\newcommand{\br}{\mathbf R}
\newcommand{\bz}{\mathbf Z}
\newcommand{\bn}{\mathbf N}
\newcommand{\Cal}{\mathcal}
\newcommand{\hess}{\operatorname{Hess}}
\newcommand{\id}{\operatorname{Id}}
\newcommand{\im}{\operatorname{Im}}
\newcommand{\Lim}{\operatorname{Lim}}
\newcommand{\ls}{\lesssim}
\newcommand{\mn}[1]{\Vert#1\Vert}
\newcommand{\op}{\operatorname{Op}}
\newcommand{\ol}{\overline}
\newcommand{\Ps}{\rm  ($\Psi$)\ }
\newcommand{\re}{\operatorname{Re}}
\newcommand{\restr}[1]{\big|_{#1}}
\newcommand{\set}[1]{\left\{\,#1\,\right\}}
\newcommand{\sett}{\Sigma _1}
\newcommand{\sgn}{\operatorname{sgn}}
\newcommand{\sh}{\sharp}
\newcommand{\st}{\Sigma _2}
\newcommand{\sub}{\operatorname{Sub}}
\newcommand{\supp}{\operatorname{\rm supp}}
\newcommand{\sw}[1]{\left ( #1\right ) }
\newcommand{\w}[1]{\langle #1\rangle }
\newcommand{\wf}{\operatorname{WF}}
\newcommand{\wt}{\widetilde}
\renewcommand{\sub}{\operatorname{Sub}}
\newcommand{\subr}{\operatorname{Sub_r}}
\renewcommand{\Lim}{\operatorname{Lim}}
\renewcommand{\Ps}{\rm  ($\Psi$)}
\newcommand{\wh}{\widehat}
\begin{document}

\hsize=160  mm
\vsize=250 mm

\baselineskip 20pt 
\lineskip 2pt
\lineskiplimit 2pt

\numberwithin{equation}{section}

\title[Sufficient Conditions ]{Sufficient Conditions for Solvability of  \\Operators of Subprincipal Type}
\author[Nils Dencker]{Nils Dencker\\ \\Lund University}
\address{Centre for Mathematical Sciences, University of Lund, Box 118, 221 00 Lund, Sweden}
\email{nils.dencker@gmail.com} 

\begin{abstract}
	In this paper we show that condition $ \sub_r(\Psi) $ on the subprincipal symbol is sufficient for local solvability of linear pseudodifferential operators of real subprincipal type. These are the operators having real principal symbol, which is of principal type and vanishes of second order on an involutive manifold where the subprincipal symbol is of principal type. The condition $ \sub_r(\Psi) $ is a condition on the sign changes of the imaginary part of the subprincipal symbol, which has been shown in \cite{de:lim} and \cite{de:sub} to be necessary for local solvability of linear pseudodifferential operators of  real subprincipal type. In the appendix, we study the local solvability of quasilinear second order partial differential operators of real principal type.
\end{abstract}

\subjclass[2000]{35A01 (primary) 35S05, 58J40, 47G30 (secondary)}

\maketitle

\thispagestyle{empty}

\section{Introduction}

In this paper we shall study the local solvability of classical pseudodifferential operators $ P \in \Psi^m_{cl}$, which are given by an asymptotic expansion $ p_m (x,\xi) + p_{m-1}(x,\xi)  + \dots $ of terms $ p_{m-j} (x,\xi) $ homogeneous  of degree $ m-j $ in $ \xi $ for $j \in \bn $, where $ p_m= p $ is the principal symbol. We are going to study operators which are not of principal type, i.e., when the principal symbol $ p  $ vanishes of at least order 2, in particular the sufficiency in the case when the principal symbol is real and has involutive double characteristics. 
But we will also assume that the operator is of subprincipal type, so that the subprincipal symbol of the operator is of principal type, see Definition~\ref{subprinctype}.

The definition that $ P$ is locally solvable at a compact subset of a manifold $K \subseteq X$ 
is that the equation 
\begin{equation}\label{locsolv}
Pu = v 
\end{equation}
has a local solution $u \in \Cal D'(X)$ in a neighborhood of $K$
for any $v\in C^\infty(X)$
in a set of finite codimension.  
We can also define microlocal solvability of~$P$ at any compactly based cone
$K \subset T^*X$, see Definition~\ref{microsolv}.

For classical pseudodifferential operators $ P \in \Psi^m_{cl}$ which are of principal type, local solvability is equivalent to  condition $ (\Psi) $ on the principal symbol $ p$, i.e.,
\begin{multline}\label{psicond} \text{$\im ap$
	does not change sign from $-$ to $+$}\\
\text{along the oriented
	bicharacteristics of $\re ap$ when $\re ap = 0$}
\end{multline}
for any $0 \ne a \in C^\infty(T^*X)$, see  \cite{de:nt} and  \cite{ho:nec}.
The oriented bicharacteristics are
the positive flow of the Hamilton vector field $H_{\re ap} \ne
0$ on which $\re ap =0$, these are also called
{\em semibicharacteristics} of~$p$.
Observe that if condition  $ (\Psi) $ is satisfied on a set, then it is trivially satisfied on any subset.

For operators which are not of principal type, the  invariant {\em subprincipal symbol}
\begin{equation}\label{psubdef}
p_s = p_{m-1} + \frac{i}{2} \sum_{j} \partial_{x_j}\partial_{\xi_j} p 
\end{equation}
becomes important. There are several conditions corresponding to 
condition $ (\Psi) $ on the subprincipal symbol, several necessary conditions for solvability are known, but not many sufficient conditions.

One of the earliest results are by Mendoza and
Uhlman~\cite{MU1}, who studied the case when principal symbol is equal to a product $ p = p_1p_2$ with $ p_j $ of real principal type with linearly independent differentials $ dp_1 $ and $ dp_2 $. Thus the double characteristic set $ \st = \{ p_1 = p_2 = 0\} $ is a intersection of two transversal hypersurfaces. In this case, they proved that $P$ is {\em not solvable} if the imaginary part of the subprincipal symbol $ p_s $ changes sign on the bicharacteristics of $ p_1 $ or $ p_2 $ on $ \st $.  These are the limits of the characteristics of the principal symbol at the double characteristic set $ \st$.
They proved in \cite{MU2} that $ P $ is solvable if the imaginary part of the subprincipal symbol $ p_s $ does not vanish on the double characteristics, thus there are no sign changes.

Mendoza~\cite{Men}
generalized the necessary condition to the case when the principal symbol is real, vanishes of second order on an
involutive submanifold where it has an indefinite Hessian with rank equal to the
codimension of the manifold. Then Hessian gives well-defined limit
bicharacteristis on the submanifold, and $P$ is
not solvable if the imaginary part of the subprincipal symbol changes sign on any of these
limit bicharacteristics. 

There are several other necessary condition for solvability of operators that are not of principal type corresponding to condition \Ps\  on operators of principal type. The following one generalizes Mendoza's and Uhlmann's necessary conditions for solvability. 

\begin{exe}\label{ex1} \rm 
	A necessary condition on the subprincipal symbol for solvability of operators with real principal symbol $ p $ vanishing of at least second order on an
	involutive submanifold $ \st $ is condition $\Lim(\Psi)  $:
	\begin{equation}\label{limp0}
	\im p_s \ \text{does not change sign from $-  $ to $ + $ on the limit bicharacteristics of $ p $ on $ \st$}
	\end{equation}
	which follows from the necessary condition (2.9)  of  \cite{de:lim}. 
\end{exe}

This condition is invariant under symplectic changes of variables and multiplication with nonvanishing real factors, since a negative factor changes both the direction of the limit bicharacteristic and the sign of the imaginary part. Thus, it is invariant under conjugations of the operator with Fourier integral operators with real principal symbols.
 
Observe that this is a condition on the sign changes of $ \im p_s $ at   a  (possibly empty) subset of directions on the leaves of~$ \st  $. The sufficient conditions that we are going to use will exclude any sign changes  of $ \im p_s $ on the leaves of~$ \st  $. 
But even this stronger condition is not sufficient, one also needs conditions on the imaginary part of the subprincipal symbol in the direction of the bicharacteristics if the real part of the subprincipal symbol. For that, the operator has to be of {\em subprincipal type}, which means that the subprincipal symbol is of principal type, with Hamilton vector field that is tangent to $ \st $ at the characteristics, see Definition~2.2  in \cite{de:sub} or Definition~\ref{subprinctype} in this paper.

\begin{exe}\label{ex2}  \rm
	A necessary conditions for solvability for operators of subprincipal type for involutive $ \st $ is given by Definition~2.4  in \cite{de:sub}. It is condition $ \sub(\Psi) $ which is  Sub($ \Psi $)  on  subprincipal symbol $ p_s$  on $ \st$:  
	\begin{multline}\label{subpsi0}
	\text{$ \im ap_s $ does not change sign from $ - $ to +} \\ \text{on the oriented bicharacteristics   of $ \re ap_s$ when $ \re ap_s = 0$  on $ \st$.}
	\end{multline}	
	for any $0\ne  a \in C^\infty$. It is known that this condition is invariant under symplectic changes of variables and multiplication with nonvanishing factors, since the subprincipal symbol then only get multiplied with these factors.
\end{exe}

Observe that condition \eqref{subpsi0} is empty if $ p_s $ vanishes of second order, so for this condition we need that the operator is of subprincipal type, see Definition~\ref{subprinctype}.

A stronger necessary condition for solvability involves the sign changes on the imaginary part of the subprincipal symbol on a larger set of curves on the double characteristic set, actually on  the limits of the bicharacteristics of the real part of the {\em refined principal symbol}
\begin{equation}\label{prefdef}
p_r= p + p_{s} 
\end{equation}
where $ p_s $ is the subprincipal symbol given by \eqref{psubdef},  see Theorem 18.1.33 in~\cite{ho:yellow}.
This symbol is invariant under conjugation with elliptic Fourier integral operators, and multiplication with $ a(x,D) \in \Psi^0 $ gives the refined principal symbol $ a p_r $ modulo terms in $ S^{m-1} $ vanishing of order 1 on $ \st $. In fact, the refined principal symbol of $a(x,D)P(x,D)  $ is $ a p_r + \tfrac{i}{2}H_p a $ modulo $ S^{m-2} $.

\begin{exe} \label{subk}
	A  necessary condition for solvability for operators of subprincipal type with principal symbol vanishing of second order on $ \st $ is  $\sub_2(\Psi) $. This condition
	is given by Definition~2.6  in \cite{de:isaac} and is condition \Ps \ on the  symbol 
	\begin{equation} \label{extsub}
	p_{s,2} = J^2(p) + J^0(p_{m-1})   = J^2(p) +  p_s 
	\end{equation} 
	where $J^2(p)$ equal to the 2:nd  jet of  $ p  $ at $ \st  $. 	
\end{exe}

This condition is invariant under symplectic changes of variables and multiplication with nonvanishing factors by Remark~2.3 in \cite{de:isaac}, since then $ p_{s,2} $ gets multiplied with a nonvanishing factor.

Observe that this definition gives conditions on the sign changes of  $ \im p_{s,2} $ on the limits of the bicharacteristics of $ \re p_{s,2} $ at $ \st $, which are the limits of  of the bicharacteristics of $ \re p_r $, see~\eqref{Hrepr}.
Condition $\sub_2(\Psi) $ gives \eqref{limp0} and \eqref{subpsi0}, but  the directions of the limit characteristics  depend on the sign of $ \re p_s $, see the following example. 

\begin{exe}
	If $ p = | \xi'|^2 - | \xi'' |^2$ with $ (\xi',\xi'')  \in \br^n \times \br^m$, then 
	$$ 
	H_p = 2 (\xi'\cdot \partial_{x'} - \xi''\cdot \partial_{x''}  ) = 2 |\xi | (\theta'\cdot  \partial_{x'} - \theta'' \cdot \partial_{x''}) \qquad | \theta | = 1
	$$ 
	which gives all directions in $ x $ when $ \xi \to  0 $.
	If we take the limit only  when $ p_r = 0 $, i.e., when
	$ p = -\re p_s $, then we get the limit bicharacteristics $\theta'\cdot  \partial_{x'} - \theta'' \cdot \partial_{x''}  $ with $ | \theta'| > | \theta''|$  when $ \re p_s  < 0 $, the ones with $ | \theta'| < | \theta''|$ when $\re p_s  > 0 $ and all directions in $\br^n\times \br^m  $ when $ \re p_s = 0 $.
\end{exe}

Thus, when $ \re p_s = 0$ we may obtain  that the sign of $ \im p_{s,2} = \im p_s$ is constant on the leaves of $ \st $ when  $ \re p_s = 0$, but by Example~\ref{cexample} that is not enough to get solvability. Observe that we shall assume that $ \re p_r $ is constant on the leaves by~\eqref{tangHam}.

Also observe that the necessity of the conditions in Examples~\ref{ex1}--\ref{subk} only hold under some additional conditions on the symbol, for example finite order of the sign change. For the sufficiency, it is not enough that  $\sub (\Psi) $  holds when $ p_s = 0 $, by the following example. 

\begin{exe}\label{cexample}
	Consider the PDO
	\begin{equation}
		P_\pm = (1 + t^2)\Delta_x + D_t \pm i t D_t
	\end{equation}
with symbol $(1 + t^2) | \xi |^2  + \tau \pm i t \tau = (1 + t^2) | \xi |^2  + (1  \pm i t )\tau $. This operator satisfies the condition $\Lim (\Psi) $ by~\eqref{limp0} and $ \sub ( \Psi ) $ by~\eqref{subpsi0} since $ \im p_s  = \pm t \tau = 0$  when  $ \re p_s= \tau = 0$. But multiplication by $ (1 \pm it)^{-1} = (1 \mp it)/(1 + t^2)$ gives the operator $D_t +  (1 \mp it) \Delta_x$, and conjugation with the Schr\"odinger kernel $ \exp(\pm it \Delta_x) $ gives the operator  $ Q_\mp = D_1 \mp it \Delta_x $. Here $ Q_+ $ is the Mizohata operator which is a standard example of an unsolvable operator, and $ Q_- $ is solvable, since   $ u(t,x) =i \int_0^t \exp((s^2 - t^2)\Delta_x/2) f(s,x)\, ds$ solves $ Q_-u = f \in C^\infty_0$.
\end{exe}

Observe that the condition $ \Lim(\Psi) $ in Example~\ref{ex1} does in general not imply that $ \im p_s $ has constant sign on the leaves of $ \st $, see the following example.

\begin{exe}\label{cbexample}
	If the principal symbol of $ P $ is $ D_{x_1}D_{x_2} $, then the leaves $ L $ of $\/ \st $ have dimension~$ 2 $. Divide a leaf $ L \cong \br^2 $ into a checkerboard and index the squares with $ (j,\, k) \in \bz^2 $. Denote the squares with  index $ (2j,2k) $ with $ S_+ $  and the ones with index $ (2j + 1, 2k+1) $ with $ S_- $ and the rest with $ S_0 $. 
	If  $ \im p_s  > 0$ in the interior of the squares in $ S_+ $, $ \im p_s  < 0$ in the interior of the squares in $ S_- $ and $ \im p_s  = 0$ on the squares in $ S_0 $, then $ \im p_s $ has constant sign along any $ x_1$ and $ x_2 $ lines, but not on the whole plane.

 \end{exe}

The conditions on $ P\in \Psi^m_{cl} $ in the paper will be the following. Let $ p $ be the real principal symbol, $ \Sigma = p^{-1}(0) $ be the characteristics and $ \st = \set{p = |dp| = 0}$ be the double characteristics. We assume that $ \st$ is a nonradial involutive submanifold and that $ p$ is real and vanishes of exactly second order at $\st$ so that
\begin{equation}\label{hessp}
	\text{$ \hess p $ is {\em   nondegenerate} on $ \st$}
\end{equation}
This implies that $ p$ is of real principal type on $ \sett = \Sigma \setminus \st$ in a sufficiently small conical neighborhood of  $ \st $, since  it  cannot vanish of second order on $\Sigma_1 $. Also, $ \hess p \restr{\st} $ has locally constant rank and index. That $ \st $ is nonradial means that if a function vanishes on $ \st $ then its Hamilton vector field does not have the radial direction on $ T^* X $, which is a generic condition.

\begin{rem}
	The invariant condition is that $ p$ is proportional to a real function. This means that the quotient $q =\im p/\re p $, which is defined where $ \re p \ne 0 $, can be extended to a $ C^\infty $ function with values on the extended real line $ \overline \br$, i.e., either $ q $ or $ q^{-1} $ is smooth. In fact, if \/ $ q \in C^\infty$  then $ p = (1 + iq) \re p$ and if  $ q^{-1} \in C^\infty$ then $ p = (q^{-1} + i) \im p$.
\end{rem}

In the following, we will assume that $ P$ is on the form so that the principal symbol $ p$ is real valued.
In the case when $ p $ is not proportional to a real function,  condition $ (\Psi) $ on the principal symbol has to be satisfied  on $\Sigma_1 $ since it is then necessary for solvability.

Recall that the subprincipal symbol 
\begin{equation} \label{subsymbol}
p_{s} = p_{m-1} + \frac{i}{2}\sum_{j}^{}\partial_{x_j}\partial_{{\xi}_j}p
\end{equation} 
is invariantly defined on $ \st $ under conjugation with elliptic Fourier integral operators. In fact, $ p_s $ is the value of the Weyl symbol of $ p + p_{m-1} $ at $ \st $ modulo $ S^{m-2} $, see~\cite{ho:weyl}.

\begin{rem}\label{subinv}
	When $ \st $ is involutive  we may choose symplectic coordinates so that $ \st = \set{\xi_1 = \dots = \xi_k = 0}$  and then the subprincipal symbol $p_s = p_{m-1}$  at $ \st$.
\end{rem}

In fact, since $ \partial_x\in T \st $ we find that  $ \partial_x p $ vanishes of second order on $\st $. If  $C$ is a pseudodifferential operator with principal symbol  $c = {\sigma}(C)$, then the value of the subprincipal symbol of the composition $CP$ is equal to $cp_{s} + \frac{i}{2}H_{p} c=  cp_s$ on $ \st  $.
Observe that the subprincipal symbol is complexly conjugated when taking the adjoint of the operator, see \cite[Theorem~18.1.34]{ho:yellow}. 

Since we shall assume that $ p $ is real, the real and imaginary parts of  $ p_s $  are invariant under multiplication with elliptic pseudodifferential operators and  conjugation with elliptic Fourier integral operators if these operators have  real principal symbols. In order to study the invariants, we need some symplectic concepts.

The symplectic annihilator to a linear space consists of  the vectors that are symplectically orthogonal to the space. 
Let  $T{\Sigma}_2^{\sigma}$ be the symplectic annihilator to $T
{\Sigma}_2$, which spans the symplectic leaves of $\st$. If $\st =
\set{\xi = 0}$, 
$(x,y) \in \br^{d}\times \br^{n-d}$, then the leaves are
spanned by  $\partial_{x}$.  Let
\begin{equation}
T^{\sigma}{\Sigma}_2 = T{\Sigma}_2/T{\Sigma}_2^{\sigma}
\end{equation}
which is a symplectic space over ${\Sigma}_2$. In these
coordinates it is parametrized by
\begin{equation}\label{tsigmadef}
T^{\sigma}\st =  
\set{((x_0,y_0; 0,\eta_0); (0,y;0,\eta)) \in T \st:\ (y,{\eta}) \in T^*\br^{n-d}}
\end{equation} 
Thus the fiber is isomorphic to the symplectic manifold $T^*\br^{n-d} $ with $ (x_0,y_0; 0,\eta_0) = w_0 \in \st $ as a parameter.

\begin{defn}\label{subprinctype}
	If the principal symbol is real valued, then we say that the operator $P$ is of \emph{real subprincipal type} if the following conditions hold:
	\begin{equation}\label{tangHam}
	 H_{\re p_{s}} \subset T\st  
	\end{equation}
	which means that $ d\re p_s \restr {TL} = 0 $, and
	\begin{equation}\label{subprinc}
	d \re p_{s}\restr{T^{\sigma}{\Sigma}_2} \ne 0  \qquad \text{}
	\end{equation}
	so  that $H_{\re p_{s}}$ is transversal to the leaves and we shall assume that it does not have the radial direction. 
	The bicharacteristics of $\re p_{s} $ with respect to the symplectic structure of $  \st $ are called the {\em subprincipal bicharacteristics} for any value of $ \re p_s $.
\end{defn}

This definition is invariant under symplectic changes of variables and but {\em not} by multiplication with nonvanishing real factors when $ \re p_s \ne 0 $. But it is invariant by multiplication with nonvanishing real factors that are constant on the leaves of $ \st $.
When the coordinates are given as in~\eqref{tsigmadef}, we find from \eqref{tangHam} that $ \partial_x \re p_s  = 0 $ on $ \st $ and  from \eqref{subprinc} that $  \partial_\eta \re  p_s \ne 0  $ or $ \partial_{y} \re p_s  \not \parallel\eta $ on $ \st $. Thus it follows that
\begin{equation}\label{resubprinc}
\re p_s \ \text{is of real principal type}
\end{equation}
and thus has simple zeroes. In  \cite[Definition~2.1]{de:sub} the definition was that  $ P$ is of subprincipal type if Definition~\ref{subprinctype} hold with $ \re p_s $ replaced with $ p_s $, but only when $ p_s = 0 $, which is invariant under multiplication with nonvanishing factors and symplectic changes of variables. But in that case the principal symbol may not be proportional to a real symbol ant then $ \re p_s $ is not well defined.

We shall study the microlocal solvability of the operator $ P$, which is
given by the following definition from  \cite{ho:yellow}. Recall that $H^{loc}_{(s)}(X)$ is
the set of distributions that are locally in the $L^2$ Sobolev space
$H_{(s)}(X)$.

\begin{defn}\label{microsolv}
	If  $ P\in \Psi^m_{cl} $ and $K \subset T^*X$ is a compactly based cone, then we say that $P$ is
	microlocally solvable at $K$ if there exists an integer $N$ so that
	for every $f \in H^{loc}_{(N)}(X)$ there exists $u \in \Cal D'(X)$ such
	that $K \bigcap \wf(Pu-f) = \emptyset$. 
\end{defn}

Observe that solvability at a compact set $M \subset X$ is equivalent
to solvability at $T^*X\restr M$ by~\cite[Theorem 26.4.2]{ho:yellow},
and that solvability at a set implies solvability at a subset. Also,
by Proposition 26.4.4 in~\cite{ho:yellow} the microlocal solvability is
invariant under conjugation by elliptic Fourier integral operators and
multiplication by elliptic pseudodifferential operators.

To prove solvability we shall use {\em a priori} estimates. Let $\mn{u}_{(k)}$ be the $L^2$ Sobolev norm of order $k$, $u \in C_0^\infty$.
In the following, $P^*$ will be the $L^2$ adjoint of $P$.

\begin{rem}\label{solvrem}
	Let  $P \in {\Psi}^m_{cl}(X)$ and $ K \subset T^*X$ be a compactly based cone, and
	assume that there exists  ${\nu} \in \br$
	and a pseudodifferential operator $A$ so that
	$K \bigcap \wf(A) = \emptyset $ and
	\begin{equation}\label{solvest}
	\mn {u}_{(-N)} \le C(\mn{P^*{u}}_{({\nu})} + \mn {u}_{(-N-n)} +
	\mn{Au}_{(0)}) \qquad u \in C_0^\infty(Y)
	\end{equation}
	Then $ P$ is microlocally solvable at $K$ and one can take this $N$ in Definition~\ref{microsolv}.
\end{rem}

Observe that if $ P \in \Psi^m_{cl} $ then there is a loss of  $ \nu +  m + N $ derivatives in the estimate~\eqref{solvest} compared with the elliptic case.  One can have several operators $ A_j $ in~\eqref{solvest} by taking  $ A =(A_1, \dots)$ vector valued.

\begin{defn}\label{refdef}
We say that $ P  \in \Psi^m$ satisfies condition $\subr(\Psi) $  if there exists a homogeneous $0 \ne a \in S^0$ such that  $a p_r  $  has real principal symbol that vanishes of order $ 2 $ at an involutive manifold $ \st $ with nondegenerate Hessian, $a p_r  $ is of real subprincipal type and satisfies condition $ (\Psi) $ at the limit $ \st $. This means that $ \im a p_r $ does not change sign from $ - $ to $ + $ on the limits of the bicharacteristic of \/ $ \re a p_r$ at $ \st $.
\end{defn}

The  refined principal symbol is equal to $ p_r  = p + p_s $ by~\eqref{prefdef}.
Observe that this condition gives conditions on the sign changes for {\em any} value of $ \re p_s $, and that is {\em not} the case in Example~\ref{cexample}. This condition is stronger than the conditions in Examples~\ref{ex1}--\ref{subk}.
Observe that the factor $ a $ makes this condition invariant under multiplication with with  nonvanishing factors.
 It is also invariant under symplectic changes of variables, thus the conditions is invariant 
under conjugation  with Fourier integral operators and multiplication with elliptic pseudodifferential operators having real principal symbols.

\begin{prop}\label{refcond}
  If $ P \in \Psi^m $ has real principal symbol that vanishes of order $ 2 $ at an involutive manifold $ \st $ with nondegenerate Hessian and is of real subprincipal type, then $ P $ satisfies condition $ \subr (\Psi) $ if and only if \/ $ \im p_s $ does not change sign on the leaves of $ \st $ and the sign of $ \im p_s $ on the leaves do not change from $ - $ to $ + $ on the subprincipal bicharacteristics, i.e., the bicharacteristics of $ \re p_s $ with respect to the symplectic structure of  \/ $ \st $ for {\em any value} of $ \re p_s $, see Definition~\ref{subprinctype}.
\end{prop}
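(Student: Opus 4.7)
The plan is to unpack condition $\subr(\Psi)$ by first eliminating the factor $a$, then classifying the limits of bicharacteristics of $\re p_r = p + \re p_s$ at $\st$ into two types --- leaf limits and subprincipal limits --- and matching them to conditions (i) and (ii) of the proposition. I begin by choosing symplectic coordinates $(x,y,\xi,\eta)$ with $\st = \set{\xi_1 = \dots = \xi_k = 0}$ so that $p = \tfrac12 \xi^T A(x,y,\eta) \xi + O(|\xi|^3)$ with $A$ real, symmetric, and nondegenerate; by~\eqref{resubprinc} and a principal-type normal form on $\st$, I may further arrange $\re p_s \equiv \eta_1$ on $\st$. Since $ap$ is required to be real with $p$ real and nonvanishing on an open conic subset of $\Sigma$, $a$ itself is forced to be real; for such nonvanishing real $a$, the curves traced by $H_{\re ap_r}$ coincide on $\set{\re p_r = 0}$ with those of $H_{\re p_r}$ (with orientation reversed where $a < 0$), and $\im(ap_r) = a\im p_s$ flips sign whenever $a$ does, so the ``no $-$ to $+$'' condition is invariant. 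Real subprincipal type and the nondegenerate Hessian condition are also preserved under real nonvanishing multipliers, so I may take $a \equiv 1$.

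The core step is to classify the limit bicharacteristics of $\re p_r$ at $\st$. Writing $H_{\re p_r} = H_p + H_{\re p_s}$, one has $H_p = (A\xi)\cdot\partial_x + O(|\xi|^2)$ (tangent to leaves, vanishing linearly in $\xi$), while $H_{\re p_s}$ extends smoothly, is tangent to $\st$ by~\eqref{tangHam}, and projects on $T^\sigma\st$ to the subprincipal bicharacteristic field. On $\set{\re p_r = 0}$ one has $\tfrac12 \xi^T A\xi = -\re p_s$, so approaching $\st$ forces $\re p_s \to 0$ unless one rescales conically. Two families of limit curves arise: \emph{leaf limits}, obtained by restricting to a fixed leaf and rescaling time so that $H_p$ dominates as $|\xi| \to 0$ along a fixed direction $\xi_0$; these give oriented curves in single leaves with tangent $A\xi_0$, and both orientations $\pm A\xi_0$ appear from sequences $\xi_n \to \xi_0$ and $\xi_n \to -\xi_0$. \emph{Subprincipal limits}, obtained by rescaling $\xi = R^{-1/2}\tilde\xi$, $\eta = R\tilde\eta$ with $R \to \infty$ on $\set{\re p_r = 0}$, yield subprincipal bicharacteristics at any normalized level of $\re p_s = \tilde\eta_1$.

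For $\subr(\Psi) \Rightarrow \text{(i)+(ii)}$, applying ``no $-$ to $+$'' to both orientations of each leaf limit forces no sign change of $\im p_s$ along every admissible direction in the leaf; since the admissible directions $\set{A\xi : \xi^T A\xi = -2\re p_s}$ form an open cone by nondegeneracy of $A$, $\im p_s$ has constant sign on each leaf, giving (i). Applying ``no $-$ to $+$'' to the subprincipal limits at each level of $\re p_s$ yields (ii). For the converse, under (i) the function $\im p_s$ factors through the projection $\st \to T^\sigma\st$, so its restriction to any limit bicharacteristic depends only on the projection: leaf limits project to points (no sign change), and subprincipal limits project to subprincipal bicharacteristics where (ii) rules out the prohibited change. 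General limits (concatenations or mixtures of the two types) are handled identically, since the sign profile of $\im p_s$ along any limit is entirely determined by the projection to $T^\sigma\st$.

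The main obstacle is the rigorous classification of limit bicharacteristics. One must verify that the leaf and subprincipal rescalings exhaust all limits (no exotic intermediate scaling produces new curves), that both orientations of each leaf limit genuinely arise as oriented limit bicharacteristics (paying attention to the possibly disconnected quadric $\xi^T A\xi = -2\re p_s$ in the definite-signature case, where a single connected component may not supply both $\pm A\xi_0$ orientations), and that the conical-infinity rescaling does produce subprincipal bicharacteristics at \emph{every} value of $\re p_s$ --- not only at the level $\re p_s = 0$ accessible via finite limits at fixed points of $\st$.
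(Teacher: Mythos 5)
Your overall strategy---classifying limit bicharacteristics of $\re p_r$ at $\st$ into ``leaf limits'' (driven by the Hessian) and ``subprincipal limits'' (driven by $\re p_s$) and applying the sign condition to each---is exactly the structure of the paper's proof. Both arguments use rescalings to extract the limit Hamilton vector field, obtain condition (i) from the leaf rescaling (the paper's $\xi = \Lambda^{1/3}\theta$, $\Lambda\to\infty$, giving the vector field $H_{p_2} = 2\sum a_{jk}\theta_j\partial_{x_k}$ whose flow-out is a local diffeomorphism onto the leaf), and obtain information about the $t$-direction from the $\xi\to0$ limit, which produces the subprincipal bicharacteristics.

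There is, however, a genuine gap in your derivation of (ii). The stated condition (ii) concerns the \emph{sign function on the leaves} in the sense of Definition~\ref{sgndef}: it says the leaf-sign $\sgn(\im p_s)$ cannot pass from $-1$ to $+1$ as $t$ increases along a subprincipal bicharacteristic. The leaf-sign at a given $(t,y,\tau,\eta)$ depends on the behavior of $\im p_s$ over the \emph{whole leaf}, not merely on its value at the base point of the subprincipal bicharacteristic. Your sentence ``Applying `no $-$ to $+$' to the subprincipal limits at each level of $\re p_s$ yields (ii)'' only constrains $\im p_s$ \emph{restricted to those curves}. A symbol with $\im p_s(t,x,\cdot) = e^{-1/|x|^2}\,g(t,\cdot)$ vanishes identically along the subprincipal bicharacteristic $x=0$, so no violation is visible there, yet the leaf-sign $\sgn(g)$ can change freely. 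This is precisely why the paper introduces a \emph{third, intermediate} rescaling, $\xi = \varrho\theta$ with $0 < \varrho \ll \sqrt{\Lambda}\to\infty$ in \eqref{Hrepr0}: the flow-outs $\Gamma(t,\xi)$ of the limit field then sweep a cylindrical neighborhood of the forward subprincipal bicharacteristic inside $\st$, so condition $\subr(\Psi)$ actually sees the values of $\im p_s$ on the nearby leaves and thus forces the leaf-sign condition. You acknowledge that ``mixtures of the two types'' exist, but you place them in the converse direction (where you argue they cause no harm); you do not notice that they are indispensable in the forward direction to obtain (ii). Two smaller points: your rescaling $\xi = R^{-1/2}\tilde\xi$, $\eta = R\tilde\eta$ does not reproduce the balance the paper needs (the leaf exponent is $1/3$, chosen so that dividing by $\Lambda^{1/3}$ isolates $H_{p_2}$ while $|\xi|/\Lambda\to0$; the pure subprincipal limit is simply $\xi\to0$ at fixed $\Lambda$). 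And the concerns you flag at the end---exhaustiveness of limits, orientations at definite signature, and access to all $\re p_s$ levels---are well chosen and should worry the reader of the paper as well; but the cylindrical-neighborhood argument is the concrete device you are missing.
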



Here, the sign on the leaves is $ \pm 1 $  if  $ \pm \im p_s  \ge 0$ and $ \im p_s  \not \equiv 0 $ on leaf $ L $ of $ \st $ and equal to $ 0 $ if $ \im p_s  \equiv 0 $ on $ L $, see Definition~\ref{sgndef}.
Thus, condition $ \subr (\Psi)$ implies the necessary conditions in Examples~\ref{ex1} and~\ref{ex2}, and it is not hard to show that it implies the condition $\sub_2(\Psi) $ in Example~\ref{subk}.
In fact, the limits at $ \st $ of the Hamilton vector field of the refined principal symbol only depend on the values  at $ \st $ of the Hessian of the principal symbol  and the gradient of the real part of the subprincipal symbol, see~\eqref{Hrepr}.

\begin{proof}
	By multiplying with $ \w{D}^{2-m} $ we assume that $ P  \in \Psi^2$, and we may choose symplectic coordinates so that $ (x,y) \in \br^d \times \br^{nd}$ and $ \st = \set{\xi = 0} $.
	By Taylor's formula 
	we can write  the real principal symbol as
	\begin{equation}
	p_2(x,y;\xi,\eta) = \sum_{jk} a_{jk} (x,y;\xi,\eta)\xi_j \xi_k
	\end{equation}
	where the Hessian $ \{ a_{jk} (x,y;0,\eta) \} _{jk}$ is  nondegenerate near $ w_0 $ by assumption.  We have that  $p_1 = p_s$ the subprincipal  symbol on $ \st $  and by extending it we may assume it is constant in $  \xi $.
	
	Now the Poisson  parentheses $ \set {\xi, \re p_s}\equiv \partial_x \re p_s  \equiv 0 $ on $ \st $ by Definition~\ref{subprinctype} and $ \set {x, \re p_s}   \equiv -\partial_\xi \re p_s \equiv 0$. Since $ \re p_s$ is homogeneous and of principal type, we can thus complete $x $, $ \xi $ and $ \tau = \re p_s\restr{\st}$ to a symplectic homogeneous coordinate system $(x,t,y; \xi, \tau, \eta) $ microlocally near~$ w_0 \in \st$. 
	We then obtain that the refined principal symbol of $ P$ is equal to
	\begin{equation}\label{normal}
	\sum_{jk} a_{jk} (x,t, y; \xi, \tau,\eta)\xi_j \xi_k + \tau + i f(x,t,y; \tau, \eta) 
	\end{equation}
	modulo terms in $ S^{-1}$,
	where $ f $ is real and  homogeneous of degree 1. 

   The Hamilton vector field of the real part of the refined principal symbol is given by 
	\begin{equation}  \label{Hrepr}
	H_{\re p_r}  \cong 2\sum_{jk} a_{jk} (x,t, y; 0, \tau,\eta)\xi_j \partial_{x_k} + \partial_t 
	\end{equation}
	modulo terms with coefficients that are $ \Cal O( |\xi|^2 /\Lambda)$ with $ \Lambda = \sqrt{\tau^2 + | \eta |^2}$ in the base of homogeneous vector fields $ V= (\partial_t, \partial_x,  \partial_y, \Lambda \partial_\tau, \Lambda \partial_\xi,  \Lambda \partial_\eta) $. We get the limit at $ \st $ when  $ |\xi|/\Lambda \to 0 $. Observe that the terms in $ p_1 - p_s $ vanish on $ \st $ so their Hamilton vector fiels are tangent to the leaves of $ \st $. 
	
    One limit is  when $ \xi \to 0 $ for fixed $ \Lambda $ and then the limit of  \eqref{Hrepr}  is equal to $ \partial_t $ which gives the subprincipal bicharacteristics.
	We can also take  $ \tau =  \Lambda \tau_0$,  $ \eta = \Lambda \eta_0 $, $ y = y_0 $ and $ \xi = \Lambda^{1/2}\theta $ with $ | \tau_0 |^2 + |\eta_0|^2 = | \theta   | = 1 $.  Since the coefficients $ a_{jk} $ and $ C $ are homogeneous,    one can write \eqref{Hrepr}  as
	\begin{equation} \label{limchar}
	2 \Lambda^{1/2} \sum_{jk} a_{jk} (x,t, y_0;0, \tau_0,\eta_0)\theta_j \partial_{x_k}  + \partial_t 
	\end{equation}  
	modulo homogeneous vector fields in $ V $ with coefficients that are $ \Cal O(\Lambda^{-1/2}) $. 
	By dividing by $\lambda^{1/2} $ and  taking the limit as $ \lambda  \to \infty $ we obtain the limit  Hamilton vector field
	\begin{equation} \label{limham}
	H_{p_2} = 2\sum_{jk} a_{jk} (x,t, y_0; 0, \tau_0,\eta_0)\theta_j \partial_{x_k} 
	\end{equation}
	This vector field gives for fixed  $ \theta_0 =(\theta_1, \dots, \theta_d)$ a foliation of  the leaves of $  \st $. 
	Let $ \Gamma(\xi) $ with $\xi = \varrho \theta $ be the flow-out of the Hamilton vector field $  \varrho H_{p_2}$ with $ H_{p_2} $ given by~\eqref{limham}. Then the Hessian of $ \Gamma(\xi) $ at $ \xi = 0 $,  $\hess  \Gamma(0) $, is non-degenerate so the mapping $ \xi \mapsto  \Gamma( \xi)$ is a diffeomorphism from $\{\xi :  | \xi | < c  \} $ to a neighborhood of $ w_0  = (x,t_0,y_0;0,\tau_0, \eta_0) $ in the leaf through $ w_0$. Thus we obtain from condition $\subr(\Psi) $ that there can be no sign changes of $ f $ on the leaves of~$ \st $ near $ w_0 $.

	We can also take the limit $ (\tau, \eta) = \Lambda(\tau_0, \eta_0) $ and $ \xi = \varrho\cdot \theta $ with $ | \tau_0 |^2 + |\eta_0|^2 = | \theta   | = 1 $ and $0 \le  \varrho \ll \sqrt{ \Lambda} \to \infty$.  As before, we obtain the limit Hamilton vector field
	\begin{equation} \label{Hrepr0}
	 \partial_t + 2 \varrho \sum_{jk} a_{jk} (x,t, y_0; 0, \tau_0,\eta_0)\theta_j \partial_{x_k}  
	\end{equation}
	For fixed $ \varrho $ and $ \theta $ we find that the orbits of~\eqref{Hrepr0} gives a foliation of $ \st $. When $ \varrho = 0 $ the orbit~$ \gamma_0 $ of~\eqref{Hrepr0} through $ w_0 = (x_0 ,t_0, y_0; 0, \tau_0,\eta_0) $ is a subprincipal bicharacteristic. 
	When $ \xi = \varrho \theta  \ne  0  $  then the orbits $ \Gamma(t, \xi) $ of~\eqref{Hrepr0} through $ w_0 $  with  $ t > 0 $ form a proper cone in $ \st $ with the subprincipal bicharacteristic in its interior. 
	We obtain as before that the mapping $ (t,\xi) \mapsto \Gamma(t,\xi)$ is a diffeomorfism from $\{(t,\xi) :  | (t,\xi) | < c  \} $ to a neighborhood of $w_0 = (x_0,t_0,y_0;0,\tau_0 ,\eta_0) $.
	Thus, we find from condition $ \subr  (\Psi)$  that  if $ f  < 0  $ on the leaf of $ w_0 $ in a neighborhood of $ w_0 $ then $ f  \le 0 $ on the leaves of $w_t = (x_0,t,y_0;0,\tau_0 ,\eta_0) $  in a neighborhood of $ w_t $ for $ 0  \le t - t_0 \ll 1$, which proves the proposition.
\end{proof}

\begin{rem}
	The requirement that condition ($ \Psi $) shall hold on the refined principal symbol for all values of the real part is only needed when $f =  \im p_s$ depends on $ \tau = \re p_s $. In fact, if $ f $ does not depend on $  \tau $, then by choosing suitable $ \tau $ so that  $ \re p_r = 0 $ we can get the same limits at $ \st $ of the Hamilton vector field of $\re p_r  $ as in the proof  of Proposition~\ref{refcond}.
	One may of course eliminate the $ \tau $ dependence of $ f $ by the Malgrange preparation theorem, but that would change the imaginary part of the principal symbol $ p $, see for example Example~\ref{cexample}.
\end{rem}

The following is the main result of the paper.

\begin{thm}\label{mainthm}
	Assume that $P \in {\Psi}^m_{cl}(X)$  satisfies  $\sub_r (\Psi) $ microlocally 
	near $w_0 \in \st$, then $P$ is  microolocally solvable  near~$w_0$ with a loss of $ 5/2 $ derivatives by the {\sl a priori }estimate~\eqref{solvest}.
\end{thm}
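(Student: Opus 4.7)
The plan is to establish the \emph{a priori} estimate~\eqref{solvest} for $P^*$ with $\nu + m + N = 5/2$ and then invoke Remark~\ref{solvrem}. First, using the invariance of $\sub_r(\Psi)$ under conjugation with elliptic Fourier integral operators with real principal symbols and multiplication by elliptic pseudodifferential operators with real principal symbols, reduce $P$ modulo lower order to the normal form~\eqref{normal}, so that $\st = \set{\xi = 0}$, $\re p_s = \tau$, and $\im p_s = f(x,t,y;\tau,\eta)$ is independent of $\xi$. By Proposition~\ref{refcond}, $f$ has constant sign on each leaf of $\st$ for fixed $(t,y,\tau,\eta)$, and this sign does not change from $-$ to $+$ along the subprincipal flow $\partial_t + \re C\cdot\partial_x$, for every value of $\tau$.

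Next, microlocalize to a small conic neighborhood of $w_0$ by a cutoff $\chi$; contributions from its complement are absorbed into $\mn{Au}_{(0)}$ in~\eqref{solvest} with $w_0 \notin \wf(A)$. The core ingredient is the construction of a Nirenberg--Treves--Dencker multiplier $B$ of order $0$, whose symbol $b(x,t,y;\tau,\eta)$ is independent of $\xi$ and satisfies the symbol inequality
\begin{equation*}
 -\partial_t b - 2\, b f \;\ge\; -C\Lambda^{-1}, \qquad \Lambda=\sqrt{\tau^2+|\eta|^2},
\end{equation*}
microlocally near $w_0$. The existence of such a $b$ is Dencker's sign-function construction for condition~$(\Psi)$ applied to $f$, viewed as a family of functions of $t$ parametrized by $(x,y,\tau,\eta)$; Proposition~\ref{refcond} supplies precisely the sign hypothesis that construction requires, with uniformity in $\tau$.

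Given $B$, one computes $\im(BP^*u, u)$ by integration by parts and the Weyl calculus, producing a principal contribution $((-\partial_t b - 2bf)^w u, u)$ to which the sharp G\aa{}rding inequality applies, plus commutator and remainder terms. The second-order term $\sum a_{jk} D_{x_j}D_{x_k}$ contributes only through its commutator with $B$, since $B$ is constant in $\xi$ and the two symbols Poisson-commute at $\st$; this commutator is $\Cal O(\xi/\Lambda)$ and is controllable using the $\Lambda^{-1/3}$ scaling of~\eqref{limchar}--\eqref{limham} together with the nondegeneracy of $\hess p$. Gathering these bounds leads to
\begin{equation*}
\mn{u}_{(-1/2)}^2 \;\ls\; \mn{P^*u}_{(\nu)}\mn{u}_{(0)} + \mn{u}_{(-N-n)}^2 + \mn{Au}_{(0)}^2,
\end{equation*}
and interpolation yields~\eqref{solvest} with total loss $5/2$: $3/2$ derivatives from the $(\Psi)$-estimate as in~\cite{de:nt}, and an additional~$1$ from the $\Lambda^{-1/3}$ weight reflecting the quadratic vanishing of $p$ on $\st$.

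The main obstacle is the construction of the multiplier $B$ that works \emph{simultaneously} at every value of $\tau = \re p_s$: Example~\ref{cexample} shows that controlling only $\tau = 0$ is insufficient, so Dencker's scheme must be executed on the full refined principal symbol rather than on $p_s$ at $\re p_s = 0$. A secondary technical obstacle is managing the interaction between $B$ and the quadratic term $\sum a_{jk}D_{x_j}D_{x_k}$: because $B$ is built on the leaves, all commutator errors come with factors of $\xi$, and verifying that these errors are controlled by the $\Lambda^{-1/3}$-weighted scaling from~\eqref{limchar} demands careful bookkeeping through the Weyl calculus.
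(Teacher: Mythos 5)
Your proposal correctly identifies the reduction to the normal form $D_t + A^w + if_1^w$, the need to exploit the sign condition on $f$ for all values of $\tau$, and the rough order of the loss, but it diverges from (and has gaps relative to) the paper's actual argument in three substantial places.

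First, you propose a Weyl-quantized multiplier $B$ with a \emph{pointwise} symbol inequality $-\partial_t b - 2bf \ge -C\Lambda^{-1}$ to which the sharp G\aa{}rding inequality is applied. This cannot work as stated: the pseudo-sign $\delta_0 + \varrho_T$ built from the signed distance to the zero-change set $X_0$ is only Lipschitz continuous (see Propositions~\ref{g1prop}, \ref{apsdef}), not a symbol in any $S^0$ class, so sharp G\aa{}rding is unavailable for its Weyl quantization. The paper instead quantizes the multiplier as $B_T^{Wick}$; the Wick calculus gives exact positivity $(a\ge 0 \Rightarrow a^{Wick}\ge 0)$ without any symbol-class hypothesis, and Propositions~\ref{propwick}, \ref{wickcomp0} and~\ref{wprop} supply the composition and domination results needed in place of sharp G\aa{}rding. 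Your phrase ``Dencker's sign-function construction'' hides this entire Wick machinery, which is precisely where the $(\Psi)$ difficulty lives.

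Second, your multiplier is of order $0$ and independent of $\xi$, and you claim the quadratic term $A=\sum a_{jk}D_{x_j}D_{x_k}$ ``contributes only through its commutator with $B$,'' controlled by the $\Lambda^{-1/3}$ scaling. If $B$ is $\xi$-independent, then $i[A^w,B^w]$ has symbol linear in $\xi$; it neither gives positivity nor absorbs the error class $R = \{\langle C_2\xi,\xi\rangle + C_1\cdot\xi + C_0 : C_j\in S^{-1}\}$ that one must contend with after the preparation of Section~\ref{prep}. The paper therefore adds a $\xi$-linear piece $\lambda_T=\epsilon h^{1/2}\langle L(x-x_0),\xi\rangle/T$ to the multiplier (Definition~\ref{multdef}, Lemma~\ref{multlem}), chosen so that $\{A,\lambda_T\}\gtrsim |\xi|^2$ gives the coercivity $\|D_xu\|^2$ in the estimate~\eqref{propest}; this is what makes the $R$-errors and the $\mu=h^{1/2}\langle\xi\rangle^2$-terms absorbable. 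Without this term, your commutator bound by ``$O(\xi/\Lambda)$'' is not absorbable and the argument breaks.

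Third, the scale on which the estimate is carried out is not the homogeneous $\Lambda^{-1/3}$ scale suggested by the limit computation~\eqref{limchar}--\eqref{limham}; that computation only serves to prove the equivalence in Proposition~\ref{refcond}. The actual second microlocalization is with respect to the Weyl--H\"ormander metrics $G=Hg^\sh$ and $g^\sh_0$ adapted to $f$ near its sign changes (Sections~\ref{symb}--\ref{weight}), with the weight $m$ of Definition~\ref{h0def} measuring the change in $\delta$ between minima of $H_1^{1/2}\langle\delta\rangle^2$. The loss of $5/2$ then comes out of the interplay of $b_T\in S^1_{1/2,1/2}$ (order $1$, not $0$) and the $h^{1/2}$-factor in~\eqref{propest}, rather than from an additive ``$3/2 + 1$'' bookkeeping; as presented, your accounting for the loss is a heuristic, not a derivation.

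Finally, a smaller but non-trivial omission: the preparation to the form~\eqref{preppropform} with $\im C=0$ on $\st$ requires solving a quasilinear second-order system on the leaves (Proposition~\ref{approp}, proved in Appendix~\ref{app}); your proposal glosses over this by invoking invariance, but it is a genuine step.
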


Thus, by Example~\ref{subk} condition   $\subr (\Psi) $  is both necessary (under additional conditions) and sufficient for local solvability for operators  of subprincipal type with  principal symbol that is real and
vanishes of exactly second order at a nonradial involutive manifold~$ \st $.

The solvability with a loss of $ 5/2 $ derivatives can be compared with the loss of $ 2 $ derivatives when the antisymmetric part of P is bounded. In fact, by using the normal form~\eqref{normal0} with  $ f \equiv 0 $
gives a Schr\"odinger type operator  that is symmetric modulo bounded operators.  By using a multiplier as in Lemma~\ref{multlem} one can obtain $ L^2 $ estimates with arbitrarily small constants. For small enough constant, these estimates may be perturbed by any bounded term. 
This is similar to the case of operators of principal part, where the loss of derivatives is $ 1/2 $  more when the antisymmetric part if the operator is unbounded and condition $( \Psi )$ is  satisfied.

This paper treats subprincipal type operators with involutive characteristics having nondegenerate second order vanishing of the principal symbol. For the noninvolutive or degenerate cases, see~\cite{Parm}, \cite{Serparm} and the references there.
 
To prove Theorem~\ref{mainthm} we shall use suitable {\em a priori} estimate and Remark~\ref{solvrem}. The proof will occupy most of the remaining paper.

\section{The Preparation}\label{prep}

As in the proof of Proposition~\ref{refcond} we may assume that the operator $ P \in {\Psi}^2_{cl}(X)$ is of second order with real  principal symbol, $ X = \br^n$ and the coordinates are chosen so that $ \st = \set {\xi = 0}$, $(x,y;\xi,\eta) \in T^*(\br ^d \times \br ^{d-n}) $ microlocally near $w_0 \in \st $, where $ x \mapsto (x, y_0;0, \eta_0) $  spans the leaves of the symplectic foliation of $ \st $ and  $d$ is the codimension of $ \st$. We may multiply $ P $ with an elliptic operator of order zero so that the refined principal symbol satifies condition $ \subr(\Psi) $, see Definition~\ref{refdef}.
Thus, we have the operator on the normal form given by~\eqref{normal} 
\begin{equation}\label{normal0}
\sum_{jk} a_{jk} (x,t, y; \xi, \tau,\eta)\xi_j \xi_k + p_1(x,t,y; \xi,\tau, \eta)   + p_0(x,t,y;\xi, \tau, \eta)
\end{equation}
modulo terms in $ S^{-1}$, where  $ \set{a_{jk}}_{jk}$ is nondegenerate on $ \st $,
\begin{equation}\label{normalsub}
p_1(x,t,y; \xi,\tau,  \eta)  =  \tau + i f(x,t,y; \tau, \eta) + C(x,t,y;  \xi,\tau,  \eta)\cdot \xi
\end{equation}
with $ f  $ homogeneous of degree 1, and $ C $ and $ p_0 $ homogeneous of degree 0. 
By  condition $ \subr  (\Psi)$ and Proposition~\ref{refcond} we find that $ f $ does not change sign on the leaves of $ \st $ and  the sign on the leaves do not change from $ - $ to $ + $ as $ t $ increases by Definition~\ref{sgndef}.

We shall compute the symbol modulo the error terms 
\begin{equation}\label{error}
R = \{ \w{C_2\xi, \xi} + C_1 \cdot \xi + C_0:\ C_j \in S^{-1} \} 
\end{equation}
These are sums of terms that are either  in $ S^1 $ vanishing  of second order  on $ \st $,  in  $ S^0 $  vanishing  on $ \st $  or  in $ S^{-1} $.
Observe that homogeneous vector fields that are tangent to $ \st $ maps $ R $ into itself.

We shall use the Weyl quantization, which has the property that symmetric operators have real symbols.
The Weyl quantization of symbols $a \in \Cal
S'(T^*\br^n)$ is defined by: 
\begin{equation} \label{weylop}
\sw{a^wu,v} = (2{\pi})^{-n}\iiint
\exp{(i\w{x-y,{\xi}})}a\!\left(\tfrac{x+y}{2},{\xi}\right)u(x)\ol
{v(y)}\,dxdyd{\xi}
\qquad u, v \in C_0^\infty
\end{equation} 
Observe that $\re a^w = (\re a)^w$ is the symmetric
part and $i\im a^w = (i\im a)^w$ the antisymmetric part of the
operator $a^w$. Also, if $a \in S^m_{1,0}$ then $a(x,D_x)
= b^w(x,D_x)$ modulo ${\Psi}^{m-2}_{1,0}$ where
\begin{equation}\label{kntoweyl}
b(x,\xi)  = a(x,\xi) + \frac{i}{2}\sum_{j}\partial_{x_j}\partial_{\xi_j} a(x,\xi) 
 \end{equation}
which gives the subprincipal symbol by~\cite[Theorem~18.5.10]{ho:yellow}. 
The equality~\eqref{kntoweyl} shows that  $ a \in R $ if and only if  $b \in R $.
I
 
Now by conjugating with $ e^{\phi} \in S^0$ having phase $ \phi(x,t,y;\tau, \eta)$ that is  real and homogeneous of degree 0, we may obtain that $ \im p_0 = 0 $ at $ \st $, i.e., $  \im p_0 \in R$. In fact, we obtain this by solving the equation 
 \begin{equation} 
\partial_t  \phi(x,t,y;\tau, \eta) + \re C(x,t,y;\tau, 0,\eta)\cdot \partial_x  \phi(x,t,y;\tau, \eta) = \im p_0(x,t,y;\tau, 0,\eta) 
  \end{equation}
but this may of course change  the values of $\im  C $ and $\re  p_0 $.

Next, we want to reduce to the case $ \im C = 0 $ on $ \st $. i.e, $ \im C \in R $, but that can in general not be done by conjugation. Instead we shall use symplectic changes of variables given microlocally by Fourier integral operators.
In the following, we shall for simplicity include the variable $ t $ in the  $ y $ variables, and the variable $ \tau $ in the $ \eta $ variables. The variables $ (y,\eta) $ will then parametrize the leaves of $ \st $.

Let $ \br^d \ni x \mapsto \chi(x, y,\eta)\in \br^d $, where $\chi \in C^\infty $,  homogeneous in $ \eta $ and $ | \partial_{x}\chi| \ne 0 $ and let
\begin{equation} 
F u(x,y) = (2\pi)^{-n}\iiiint e^{i (\w{\chi(x,y,\eta) - z, \xi} + \w{y-w, \eta})} u(z,w)\, dzdwd\xi d\eta \qquad u \in C^\infty_0
 \end{equation}
 which is an elliptic Fourier integral operator.
This correspond to the homogeneous symplectic transformation 
$$ 
(x,y; \partial_{x}\chi(x,y,\eta)  \cdot \xi , \eta + \partial_{y}\chi(x,y,\eta) \cdot \xi  ) 
 \mapsto ( \chi(x,y,\eta), y + \partial_\eta \chi \cdot \xi ;\xi,\eta)
$$
which preserves $ \st $, thus $ |\xi | $ and $ | \eta | $ are preserved modulo   multiplicative constants.
In fact, when $ \xi = 0 $ we get the mapping $ (x,y; 0,\eta)  \mapsto ( \chi(x,y,\eta), y  ; 0,\eta) $ which gives a homogeneous change of $ x $ variables.
We put  the amplitude of $ F $ equal to 1 to simplify the notation, actually the amplitude only has to equal to 1 near the wave front set of the kernel of $ F $. 

By applying the operator $ P $ we find
\begin{equation} 
PFu(x,y) =  (2\pi)^{-n}\iiiint e^{i (\w{\chi(x,y,\eta) - z, \xi} + \w{y-w, \eta})} Q(x,y;\xi, \eta) u(z,w)\, dzdwd\xi d\eta
 \end{equation}
so $ PFu = FQu $, where 
\begin{equation} \label{conjsymb}
Q(x,y;\xi, \eta) = \sum_{\alpha, \, \beta \in \bn} \partial_{{\xi}}^\alpha \partial_{\eta}^{\beta} P(x,y; \partial_x \chi \cdot \xi, \eta + \partial_y \chi \cdot \xi) {\Cal M}^{\chi}_{\alpha, \beta } (x,y;\xi, \eta) /\alpha ! \beta !
 \end{equation}
 with
 \begin{equation} 
 {\Cal M}^{\chi}_{\alpha, \beta } (x,y;\xi, \eta)  = D_z^\alpha D_w^\beta e ^{i\chi_2(x,y,z,w,\eta)\cdot \xi}\restr{\substack{z=x\\ w=y}}
  \end{equation}
where the phase function
\begin{multline} 
\chi_2(x,y,z,w,\eta) = \chi(z,w,\eta)  - \chi(x,y,\eta)  \\ + \w{x-z, \partial_x \chi (x,y,\eta)}
+ \w{y-w, \partial_y \chi (x,y,\eta)}
 \end{multline} 
vanishes of second order at $ z= x $ and $ w = y $, see \cite[Th.\ 18.1.17]{ho:yellow} or \cite[Chapter 7, Theorem 3.1]{T2}.
Thus there are no terms with $ | \alpha| + | \beta | = 1 $ in the expansion of~\eqref{conjsymb}. Since we only need the symbols modulo terms in $ R  $ it suffices to compute the first two terms of the expansion~\eqref{conjsymb}.

We obtain from~\eqref{normal0} by a straighforward computation that
\begin{equation} \label{qsymbol}
Q(x,y;\xi, \eta) \cong P(x,y; \partial_x \chi \cdot \xi, \eta + \partial_y \chi \cdot \xi) \\ + \frac{1}{2i} \sum_{jk} a_{jk}(x,y; 0, \eta) \partial_{x_{j}}\partial_{x_{k}}\chi (x,y,\eta)\cdot \xi
 \end{equation} 
modulo terms that are in $ R $. In fact, $ \partial_{{\xi}} a_{jk}\in S^{-1} $,  $  \partial_{{\xi}}^\alpha \partial_{\eta}^{\beta} p_2 = \Cal O (|\eta |^{-1}|\xi | ) $ if $ \beta \ne 0 $ and  $| \alpha +  \beta|  \ge 2 $, and $  \partial_{{\xi}}^\alpha \partial_{\eta}^{\beta} p _1 \in  S^{-1} $ if  $| \alpha +  \beta|  \ge 2 $. 

We shall first simplify by making a change of variables to diagonalize $A = \{a_{jk}\}_{jk} = \hess p_2 $. Since $ A $ is nondegenerate, we can use the spectral projections to obtain either that
$ A = \begin{pmatrix} A_+ & 0 \\ 0 & A_- \end{pmatrix} $ 
or $ A = A_\pm $, where $ \pm A_\pm  $ is positive definite. Then we can use $ (\pm A_\pm )^{-1/2} $ to construct real valued $ \chi (x,y, \eta)$ so that $ (\partial_x \chi\cdot \xi)^t A\/ \partial_x \chi \cdot \xi = |\xi'|^2 - |\xi''|^2 = L(\xi)$ has constant coefficients near $ w_0 \in \st $. Here  $ (\xi', \xi'') = \xi $,  and $   L(\xi)$ is the real quadratic form with the polarized bilinear form $   L(\xi_0,\xi)$. 
Of  course,  this may also change the values of $ p_j $ for $ j < 2 $.
But observe that the term  homogeneous of order 0 in  the expansion~\eqref{qsymbol} of $ Q $ is equal to
$$  
q_0(x,y;\xi,\eta) = p_0 (x,y;\partial_{x} \chi_k (x,y,\eta)\cdot \xi,\eta  + \partial_{y} \chi (x,y,\eta) \cdot \xi) 
$$  
modulo terms vanishing at $ \st $, so we find that $ \im q_0(x,y;0,\eta)  =  \im p_0 (x,y;0,\eta )  = 0$.

Thus, we may assume that $ A = L $ is constant in the following. Next we shall do another change of  symplectic variables to make $ \im C = 0$ at $ \st $. Assume that $ \chi = (\chi_1, \dots, \chi_d) $ where $ \chi_j $ is parallel to $ e_j $  for the standard base  $ e_1, \dots , e_d $ of $ \br^d $, so that we can write $ \chi = (\chi_1e_1, \dots, \chi_d e_d) $ with scalar $ \chi_j $. As before, we get the expansion~\eqref{qsymbol} with $ \set{a_{k}}_{jk} = A = L $.

In order to get the symmetric part of the operator we shall compute the Weyl symbol of $ Q $ which is given by $ \wt Q \cong Q  +  \frac{i}{2}\sum_{j}\partial_{x_j}\partial_{\xi_j} q_2$ modulo  $S^0 $ where 
$$ q_2(\xi) = L(\partial_x \chi \cdot \xi)=  | \partial_{x'} \chi\cdot \xi|^2 -  | \partial_{x''} \chi\cdot \xi|^2  = \sum_{k= 1}^{\ell}(\partial_{x_k}\chi \cdot  \xi)^2 -  \sum_{k= \ell + 1}^{m}(\partial_{x_k}\chi \cdot \xi)^2
$$ 
is the principal symbol of $ \wt Q $, so~\eqref{qsymbol} gives 
\begin{equation}\label{weylcorr}
\wt Q \cong q_2  + i \sum_{k= 1}^m \left (2L(\partial_x \chi_k, \partial_x \partial_{x_k} \chi_k)  - \frac{1}{2}  L(\partial_{x}) \chi_k  \right)  \xi_k +  p_1(x,y;\partial_x \chi \cdot \xi , \eta  + \partial_y \chi \cdot \xi)
\end{equation}
 modulo $ S^0 $.

Let $ q_1 $ be the terms homogeneous of order 1 in  the expansion~\eqref{weylcorr} of $\wt  Q $, then we have  $ q_1(x,y;0,\eta)  =  p_1 (x,y;0,\eta ) $ on $ \st $. We find from~\eqref{weylcorr} that the $ \xi_k $ derivative of $ q_1 $ at $ \st $ is equal to
\begin{multline} 
2i L(\partial_x \chi_k (x,y,\eta), \partial_x \partial_{k} \chi_k (x,y,\eta))  - \frac{i}{2}  L(\partial_{x}) \chi_k (x,y,\eta) \\
+ \partial_\xi p_1  (x,y;0,\eta)\cdot \partial_{x} \chi_k  (x,y,\eta) + \partial_\eta p_1  (x,y;0,\eta) \cdot \partial_{y} \chi_k  (x,y,\eta) 
 \end{multline} 
 Here $ \im \partial_\eta p_1  =  \partial_\eta f  $ and $ \im \partial_\xi p_1 =  \im C $ on $ \st $.
 Observe that the terms in  $ q_1 $ vanishing of second order  at $ \st $ are in $ R $.

 By taking the imaginary part and ignoring the term  $ \partial_\eta f \cdot \partial_{y} \chi_k  $ for now, we find $ \partial_{{\xi}} \im q_1  (x,y;0,\eta)  = 0$ if for any $ k $ we have
\begin{multline} \label{changesys}
L(\partial_{x}) \chi_k (x,y,\eta)  - 4L(\partial_x \chi_k (x,y,\eta), \partial_x \partial_{k} \chi_k (x,y,\eta)) \\ - 2 \im  C (x,y;\eta)  \cdot \partial_{x} \chi_k (x,y,\eta)  = 0 
\end{multline}
which is a  quasilinear second order system of  PDE on the leaves of $ \st $ having real coefficients.
 Observe that this system is completely decoupled with one equation for each $ \chi_k $. 
As before, we we find that $ \im q_0(x,y;0,\eta)  =  \im p_0 (x,y;0,\eta )  = 0$. 
In order to get a suitable change of coordinates we shall solve system~\eqref{changesys}  in a neighborhood of $ w_0 = (x_0,y_0,\eta_0)$ with initial data  $ \chi = 0 $  and  $|\partial_{x} \chi| \ne 0 $ at $ w_0  $.

\begin{prop}\label{approp}
	For any $ v_k \in \br^d $, $ 1 \le k \le d $, the equation \eqref{changesys} 
	with data  $ \chi_k = 0$ and $ \partial_{x} \chi_k = v_k $ at $ w_0 =   (x_0,y_0,\eta_0) $  has a solution $ \chi_k (x,y,\eta) \in C^\infty $ in a neighborhood of ~$w_0$.
\end{prop}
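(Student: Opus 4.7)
The system \eqref{changesys} is completely decoupled: each equation involves only $\chi_k$ and its $x$-derivatives, so it suffices to construct a single $\chi = \chi_k$ with the prescribed 1-jet at $x_0$, for fixed $k$ and with $v := v_k$. Treating $(y,\eta)$ as smooth parameters near $(y_0,\eta_0)$, the task is to produce a local $C^\infty$ solution of
\begin{equation*}
L(\partial_x)\chi - 4L(\partial_x\chi,\partial_x\partial_{x_k}\chi) - 2\im C(x,y;\eta)\cdot\partial_x\chi = 0
\end{equation*}
with $\chi(x_0)=0$ and $\partial_x\chi(x_0)=v$. This is a quasilinear second-order PDE in $x$ with smooth coefficients, and the pointwise initial data leaves ample freedom for extension to full Cauchy data on a hypersurface.

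The plan is to reduce to a non-characteristic Cauchy problem and invoke classical local existence. First I would compute the principal symbol of the linearization at any function whose gradient at $x_0$ equals $v$: it is the real quadratic form $Q(\xi) = L(\xi) - 4\xi_k\langle Lv,\xi\rangle$, which cannot vanish identically in $\xi$ because $L$ is nondegenerate. I would then pick $\nu$ with $Q(\nu)\ne 0$, set $S = \{(x-x_0)\cdot\nu=0\}$, and extend the prescribed 1-jet to smooth Cauchy data $\chi\restr{S}$ and $\partial_\nu\chi\restr{S}$ on $S$ depending smoothly on $(y,\eta)$. When $Q$ is indefinite in the $\nu$-direction the Cauchy problem is hyperbolic, and the classical local existence theorem for quasilinear second-order hyperbolic Cauchy problems directly yields a $C^\infty$ solution with smooth parameter dependence.

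The hard part will be the elliptic or degenerate regime, arising in particular when $L$ is definite and $v$ is small, where the Cauchy problem is ill-posed in $C^\infty$. My plan there is a two-step argument. First, build a formal power series solution at $x_0$ order by order: at each order, a single scalar linear equation on the top-order Taylor coefficients of $\chi$ must be solved, which is possible thanks to the nondegeneracy of $L$; Borel's theorem then realizes the formal series as a $C^\infty$ germ $\chi_0$ satisfying the PDE modulo a residual $R$ flat at $x_0$. Second, I would construct the correction $u = \chi - \chi_0$ as a small solution of a linear elliptic equation with right-hand side $R$ and vanishing boundary data on a small ball around $x_0$; the smoothness of $u$ and the fact that its 1-jet at $x_0$ vanishes (preserving the prescribed 1-jet of $\chi$) will follow from elliptic regularity together with a contraction argument absorbing the small quasilinear corrections.
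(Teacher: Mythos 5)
Your dichotomy ``hyperbolic vs.\ elliptic'' is incomplete and this is a genuine gap. The Hessian $L$ in~\eqref{pcond} is only assumed nondegenerate, so it may have signature $(p,q)$ with both $p,q \ge 2$. In that case the principal symbol $Q(\xi) = L(\xi) - 4\xi_k\langle Lv,\xi\rangle$ is \emph{ultrahyperbolic} near $v=0$: it is indefinite, so some direction $\nu$ is non-characteristic, but $\tau \mapsto Q(\xi + \tau\nu)$ does not have only real roots for all $\xi$, so the equation is not hyperbolic with respect to $\nu$ (nor with respect to any other direction). Your claim ``when $Q$ is indefinite in the $\nu$-direction the Cauchy problem is hyperbolic'' is only true for Lorentzian signature $(1,n-1)$; for ultrahyperbolic $L$ neither the non-characteristic Cauchy problem (your hyperbolic branch) nor a Dirichlet boundary value problem (your elliptic branch) is well-posed in $C^\infty$, so both branches fail. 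In addition, even in the genuinely elliptic case the second step of your plan is unjustified: solving a small Dirichlet problem with a right-hand side that is flat at $x_0$ does not produce a correction $u$ whose $1$-jet at $x_0$ vanishes, and patching that up without disturbing the boundary condition or re-introducing the quasilinear terms is exactly what needs proof.

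The paper avoids the type question altogether. In Appendix~\ref{app}, Proposition~\ref{normalform} shows that after dividing by a single derivative $b(D) \in S^{-1}$ and microlocalizing to a small cone $\Gamma_{\xi_0,\varepsilon}$, the second-order quasilinear operator factors (modulo lower order) as $a\,Q$ where $Q = D_t + \sum A_j D_{x_j} + A_0$ is a \emph{first-order} pseudodifferential evolution operator with real principal symbol; Proposition~\ref{normeq} solves the microlocal initial value problem for $Q$ by conjugating it to $D_t$ with Fourier integral operators. Because a first-order evolution equation of real principal type is always locally solvable in $t$, this works uniformly for definite, Lorentzian, and ultrahyperbolic $L$. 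The pieces are summed up via a partition of unity, the $\Psi^{-\infty}$ error operators are made small by shrinking the support (Lemma~\ref{estlem}), and the nonlinear equation is then solved by Picard iteration on the linearization with uniform $H^m$ estimates (Proposition~\ref{linest}) and an Arzel\`a--Ascoli compactness argument. Note also that the paper explicitly remarks that the solution is \emph{not} unique; this is another sign that the construction is not a well-posed Cauchy problem, which it would be on your hyperbolic branch.
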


Proposition~\ref{approp} follows from  Theorem~\ref{appthm} 
in Appendix~\ref{app}, and shows that this initial value problem   has a $ C^\infty $ solution $\chi(x,y,\eta)  = (\chi_1(x,y,\eta), \chi_2(x,y,\eta) \cdots   )$ near $ w_0$ such that $ \chi(x_0,y_0,\eta_0) = 0  $ and $  \partial_x\chi(x_0,y_0,\eta_0) =  \id  $. By restricting $ \chi(x,y,\eta)  $ to the set $ |\eta| = 1 $ and extending it by homogeneity in $ \eta $, we obtain a homogeneous change of coordinates so that $ \partial_{{\xi}} \im p_1 = \im C = 0$  and  $ \im p_0 = 0 $ at $ \st $ near $ w_0 $.  Thus we have proved the following result.
 
\begin{prop}\label{prepprop}
Assume that $ P \in \Psi_{cl}^m $(X) satisfies the conditions in Theorem \ref{mainthm} microlocally near $ w \in \st $. 
By	conjugation with elliptic Fourier integral operators and  multiplication with symmetric elliptic pseudodifferential operators, we may assume that $ X = T^*\br^n $, the coordinates are $ (x,t,y; \xi, \tau. \eta) $ so that $  \st = \set{\xi = 0} $ and $ P $ is on the form
\begin{equation}\label{preppropform}
	P \cong D_t + A^w +  i f_1^w 
\end{equation} 	
 microlocally near $ w \in \st $ modulo terms with symbols in $ R $ given by~\eqref{error}.
Here $ A = A_2 + A_1 + A_0 \in S^2_{cl} $, $ A_j \in S^j $, is real valued, 
with principal symbol $ A_2 $ vanishing of second order at $ \st $ with $ \hess A_2$ nondegenerate on the normal bundle $ N\st $, and  $ A_1 = 0 $  at $ \st $. Also, $ f _1 = f + f_0$  is  real and homogeneous of degree $ 1 $ where $ f $ does not depend on $ \xi $ and $f_0 =   \partial_{\eta}f \cdot r \cdot \xi $ with  $ r \in S^0 $.  By condition $ \subr(\Psi) $ we have that $ f  = f_1\restr \st$
does not change sign on the leaves of $ \st $ and the sign of $ f$ on the leaves do not change from $ - $ to $ + $ as $ t $ increases. Here the sign of $ f $ is given by Definition~\ref{sgndef}.
\end{prop}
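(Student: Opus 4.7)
The plan is to assemble the reductions developed through Section~\ref{prep} into a single chain. After multiplying on the left by $\w{D}^{2-m}$ and then by the elliptic scalar factor $a^w$ supplied by Definition~\ref{refdef}, the operator has order~$2$ and its refined principal symbol satisfies condition $(\Psi)$ at the limits on $\st$; these manipulations preserve microlocal solvability, and since the factor has real principal symbol constant on the leaves, they preserve real subprincipal type by Definition~\ref{subprinctype}. I would then choose symplectic homogeneous coordinates as in the proof of Proposition~\ref{refcond}, so that $\st=\set{\xi=0}$ and $\tau=\re p_s\restr{\st}$. Passing to the Weyl quantization with the help of~\eqref{kntoweyl} puts $P$ into the normal form \eqref{normal0}--\eqref{normalsub} modulo symbols in $R$. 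A conjugation by the elliptic $e^{i\phi^w}$, with $\phi$ a real homogeneous phase of degree~$0$ chosen to solve the transport equation along the subprincipal bicharacteristics, arranges $\im p_0 \equiv 0$ on $\st$.

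The heart of the proof is two successive Fourier integral changes of variables of the form~\eqref{conjsymb}. The first uses a $\chi$ built from $(\pm A_\pm)^{-1/2}$ to diagonalize the principal Hessian to $L(\xi)=|\xi'|^2-|\xi''|^2$ with constant coefficients, which is possible because $\hess p_2$ is nondegenerate on $\st$. The second is the FIO with $\chi=(\chi_1 e_1,\dots,\chi_d e_d)$, where each scalar $\chi_k(x,y,\eta)$ solves the decoupled real quasilinear second-order system \eqref{changesys} with Cauchy data $\chi_k=0$, $\partial_x\chi_k=e_k$ at $w_0$. The existence of a smooth local solution, which is the main analytic obstacle, is exactly Proposition~\ref{approp}, proved through Theorem~\ref{appthm} in the appendix. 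Restricting to $|\eta|=1$ and extending homogeneously yields a genuine homogeneous symplectic transformation preserving $\st$, so the total operator obtained from $P$ by these FIO conjugations is microlocally equivalent to $P$ modulo a factor that preserves all the real subprincipal structure.

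In collecting the output of the two changes of variables I would compute the Weyl symbol using~\eqref{weylcorr}. The term $\partial_\eta f\cdot \partial_y\chi_k$ that was set aside in deriving \eqref{changesys} is precisely of the form $\partial_\eta f\cdot r\cdot\xi$ with $r\in S^0$, and contributes $f_0$ to the antisymmetric part; every other purely imaginary term of degree $\ge 0$ at $\st$ has been killed by the construction. The real symbols assemble into $D_t$ (from $\tau$), a real degree-$2$ piece $A_2$ with nondegenerate Hessian on $N\st$, a degree-$1$ piece $A_1$ vanishing on $\st$ (which houses $\re C\cdot\xi$ together with the real Weyl correction in~\eqref{weylcorr}), and a degree-$0$ piece $A_0$; the antisymmetric part becomes $if_1^w$ with $f_1=f+f_0$, both homogeneous of degree~$1$ and $f$ independent of $\xi$. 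Finally, Proposition~\ref{refcond} translates the standing hypothesis $\subr(\Psi)$ into the stated sign conditions on $f$ on the leaves and along subprincipal bicharacteristics, completing the proof.
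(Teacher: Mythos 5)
Your proposal follows the paper's proof essentially step by step, and the skeleton is correct: reduce to order two, choose symplectic coordinates with $\st=\set{\xi=0}$ and $\tau=\re p_s\restr{\st}$, pass to the Weyl normal form modulo $R$, conjugate to eliminate $\im p_0$ on $\st$, perform two successive Fourier integral changes of variables (first to reduce the Hessian to the constant form $L(\xi)$, then one built from solutions $\chi_k$ of the decoupled quasilinear system~\eqref{changesys} supplied by Proposition~\ref{approp}), identify the remaining $\partial_\eta f\cdot\partial_y\chi_k$ contribution as the $f_0$ term, and use Proposition~\ref{refcond} for the sign conditions on $f$. The one genuine error occurs in the step you describe as conjugation by "the elliptic $e^{i\phi^w}$" with $\phi$ real of degree zero.

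That operation moves the \emph{real} part of the zeroth-order symbol, not the imaginary part: to leading order $e^{-i\phi}D_t e^{i\phi}=D_t+\partial_t\phi$, which is a real shift, so the transport equation you then solve would modify $\re p_0$ while leaving $\im p_0$ untouched. What is needed, and what the paper uses, is conjugation by the non-unitary zeroth-order operator $e^{\phi}$ with $\phi$ real; there one has $e^{-\phi}D_t e^{\phi}=D_t-i\partial_t\phi$, so the transport equation $\partial_t\phi+\re C\cdot\partial_x\phi=\im p_0$ on $\st$ does kill $\im p_0$. Since you correctly identify the real degree-zero $\phi$ and the transport equation along the subprincipal bicharacteristics, this reads as a slip rather than a conceptual misunderstanding, but as written the step would fail. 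Replace $e^{i\phi^w}$ by $e^{\phi^w}$ and the argument matches the paper.
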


Observe that $ P $ in \eqref{preppropform} is an evolution operator and it is of subprincipal type.

\begin{rem}\label{preprem}
	The normal form of the $ L^2 $ adjoint $ P^* $ is~\eqref{preppropform} with $ f_1 $  replaced by $ -f_1 $. Then $ P^* $ satisfies condition $ \subr(\ol \Psi)$ which gives the opposite  conditions on sign changes as $t $ increases given by~\eqref{pcond0}.
\end{rem}

\section{The microlocal estimate}\label{reduct}

Next, we shall microlocalize and reduce the proof of
Theorem~\ref{mainthm} to the semiclassical multiplier estimate of
Proposition~\ref{mainprop} for a microlocal normal form of the adjoint
operator.  We shall consider operators given by Proposition~\ref{prepprop}
\begin{equation}\label{pdef1}
 P^* \cong  D_t +A^w +  i f_1^w(x,t,y;D_t, D_y)
\end{equation}
modulo terms with symbols in $ R $ given by~\eqref{error}.
Here $ f _1 = f + f_0$  is  real and homogeneous of degree $ 1 $ where $f_0 =   \partial_{\eta}f\cdot  r \cdot \xi $ with  $ r \in S^0 $, $ f $ does not depend on $ \xi $ and 
\begin{equation}\label{adef}
A = \sum_{jk} a_{jk}\xi_j\xi_k + \sum_j a_j\xi_j + a_0
\end{equation}
where $ a_{jk} $ and $ a_j  \in S^0_{1,0} $ are real and homogeneous of degree 0, and $ \{ a_{jk}\}_{jk} $ is symmetric  and nondegenerate.

In the following, we shall
assume that ~~$P^* $ satisfies  condition~$ \subr(\ol \Psi) $, 
so that the sign of $ f(t,x,y;\tau,\eta) $ is constant in $ x $ and
\begin{equation} \label{pcond0}
f(t,x_0,y_0;\tau_0,\eta_0) > 0  \quad\text{and $s > t$} \implies  f(s,x, y_0;\tau_0,\eta_0 ) \ge 0 \ \ \forall\, x
\end{equation}
so that the sign on the leaves cannot change from $ + $ to $ - $ as $ t $ increases, see Definition~\ref{sgndef} for the definition of the sign.
Observe that if ${\chi} \ge 0$ then ${\chi}f$ also satisfies  condition~ $ \subr(\ol\Psi) $,  so this condition can be microlocalized.

In order to prove Theorem~\ref{mainthm} we shall make a
second microlocalization using the specialized symbol classes of
the Weyl calculus. 
We shall therefore recall the definitions of the Weyl calculus: let $g_{w}$ be a
Riemannean metric on~ $T^*\br^n$, $w = (x,{\xi})$, then we say that
$g$ is slowly varying if there exists $c>0$ so that $g_{w_0}(w-w_0) <
c $ implies $g_{w} \cong g_{w_0}$, i.e., $1/C \le g_{w}/g_{w_0}
\le C.  $ Let ${\sigma}$ be the standard symplectic form on $T^*\br^n$, and assume
$ g^{\sigma}(w) \ge g(w)$ where $ g^{\sigma}$ is the dual metric of $w \mapsto
g({\sigma}(w))$.  We say that $g$ is ${\sigma}$~ temperate if it
is slowly varying and there exists $ C > 0 $ and $ N \in \bn $ so that
\begin{equation*}
g_{w} \le C g_{w_0}( 1 +
g^{\sigma}_{w}(w-w_0))^N  \qquad
\forall\ w,\ w_0 \in T^*\br^n
\end{equation*}
 Actually, ${\sigma}$~ temperate metrics with $ g \le g^\sigma $ are called H\" ormander metrics.
A positive real valued function $m(w)$ on $T^*\br^n$ is 
$g$~continuous if there exists $c>0$ so that
$g_{w_0}(w-w_0) < c $ implies $m(w) \cong
m(w_0)$.
We say that $m$ is 
${\sigma}$, $g$~ temperate if it is $g$ ~continuous and there exists $ C > 0 $ and $ N \in \bn $ so that
\begin{equation*}
m(w) \le C m(w_0)( 1 +
g^{\sigma}_{w}(w-w_0))^N  \qquad \forall\ w,\ w_0 \in T^*\br^n
\end{equation*}
If $m$ is ${\sigma}$, $g$~ temperate, 
then $m$ is a weight for ~$g$ and we can define the symbol classes:
$a \in S(m,g)$ if $a \in C^\infty(T^*\br^n)$ and
\begin{equation}\label{symbolest} 
|a|^g_j(w) = \sup_{T_i\ne 0}
\frac{|a^{(j)}(w,T_1,\dots,T_j)|}{\prod_1^j
	g_{w}(T_i)^{1/2}}\le C_j
m(w)\qquad \forall \ w \in T^*\br^n \qquad\text{for $j \ge 0$}
\end{equation}
which gives the seminorms of $S(m,g)$. If $a \in
S(m,g)$ then we say that the corresponding Weyl operator $a^w \in \op
S(m,g)$.  
For more on the Weyl calculus, see \cite[Section
18.5]{ho:yellow}.

\begin{defn}\label{s+def}
	Let $m$ be a weight for the metric $g$.  We say that $a \in S^+(m,g)$
	if $a \in C^\infty(T^*\br^n)$ and $|a|_j^g \le C_jm$ for $j \ge
	1$.
\end{defn}

Observe that by the mean value theorem we find that 
\begin{equation}\label{asymbol}
|a(w) - a(w_0)| \le C_1 \sup_{{\theta} \in [0,1]}
g_{w_{\theta}}(w-w_0)^{1/2}m(w_{\theta}) \\ \le C' m(w_0)(1 +
g_{w_0}^{{\sigma}}(w-w_0))^{(3N+1)/{2}}  
\end{equation}
where $w_{\theta} = {\theta}w + (1-{\theta})w_0$, since $ w_\theta - w_0 = \theta(w - w_0) $ for some $  0  < \theta < 1 $ and  
\[ g_{w_{\theta}}(w-w_0) \lesssim g^\sigma_{w_{\theta}}(w-w_0) \lesssim  
g^\sigma_{w_0}(w-w_0) (1 + g^\sigma_{w_0}(w-w_0) )^N\]
Thus $m + |a|$ is a weight for ~$g$ and
 $a \in S(m + |a|, g)$, so the operator ~$a^w$ is well-defined.

\begin{lem}\label{calcrem}
	Assume that $m_j$ is a weight for $g_j = h_j g^\sh \le g^\sh =
	(g^\sh)^{\sigma} \le g_j^\sigma \le h_j^{-1}g^\sh$ and $a_j \in
	S^+(m_j, g_j)$, $j=1$, $2$.  Let $g = g_1 + g_2$ and $h^2 = \sup
	g_1/g_2^{\sigma} = \sup g_2/g_1^{\sigma} = h_1h_2$, then
	\begin{equation}\label{2.3}
	a_1^wa_2^w -(a_1a_2)^w \in \op S(m_1m_2h,g)
	\end{equation}
	with the usual expansion of~\eqref{2.3}  in terms in $
	S(m_1m_2h^k,g)$, $k\ge 1$.
	We also have that
	\begin{equation}\label{2.3a}
	\re a_1^wa_2^w -(a_1a_2)^w \in \op S(m_1m_2h^2,g)
	\end{equation}
	if $a_j\in C^\infty$ is real and $| a_j|^{g_j}_k \le C_k 
	m_j$, $k \ge 2$, for  $j=1$, $2$. In that case we have
	$a_j \in S(m_j + |a_j| + |a_j|^{g_j}_1, g_j)$.
\end{lem}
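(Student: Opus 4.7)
}
The plan is to apply the standard Weyl composition formula and track where the $S^+$ hypothesis (as opposed to $S$) actually bites. Recall that the Weyl composition has the integral representation
\[
(a_1 \# a_2)(w) = e^{i\sigma(D_w,D_{w'})/2}\, a_1(w) a_2(w') \big|_{w=w'},
\]
with the formal Moyal expansion
\[
a_1 \# a_2 \sim \sum_{k=0}^{\infty} \frac{1}{k!}\bigl(\tfrac{i}{2}\sigma(D_w,D_{w'})\bigr)^k a_1(w) a_2(w')\big|_{w=w'}.
\]
The $k{=}0$ term is precisely $a_1 a_2$, so for part (1) the relevant object $a_1 \# a_2 - a_1 a_2$ is the tail $k\ge 1$. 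This is exactly where the distinction between $S$ and $S^+$ disappears: only derivatives of order $\ge 1$ of each $a_j$ enter, and by the $S^+$ hypothesis all such derivatives are bounded, in the $g_j$-norm, by $m_j$. Each of the $k$ contractions with $\sigma$ costs a factor $\sup g_1/g_2^\sigma$ or $\sup g_2/g_1^\sigma$, which by assumption both equal $h^2 = h_1 h_2$, so a symbolic estimate in the metric $g = g_1 + g_2$ places the $k$-th term in $S(m_1 m_2 h^k,g)$.

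To turn this formal expansion into an actual asymptotic expansion I would quote the standard Weyl calculus results of H\"ormander (Section~18.5 of \cite{ho:yellow}, in particular the composition theorem and its Taylor-remainder version). Those results are stated for $S(m,g)$ symbols, but the only place where the $S$ hypothesis on $a_j$ itself is used is to absorb the $k{=}0$ term into the symbol class; since we have subtracted $a_1 a_2$, the proof goes through verbatim under the $S^+$ hypothesis. This gives \eqref{2.3} together with the claimed asymptotic expansion in $S(m_1 m_2 h^k, g)$ for $k\ge 1$.

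For part (2) the extra cancellation comes from the antisymmetry of $\sigma$. Since $a_1$, $a_2$ are real and the composition is a formal series in $(i/2)^k \sigma^k$, swapping the arguments multiplies the $k$-th term by $(-1)^k$ and complex conjugates the scalar prefactor, yielding the identity $a_2 \# a_1 = \overline{a_1 \# a_2}$. Hence
\[
\re(a_1 \# a_2) = \tfrac{1}{2}(a_1 \# a_2 + a_2 \# a_1),
\]
and this kills every odd-$k$ contribution in the Moyal expansion. The leading surviving correction is therefore the $k{=}2$ term, which is a bilinear combination of second derivatives of $a_1$ and $a_2$; by the hypothesis $|a_j|^{g_j}_k \le C_k m_j$ for $k\ge 2$ together with the gain $h^{2}$ obtained from two $\sigma$-contractions, this lies in $S(m_1 m_2 h^2, g)$. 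The remaining statement that $a_j \in S(m_j + |a_j| + |a_j|^{g_j}_1, g_j)$ is purely bookkeeping: the first derivative of $a_j$ enters only through $|a_j|^{g_j}_1$, the zero-order piece through $|a_j|$, and everything higher is controlled by $m_j$, so $a_j^w$ is well defined and all the symbolic calculus applies.

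The main obstacle is controlling the Taylor remainder in the Moyal expansion when only the derivatives of order $\ge 2$ (not order $\ge 1$) of $a_j$ satisfy the symbol bounds by $m_j$. The cleanest way around this, which I would follow, is to write the remainder as an oscillatory integral, integrate by parts so that only second and higher derivatives of $a_1$ and $a_2$ appear in the integrand (this is possible because the critical point of the phase is non-degenerate and each integration by parts brings out one derivative of each factor and one symplectic contraction), and then estimate the resulting integrand using the metric inequality $g_j \le g_j^\sigma$ and the temperance of $g_j$. Once the integrand is under control, standard seminorm estimates in the $g$-calculus complete the argument; the gain of $h^2$ per pair of $\sigma$-contractions is exactly what the chosen normalization $h^2 = \sup g_1/g_2^\sigma$ guarantees.
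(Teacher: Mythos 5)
Your approach is essentially the same as the paper's: both subtract the leading Moyal term(s), use the integral representation $E(z)=(e^z-1)/z=\int_0^1 e^{\theta z}\,d\theta$ (the paper's version of your ``integrate by parts'' step) to factor out one or two $\sigma$-contractions so that the remaining integrand lies in a genuine $S$-class rather than $S^+$, and then invoke the standard composition theorem (\cite[Theorem~18.5.5]{ho:yellow}) uniformly in $\theta\in[0,1]$; for the real-part statement both use that reality of $a_1,a_2$ gives $a_2\#a_1=\overline{a_1\#a_2}$, killing the odd Moyal terms. One small slip: the gain per $\sigma$-contraction is $h=(h_1h_2)^{1/2}$, not $h^2=\sup g_1/g_2^\sigma$ — you write the latter as the cost per contraction but correctly conclude the $k$-th term is in $S(m_1m_2h^k,g)$.
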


\begin{proof}
	As shown after Definition~\ref{s+def} we have that $m_j + |a_j|$ is a
	weight for ~$g_j$ and $a_j \in S(m_j + |a_j|, g_j)$, $j = 1$, 2.
	Thus 
	$a_1^w a_2^w \in \op S((m_1 + |a_1|)(m_2 + |a_2|),g)$
	is given by  Proposition~18.5.5 in~\cite{ho:yellow}.
	We find that $a_1^wa_2^w -(a_1a_2)^w = a^w$ with
	\begin{equation}\label{nfcalc}
	a(w) = E(\tfrac{i}{2}{\sigma}(D_{w_1},D_{w_2}))
	\tfrac{i}{2}{\sigma}(D_{w_1},D_{w_2})a_1(w_1)a_2(w_2)\restr{w_1= w_2=w}  
	\end{equation}
	where $E(z) = (e^z -1)/z = \int_0^1 e^{{\theta}z}\,d{\theta}$. 
	Here
	${\sigma}(D_{w_1},D_{w_2})a_1(w_1)a_2(w_2) \in S (M H, G)$ where
	$
	M(w_1,w_2) = m_1(w_1)m_2(w_2)
	$,
	$G_{w_1,w_2}(z_1,z_2) = g_{1,w_1}(z_1) + g_{2,w_2}(z_2)$ and $H^2(w_1,w_2) = h_1(w_1)h_2(w_2) =
	\sup\, G_{w_1,w_2}/G_{w_1,w_2}^{\sigma}$ so that $H(w,w) = h(w)$. The
	proof of Theorem~18.5.5 in ~\cite{ho:yellow} works when
	${\sigma}(D_{w_1},D_{w_2})$ is replaced by
	${\theta}{\sigma}(D_{w_1},D_{w_2})$, uniformly in $0 \le  {\theta} \le
	1$ (when $ \theta = 0 $ we just get the Poisson parenthesis $ \tfrac{i}{2} \{a_1, a_2  \} $). By integrating over ${\theta} \in [0,1]$ we obtain that $a(w)$ has
	an asymptotic expansion in $S(m_1m_2 h^k,g)$, which proves ~\eqref{2.3}. 

	If  $| a_j|^{g_j}_k \le C_k 
	m_j$, $k \ge 2$, then we have by Taylor's formula as in
	~\eqref{asymbol} that
	\begin{multline*} 
	|a_j(w) - a_j(w_0)| \le g_{w_0}(w-w_0)^{1/2}|a_j|_1^g(w_0) + C_1 \sup_{{\theta} \in [0,1]}
	g_{w_{\theta}}(w-w_0)m(w_{\theta}) \\ \le C' (|a_j|_1^g(w_0) + m(w_0))(1 +
	g_{w_0}^{{\sigma}}(w-w_0))^{2N+ 1}  
	\end{multline*}
	\begin{multline*}
	|\w{T,\partial_wa_j(w)} - \w{T,\partial_wa_j(w_0)}| \le C_2
	\sup_{{\theta} \in [0,1]}
	g_{w_{\theta}}(T)^{1/2}g_{w_{\theta}}(w-w_0)^{1/2}m(w_{\theta}) \\
	\le C_3g_{w_0}(T)^{1/2} m(w_0)(1 + 
	g_{w_0}^{{\sigma}}(w-w_0))^{(4N+1)/{2}} 
	\end{multline*}
	thus $m_j + |a_j| + |a_j|^{g_j}_1$ is a weight for $g_j$ and clearly
	$a_j \in S(m_j + |a_j| + |a_j|^{g_j}_1, g_j)$. 

	Now  if $a_1$ and $a_2$ are real, then $\re a_1^wa_2^w -(a_1a_2)^w = a^w$ with
	\begin{equation*}
	a(w) = \re E(\tfrac{i}{2}{\sigma}(D_{w_1},D_{w_2}))
	(\tfrac{i}{2}{\sigma}(D_{w_1},D_{w_2}))^2a_1(w_1)a_2(w_2)/2\restr{w_1= w_2=w}  
	\end{equation*}
	where ${\sigma}(D_{w_1},D_{w_2})^2a_1(w_1)a_2(w_2) \in S(MH^2,G)$,
	with the same~$E$, $M$, ~$G$ and $H$ as before.
	The proof of ~\eqref{2.3a} then follows in the same way as the proof
	of ~\eqref{2.3}. 
\end{proof}

\begin{rem}\label{vvcalc}
	The conclusions of Lemma~\ref{calcrem} also hold if $a_1
	$ has values in $\Cal L(B_1, B_2)$ and $a_2$ in~ $B_1$ 
	where $B_1$ and $B_2$ are Banach spaces, then $a_1^wa_2^w$ has values
	in $B_2$.
\end{rem}

For example, if $\set{a_j}_j \in S(m_1,g_1)$ with values in $\ell^2$,
and $b_j \in S(m_2,g_2)$ uniformly in ~$j$, then $\set{a_j^wb_j^w}_j \in
\op (m_1m_2,g)$ with values in ~$\ell^2$. 

\begin{rem}\label{nfcalcrem}
For pseudodifferential operators with the Kohn-Nirenberg quantization, we have by Theorem 4.5 and (4.13)  in \cite{ho:weyl} that $ a_1(x,D) a_2(x,D) = a(x,D) $ with
\begin{equation}\label{nfcalc0}
	a(x,\xi)   = e^{i\w{D_\xi, D_y}}  a_1(x,\xi) a_1(y,\eta) \restr{\substack{y=x \\ \eta = \xi}}
\end{equation}
As in the proof of  Lemma~\ref{calcrem} we find that  $ a_1(x,D) a_2(x,D) - a(x,D) = r(x,D)$ with
\begin{equation}\label{nfremain}
r(x,\xi)   = E({i\w{D_\xi, D_y}}) \partial_{{\xi}} a_1(x,\xi) D_ya_1(y,\eta) \restr{\substack{y=x \\ \eta = \xi}}
\end{equation}
where $E(z) = (e^z -1)/z = \int_0^1 e^{{\theta}z}\,d{\theta}$.
\end{rem}

To prove Theorem~\ref{mainthm} we shall  prove an estimate for the microlocal normal form of the adjoint operator. 
Since the proof is rather long, we will take it in two steps, and the first is microlocal estimate.
For that, we shall use the symbol classes $ S^m_{1,0} $ and $ S^m_{1/2,1/2} $.

\begin{prop}\label{maincor}
	Assume that $P$ is as in Proposition~\ref{prepprop} microlocally near $ w_0 \in \st $ and  that  $ t = t_0 $ and $ x = x_0 $ at $ w_0 $.
	Then there exist $T_0 > 0$ and a real valued symbol
	$b_T \in S^1_{1/2,1/2}$ with homogeneous gradient $\nabla b_T = (\partial_{z} b_T, |{{\zeta}}| \partial_{\zeta} b_T)
	\in  S^{1}_{1/2,1/2}$ uniformly for $0 < T \le
	T_0$, $ (z,\zeta) \in T^*\br^n$, such that for every $ N > 0 $ there exists $ C_N > 0 $ so that
	\begin{equation}\label{corest}
	\mn{b_T^wu}^2_{(-1/2)}+ \mn{D_x u}^2 + \mn u^2\le  C_N\left(T \im\sw{P^*u,b_T^wu} + 
	\mn{u}_{(-N)}^2 \right) + \mn{\psi^w u}^2
	\end{equation} 
	for $ u \in C_0^\infty$ having support where $|t - t_0| \le T$ and  $|x -x_0| \le T$.
	Here $ \psi \in S^{2} $, $ w_0 \notin \wf \psi ^w$ and
	the constants $T_0$, $C_N$ and the
	seminorms of\/ $b_T$ only depend on the seminorms of'
	the symbols in $ P $.
\end{prop}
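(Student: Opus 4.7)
The plan is to derive~\eqref{corest} from the (yet-unstated) semiclassical multiplier estimate of Proposition~\ref{mainprop} by dyadic decomposition and rescaling. By Proposition~\ref{prepprop}, after conjugation and multiplication by elliptic operators we may take
\[
	P^* \cong D_t + A^w + i f_1^w
\]
microlocally near $w_0$, modulo symbols in the error class $R$ of~\eqref{error}. Errors in $R$ are harmless in~\eqref{corest}: terms vanishing of second order on $\st$ contribute to $\mn{D_x u}^2$, while terms in $S^{-1}$ contribute to $\mn{u}^2_{(-N)}$. The condition $\subr(\ol\Psi)$ provides the one-sided sign-change information~\eqref{pcond0} for $f = f_1\restr\st$ along the subprincipal direction $\partial_t$.

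Next I would microlocalize by a nonnegative cutoff $\phi$ supported in a small conic neighborhood of $w_0$, absorbing the complementary region into $\mn{\psi^w u}^2$ for a suitable $\psi \in S^2$ with $w_0 \notin \wf \psi^w$. Then I dyadically decompose in $\Lambda = (\tau^2 + |\eta|^2)^{1/2}$: write $u = \sum_\nu u_\nu$ with $u_\nu$ frequency-localized to $\Lambda \sim 2^\nu$. On each shell I introduce the semiclassical parameter $h = 2^{-\nu}$ and rescale anisotropically so that $\xi$ is of size $h^{1/2}$ and $(\tau,\eta)$ of size $h^{-1}$---the natural scaling forced by nondegeneracy of $\hess A_2$ on the normal bundle of $\st$ combined with homogeneity. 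In the rescaled variables, $P^*$ takes the semiclassical form to which Proposition~\ref{mainprop} applies, producing on each shell a real semiclassical multiplier $b_{T,\nu}$ realizing the needed positivity.

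Reassembling the $b_{T,\nu}$ by homogeneity yields a single symbol $b_T \in S^1_{1/2,1/2}$. The refined regularity on $\nabla b_T$ encodes an $S^+$-structure in the sense of Definition~\ref{s+def} for the H\"ormander metric $g = \w\zeta^{-1}|dz|^2 + \w\zeta^{-2}|d\zeta|^2$; this is exactly what Proposition~\ref{mainprop} must deliver at the semiclassical scale, and it is what makes Lemma~\ref{calcrem} applicable to products such as $A^w b_T^w$ and $(b_T^w)^2$ with controlled remainders. Assembling the dyadic estimates by almost orthogonality and returning to the original variables yields~\eqref{corest}. The prefactor $T$ on the right comes directly from the support restriction $|t-t_0|, |x-x_0| \le T$: it bounds how far along the subprincipal flow the sign-change obstruction can accumulate, and it supplies the small parameter needed to absorb cross terms coming from the antisymmetric part.

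The main obstacle is the construction and positivity analysis of the multiplier in Proposition~\ref{mainprop}. After the standard manipulation one finds
\[
	2\im \sw{P^*u, b^w u} \approx \sw{(\partial_t b + H_A b + 2 f b)^w u, u},
\]
and one must choose a real $b$ so that the symbol $\partial_t b + H_A b + 2 f b$ dominates a nonnegative weight reproducing the three positive terms on the left of~\eqref{corest}: the subelliptic $\mn{D_x u}^2$ via sharp G\aa rding applied to a contribution controlled by $\hess A_2$; the Sobolev gain $\mn{b_T^w u}^2_{(-1/2)}$ via a Nirenberg--Treves style construction exploiting the one-sided sign change of $f$ from~\eqref{pcond0}; and $\mn{u}^2$ absorbed by a small-$T$ cross-term argument. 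Coordinating these three ingredients in a single multiplier, while keeping $b$ in the $S^+$-class so that Weyl-calculus remainders do not destroy the positivity, is the crux of the semiclassical argument.
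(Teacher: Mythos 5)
Your proposal follows the same route as the paper: microlocalize near $w_0$ with a cutoff $\phi$ whose complement feeds into $\mn{\psi^w u}^2$, perform a dyadic (second) microlocalization in the $\w\zeta\cong h_j^{-1}$ shells, apply the semiclassical estimate of Proposition~\ref{mainprop} to each localized piece $\phi_j^w u$, and reassemble $b_T^w=\sum_j h_j^{-1/2}\phi_j^w b_{j,T}^w\phi_j^w\in\Psi^1_{1/2,1/2}$ by almost orthogonality, with the $T$ prefactor coming from the $(t,x)$-support restriction and absorbing the commutator and $R$-class perturbations. Two small slips that do not affect the argument: the $S^{-1}$ terms in $R$ pair against $b_T^w\in\Psi^1_{1/2,1/2}$ to give $L^2$-bounded rather than smoothing operators, so they are absorbed by $T\,\mn u^2$ (small $T$) rather than by $\mn u^2_{(-N)}$; and the relevant balance is $|\xi|\sim h^{-1/2}$ (not $h^{1/2}$), i.e.\ where the principal part $\sim|\xi|^2$ matches the subprincipal scale $h^{-1}$, exactly what the $g^\sh$ metric and the class $S(h^{-1/2},g^\sh)\cap S^+(1,g^\sh)$ encode.
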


\begin{proof} [Proof that  Proposition~\ref{maincor} gives
	Theorem~\ref{mainthm}] We shall prove that there exists ${\phi} \in S^0_{1,0}$ such that ${\phi} \ge  0 $ and $ \phi = 1 $ in a conical
	neighborhood of $w_0 \in \st$, and $R \in S^{3/2}_{1,0}$ with $w_0 \notin \wf R^w $ so that
	for any $ N > 0 $ there exists $ C_N > 0 $ such that
	\begin{equation}\label{solvest1}
	\mn{{\phi}^w u} \le C_N\left(\mn{{\phi}^w P^*u}_{(1/2)} + \mn{R^w u} + \mn{u}_{(-N)}\right)
	\qquad u \in C_0^\infty
	\end{equation}
	Here $\mn{u}_{(s)}$ is the usual $ L^2 $ Sobolev norm, so by Remark~\ref{solvrem} we obtain that $ P $ is solvable with a loss of $ 5/2 $ derivatives in a conical neighborhood of  $ w_0 $ since $ w_0 \notin \wf (1 - \phi)^w $ and $ m = 2 $. 
	
	We may assume that $ m = 2 $ and $ P \in \Psi^2_{1,0}$ is on the form
	in Proposition~\ref{prepprop}  in a conical neighborhood of $ w_0 $. Let $ \phi \ge 0 $  have support in a smaller conical neighborhood such that $ \max{(|t|, |x|)}\le T \le T_0 $ in $ \supp {\phi} $, 
	$ \psi $ in~\eqref{corest} vanishes on $ \supp \phi $ and 
	$ \phi = 1 $ in a conical
	neighborhood of  $ w_0 $. Then by applying the estimate~\eqref{corest} on ${\phi}^wu$ we obtain for any $ N > 0 $ 
	\begin{multline}\label{fest}
	\mn{b_T^w{\phi}^wu}^2_{(-1/2)} + \mn{{\phi}^w u}^2   + \mn{D_x{\phi}^w u}^2  \le C_N
	\left(T \im\sw{P^*{\phi}^w u, b_T^w{\phi}^wu} + \mn {  u}^2_{(-N)} \right)
	\end{multline}
	where $  C_N  > 0 $ and $b_T^w \in {\Psi}^{1}_{1/2,1/2}$ is
	symmetric with homogeneous gradient $\nabla b_T \in
	S^{1}_{1/2,1/2}$. 
	
	By Cauchy-Schwarz,
	\begin{equation}
	|\sw{P^*{\phi}^wu, b_T^w{\phi}^wu}| \lesssim  \mn {P^*{\phi}^wu}^2_{(1/2)} + \mn{ b_T^w{\phi}^wu}^2_{(-1/2)}
	\end{equation}
	and $  \mn {P^*{\phi}^wu}^2_{(1/2)}  \le  \mn {{\phi}^wP^*u}^2_{(1/2)}   +  \mn {[P^*, {\phi}^w] u}^2_{(1/2)} $
	where the commutator $[P^*, {\phi}^w] \in \Psi^1$ with $ w_0 \notin \wf [P^*, {\phi}^w] $.  
	
	Thus, for $ T$ small enough, we obtain for any $ N > 1$ the estimate
	\begin{multline}\label{fest1}
	\mn{{\phi}^w u}^2   \le \mn{b_T^w{\phi}^wu}^2_{(-1/2)} + \mn{{\phi}^w u}^2   + \mn{D_x{\phi}^w u}^2 \\ \le C_N \left(
	T  \mn {{\phi}^wP^* u}^2_{(1/2)} + T\mn{[P^*, \phi^w]u}_{(1/2)}^2 + \mn { u}^2_{(-N)}  \right)
	\end{multline}
	This gives the estimate~\eqref{solvest1} with $ R = T \w{D}^{1/2}[P^*, {\phi}^w]  \in \Psi^{3/2} $ which completes the proof of Theorem~\ref{mainthm}.
\end{proof}

Next we shall derive a semiclassical estimate for the proof of Proposition~\ref{maincor}. We shall assume that the coordinates are chosen as in Proposition~\ref{prepprop} so that $ \st = \set{\xi  = 0} $. The proof involves a second microlocalization 
near~$(t_0, x_0,y_0; \tau_0, 0, {\eta}_0) = (z_0; \zeta_0)\in \st $ using the homogeneous metric $ g = g_{1,0} $. 
Then $ \sup g/g^\sigma = h^2 \le 1$  are constant and $ |\xi | \ls h^{-1}   \cong \w{(\tau, {\eta})} $.
We have $ g/h =  g^\sh  \cong g_{1/2,1/2} $ where $ g^\sh =(g^\sh)^\sigma $ is constant, $S^k_{1,0}= S(h^{-k}, g)$ and $S^k_{1/2,1/2} = S(h^{-k}, g^\sh)$ for $k \in \br $.
Observe that now the the symbols of the error terms $ R $ can be written $ \w{R_{2}\xi, \xi} + R_{1}\cdot \xi + R_{0} $  where $ R_{j}\in S(h , g ) $.

\begin{prop}\label{mainprop} 
	Assume that $P^* \cong D_t + A^w + i f_1^w $ modulo operators with symbols in $ R $. Here $f_1 = f + f_0$ is real,  $f  \in
	S(h^{-1}, g)$ is independent of $ \xi  $ satisfying  condition $ \subr(\ol \Psi) $ in~\eqref{pcond0}, and $ f_0 = \partial_\eta f \cdot r\cdot \xi \in S(h^{-1}, g) $ with $ r \in S(1,g) $. We also have 
	\begin{equation}\label{adef1}
	A = \sum_{jk} a_{jk}\xi_j\xi_k + \sum_j a_j\xi_j + a_0
	\end{equation}
	where $ a_{jk} $ and $ a_j  \in S(1,g)$ are real and $ \{ a_{jk} \}_{jk} $ is symmetric and  nondegenerate. Here $ g = g^\sharp /h $ with $0 < h \le 1$ and $g^\sh = (g^\sh)^{\sigma}$ constant. Then there
	exist \/ $T_0 > 0$ and real valued symbols $b_T(t,x,{\xi}) \in
	S(h^{-1/2}, g^\sh) \bigcap S^+(1, g^\sh)  + S(h^{-1/2}, g)$ uniformly for any 
	$0 < T \le T_0$  and $ |x| \le T $, so that
	\begin{equation}\label{propest}
	h^{1/2}\left(\mn{b_T^wu}^2 + \mn {D_xu}^2+ \mn u^2\right) \le C_0 T \im\sw{P^* u,b_T^wu}  + \mn{\Psi^w u}^2
	\end{equation} 
	when $u \in C_0^\infty$ has support where
	$|t| \le T$ and $ |x| \le T $. Here 
	$ \Psi \in S^{2} $, $ \st \bigcap \supp \Psi = \emptyset $ and  
	$C_0$, $T_0$ and the seminorms of~$b_T$ only depend on the seminorms of $f$ in $S(h^{-1}, g)$.
\end{prop}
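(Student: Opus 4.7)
The plan is to follow the Nirenberg--Treves multiplier strategy used by the author for condition $(\Psi)$ in the principal-type case, augmented by a second piece tailored to the nondegenerate second-order symbol $A$. I would write $b_T = b_T^\flat + b_T^\sharp$, where $b_T^\flat(t,x;\tau,\eta)\in S(h^{-1/2},g^\sh)\cap S^+(1,g^\sh)$ depends only on the leaf variables and encodes the past-sign pattern of $f$ along the $t$-axis (a smoothed signum), while $b_T^\sharp(t,x,\xi)\in S(h^{-1/2},g)$ is chosen linear in $\xi$, suitably normalized by $\w\zeta^{1/2}$, so that $\{A,b_T^\sharp\}$ is a positive quadratic form in $\xi$ on $\st$. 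A cut-off $\chi(t/T)\chi(x/T)$ is built in so that the multiplier lives in the prescribed $(t,x)$-box and so that the $T$-dependence of the estimate comes out correctly.

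The starting identity, using self-adjointness of $A^w$, $f_1^w$, $b_T^w$ and compact support of $u$, is
\begin{equation*}
2\im\sw{P^* u,b_T^w u}=\sw{(\partial_t b_T)^w u,u}+\tfrac{1}{i}\sw{[b_T^w,A^w]u,u}+2\re\sw{f_1^w u,b_T^w u}.
\end{equation*}
By Lemma~\ref{calcrem} the leading symbol of the right-hand side equals $Q:=\partial_t b_T+\{A,b_T\}+2 f_1 b_T$ modulo errors of relative size $h^{1/2}$. By condition $\subr(\ol\Psi)$ the sign of $f$ on every leaf can only change from $-$ to $+$ as $t$ grows, so for each fixed leaf-parameter $(y,\tau,\eta)$ one can construct $\sigma$ with $\partial_t\sigma$ a positive bump near the sign-change surface and $\sigma\cdot f\ge 0$ away from it, producing $\partial_t b_T^\flat+2f b_T^\flat\ge 0$ pointwise, the jump in $\sigma$ supplying the decisive positive mass that underlies $\mn{b_T^w u}^2$ on the left. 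After the preparation of Proposition~\ref{prepprop} the Hessian of $A$ is the constant quadratic form $L(\xi)=|\xi'|^2-|\xi''|^2$, so a choice of $b_T^\sharp$ linear in $\xi'/\w\zeta^{1/2}$ and $-\xi''/\w\zeta^{1/2}$ gives $\{L,b_T^\sharp\}\gtrsim|\xi|^2\w\zeta^{-1/2}$, and an application of the sharp G\aa rding inequality then yields the $\mn{D_x u}^2$ term in~\eqref{propest}. The factor $T$ on the right matches the $T^{-1}$ scaling of $\partial_t\chi(t/T)$ against the bulk size of $Q$ in the box $|t|,|x|\le T$.

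The main obstacle, and the most delicate technical point, is the control of the cross terms linking $b_T^\flat$ and $A^w$: although $b_T^\flat$ is independent of $\xi$, the Poisson bracket
\begin{equation*}
\{A,b_T^\flat\}=\bigl(2\textstyle\sum a_{jk}\xi_k+a_j\bigr)\cdot\partial_x b_T^\flat
\end{equation*}
is of first order in $\xi$ and could a priori destroy the $\mn{D_x u}^2$ positivity produced by $\{A,b_T^\sharp\}$. The correction $f_0=\partial_\eta f\cdot r\cdot\xi$ contributes analogously a symmetric term proportional to $\partial_\eta f$, which must be absorbed using the Nirenberg--Treves positivity of $\partial_t b_T^\flat+2f b_T^\flat$ (whose local strength is itself proportional to $|\partial_\eta f|$ in a neighborhood of the sign-change surface). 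The resolution is to adjust $b_T^\sharp$ by a lower-order correction depending on $\partial_x\sigma$ and $\partial_\eta f$ so that all first-order-in-$\xi$ cross terms cancel modulo symbols of class $S(h^{1/2},g^\sh)$. Lemma~\ref{calcrem}, especially the refined version~\eqref{2.3a} for real symbols, then delivers the symmetric part of the full operator with a controllable remainder, and a final sharp G\aa rding inequality applied to the resulting positive quadratic form in $(u,D_x u)$, combined with the ellipticity cut-off $\mn{\Psi^w u}^2$ away from $\st$, yields the semiclassical estimate~\eqref{propest}.
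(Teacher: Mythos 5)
Your high-level architecture (split the multiplier into a Nirenberg--Treves piece controlling the sign of $f$ and a piece linear in $\xi$ designed to make the bracket with $A$ positive, then expand $\im\langle P^*u,b_T^wu\rangle$ into $\partial_t b_T + \{A,b_T\} + 2 f_1 b_T$) matches the shape of the paper's argument: the paper's $B_T=\delta_0+\varrho_T+\lambda_T$ is exactly that decomposition. However, three of the steps you describe gloss over what is actually the technical core, and one of them as written would fail.

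First, you let $b_T^\flat$ depend on "the leaf variables," i.e. on $x$, which forces you to confront the cross term $\{A,b_T^\flat\}=(2\sum a_{jk}\xi_k+a_j)\cdot\partial_x b_T^\flat$, of first order in $\xi$. The paper avoids this entirely: $\subr(\ol\Psi)$ forces $\sgn(f)$ to be constant on the leaves (Definition~\ref{sgndef}), so the signed distance $\delta$ is \emph{independent of} $x$ (Definition~\ref{d0deforig}), as are $m$ (via $H_1=\min_x H$) and $\varrho_T$. Consequently $\partial_x(\delta_0+\varrho_T)\equiv 0$ and the dangerous bracket vanishes. Your fix — tune $b_T^\sharp$ by a lower-order correction so "all first-order-in-$\xi$ cross terms cancel" — is an unverified claim, and I see no reason it would succeed without the $x$-independence that the paper exploits.

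Second, your multiplier $b_T^\flat$ is described as a "smoothed signum" with "$\partial_t\sigma$ a positive bump near the sign-change surface." This is the easy picture for a single, clean zero crossing, but it cannot be made uniform over a symbol family under condition $\subr(\ol\Psi)$, where the zero set of $f$ can be arbitrarily wild. The paper's solution — the signed distance $\delta$, the Lipschitz perturbation $\varrho_T$ solving $T\partial_t(\delta+\varrho_T)\ge m/2$ (Proposition~\ref{apsdef}), and the sophisticated weight $m$ (Definition~\ref{h0def}) — is precisely the nontrivial content you are waving at. Crucially, $\delta_0+\varrho_T$ is only Lipschitz, so it is not in a symbol class where the sharp G\aa{}rding inequality applies; the paper uses the \emph{Wick} quantization, whose exact positivity $a\ge 0\Rightarrow a^{Wick}\ge 0$ holds for $L^\infty$ symbols. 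Replacing this with sharp G\aa{}rding would require a smooth construction whose feasibility you have not established.

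Third, the heart of the proof is the lower bound $\re\langle B_T^{Wick}f_1^w u,u\rangle\ge\langle C^w u,u\rangle$ with $C\in S(m,g^\sh)+S(\mu,g_0^\sh)$ (Proposition~\ref{lowersign}). This is where the second microlocalization in the $f$-adapted metric $G$, the factorization $f=\alpha\delta$ with $\alpha\gtrsim MH^{1/2}$ (Proposition~\ref{ffactprop}), the Lerner-style completion of squares, and the specific control $MH^{3/2}\langle\delta\rangle^2\lesssim m$ (Proposition~\ref{mestprop}) all enter. Your proposal does not engage with this step at all, and without it the estimate~\eqref{propest} does not follow.
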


Proposition~\ref{mainprop} will be proved at the end of Section
~\ref{lower}. 

\begin{proof}[Proof of that Proposition~\ref{mainprop} gives Proposition~\ref{maincor}]
	First note that in the estimate \eqref{corest}, $ P^* $ can be perturbed by 
	operators with symbols in $ R $. In fact, if
	$\wt R =  \w{R_2D_x, D_x} + \w{R_1, D_x} + R_0 $ with $ R_j \in  \Psi^{-1}_{1,0}$, then $\re b_T^w \wt R = \w{S_2D_x, D_x} + \w{S_1, D_x} + S_0  $ where $ S_j \in  \Psi^{0}_{1/2,1/2}$ is continuous on $ L^2 $. Thus, this term can be estimated by the last two terms in the left hand side of~\eqref{corest} for small enough $  T $.
	
	 As before, we shall include $ \tau $ in the variables $ \eta $ and use  the coordinates $ (z,\zeta) \in T^*\br^n$. 
	For the localization, we shall take ${\phi} \in S^0_{1,0}$ such that $0 \le {\phi} \le  1 $, $ \phi = 1 $ in a conical
	neighborhood of $w_0  \in \st$ such that $ \phi $ is supported  where $ | \xi | \ls  h^{-1} $, $ \Psi = 0 $ and $ P $ is on the normal form~\eqref{preppropform}.  
	By taking $ \phi(z,\zeta) = \phi_0(z)\phi_1(\zeta) $ as a product, we obtain that $ \partial_{t,x}\varphi $ has support where $\max( | t | , | x| )| \ge   T_0$ for some $ T_0 > 0 $. 
	
	Next, we shall  microlocalize in $\zeta=  (\tau, \xi,  \eta) $ with respect to the homogeneous metric $  g= g_{1,0} $ with  a partition of unity  $\set{{\varphi}_j (\zeta) }_{j} \in S^0_{1,0} = S(1,g)$ independent of $ z $ with values in $ \ell^2 $ such that $  \sum_j \varphi_j^2 = 1$,  $0 \le {\varphi}_ \le  1 $ and $ \varphi_j $ is supported where  $\w{{\zeta}} \cong h_j^{-1} $. 
	Then we can get a partition of unity in a conical neighborhood of $ w_0 $ by putting $ \phi_j = \phi \varphi_j $, 
	so that $ \phi_j $ is supported where $ | \xi | \ls h_j^{-1}$, 
	$ \sum_j \phi_j^2 = \phi^2 $  and  $ \partial_{t,x}\phi_j $ has support where $\max( | t | , | x| )| \ge   T_0$.
	
	Since the functions $ \phi_j  $ are real, we find from the calculus and symmetry that $ \sum_j\phi_j^w \phi_j^w  = \phi^w\phi^w + r^w $ where $ r \in S^{-2} $ is real valued,  which gives
	$	\mn {\phi^w v}^2 \le \sum_j \mn {\phi_j^w v}^2 + C\mn{ v}_{(-2)}^2$ for  $ v  \in C^\infty_0 $ 
	and by continuity we have $ \sum_j \mn {\phi_j^w v}^2  \ls  \mn {v}^2$.	 	
	By cutting off, we find that 
	\begin{equation}\label{cutoff}
		 \mn {v }^2_{(-2)} \ls \sum_j  \mn{h_j^{2}\phi_j^w v}^2 + \mn{\w{D}^{-2}(1- \phi)^w v}^2
	\end{equation}
    Since the cut-off functions have values in $ \ell^2 $ the calculus gives that the operators that we obtain from these will have values in  $ \ell^2 $ (or scalar values after summation) by Remark~\ref{vvcalc}.

	By possibly shrinking $ T_0 $ we can also choose real symbols $\set{{\psi}_j }_{j} \in
	S^0_{1,0} $ with values in ~$\ell^2$, such that  $0 \le {\psi}_j \le 1$ has support in a $ g $ neighborhood of $ w_j $  of radius $ 2T_0 $ so that ${\psi}_j{\phi}_j = {\phi}_j$. If $ T_0 $ is small enough, we may assume that $ P $ is on the normal form~\eqref{preppropform} and $ g_{1,0}  \cong g = hg^{\sh}$ is constant in $ \supp \psi_{j} $,  and that there is a fixed bound on number of overlapping supports of $ \psi_j $, see \cite[Section
	18.5]{ho:yellow}. Then we obtain
	that $S^m_{1,0} = S(h_j^{-m}, g_j)$ and $S^m_{1/2,1/2}
	= S(h_j^{-m}, g^\sh)$ in $\supp {\psi}_j$ for $m \in \br$, where
	$h_j  \le 1$, $g_j= h_j g^\sh $.
	
	The microlocalization of $ P $ is
    $
	P_{j} = D_t	+ A_j^w + if_j^w 
	$ 
	where $ A_j =  \psi_jA  + (1 - \psi_j)A_{0,j}\in S(\w{\xi}^2, g_j ) $  with $A_{0,j}(t,x,y; \eta, \xi) = \sum_{k\ell} a_{k\ell}(w_j) \xi_k \xi_\ell$,
	$f_{1j} = {\psi}_j f_1 \in S(h_j^{-1}, g_j ) $ uniformly in~$j$
	satisfying condition $ \subr(\Psi) $. If the support of~$ \psi_j $ is small enough, then the Hessian $ \partial_{{\xi}}^2 A_j $  is nondegenerate at~$ \st $.    
    	
  Then, by using Proposition~\ref{mainprop} with $P_{j}$ and substituting $  {\phi}_j^w u$ in~\eqref{propest}, we obtain real $b_{j,T} \in S(h_j^{-1/2}, g_j^\sh) \bigcap	S^+(1, g_j^\sh) + S(h_j^{1/2}\w{\xi}, g_j)$ uniformly so that 
	\begin{multline}\label{mmest}
	\mn {b_{j,T}^w{\phi}_j^w u}^2 + \mn {{\phi}_j^w u}^2 + \mn { D_x{\phi}_j^wu}^2 \\  \le C_0T h_j^{-1/2}\im\sw{P_{j}^*{\phi}_j^wu,
		b_{j,T}^w {\phi}_j^wu} + C_N \mn{{\phi}_j^wu}^2_{(-N)}
	\end{multline} 
	for $u\in C_0^\infty$ having support where
	$ \max (|t|,  |x| ) \le T \le T_0 $.
	We have $ P_j^* \phi_j^w = \phi_j^w P_j^* + Q_j ^w$, where 
	\begin{equation}
	Q_j^w = [D_t,  \phi_j^w]  + [ A_j^w,  \phi_j^w ] - i [f_{1j}^w, {\phi}_j^w ] \in \op S(h_j^{-1}, g_j)
	\end{equation}
	Since the commutator of symmetric operators is antisymmetric, the calculus gives that $\im  Q_j  \in S(h_j\w{\xi}^2, g_j)$ when $\max( | t | , | x| )| \le   T_0$ since then $ \partial_{t,x} \phi_j = 0  $ which gives $[ A_j^w,  \phi_j^w ] =  \sum_{k\ell}[ a_{k\ell}^w,  \phi_j^w ] D_{x_k} D_{x_\ell}$ and $  [D_t,  \phi_j^w] = 0 $. This also gives $ D_x{\phi}_j^w u = {\phi}_j^w D_x u $ when  u is supported  where  $\max( | t | , | x| )| \le   T_0$.
	We also have  $\re  Q_j  \in S(1, g_j)$ since $\w{\xi} \ls h_j^{-1}  $ in $ \supp \phi_j $. 
	
	By using the calculus we obtain that  $
	{\phi}_j^wP = {\phi}_j^w P_{j}$ modulo $\op S(h_j^N, g_j ),\ \forall N$, since ${\psi}_j{\phi}_j = {\phi}_j$.	We obtain for any $ N $ that
	\begin{multline}\label{mmest1}
	\mn {b_{j,T}^w{\phi}_j^w u}^2 + \mn {{\phi}_j^w u}^2 + \mn {{\phi}_j^w D_x u}^2\\ \le C_0T \left(\im\sw{P^*u, B_{j,T}^w u} +h_j^{-1/2}\im \sw {Q_j^w u, b_{j,T}^w {\phi}_j^wu}\right) + C_N \mn{h_j^N{\phi}_j^w u}^2 \qquad \forall\,j
	\end{multline} 
	if $u\in C_0^\infty$ supported where
	$\max (|t|,  |x| )\le T  \le T_0$. Here 
	$$ B_{j,T}^w =  h_j^{-1/2}{\phi}_j^w	b_{j,T}^w {\phi}_j^w  \in  \op S(h_j^{-1}, g_j^\sh) \bigcap \op  S^+(h_j^{-1/2}, g_j^\sh) + \op S(\w{\xi}, g_j)
	$$ 
	uniformly and by symmetry $  B_{j,T} $ is real. Since $ {\phi}_j^w b_{j,T}^w  \cong (b_{j,T}{\phi}_j)^w $ modulo $ \op S(h_j^{1/2}, g_j^\sh) $ and $\re  Q_j  \in S(1, g_j) $ we find that $\{ h_j^{-1/2}\im  {\phi}_j^wb_{j,T}^wQ_j^w\}_j \in  \op S(\w{\xi}^2, g^\sh)$  with values in~$ \ell^2 $ when $\max( | t | , | x| )| \le   T_0$. Thus we may find $ \psi \in S^1 $ with support outside a conical neighborhood of $ w_0 $ so that
	$$
	\sum_j h_j^{-1/2}\im \sw {Q_j^w u, b_{j,T}^w {\phi}_j^wu}\ls \mn{u}^2 + \mn {D_xu}^2 +  \mn{\psi^w u}^2 
	$$
	if $u\in C_0^\infty$ supported where
	$\max (|t|,  |x| )\le T_0$.	

	Let $b_{T}^w = \sum_{j}B_{j,T}^w$, then 
	by the finite bound on the overlap of the supports we find that 
	\begin{equation}
    \mn{b_T^wu}_{(-1/2)}^2 \ls \left\| \sum_{j}h_j^{1/2}B_{j,T}^w u \right\|^2 =   \left\| \sum_{j}{\phi}_j^w	b_{j,T}^w {\phi}_j^w u \right\|^2 \\
    \lesssim \sum_j\mn{b_{j,T}^w{\phi}_j^w u}^2 + \mn{u}_{(-N)}^2   
	\end{equation}
	since $ \w{B_{j,T}^w u, B_{k,T}^w u} = 0$ if $ | j - k | \gg 1 $.
 Thus, by summing up we obtain
	\begin{multline}\label{2.12}
	\mn{b_T^wu}^2_{(-1/2)} + \mn {u}^2 + \mn {D_xu}^2 \\  \le C_1 \left(T(\im \sw{P^*u, b_{T}^wu} + \mn {u}^2 + \mn {D_xu}^2 + \mn{\psi^w u}^2) + \mn{u}_{(-N)}^2  + \mn{(1-\phi)^w u}_{(1)}^2 \right)
	\end{multline}
	for $u \in  C_0^\infty$ having support where
	$ \max (|t|,  |x| ) \le T  \le T_0 $. Here we find that $ w_0 \notin \wf \psi^w \ \bigcup  \wf (1-\phi)^w $  which gives~\eqref{corest} for small enough ~$T$.
	We also have that $b_{T}^w =
	\sum_{j}^{}h_j^{-1/2}{\phi}_j^wb_{j,T}^w{\phi}_j^w \in \Psi^{1}_{1/2,1/2}$ 
	since  ${\phi}_j \in S(1, g_j)$ is supported where $\w{{\zeta}} \simeq h_j^{-1}$ and   $b_{j,T} \in S(h_j^{-1/2},g^\sh)$.
	The homogeneous gradient $\nabla b_{T} \in S^{1}_{1/2,1/2}$ since 
	 $b_{j,T} \in S^+(1,g^\sh) $ and the homogeneous gradient is equal to $ h^{-1/2} $ times the gradient in coordinates which are  $ g^\sh $ orthonormal.  This finishes the proof of  Proposition~\ref{maincor}.
\end{proof}

It remains to prove Proposition~\ref{mainprop}, which will be done at
the end of Section ~\ref{lower}. The proof involves the construction of a
multiplier $b^w_T$, and it will occupy most of
the remaining part of the paper.

\section{The symbol classes}\label{symb}

In this section we shall define the symbol classes we shall use. We shall follow Section~3 in \cite{de:X} with some changes due to the different conditions and normal forms. We shall study the subprincipal symbol $ p_s = \tau + i f_1$ with $ f_1 = f + f_0 $ where $f(t,x,y, \tau, \eta) \in S(h^{-1}, g)   $ and $ f_0 = \partial_\eta f \cdot r\cdot \xi $  where  $\partial_\eta f \cdot r \in S(1, g) $.  
The metric is localized and assumed to be constant, but the result holds in general for $ \sigma $ temperate metrics $ g \le h^2 g^\sigma $.

Since we are going to study the adjoint, we shall also assume that $f(t,x,\tau, w) \in S(h^{-1}, g)$ is independent of $ \xi $ and satisfies condition $ \subr(\ol  \Psi)$ in~\eqref{pcond0}. Here $ g = g_{1,0} $ is the usual homogeneous metric, $ x \in \br^{m} $, $ (t,\tau) \in T^*\br $ and $ w =(y,\eta) \in T^*\br^{n-m-1} $ as in Section~\ref{prep}.  We have that $ g = h g^\sh $ where $ g^\sh \le (g^{\sh})^\sigma $,  in the case of the homogeneous metric we have
\begin{equation}\label{gdef}
g^\sh = (dt^2 + |dx|^2 + |dy|^2)/h + h(d\tau^2 + |d\xi |^2 + |d\eta|^2)
\end{equation}
We shall construct a metric, weight and multiplier adapted to $ f $, so the symbols in this section will be independent of $ \xi $ except for $ f_0 $, which will be handled as an error term in the estimates, see Remark~\ref{f0rem}. 
We shall suppress the  $ \xi $ variables and assume that we have choosen $g^\sh$ orthonormal coordinates so that $g^\sh $ is the euclidean metric so that $g^\sh (t,x, \tau, w) =  | (t,x, \tau, w)   |^2 $. Then we have $ |f'| = | f|^{ g^\sh}_1\ls h^{-1/2}$,  $ | f''| = | f|^{ g^\sh}_2 \ls 1 $  and  $|f^{(k)}| = | f|^{ g^\sh}_k \ls h^{-1 + k/2} \ls h^{1/2}$ for $ k > 2$.
By decreasing ~$h$ we may obtain that  $| f' | \le h^{-1/2}$
which we assume in what follows. Observe that after the change of coordinates $ | \partial_\eta f | =  h^{1/2} | \partial_\eta f |^{ g^\sh} \le h^{1/2}|f  ' | \le 1$ and $ | \partial_t f | =  h^{-1/2} | \partial_tf |^{ g^\sh}\le h^{-1/2}| f ' | \le h^{-1}$.
The results in this section are uniform in the sense that they depend only on
the seminorms of $f$ in~$S(h^{-1}, g)$. 

Since we assume that  \/ $f = \im p_r $ does not change sign on the leaves of  $ \st $,  we may have the following definition of the sign of  $ f$.

\begin{defn}\label{sgndef}
	If  \/ $ f $ does not change sign on the leaves $ L $ of  $ \st $, then we define the sign function
	\begin{equation}
	\sgn (f) = 
	\left\{
	\begin{aligned}
	\pm 1 \quad &\text{ if } \/\pm f \ge 0  \text{ and } f \not\equiv 0 \quad  \text{on $ L $} \\
	0  \quad &\text{ if }  f \equiv 0 \quad \text{on $ L $}
	\end{aligned}
	\right.
	\end{equation}
	which is then constant on the leaves of $ \st $ such that\/ $ \sgn(f)f \ge 0 $.
\end{defn} 

 Let
\begin{align}\label{fsign}
&X_+ = \set{ (t,\tau,w ) : \exists\,s\le t,\ \max_x f(s,x,\tau,w) >0}\\
&X_- = \set{(t,\tau,w ) : \exists\,s\ge  t,\ \min_x f(s,x,\tau,w) < 0}
\end{align}
Observe that by the definition, $ X_\pm $ is open  in $\st$ and is a union of leaves of~$ \st $. 
By condition $ \subr(\Psi) $
we find that $\pm f(t,x, \tau, w) \ge 0$ when  $(t, \tau, w) \in X_\pm$ and that $X_-\bigcap X_+ = \emptyset$.
Let $X_0= \st \setminus \left(X_+\bigcup
X_-\right)$ which is a union of leaves and  is relatively closed in
$\st$.

\begin{defn} \label{d0deforig} 
	Let
	\begin{equation}\label{d0def}
	d(t,\tau, w) = \inf\set{| (t,\tau, w) - (s,\sigma, z) |: (s,\sigma, z) \in X_0}
	\end{equation}
	be the $g^\sh$ distance to $ X_0$,
	it is constant in $ x $ and equal to $+\infty$ in the case when 
	$X_0= \emptyset$.
	We define the signed  distance function ${\delta}(t,w)$ by
	\begin{equation}\label{delta0def}
	{\delta} =  \sgn(f) \min(d,h^{-1/2}),
	\end{equation}
	where $d$ is given by ~\eqref{d0def} and $\sgn(f)  $ by Definition~\ref{sgndef}.
\end{defn}

	We say that $a(t,x,\tau,w)$  is Lipschitz continuous 
	if it is Lipschitz with respect to the metric ~$g^\sh$. 

\begin{prop}
	The signed distance function $(t,\tau, w) \mapsto {\delta}(t,\tau, w)$ given by
	Definition~\ref{d0deforig} is Lipschitz continuous  with Lipschitz constant equal to 1. We
	also find that 
	$t \mapsto {\delta}(t,\tau, w)$ is nondecreasing, $ \delta $ is constant in $ x $, $0 \le {\delta}f$,
	$|{\delta}| \le h^{-1/2}$ and $|{\delta}| = d$ 
	when $|{\delta}| < h^{-1/2}$.
\end{prop}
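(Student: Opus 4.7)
The plan is to verify the six claimed properties in order, leveraging the trichotomy $\st = X_-\sqcup X_0\sqcup X_+$ and the one-sided closure that condition~$\subr(\ol\Psi)$ imposes: from~\eqref{pcond0}, if $\max_x f(s,x,\tau,w)>0$ then $f(s',\cdot,\tau,w)\ge 0$ for every $s'>s$. Read off, this forces a structure on each $t$-slice at fixed $(\tau,w)$: the $t$-line decomposes as $(-\infty,\sigma_-(\tau,w))\subseteq X_-$, a closed interval $[\sigma_-(\tau,w),\sigma_+(\tau,w)]\subseteq X_0$ on which $f\equiv 0$, and $(\sigma_+(\tau,w),\infty)\subseteq X_+$, with $\sigma_-\le\sigma_+$.

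The routine items are handled in one pass. The bounds $|\delta|\le h^{-1/2}$ and $|\delta|=d$ on $\{|\delta|<h^{-1/2}\}$ follow directly from the $\min$ in~\eqref{delta0def}. Independence of $x$ holds because $X_0$ is defined via $\max_x f$ and $\min_x f$, so its projection to $(t,\tau,w)$-space is $x$-independent, and hence so are $d$ and $\sgn(f)$. The sign property $\delta f\ge 0$ amounts to the fact that on $X_\pm$ condition~$\subr(\ol\Psi)$ applied at $s=t$ forces $\pm f\ge 0$ on the leaf, matching $\sgn(f)=\pm 1$; on $X_0$ we have $\delta\equiv 0$ and the bound is trivial.

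For Lipschitz continuity with constant~$1$, the distance function $d$ is $1$-Lipschitz in $g^\sh$, hence so is $\min(d,h^{-1/2})$, and multiplying by a locally constant $\pm 1$ preserves the bound within each sign region. For two points $p_1\in X_+$ and $p_2\in X_-$, the straight $g^\sh$-segment connecting their projections cannot avoid $X_0$ by connectedness of the segment; selecting $\bar p\in X_0$ on that segment, $\delta(\bar p)=0$, and the triangle inequality gives $|\delta(p_1)-\delta(p_2)|\le d(p_1)+d(p_2)\le |p_1-\bar p|+|\bar p-p_2|=|p_1-p_2|_{g^\sh}$.

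The monotonicity in $t$ is the central step. Since $\delta$ is continuous (by Lipschitz) and vanishes on $X_0$, it is enough to show $t\mapsto d(t,\tau,w)$ is monotone on each open ray $X_\pm(\tau,w)$. For $t<s$ both in $X_+(\tau,w)$, I would take an $\epsilon$-optimal witness $(t_0,\tau_0,w_0)\in X_0$ for $d(s,\tau,w)$ and replace the $t$-coordinate by $t_0'=\max(\sigma_-(\tau_0,w_0),\,t_0-(s-t))$, which still lies in $[\sigma_-(\tau_0,w_0),\sigma_+(\tau_0,w_0)]$ so that $(t_0',\tau_0,w_0)\in X_0$. When $t\ge \sigma_-(\tau_0,w_0)$ the estimate $|s-t_0|\ge|t-t_0'|$ gives $|(t,\tau,w)-(t_0',\tau_0,w_0)|\le |(s,\tau,w)-(t_0,\tau_0,w_0)|\le d(s)+\epsilon$, hence $d(t)\le d(s)$. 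The $X_-$ ray is handled symmetrically with the direction of time reversed. The main obstacle is the residual case $t<\sigma_-(\tau_0,w_0)$, where $(t,\tau_0,w_0)\in X_-$; here the translation argument must be supplemented by picking an alternate $X_0$-witness obtained by following the segment from $(t,\tau,w)\in X_+$ to $(t,\tau_0,w_0)\in X_-$ until it first meets $X_0$, and then combining the resulting $g^\sh$-distance bound with the slice structure of $\sigma_\pm$ to dominate $d(s,\tau,w)$.
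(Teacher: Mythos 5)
Your proof is correct but the monotonicity step takes a more roundabout route than the paper. The paper notes that for $p\in X_+$ the distance to $X_0$ equals the distance to $\complement X_+$ (since $X_+$ and $X_-$ are disjoint open sets, $\partial X_+\subset X_0$), and $\complement X_+$ is a down-set in $t$; translating any approximate witness $(s,\sigma,z)\in\complement X_+$ for $\delta(t+\varepsilon,\tau,w)$ back by $\varepsilon$ then stays in $\complement X_+$, which gives $\delta(t,\tau,w)\le\delta(t+\varepsilon,\tau,w)$ with no case analysis at all. By insisting the witness lie in $X_0$ — which is not $t$-translation invariant — you manufacture the residual case $t<\sigma_-(\tau_0,w_0)$ that then has to be patched. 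Your sketched patch does close: if $\bar p=(t,\bar\tau,\bar w)\in X_0$ lies on the constant-$t$ segment from $(t,\tau,w)\in X_+$ to $(t,\tau_0,w_0)\in X_-$, then $d(t,\tau,w)\le|(\tau,w)-(\bar\tau,\bar w)|\le|(\tau,w)-(\tau_0,w_0)|\le|(s,\tau,w)-(t_0,\tau_0,w_0)|$, so the $(\tau,w)$-part of the original witness's distance already dominates; the additional appeal to the ``slice structure of $\sigma_\pm$'' is not needed for that bound. Both routes work, but the paper's choice of $\complement X_+$ as the target set is cleaner and bypasses the fiber decomposition $(-\infty,\sigma_-)\sqcup[\sigma_-,\sigma_+]\sqcup(\sigma_+,\infty)$ that your translation argument leans on. Conversely, your Lipschitz argument is slightly more explicit than the paper's: you treat the cross-region pair $p_1\in X_+$, $p_2\in X_-$ directly via the segment-crossing-$X_0$ observation, whereas the paper merely remarks that proving local Lipschitz continuity on $\complement X_0$ suffices.
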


\begin{proof} 
	Clearly ${\delta}f \ge 0$, and by the definition we have that
	$|{\delta}| =\min(d, h^{-1/2}) \le h^{-1/2}$ so $|{\delta}| =
	d$ when $|{\delta}| < h^{-1/2}$.  
	Now, it suffices to show the Lipschitz continuity of $(t,\tau, w)  \mapsto
	{\delta}(t,\tau,w)$ locally, and thus locally on ~$\complement{X_0}$ when $d < \infty$.  Then $ d(t,\tau,w) $ is the distance function to $ X_0 $ so it is Lipschitz continuous with constant 1.
	
	Next we show that $ t \mapsto {\delta}(t,\tau,w)$ is nondecreasing. In fact, when $ t $ increases we can only go from $ X_- $ to $ X_0 $ and from $ X_0 $ to $ X_+ $. 
	If $(t,\tau, w) \in  X_+ $ then $ c = \delta(t,\tau, w)  > 0  $  is the distance to $ \complement X_+ $. If there exists $ \varepsilon > 0$ so that $\delta(t+\varepsilon,\tau, w) < c  $ then there would exists $   (s,\sigma, z)  \notin X_+$ so that $ | (t + \varepsilon,\tau, w) - (s,\sigma, z) |  < c$. But then $ | (t,\tau, w) - (s - \varepsilon,\sigma, z) |  < c $ where $(s - \varepsilon,\sigma, z) \notin X_+  $ which gives a contradiction. By switching $ t $ to $ -t $, $ \delta $ to $ -\delta $ and $ X_+  $ to $ X_- $ we similarly find that $ \delta < 0$ is nondecreasing on $ X_- $ and $ \delta $ is of course equal to $ 0 $ on $ X_0 $. 
\end{proof}

Next, we are going to define the metric that we are going to use for $ f $.

\begin{defn}\label{g1def}
	Let
	\begin{equation}\label{h2def}
	H^{-1/2} = 1 + |{\delta}| + \frac{|f'|}{|{f''}| + h^{1/4}|f'|^{1/2} +
		h^{1/2}}
	\end{equation}
	and $G = H g^\sh$.
\end{defn}

\begin{rem} \label{H1prop}
	We have that 
	\begin{equation} \label{H1hest}
	1 \le H^{-1/2}\le 1 + |{\delta}| + h^{-1/4}  |f'|^{1/2}\le 3 h^{-1/2}
	\end{equation}
	since $|f'| \le h^{-1/2}$ and $|{\delta}| \le
	h^{-1/2}$. Moreover, $|f'| \le H^{-1/2}(|{f''}| + h^{1/4}|f'|^{1/2}
	+ h^{1/2})$ so by the Cauchy-Schwarz inequality we obtain
	\begin{equation}\label{dfest0}
	|f'| \le 2 |{f''}| H^{-1/2} + 3h^{1/2}H^{-1} \le C_2H^{-1/2} 
	\end{equation}
which gives that $ f \in S^+(H^{-1}, G) $, see Definition~\ref{s+def}.
\end{rem}

Since the metric $ G $ does not depend on  the values of $ f  $, we shall need a weight to define the symbol class of $ f $.

\begin{defn} \label{Mdef}
	Let
	\begin{equation}\label{Mdef0}
	M = |f| +|f'| H^{-1/2} + |{f''}| H^{-1} + h^{1/2}H^{-3/2}  
	=|f| +|f'|^G_1 + |{f''}|^G_2 + h^{1/2}H^{-3/2}
	\end{equation}
	then we have that $h^{1/2} \le M \le C_3 h^{-1}$ by~\eqref{H1hest}.
\end{defn}

In the following, we shall simplify the notation and include the variables  $ x $, $ t $ and $ \tau $ in the $ w $ variables.

\begin{prop}\label{g1prop}
	We find that $H^{-1/2}$ is Lipschitz continuous,
	$G$ is ${\sigma}$ ~temperate  such that $G =
	H^2G^{\sigma}$ and
	\begin{equation}\label{tempest}
	H(w) \le C_0 H(w_0)(1 + G_w(w-w_0))
	\end{equation}
	We have that $M$ is a weight for $G$ such that $f \in S(M,G)$
	and
	\begin{equation}\label{mtemp}
	M(w) \le C_1 M(w_0)(1 + G_{w_0}(w-w_0))^{3/2}
	\end{equation}
	In the case when $1 + |{\delta}(w_0)| \le
	H^{-1/2}(w_0)/2$  we have $ |f'(w_0)| \ge  h^{1/2}$,
	\begin{equation} \label{dfsymbolest}
	|f^{(k)}(w_0)| \le C_k  |f'(w_0)|H^{\frac{k-1} {2}}(w_0)\qquad\text{$k
		\ge 1$}
	\end{equation} 
	and $1/C \le   | f'(w)| /   |f'(w_0| \le C$ when $|w-w_0| \le
	c H^{-1/2}(w_0)$ for some $c>0$.
\end{prop}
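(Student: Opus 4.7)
The proof naturally divides into three groups of claims that I would address in sequence.

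First, to establish the metric statements I would focus on proving Lipschitz continuity of $H^{-1/2}$; the consequences for $G$ then follow mechanically. The summand $|\delta|$ is $1$-Lipschitz by the preceding proposition, so it suffices to bound the increments of the quotient $Q = |f'|/D$ with $D = |f''| + h^{1/4}|f'|^{1/2} + h^{1/2}$. Writing
\[ Q(w) - Q(w_0) = \frac{|f'(w)| - |f'(w_0)|}{D(w_0)} - |f'(w)|\,\frac{D(w) - D(w_0)}{D(w)\,D(w_0)}, \]
I would bound the increment of $|f'|$ by $|f''|\le C$, the increment of $|f''|$ by $|f'''|\le Ch^{1/2}$, and the increment of $|f'|^{1/2}$ by $|a^{1/2} - b^{1/2}|\le |a - b|/(a^{1/2} + b^{1/2})$; the denominator factors that arise are absorbed using the structural lower bounds $D\ge h^{1/2}$, $D\ge |f''|$ and $D^2\ge h^{1/2}|f'|$. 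This yields $|Q(w) - Q(w_0)|\le L|w - w_0|$ with $L$ depending only on the $S(h^{-1},g)$-seminorms of $f$. Squaring $|H^{-1/2}(w) - H^{-1/2}(w_0)|\le L|w - w_0|$ then gives both the $\sigma$-temperance $H(w)\le 2H(w_0)(1 + L^2 G^\sigma_w(w - w_0))$ (with $N = 1$) and \eqref{tempest}, while $G = H^2 G^\sigma$ is immediate from $g^\sh = (g^\sh)^\sigma$ being constant.

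Second, for $f\in S(M, G)$ and the weight property of $M$, I would work in $g^\sh$-orthonormal coordinates, where $|f|^G_k = H^{-k/2}|f^{(k)}|$. The bounds $|f|^G_k\le C_k M$ for $k = 0, 1, 2$ are built into the definition of $M$, using \eqref{dfest0} at $k = 1$. For $k\ge 3$, combining $|f^{(k)}|\le C_k h^{k/2 - 1}$ (from $f\in S(h^{-1}, g)$) with $H\ge h/9$ (a consequence of \eqref{H1hest}) yields $|f^{(k)}|H^{-k/2}\le C_k h^{1/2}H^{-3/2}\le C_k M$. To establish \eqref{mtemp} I would treat each summand of $M$ separately: the auxiliary $h^{1/2}H^{-3/2}$ is controlled by cubing the Lipschitz estimate of the first step, which produces the exponent $3/2$; the $f$-dependent summands $|f|$, $|f'|H^{-1/2}$ and $|f''|H^{-1}$ are controlled by Taylor's formula applied to $f$, $f'$ and $f''$, using the derivative bounds of the previous sentence together with the Lipschitz variation of the $H$-powers.

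Third, under the extra hypothesis $1 + |\delta(w_0)|\le H^{-1/2}(w_0)/2$, the second summand of $H^{-1/2}$ must carry at least half of $H^{-1/2}(w_0)$, giving the key structural inequality $D(w_0)\le 2|f'(w_0)|H^{1/2}(w_0)$. Since $D\ge h^{1/2}$ and $D\ge |f''|$, this at once yields $|f'(w_0)|\ge h^{1/2}/2$ and the $k = 2$ case of \eqref{dfsymbolest}. For $k\ge 3$, the rearranged inequality $h^{k/2 - 1}|f'|^{k - 2}\le C D^{k-1}$ follows from $D\ge h^{1/4}|f'|^{1/2}$ together with the universal bound $|f'|\le Ch^{-1/2}$. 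Slow variation of $|f'|$ on the ball $|w - w_0|\le c H^{-1/2}(w_0)$ then follows by integrating $|f''|$ along segments, using the $k = 2, 3$ cases of \eqref{dfsymbolest} and slow variation of $H$ to bound $|f''(w_\theta)|\le C|f'(w_0)|H^{1/2}(w_0)$, yielding $|f'(w) - f'(w_0)|\le Cc|f'(w_0)|$, which gives the two-sided bound for $c$ small.

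The main technical obstacle is the Lipschitz estimate of the first step: because the summand $h^{1/4}|f'|^{1/2}$ is intrinsically only H\"older, a uniform Lipschitz bound on $Q$ requires careful interplay of the three competing lower bounds on $D$ to absorb the $1/\sqrt{|f'|}$ singularity that arises near zeros of $f'$.
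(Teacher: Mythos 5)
Your proposal is correct and follows essentially the same route as the paper: Lipschitz continuity of $H^{-1/2}$ via the quotient $E/D$ with the competing lower bounds on $D$, temperance from the squared Lipschitz estimate, the weight property of $M$ via Taylor's formula combined with the Lipschitz variation of $H^{-1/2}$, and the special-case estimates from $D(w_0)\le 2|f'(w_0)|H^{1/2}(w_0)$. Two small points worth internalizing when you write it out: the increment of $|f'|$ must be bounded by $|f''(w_\theta)|\,|\Delta w|\lesssim D(w_0)|\Delta w|$ (using $D\ge |f''|$ and $|\Delta w|\le 1$, not merely a universal $C$, since $D(w_0)$ can be as small as $h^{1/2}$) — your list of structural lower bounds shows you already have this in mind; and to get the stated $|f'(w_0)|\ge h^{1/2}$ (rather than $h^{1/2}/2$) one should combine $h^{1/2}\le 2H^{1/2}(w_0)|f'(w_0)|$ with $H^{1/2}(w_0)\le 1/2$, which is forced by the hypothesis since $1\le 1+|\delta(w_0)|\le H^{-1/2}(w_0)/2$.
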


\begin{rem}\label{f0rem}
     The term $ f_0 = \partial_\eta f\cdot r\cdot \xi \in S(h^{-1}, g)$  in Proposition~\ref{mainprop} can be written $f_0 = r_0\cdot \xi $  with $r_0 \in S(MH^{1/2} h^{1/2},G) \subset S(1,G) $  since $ |\partial_\eta f | \le h^{1/2}| f ' | $. Now, $ \w{\xi} $ is not a weight for $ g^\sh $ near $ \st $ but $ f_0 \in S(MH^{1/2} h^{1/2}\w{\xi},G_0 ) $  where $ G_0 $ is given by~\eqref{G0def}.
\end{rem}

Since $G \le g^\sh \le
G^{\sigma}$ we find that the conditions ~\eqref{tempest}
and~\eqref{mtemp} are stronger than the property of being
${\sigma}$~temperate (in fact, it is strongly ${\sigma}$ ~temperate in the
sense of \cite[Definition~7.1]{bc:sob}), and imply that $ G $ is slowly varying and $ M $ is $ G $ continuous.
When $1 + |{\delta}| < H^{-1/2}/2$ we find from~\eqref{dfsymbolest} that $ |f' | > 0$ is a weight for~$ G $, $f' \in S(|f'|,G)$  and $f^{-1}(0)$ is a $C^\infty$ hypersurface. Since that surface does not depend on $ x $ we find that $ \min_x H^{1/2}$
gives an upper bound on the curvature of~$f^{-1}(0)$ by
~\eqref{dfsymbolest}. Proposition~\ref{mproplem} shows that
~\eqref{dfsymbolest} also holds for $k = 0$ when $1 + |{\delta}| \ll
H^{-1/2}$.

\begin{proof}
	First we note that if $ H^{-1/2} $  is Lipschitz continuous, then 
\begin{equation}\label{tempest0}
	H^{-1/2} (w_0) \ls H^{-1/2} (w) + C_1|w-w_0|
\end{equation}
     which gives \eqref{tempest} since $ G = Hg^\sh $.
	Next, we shall show that $H_1^{-1/2}$ is Lipschitz
	continuous. Since the first terms of ~\eqref{h2def} are Lipschitz continuous, we
	only have to prove that 
	\begin{equation*}
	{|f'|}/(|{f''}| + h^{1/4}|f'|^{1/2} +
	h^{1/2}) = E/D
	\end{equation*}
	is Lipschitz continuous. Since this is a local property, it suffices to prove 
	this when $|{\Delta}w| = |w-w_0| \le 1$. Then we have that 
	$D(w) \cong D(w_0)$, in fact $D^2 \cong h + h^{1/2}|f'| + |f''|^2$ so
	\begin{equation*}
	D^2(w) \le C(D^2(w_0) +  |f''(w_0)| h^{1/2} + h) \le C'D^2(w_0)
	\end{equation*}
	when $|{\Delta}w| \le 1$. We find that
	\begin{equation*}
	\left|{\Delta}\frac{E}{D} \right| = \left| \frac{E(w)}{D(w)} - \frac{E(w_0)}{D(w_0)} \right| \le
	\frac{|{\Delta}E|}{D(w)} + \frac{E(w_0)|{\Delta}D|}{D(w)D(w_0)} 
	\end{equation*}
	Taylor's formula gives that
	\begin{equation}\label{estDE}
	|{\Delta}E| \le (|f''(w)| + Ch^{1/2})|{\Delta}w| \le CD(w)
	\end{equation}
	when $|{\Delta}w| \le 1$. It remains to show that $E(w_0)|{\Delta}D| \le C
	D(w)D(w_0)|{\Delta}w|$, which is trivial if $E(w_0) = 0$.
	Else, we have
	\begin{equation*}
	|{\Delta}|f''|| \le Ch^{1/2}|{\Delta}w|  \le C D^2(w_0)|{\Delta}w| /E(w_0)
	\le C' D(w_0) D(w)||{\Delta}w|/E(w_0)
	\end{equation*}
	when $|{\Delta}w| \le 1$ since $h^{1/2} \le D^2/E$ and $D(w_0) \le CD(w)$. Finally, we have 
	\begin{multline*}
	h^{1/4}|{\Delta}|f'|^{1/2}| \le  h^{1/4}|{\Delta}E|/(|f'(w_0)|^{1/2} + |f'(w)|^{1/2}) \\
	\le C h^{1/4}|f'(w_0)|^{1/2}D(w) |{\Delta}w|/|f'(w_0)|   \le 
	C D(w_0) D(w)|{\Delta}w| /E(w_0)
	\end{multline*}
	when $|{\Delta}w| \le 1$ by ~\eqref{estDE}, which proves the Lipschitz continuity.

	Next, we study  the case when $1 + |{\delta}(w_0)| \le H^{-1/2}(w_0)/2$, then 
	$ H^{1/2}(w_0) \le 1/2$.
     Then we find from~\eqref{h2def} that
	\begin{equation} \label{h1dfest}
	|{f''(w_0)}| +h^{1/4}|f'(w_0)|^{1/2} + h^{1/2} 
	\le 2 H^{1/2}(w_0)|f'(w_0)| \le |f'(w_0)|
	\end{equation}
	which gives $|f'(w_0)| \ge h^{1/2}$,  $|f'(w_0)| \ge |{f''(w_0)}|$ and that $ h^{1/2} \le 4 H(w_0) 
	| f'(w_0) | $.  When $ | w - w_0 | \le cH^{-1/2}(w_0) $  we find from~\eqref{h1dfest} by using Taylors formula that
	$$
	|f'(w) - f'(w_0) | \le | f''(w_0) | cH^{-1/2}(w_0) + C_3h^{1/2}c^2 H^{-1}(w_0) \le (c + 4C_3c^2)	| f'(w_0) |
	$$
	which gives  $1/C \le   | f'(w)| /   |f'(w_0| \le C$ for small enough $ c > 0 $.
	Now~\eqref{dfsymbolest} follows from
	~\eqref{h1dfest} for $k = 2$. When $k \ge 3$ we have
	$$
	|f^{(k)}(w_0)| \le C_kh^{\frac {k-2}2} \le 4C_k 3^{k-3} |f'(w_0)| H^{\frac{k-1}2}
	$$
	since $h^{1/2} \le 4 H |f'(w_0)| $ by~\eqref{h1dfest} and $h^{(k-3)/2}
	\le 3^{k-3}H^{(k-3)/2}$ by~\eqref{H1hest}.
	
	Finally, we shall prove that ~$M$ is a weight for ~$G$.
	By Taylor's formula we have
	\begin{equation}\label{taylorest0}
	|{f^{(k)}(w)}| \le C_4\sum_{j=0}^{2-k}
	|{f^{(k+j)}(w_0)}| | w-w_0|^{j} + 
	C_4 h^{1/2}   | w-w_0|^{(3-k)} \qquad 0 \le k \le 2	
	\end{equation}
	thus we obtain that
	\begin{equation*}
	M(w) \le C_5 \sum_{k=0}^{2}|{f^{(k)}(w_0)}|( | w-w_0| + H^{-1/2}(w))^k
	+ C_5h^{1/2} (  | w-w_0| + H^{-1/2}(w))^3
	\end{equation*}
	By switching $w$ and $w_0$ in~\eqref{tempest0}  we find
	$H^{-1/2}(w) +  | w-w_0| \le C_0 (H^{-1/2}(w_0) +  | w-w_0|)$. Thus we
	obtain that
	\begin{multline*}
	M(w) \le C_6  \sum_{k=0}^{2}|{f^{(k)}(w_0)}| H^{-k/2}(w_0) (1 +
	H^{1/2}(w_0)  | w-w_0|)^k  \\ + 
	C_6h^{1/2}H^{-3/2}(w_0) (1 + H^{1/2}(w_0)  | w-w_0|)^3 \le C_6M(w_0) (1 +
	G_{w_0}(w-w_0))^{3/2} 
	\end{multline*}
	which gives ~\eqref{mtemp}.
	It is clear from the definition
	of $M$ that  
	$ 
	|{f^{(k)}}| \le MH^{k/2}$ when $k \le 2 
	$, 
	and when $k \ge 3$ we have
	$
	|{f^{(k)}}| \le C_kh^{\frac{k-2}{2}} \le C_k3^{k-3} M H^{\frac{k}{2}}
	$
	since  $ h^{1/2} \le MH^{3/2}$ and $h^{(k-3)/2} \le 3^{k-3}H^{(k-3)/2}$ when $k \ge 3$. 
\end{proof}

\begin{prop}\label{mproplem}
	We have that 
	\begin{equation}\label{Mcomp}
	1/C \le M /  (|{f''}| H^{-1} + h^{1/2}H^{-3/2}) \le C
	\end{equation}
	When  $|{\delta}| \le {\kappa}_0H^{-1/2}$ and $H^{1/2} \le {\kappa}_0$ for\/ $0 < {\kappa}_0  \le 1/4$ we find that
	\begin{equation} \label{Mcomp0}
	1/C_1 \le M/ |f'|H^{-1/2} \le C_1
	\end{equation}
	which implies that $f \in  S(H^{-1}, G)$ by~\eqref{dfest0}.
\end{prop}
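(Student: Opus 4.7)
The plan is to prove each comparability separately. For the first assertion, the bound $|f''|H^{-1} + h^{1/2}H^{-3/2} \le M$ is immediate from the definition of $M$, so I would focus on the reverse inequality, which reduces to bounding $|f|$ and $|f'|H^{-1/2}$ by $|f''|H^{-1} + h^{1/2}H^{-3/2}$. The estimate $|f'|H^{-1/2} \le 2|{f''}|H^{-1} + 3h^{1/2}H^{-3/2}$ follows at once from \eqref{dfest0} together with $H \le 1$ (so $H^{-1} \le H^{-3/2}$).

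The main obstacle is the bound on $|f|$, which I would handle by splitting into two cases according to the size of $|{\delta}(w_0)|$. If $|{\delta}(w_0)| = h^{-1/2}$, then $H^{-1/2} \ge 1 + |{\delta}| \gs h^{-1/2}$, so $h^{1/2}H^{-3/2} \gs h^{-1}$, which dominates $|f| \le Ch^{-1}$ since $f \in S(h^{-1}, g)$. If $|{\delta}(w_0)| < h^{-1/2}$, the key observation is that $X_0 \subseteq \set{f=0}$, which follows immediately from the definitions of $X_\pm$ by taking $s=t$ (this forces both $f(t,x,\tau,w)\le 0$ and $f(t,x,\tau,w) \ge 0$ for all $x$). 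Hence there exists $w_*$ sharing the $x$-coordinate of $w_0$, with $|w_0 - w_*|_{g^\sh} = |{\delta}(w_0)|$ and $f(w_*) = 0$. Taylor expanding $f$ around $w_0$ to second order and using $|f^{(3)}| \le Ch^{1/2}$ then yields
\begin{equation*}
|f(w_0)| \le |f'(w_0)|\,|{\delta}| + \tfrac{1}{2}|{f''(w_0)}|\,|{\delta}|^2 + C h^{1/2}|{\delta}|^3,
\end{equation*}
and each term is dominated by $|{f''}|H^{-1} + h^{1/2}H^{-3/2}$ using $|{\delta}| \le H^{-1/2}$ together with \eqref{dfest0}.

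For the second statement, the hypotheses $|{\delta}| \le {\kappa}_0 H^{-1/2}$ and $H^{1/2} \le {\kappa}_0 \le 1/4$ imply $1 + |{\delta}| \le H^{-1/2}/2$, since $H^{-1/2} \ge 1/{\kappa}_0 \ge 4$. Thus I can invoke \eqref{h1dfest}:
\begin{equation*}
|{f''}| + h^{1/4}|f'|^{1/2} + h^{1/2} \le 2H^{1/2}|f'|.
\end{equation*}
The first term gives $|{f''}|H^{-1} \le 2|f'|H^{-1/2}$, while squaring the middle term yields $h^{1/2} \le 4H|f'|$, whence $h^{1/2}H^{-3/2} \le 4|f'|H^{-1/2}$. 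Combining with the first part of the proposition already proved, $|f| \le C(|{f''}|H^{-1} + h^{1/2}H^{-3/2}) \le C'|f'|H^{-1/2}$, so altogether $M \le C''|f'|H^{-1/2}$. Finally, $f \in S(H^{-1}, G)$ follows since $|f'| \le CH^{-1/2}$ by \eqref{dfest0} gives $|f| \le CH^{-1}$, and \eqref{dfsymbolest} supplies the bounds on higher derivatives.
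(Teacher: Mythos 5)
Your proof is correct and follows essentially the same route as the paper: the lower bound on $M$ is trivial, the upper bound reduces to controlling $|f|$ via a Taylor expansion of $f$ about a nearby zero at distance $|\delta|$, and \eqref{Mcomp0} then follows from \eqref{h1dfest}. The only difference is expository: you spell out explicitly why $X_0 \subseteq f^{-1}(0)$ on each $x$-slice (the paper just says "as before"), and you split off the $|f'|H^{-1/2}$ term up front via \eqref{dfest0}, which makes the boundary case $|\delta| = h^{-1/2}$ a one-line observation rather than the paper's $M \cong h^{-1}$ computation; the mathematical content is identical.
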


\begin{proof}
	First note that  when $|{\delta}| \cong h^{-1/2}$ we have $H^{-1/2}
	\cong h^{-1/2}$, which gives $M \cong h^{-1}$ and proves~\eqref{Mcomp}
	in this case.  If $|{\delta}(w_0)| < h^{-1/2}$, then as before there
	exists $w \in f^{-1}(0)$ such that $|w-w_0| = |{\delta}(w_0)| \le
	H^{-1/2}(w_0)$. Since $f(w)=0$, Taylor's formula gives that 
	\begin{equation}\label{tayloref}
	|f(w_0)| \le |f'(w_0||{\delta}(w_0)|  + |{f''(w_0)}|
	|{\delta}(w_0)|^2/2 + Ch^{1/2}|{\delta}(w_0)|^3
	\end{equation}
	We obtain from ~\eqref{tayloref}
	and~\eqref{dfest0} that
	$$ M \le C  \big(|f'|H^{-1/2} + |{f''}| H^{-1} + h^{1/2}H^{-3/2}
	\big) \le C'  \left(|{f''}| H^{-1} + h^{1/2}H^{-3/2}\right) 
	\qquad\text{at ~$w_0$}
	$$
	which gives~\eqref{Mcomp} since the opposite estimate is trivial.
	
	If $|{\delta}| \le {\kappa}_0H^{-1/2} < h^{1/2}$ and $H^{1/2} \le {\kappa}_0$ with $ {\kappa}_0 \le 1/4 $
	then  $ \w{\delta} \le H^{-1/2}/2 $ so we we obtain by 
	 \eqref{dfsymbolest},  \eqref{h1dfest}  and \eqref{tayloref} that 
	$$ M
	\le C  \big(|f'|H^{-1/2} + |{f''}| H^{-1} + h^{1/2}H^{-3/2} \big)
	\le C'   | f ' |H^{-1/2} \qquad\text{at ~$w_0$}
	$$ 
	since we have as before $ h^{1/2} \le 4 H  | f ' | $ by  \eqref{h2def}. 
	This gives~\eqref{Mcomp0} since the opposite estimate is trivial, which completes the proof of the proposition.
\end{proof}

\begin{prop}\label{ffactprop}
	Let $H^{-1/2}$ be given by Definition~\ref{g1def} for $f \in
	S(h^{-1},g)$. There exists positive ${\kappa_1} $ and $ c  $ so that if\/
	$\w{{\delta}} = 1 + |{\delta}| \le {\kappa}_1H^{-1/2}$ at $ w_0 $ then
	\begin{equation}\label{ffactor}
	f = {\alpha}{\delta} \qquad \text{when $ | w - w_0 | \le c H^{-1/2}(w_0) $}
	\end{equation}
	where $0 \le {\alpha} \in S(MH^{1/2}, G)$, such that $ \alpha \ge {\kappa}_1MH^{1/2}  $,
	which implies that 
	${\delta} = f/{\alpha} \in S(H^{-1/2}, G)$.
\end{prop}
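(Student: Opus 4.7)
My plan is to establish the factorization in a ball $B = B(w_0, cH^{-1/2}(w_0))$ by identifying $X_0 \cap B$ with a smooth hypersurface, making $\delta$ smooth there, and then invoking Hadamard's lemma. I first fix $\kappa_1 > 0$ smaller than the constant $\kappa_0 = 1/4$ from Proposition~\ref{mproplem}; the hypothesis $\w\delta(w_0) \le \kappa_1 H^{-1/2}(w_0)$ then places us in the regime of Proposition~\ref{g1prop}, giving $|f'(w_0)| \ge h^{1/2}$, $|f'(w)| \cong |f'(w_0)|$ for $|w - w_0| \le cH^{-1/2}(w_0)$, and the subordination $|f''| \le H^{1/2}|f'|$ throughout $B$, while Proposition~\ref{mproplem} yields $MH^{1/2} \cong |f'|$ on $B$. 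Because the second-order terms of $f$ are then dominated by the linear ones on $B$, the set $\{f = 0\} \cap B$ is a smooth hypersurface along which $|\nabla f|$ is bounded below, with second fundamental form $\ls H^{1/2}$ and hence reach $\gs H^{-1/2}$.

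Next I identify $X_0 \cap B$ with $\{f = 0\} \cap B$. Choose $w_0^* \in X_0$ realising the distance, so $|w_0 - w_0^*| = |\delta(w_0)| \le \kappa_1 H^{-1/2}(w_0)$ and $w_0^* \in B$. Since $f \equiv 0$ along the leaf through $w_0^*$, one has $\partial_x f(w_0^*) = 0$, so $\nabla f(w_0^*)$ lies entirely in the leaf-space factor. Condition $\subr(\ol\Psi)$ in the form~\eqref{pcond0} forbids a $+\to-$ transition of $\sgn f$ as $t$ increases, which forces the $t$-component of $\nabla f(w_0^*)$ to be non-negative. The inclusion $X_0 \cap B \subset \{f = 0\} \cap B$ is automatic since $f \equiv 0$ on the leaves in $X_0$; for the reverse, given $w_1 \in \{f = 0\} \cap B$, Taylor expanding around $w_1$ and using $|f''| \le H^{1/2}|f'|$ shows that $f$ is negative in the $t$-past and positive in the $t$-future of $\pi(w_1)$, so no past $+$ or future $-$ can occur and $\pi(w_1) \in X_0$. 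Hence $\delta$ coincides with the signed $g^\sh$-distance to the smooth hypersurface $\{f = 0\} \cap B$, which is itself smooth on $B$ for $c$ small enough, with $|\nabla \delta| \equiv 1$.

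With $\delta$ smooth on $B$, Hadamard's lemma in tubular coordinates around $\{\delta = 0\}$ gives
\begin{equation*}
f(w) = \delta(w) \int_0^1 \partial_n f\bigl(\pi(w) + \theta \delta(w) n(\pi(w))\bigr)\, d\theta =: \alpha(w)\,\delta(w),
\end{equation*}
where $n = \nabla \delta$ is the unit normal and $\pi$ the nearest-point projection onto $\{\delta = 0\}$. On $\{\delta = 0\}$ we have $\alpha = |\nabla f| = |f'|$, and continuity together with $|f'| \cong MH^{1/2}$ on $B$ gives $\alpha \cong MH^{1/2}$ throughout $B$, yielding $\alpha \ge \kappa_1 MH^{1/2}$ after possibly shrinking $\kappa_1$. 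Differentiating under the integral sign, $|\alpha|^G_k$ is controlled by the $(k+1)$-st $G$-seminorm of $f$, so $\alpha \in S(MH^{1/2}, G)$, and then $\delta = f/\alpha \in S(H^{-1/2}, G)$ by the symbol calculus since $f \in S(M,G)$ and $1/\alpha \in S((MH^{1/2})^{-1}, G)$. The main obstacle is the identification $X_0 \cap B = \{f = 0\} \cap B$: the $+\to-$ prohibition of $\subr(\ol\Psi)$ is essential to pin down the sign of $\partial_t f$ at points of $X_0$ and to rule out isolated leaf-wise zeros of $f$ sitting inside $X_\pm$, which would otherwise force $\alpha$ to vanish.
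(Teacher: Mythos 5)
Your strategy is essentially the same as the paper's: both reduce to showing that $\delta$ is a smooth function on a $G$-ball around $w_0$ and then factor $f = \alpha\delta$ by a Hadamard-type division. The paper rescales to $z = H^{1/2}w$ and works with $F(z) = H^{1/2}f(H^{-1/2}z)/|f'(0)|$, then uses $(\delta_1,z')$ as local coordinates and Taylor's formula, which makes the symbol estimates for $\alpha$ automatic (everything is uniformly bounded in the rescaled picture and one rescales back at the end). You work directly in the original coordinates via tubular-neighborhood coordinates; this is fine, but the last paragraph's claim that differentiation under the integral immediately gives $\alpha \in S(MH^{1/2},G)$ does require controlling the derivatives of the nearest-point projection $\pi$ and the unit normal $n$, which is exactly what the paper's rescaling makes painless.

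The place where your argument has a genuine gap is the step you yourself flag as the main obstacle, namely the reverse inclusion in the identification $X_0 \cap B = \{f=0\}\cap B$. You write that given $w_1 \in \{f=0\}\cap B$, ``Taylor expanding around $w_1$ and using $|f''| \le H^{1/2}|f'|$ shows that $f$ is negative in the $t$-past and positive in the $t$-future of $\pi(w_1)$.'' This does not follow: Taylor expansion gives $f \approx \nabla f(w_1)\cdot(w-w_1)$ on the scale $|w-w_1|\ll H^{-1/2}$, but $\nabla f(w_1)$ (which is leaf-transverse since $\nabla_x f=0$ at zeros of $f$) may point in a $\tau$ or $\eta$ direction rather than in $t$, so the linearisation tells you nothing about the sign of $f$ as $t$ varies with the other base variables held fixed. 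More importantly, the conclusion $\pi(w_1)\in X_0$ requires first knowing that $f$ vanishes on the \emph{entire} leaf through $w_1$, not merely at $w_1$; if $f(w_1)=0$ but $f(t_1,x_2,\tau_1,\eta_1)>0$ for some $x_2$, then $\sgn f(\pi(w_1))=+1$ and $\pi(w_1)\in X_+$, not $X_0$. The Taylor step does not rule this out. The missing argument uses the constant-sign-on-leaves hypothesis directly: if $f(t_1,x_2,\tau_1,\eta_1)>0$, move $(t_1,\tau_1,\eta_1)$ a distance $\varepsilon$ in the direction $-\nabla_{(t,\tau,\eta)}f(w_1)/|\nabla_{(t,\tau,\eta)}f(w_1)|$; then $f$ becomes negative at $x_1$ but remains positive at $x_2$ by continuity, contradicting constancy of $\sgn f$ on that leaf. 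Only once $f\equiv 0$ on the leaf through $w_1$ does the nondecreasing property \eqref{pcond0} give $\sgn f \le 0$ in the past and $\ge 0$ in the future for \emph{all} $s$ (not just nearby ones), hence $\pi(w_1)\in X_0$. It is fair to say that the paper's own proof also asserts rather than proves the identification of $\delta$ with the signed distance to $f^{-1}(0)$ (and that $f^{-1}(0)$ is $x$-independent), so your recognition of the issue is a strength; it is the specific Taylor-expansion justification that does not hold up.
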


\begin{proof}
	Let ${\kappa}_0 > 0$ be
	given by Proposition~\ref{mproplem}. If
	${\kappa_1} \le {\kappa}_0$  and $  \w{\delta}\le \kappa_1 H^{-1/2} $ at $ w_0 $ then we find that
	$  |f'(w_0)| \cong
	M(w_0)H^{1/2}(w_0)$ by \eqref{Mcomp0}. We may change coordinates so that  $ w_0 = 0 $. Let $H^{1/2} = H^{1/2}(0)$ and $M = M(0)$, $w = H^{-1/2}z$ and
	$$
	F(z) = H^{1/2}f(H^{-1/2}z)/|f'(0)| \cong f(H^{-1/2}z)/M \in C^\infty
	$$ 
	Now ${\delta}_1(z) = H^{1/2}{\delta}(H^{-1/2}z)$ is the
	signed distance to $F^{-1}(0)$ in the $z$~coordinates which is constant in $ x $.  
	
	We have	$|F(0)| \le C_0$, $|F'(0)| = 1$, $|F''(0)|\le C_2$ and $|F^{(3)}(z)|
	\le C_3$, $\forall\, z$, by \eqref{dfsymbolest} in  Proposition~\ref{g1prop}. It is no restriction to assume that the coordinates $ z = (z_1, z') $ are chosen so that $\partial_{z'}F(0) = 0$, and then $|\partial_{z_1}F(z)| \ge c > 0$ in
	a fixed neighborhood of the origin. If $|{\delta}_1(0)| =
	|{\delta}(0)H^{1/2} |\le {\kappa}_1 \ll 1$ then $F^{-1}(0)$ is a
	$C^\infty$~manifold in this neighborhood, ${\delta}_1(z)$ is uniformly
	$C^\infty$ and $\partial_{z_1}{\delta}_1(z) \ge c_0 > 0$ in a fixed
	neighborhood of the origin. 
	
	By choosing $(\delta_1(z),z')$ as local
	coordinates and using Taylor's formula we find that $F(z) =
	{\alpha}_1(z){\delta}_1(z)$ for any $ x $ since $ F = 0 $ when $ \delta_1 = 0 $. Here $0  \le {\alpha}_1 \in C^\infty$  and $ \alpha_1 \ge C > 0 $ in a	fixed neighborhood of the origin.  
	Thus
	$$
	 f (w)= |f '(0)| H^{-1/2}\alpha_1(H^{1/2}w)\delta_1(H^{1/2}w) =  |f '(0)| \alpha_1(H^{1/2}w)\delta(w)
	$$
	when $ |w | \le c H^{-1/2} $. Now  $ \alpha(w) =  |f '(0)| \alpha_1(H^{1/2}w) \in S(MH^{1/2}, G) $ with $   \alpha  \cong MH^{1/2}$ which gives the proposition.
\end{proof}

\section{Properties of the symbol}\label{local}

In this section we shall study the properties of the symbol near
the sign changes. 
We shall follow Section~ in \cite{de:X} with some minor changes,
and we shall start with a one-dimensional result.

\begin{lem} \label{lem1}
	Assume that $f(t) \in C^3(\br)$ such that
	$\mn{f^{(3)}}_\infty = \sup_t |{f^{(3)}(t)}|$ is bounded. If
	\begin{equation}\label{fsgncond0}
	\sgn(t)f(t) \ge 0\qquad\text{when ${\varrho}_0 \le |t|
		\le {\varrho}_1$}
	\end{equation} 
	for ${\varrho}_1 \ge 3{\varrho}_0 >0$, then we find
	\begin{align}
	&|f(0)| \le \frac{3}{2}\left({\varrho}_0 f'(0) + 
	{\varrho}_0^3\mn{f^{(3)}}_\infty /2\right) \label{f0est}\\
	&|f''(0)| \le f'(0)/{\varrho}_0 + 7
	{\varrho}_0 \mn{f^{(3)}}_\infty/6 \label{d2fest}
	\end{align}
\end{lem}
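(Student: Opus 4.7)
The plan is to apply Taylor's formula with Lagrange remainder around $t=0$ at the four points $\pm\varrho_0$ and $\pm 3\varrho_0$ (the outer pair is admissible because the hypothesis $\varrho_1 \ge 3\varrho_0$ ensures $3\varrho_0 \in [\varrho_0,\varrho_1]$) and to exploit the sign condition~\eqref{fsgncond0}. Write $M = \mn{f^{(3)}}_\infty$; for $j\in\{1,3\}$ Taylor's formula gives
\[
f(\pm j\varrho_0) = f(0) \pm j\varrho_0\,f'(0) + \frac{j^2\varrho_0^2}{2}f''(0) \pm \frac{j^3\varrho_0^3}{6}f'''(\xi_{\pm,j}),
\]
with $|f'''(\xi_{\pm,j})|\le M$. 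Since $f(j\varrho_0)\ge 0$ and $f(-j\varrho_0)\le 0$, combining the two resulting inequalities yields
\[
\left| f(0) + \frac{j^2\varrho_0^2}{2}f''(0) \right| \le A_j := j\varrho_0 f'(0) + \frac{j^3\varrho_0^3 M}{6}, \qquad j=1,3.
\]
Subtracting the two $j=1$ expansions gives $f(\varrho_0)-f(-\varrho_0) = 2\varrho_0 f'(0)+O(\varrho_0^3 M)$, and the left-hand side is $\ge 0$, so $f'(0)\ge -\varrho_0^2 M/6$ and hence $A_1,A_3\ge 0$, which makes the absolute-value estimates meaningful.

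The inner and outer bounds just obtained involve $f(0)$ and $f''(0)$ with coefficient vectors $(1,\varrho_0^2/2)$ and $(1,9\varrho_0^2/2)$, so both unknowns can be isolated by an elementary linear combination. Indeed
\[
9\bigl(f(0) + \tfrac{\varrho_0^2}{2}f''(0)\bigr) - \bigl(f(0) + \tfrac{9\varrho_0^2}{2}f''(0)\bigr) = 8f(0),
\]
\[
\bigl(f(0) + \tfrac{9\varrho_0^2}{2}f''(0)\bigr) - \bigl(f(0) + \tfrac{\varrho_0^2}{2}f''(0)\bigr) = 4\varrho_0^2 f''(0),
\]
so the triangle inequality applied to the two bounds above gives
\[
|8f(0)| \le 9 A_1 + A_3 = 12\varrho_0 f'(0) + 6\varrho_0^3 M,
\qquad
|4\varrho_0^2 f''(0)| \le A_1 + A_3 = 4\varrho_0 f'(0) + \frac{14\varrho_0^3 M}{3}.
\]
Dividing by $8$ and by $4\varrho_0^2$ yields \eqref{f0est} and \eqref{d2fest}.

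There is no substantial obstacle: the only choice that is not forced is the selection of the second pair of evaluation points $\pm 3\varrho_0$ (the hypothesis $\varrho_1\ge 3\varrho_0$ is precisely what is needed to invoke the sign condition there), and the fact that the coefficient of $f''(0)$ scales as $j^2$ makes the $9\!:\!(-1)$ combination kill $f''(0)$ while the $(-1)\!:\!1$ combination kills $f(0)$.
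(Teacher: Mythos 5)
Your proof is correct and follows essentially the same route as the paper: Taylor's formula about $0$ evaluated at $\pm\varrho_0$ and $\pm 3\varrho_0$ (admissible since $\varrho_1\ge 3\varrho_0$) together with the sign condition gives the two-sided bound $|f(0)+t^2 f''(0)/2|\le f'(0)|t|+\mn{f^{(3)}}_\infty |t|^3/6$ for $|t|\in\{\varrho_0,3\varrho_0\}$, from which both estimates follow by taking linear combinations. The paper isolates $f''(0)$ by the same subtraction and then obtains the $f(0)$ bound by substituting the $f''(0)$ estimate back into the $|t|=\varrho_0$ inequality, whereas you use the direct $9:(-1)$ combination; these are arithmetically equivalent and yield identical constants.
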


\begin{proof}
	By Taylor's formula we have
	\begin{equation*}
	0 \le \sgn({t})f({t}) = |{t}| f'(0) +
	\sgn({t})(f(0) +f''(0){t}^2/2) + R({t}) \qquad
	{\varrho}_0 \le |{t}| \le {\varrho}_1
	\end{equation*} 
	where $|R({t})| \le  \mn{f^{(3)}}_\infty |{t}|^3/6$. We obtain that
	\begin{equation}\label{5.2a}
	\left|f(0)+ t^2 f''(0)/2\right| \le
	f'(0)|{t}| + \mn{f^{(3)}}_\infty |{t}|^3/6
	\end{equation} 
	for any $|{t}| \in [{\varrho}_0, {\varrho}_1]$. 
	By choosing
	$|{t}| = {\varrho}_0$ and $|{t}| = 3{\varrho}_0$, we obtain that
	$$4{\varrho}_0^2 |f''(0)| \le 
	4 f'(0){\varrho}_0 + 28 \mn{f^{(3)}}_\infty {\varrho}_0^3/6
	$$ 
	which gives~\eqref{d2fest}. By letting $|t| ={\varrho}_0$  in
	~\eqref{5.2a} and substituting ~\eqref{d2fest}, we obtain~\eqref{f0est}. 
\end{proof}

\begin{prop}\label{newestprop}
	Let $f (w) \in C^\infty(\br^n)$ such that
	$\mn{f^{(3)}}_\infty < \infty$. Assume that there exists
	$0 <{\varepsilon}\le r/5 $ such that 
	\begin{equation}\label{newlemass}
	\sgn(w_1)f(w) \ge 0\quad\text{when $ |w_1| \ge {\varepsilon} +
		|w'|^2/r$ and $|w| \le r$}
	\end{equation}
	where $w= (w_1,w')$. Then we obtain
	\begin{equation}
	\label{ddfest}
	|{f''(0)}| \le 33(|\partial_{w_1}f(0)|/{\varrho} + {\varrho} \mn{f^{(3)}}_\infty )
	\end{equation}
	for any ${\varepsilon} \le {\varrho} \le r/\sqrt{10}$.
\end{prop}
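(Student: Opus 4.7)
The plan is to reduce the multidimensional estimate to one-dimensional applications of Lemma~\ref{lem1} by slicing parallel to the $w_1$-axis, then use an interpolation argument to extract the transverse and mixed components of the Hessian.

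First, along the $w_1$-axis (i.e.\ $w' = 0$) the hypothesis~\eqref{newlemass} reduces to $\sgn(t)\,f(t,0) \ge 0$ for $\varepsilon \le |t| \le r$. Since $\varrho \le r/\sqrt{10} < r/3$, Lemma~\ref{lem1} applies with $\varrho_0 = \varrho$ and $\varrho_1 = r$ (so $\varrho_1 \ge 3\varrho_0$) and directly yields
\[
|\partial_{w_1}^2 f(0)| \le |\partial_{w_1}f(0)|/\varrho + (7/6)\varrho\,\mn{f^{(3)}}_\infty.
\]
This already has the form of the desired estimate for one entry of the Hessian.

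Next, for each fixed $w'$ with $|w'|$ small the slice $g_{w'}(t) = f(t, w')$ satisfies the sign condition on $|t|\in[\varepsilon+|w'|^2/r,\sqrt{r^2-|w'|^2}]$, and the ratio condition $\varrho_1 \ge 3\varrho_0$ for Lemma~\ref{lem1} remains valid as long as $|w'|$ is a fixed small multiple of $\sqrt{r\varrho}$. Applying Lemma~\ref{lem1}~\eqref{f0est} with $\varrho_0(w') = \varepsilon + |w'|^2/r$ gives a pointwise bound on $F(w') := f(0,w')$ in terms of $|\partial_{w_1}f(0,w')|$ and $\mn{f^{(3)}}_\infty$. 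I would then use the second-difference identity
\[
\mu^2\langle \partial_{w'}^2 f(0) v',v'\rangle = F(\mu v') + F(-\mu v') - 2F(0) + O\bigl(\mu^3 \mn{f^{(3)}}_\infty\bigr),\qquad |v'| = 1,
\]
with $\mu$ a fixed multiple of $\varrho$, to extract the transverse block of the Hessian. Since Taylor's theorem gives
\[
|\partial_{w_1}f(0,w')| \le |\partial_{w_1}f(0)| + |w'|\,|f''(0)| + \tfrac{|w'|^2}{2}\mn{f^{(3)}}_\infty,
\]
substitution produces an inequality in which $|f''(0)|$ reappears on the right-hand side with coefficient of order $\varrho/\mu$; choosing $\mu$ a sufficiently large multiple of $\varrho$ makes this coefficient $<1$ and allows absorption. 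The mixed partials $\partial_{w_1}\partial_{w_j}f(0)$ are treated by the same slicing, now using the difference $g'_{\mu e_j}(0) - g'_{-\mu e_j}(0) = 2\mu\,\partial_{w_1}\partial_{w_j}f(0) + O(\mu^2\mn{f^{(3)}}_\infty)$ and bounding each $g'_{\pm\mu e_j}(0)$ from the Lemma~\ref{lem1} estimate applied to the slice.

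The main obstacle is this absorption step, since the quantity $|f''(0)|$ that we are bounding also appears on the right-hand side of the interpolation identity through the Taylor control of $\partial_{w_1}f(0,w')$. The twin hypotheses $\varepsilon\le r/5$ and $\varrho\le r/\sqrt{10}$ enter precisely to make this work: they ensure that the interpolation scale $\mu = c\varrho$ can be chosen large enough to beat the self-referential term while remaining inside the range where Lemma~\ref{lem1} can be applied to the slices (i.e.\ with $\varrho_1 \ge 3\varrho_0$ on each slice). Combining the estimates for $\partial_{w_1}^2f(0)$, $\partial_{w_1}\partial_{w_j}f(0)$ and $\partial_{w'}^2f(0)$ through the block decomposition of the Hessian and the triangle inequality produces the claimed bound; the explicit constant $33$ comes from tracking the factors $3/2$ and $7/6$ from Lemma~\ref{lem1} together with the factor $2$ from the second-difference interpolation and the optimal choice of $\mu/\varrho$.
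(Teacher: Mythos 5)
Your strategy of slicing parallel to the $w_1$-axis and applying Lemma~\ref{lem1} on each slice is the same starting point as the paper, and your direct estimate for $\partial_{w_1}^2 f(0)$ is fine. The rest of the plan has two genuine gaps.

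First, the bootstrap for the mixed block is not well-founded. You want to bound $g'_{\pm\mu e_j}(0) = \partial_{w_1}f(0,\pm\mu e_j)$ ``from the Lemma~\ref{lem1} estimate applied to the slice,'' but Lemma~\ref{lem1} does not bound the first derivative of the slice; it bounds $|f(0)|$ and $|f''(0)|$ \emph{in terms of} $f'(0)$, with $f'(0)$ appearing on the right-hand side. So there is no direct upper bound on $\partial_{w_1}f(0,w')$ available from the lemma. The trick the paper uses is that the nonnegativity of $|f(0,w')|$ in~\eqref{f0est0} with $\varrho = \varrho_0(w')$ forces the \emph{one-sided} bound $0 \le \partial_{w_1}f(0,w') + 2\mn{f^{(3)}}_\infty |w'|^2$ (inequality~\eqref{nyest1}). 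Substituting the Taylor expansion of $w'\mapsto \partial_{w_1}f(0,w')$ into this one-sided inequality and then \emph{optimizing over the direction} of $w'$ (at fixed $|w'|$) isolates $|\partial_{w'}\partial_{w_1}f(0)|$ on the left and $\partial_{w_1}f(0)$ on the right, with no Hessian term remaining. This is a non-circular argument, and it is the missing idea in your proposal.

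Second, the absorption step for the transverse block does not close numerically. Tracking the constants: from~\eqref{f0est} each of $F(\pm\mu v')$ carries the factor $\tfrac{3}{2}\varrho_0(\mu)$ in front of $\partial_{w_1}f(0,\pm\mu v')$, and the Taylor term $\mu|f''(0)|$ sits inside. After summing $F(\mu v')+F(-\mu v')$ and dividing by $\mu^2$, the coefficient in front of $|f''(0)|$ is $3\varrho_0(\mu)/\mu = 3(\varepsilon/\mu + \mu/r)$. Over the allowed ranges $\varepsilon\le r/5$ and $\mu \le r/\sqrt{10}$ this is always $\ge 6/10^{1/4} > 3$, so the self-referential term cannot be absorbed; the claim that ``choosing $\mu$ a sufficiently large multiple of $\varrho$ makes this coefficient $<1$'' is not correct under the stated hypotheses. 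The paper sidesteps this entirely: after step one has produced the bound~\eqref{dd1fest} on the mixed block without circularity, it substitutes that bound back into~\eqref{f0est0} to get a bound~\eqref{nyest2} on $|f(0,w')|$ that no longer involves the Hessian, and only \emph{then} uses the even-part second-difference identity to extract $|\partial_{w'}^2 f(0)|$. The ordering of the steps is what eliminates the circularity, and that ordering is not present in your proposal.
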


\begin{proof}
	We shall consider the function $t \mapsto f(t,w')$ which satisfies
	~\eqref{fsgncond0} for fixed $w'$ with 
	$${\varepsilon} + |w'|^2/r = {\varrho}_0(w') \le |t| \le
	{\varrho}_1 \equiv 3r/\sqrt{10}
	$$ and $|w'| \le r/\sqrt{10}$ which we assume in what follows. In
	fact, then $t^2 + |w'|^2 \le r^2$ and $3{\varrho}_0(w') \le 9r/10 \le
	3r/\sqrt{10} = {\varrho}_1$. We obtain from \eqref{f0est} and~\eqref{d2fest}
	that
	\begin{align}
	\label{f0est0}
	&|f(0,w')| \le   \frac{3}{2}\partial_{w_1}f(0,w'){\varrho} +
	3{\varrho}^3 \mn{f^{(3)}}_\infty/4\\
	\label{d2fest0}
	&|\partial_{w_1}^2f(0,w')| \le \partial_{w_1}f(0,w')/{\varrho} +
	7{\varrho} \mn{f^{(3)}}_\infty/6\qquad \text{}
	\end{align}
	for ${\varepsilon} + |w'|^2/r \le
	{\varrho} \le  r/\sqrt{10} $ and $|w'| \le r/\sqrt{10}$.
	By letting $w' =0$
	in~\eqref{d2fest0} we find that
	\begin{equation}\label{d2w1fest}
	|\partial_{w_1}^2f(0)| \le \partial_{w_1}f(0)/{\varrho}  +
	7{\varrho} \mn{f^{(3)}}_\infty/6
	\end{equation} 
	for ${\varepsilon} \le {\varrho} \le r/\sqrt{10}$.
	By letting ${\varrho} = {\varrho}_0(w')$ in~\eqref{f0est0} and dividing
	with $3{\varrho}_0(w')/2$, we obtain that 
	\begin{equation}\label{nyest1}
	0 \le \partial_{w_1}f(0,w') + 2\mn{f^{(3)}}_\infty |w'|^2   
	\end{equation}
	when ${\varepsilon} \le |w'| \le r/\sqrt{10}$ since then ${\varrho}_0(w') \le
	{\varepsilon} + |w'| \le 2|w'|$.  By using Taylor's formula for $w'
	\mapsto \partial_{w_1}f(0,w')$ in ~\eqref{nyest1}, we find that 
	\begin{equation*}
	0 \le \partial_{w_1}f(0) + \w{w',\partial_{w'}(\partial_{w_1}f)(0)} +
	\frac{5}{2} \mn{f^{(3)}}_\infty |w'|^2 \qquad\text{} 
	\end{equation*}
	when ${\varepsilon} \le |w'| \le r/\sqrt{10}$. Thus, by optimizing over fixed
	~$|w'|$, we obtain that
	\begin{equation}\label{dd1fest}
	|w'||\partial_{w'}(\partial_{w_1}f)(0)| \le  \partial_{w_1}f(0) +
	\frac{5}{2} \mn{f^{(3)}}_\infty |w'|^2  \qquad\text{when ${\varepsilon} \le |w'| \le r/\sqrt{10}$} 
	\end{equation}
	By again putting ${\varrho} = {\varrho}_0(w')$ in~\eqref{f0est0},
	using Taylor's formula for $w' \mapsto \partial_{w_1}f(0,w')$ 
	but this time substituting ~\eqref{dd1fest}, we obtain 
	\begin{equation}\label{nyest2}
	|f(0,w')| \le 6\partial_{w_1}f(0)|w'| + 15
	\mn{f^{(3)}}_\infty |w'|^3 \qquad\text{when ${\varepsilon} \le |w'|
		\le r/\sqrt{10}$}
	\end{equation}
	We may also estimate the even terms in Taylor's formula by
	~\eqref{nyest2}:
	\begin{equation*}
	\begin{split}
	|f(0) + \w{\partial_{w'}^2f(0)w',w'}/2| \le \frac{1}{2}|f(0,w') +
	f(0,-w')| + \mn{f^{(3)}}_\infty |w'|^3/6 \\ \le
	6\partial_{w_1}f(0)|w'| + \frac{91}{6} \mn{f^{(3)}}_\infty |w'|^3
	\end{split}
	\end{equation*}
	when ${\varepsilon} \le |w'| \le r/\sqrt{10}$. Thus, by
	using~\eqref{f0est0} with ${\varrho} = {\varepsilon}$ and $w' = 0$ to
	estimate ~$|f(0)|$ and optimizing over fixed ~$|w'|$, we obtain that
	\begin{equation}\label{dfddfest}
	|{\partial_{w'}^2f(0)}||w'|^2/2 \le \frac{15}{2}
	|\partial_{w_1}f(0)||w'| + 16 \mn{f^{(3)}}_\infty
	|w'|^3
	\end{equation}
	when ${\varepsilon} \le |w'| \le r/\sqrt{10}$. Thus we obtain ~\eqref{ddfest}
	by taking ${\varepsilon} \le |w'| = {\varrho} \le r/\sqrt{10}$ in
	~\eqref{d2w1fest}--\eqref{dfddfest}.
\end{proof}

As before, if $f \in C^\infty(\br^n)$ then we define the {\em signed
	distance function} of $f$ as ${\delta} = \sgn(f)d$ where $d$ is the
Euclidean distance to $f^{-1}(0)$.

\begin{prop}\label{betalemma}
	Let $f_j(w) \in C^\infty(\br^n)$ and \/ ${\delta}_j(w)$ be the signed distance
	functions of $f_j(w)$, for $j = 1$, $2$. Assume that $ f_1(w) > 0
	\implies f_2(w) \ge 0$. There exists positive $c_0 $ and $c_1$, such
	that if\/ $|{\delta}_j(w_0)| \le c_0$, $|f'_j(w_0)| \ge c_1$,  for $j =
	1$, $2$, and
	\begin{equation} 
	\label{d0var}
	|{\delta}_1(w_0) -{\delta}_2(w_0)| = {\varepsilon} 
	\end{equation} 
	then there exist $g^\sh$~~ orthonormal coordinates $w= (w_1,w')$ so that
	$w_0 = (x_1,0)$ with $x_1 = {\delta}_1(w_0)$ and
	\begin{align} 
	&\sgn(w_1)f_j(w) > 0\quad\text{when $|w_1| \ge ({\varepsilon}+ |w'|^2)/c_0
		$ and $|w| \le c_0$}\qquad j= 1,\ 2 \label{betapmdef}\\
	&|{\delta}_2(w) - {\delta}_1(w)| \le ({\varepsilon} + |w-w_0|^2)/c_0\qquad
	\text{when $|w| \le c_0$} \label{Dest}
	\end{align}
	The constant $c_0$ only depends on the seminorms of $f_1$ and $f_2$ in
	a fixed neighborhood of ~$w_0$.
\end{prop}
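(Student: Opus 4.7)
The approach is to introduce local coordinates in which both $f_j^{-1}(0)$ become smooth graphs over a common hyperplane, turn the hypothesis $f_1 > 0 \Rightarrow f_2 \ge 0$ into a pointwise inequality between these graphs, and extract the quantitative estimates by a Taylor-type lemma for nonnegative functions.

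For $c_0$ small enough, continuity propagates $|f_j'(w_0)| \ge c_1$ to $|f_j'| \ge c_1/2$ on the ball $B = \{|w - w_0| \le 4c_0\}$, so both $f_j^{-1}(0)$ are $C^\infty$ hypersurfaces of uniformly bounded curvature there. I would pick $g^\sh$-orthonormal coordinates centred at the foot of the perpendicular from $w_0$ to $f_1^{-1}(0)$, with $e_1$ pointing from $0$ toward $w_0$; then $w_0 = (x_1, 0)$ with $x_1 = {\delta}_1(w_0)$, $\nabla f_1(0)$ is a positive multiple of $e_1$, and by the implicit function theorem $f_1^{-1}(0)$ is the graph $w_1 = g_1(w')$ with $g_1(0) = g_1'(0) = 0$ and $|g_1''|$ uniformly bounded, while $f_2^{-1}(0)$ is a graph $w_1 = g_2(w')$ with $|g_2''|$ bounded. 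A short argument, by looking at points with $w_1$ large within the ball, forces $\partial_{w_1} f_2 > 0$ near the origin, since otherwise $\{f_2 < 0\}$ would overlap $\{f_1 > 0\}$, contradicting the hypothesis. Hence $f_j > 0$ is equivalent to $w_1 > g_j(w')$ for both $j$, and the hypothesis translates to the pointwise inequality $g_2 \le g_1$, so in particular $g_2(0) \le 0$.

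The standard graph-distance identity then gives $|\delta_2(w_0)| = |x_1 - g_2(0)|/\sqrt{1 + g_2'(0)^2} + O(|\delta_2(w_0)|^2)$, and since $x_1 - \delta_2(w_0) = \delta_1(w_0) - \delta_2(w_0) = \pm \varepsilon$, one obtains $|g_2(0) \mp \varepsilon| = O(c_0 \, g_2'(0)^2)$ from the crude bound $|\delta_2(w_0)| \le c_0$. The elementary Taylor lemma -- if $u \ge 0$ is $C^2$ with $|u''| \le M$ on $[-r, r]$ then $|u'(0)| \le 2\sqrt{M u(0)}$, obtained by optimising $0 \le u(\pm r) \le u(0) \pm u'(0) r + M r^2/2$ over $r$ -- applied to $g_1 - g_2 \ge 0$ gives $|g_2'(0)| \le C\sqrt{|g_2(0)|}$ using $g_1'(0) = 0$. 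A short bootstrap then forces $|g_2(0)| \cong \varepsilon$ and therefore $|g_2'(0)| \le C\sqrt{\varepsilon}$.

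Taylor expansion of the nonnegative function $g_1 - g_2$ next yields
\begin{equation*}
0 \le (g_1 - g_2)(w') \le \varepsilon + C\sqrt{\varepsilon}\,|w'| + C|w'|^2 \le C'(\varepsilon + |w'|^2),
\end{equation*}
where the last step uses $2\sqrt{\varepsilon}\,|w'| \le \varepsilon + |w'|^2$. Combined with the estimate $\delta_j(w) = w_1 - g_j(w') + O(|w|^2)$ valid for a graph of bounded second derivative, this gives \eqref{Dest} after noting $|w - w_0|^2 \ge |w'|^2$. Finally \eqref{betapmdef} follows from $|g_j(w')| \le |g_j(0)| + C|w'|^2 \le \varepsilon + C|w'|^2$: for $|w_1| \ge (\varepsilon + |w'|^2)/c_0$ with $c_0$ sufficiently small, $|w_1| > |g_j(w')|$, forcing $\sgn(w_1)(w_1 - g_j(w')) > 0$ and hence $\sgn(w_1) f_j > 0$. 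The main technical nuisance will be the bootstrap in the third step, together with tracking that all constants depend only on the $C^3$-seminorms of $f_1, f_2$ in a fixed neighborhood of $w_0$.
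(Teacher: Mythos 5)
Your approach is essentially the paper's: the geometric setup (foot-of-perpendicular coordinates, both zero sets as graphs $w_1 = g_j(w')$ over the same hyperplane, the sign hypothesis becoming $g_2 \le g_1$) and the key analytic ingredient are the same. Your "elementary Taylor lemma" is precisely Glaeser's inequality, which the paper invokes as \cite[Lemma 7.7.2]{ho:yellow}. The difference is \emph{where} the lemma is applied. The paper applies it directly to $\Delta = \delta_2 - \delta_1$, which is nonnegative, uniformly $C^2$, and equals $\varepsilon$ at $w_0$; this gives $|\Delta'(w_0)| \le C\sqrt{\varepsilon}$ at once, and \eqref{Dest} then follows from Taylor expansion of $\Delta$ about $w_0$ with no need to compare $\delta_j$ to the graph functions. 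You apply it to $g_1 - g_2$ and then translate to the $\delta_j$; this also works, but the translation step needs more precision than you give it.

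The imprecise step is the claim that $\delta_j(w) = w_1 - g_j(w') + O(|w|^2)$ with an absolute constant in the $O$. If taken literally, this produces an error of size $|w_0|^2 = \delta_1(w_0)^2$, which can be as large as $c_0^2$ and is not controlled by $(\varepsilon + |w-w_0|^2)/c_0$ at $w = w_0$ once $\varepsilon \ll c_0^3$. What is actually true, and what the argument needs, is
\begin{equation*}
\delta_j(w) = \frac{w_1 - g_j(w')}{\sqrt{1 + |g_j'(w')|^2}} + O\bigl((w_1 - g_j(w'))^2\,|g_j'(w')|^2\bigr);
\end{equation*}
the extra factor $|g_j'(w')|^2 = O(\varepsilon + |w'|^2)$ in the correction, together with $g_1'(0) = 0$, $|g_2'(0)| = O(\sqrt\varepsilon)$, is what places the error in the class $O\bigl(c_0(\varepsilon + |w'|^2)\bigr)$ and makes the subtraction $\delta_2 - \delta_1$ come out right. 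Once this sharpening is supplied the proof closes, and it amounts to the paper's argument carried out on $g_1 - g_2$ rather than on $\delta_2 - \delta_1$; the latter choice is slightly shorter because it avoids the distance-to-graph bookkeeping entirely.
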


\begin{proof}
	Observe that the conditions get stronger and the conclusions weaker
	when ~$c_0$ decreases. Assume that $f_1$ and ~$f_2$ are uniformly bounded in ~$C^\infty$ 
	near ~$w_0$.
	For any  positive $ c_0 $ and $ c_1 $  there exists $ c_2 > 0 $ so that if $|f'_j(w_0)| \ge c_1$ and $|{\delta}_j(w_0)|
	\le c_0 $, $j = 1$, $2$ then 
	$|f'_j(w)| >0$ for $|w- w_0| \le c_2$, thus $f_j^{-1}(0)$ is a
	$C^\infty$ hypersurface in $|w- w_0| \le c_2$. By decreasing~$c_0$ we obtain as in the proof of
	Proposition~\ref{ffactprop} that there exists $c_3 > 0$ so that $w
	\mapsto {\delta}_j(w) \in C^\infty(\br^{n})$ uniformly in $|w- w_0|
	\le c_3$, $j = 1$,~2.  We may also choose $z_0 \in f_1^{-1}(0)$ so
	that $|{\delta}_1(w_0)| = |w_0 -z_0|$, and then choose $g^\sh $
	~~orthonormal coordinates so that $z_0= 0$, $w_0 =
	({\delta}_1(w_0),0)$ and $\partial_{w'}{\delta}_1(0)
	= \partial_{w'}{\delta}_1(w_0) = 0$, $w = 
	(w_1,w')$.  If $c_0 \le c_3/3$ we find that ${\delta}_j
	\in C^\infty$ in $|w| \le c_4 = 2c_3/3$. Since $\sgn(f_1(w_0)) = \sgn
	({\delta}_1(w_0))$ we find that $\partial_{w_1}f_1(0) > 0$.
	
	We have that $|{\partial_w^2{\delta}_j(w)}| \le C_0$ for $|w| \le
	c_4$, $j = 1$, 2, and ${\Delta}(w) = {\delta}_2(w) - {\delta}_1(w) \ge
	0$ by the sign condition. 
	By ~\cite[ Lemma~7.7.2]{ho:yellow} we obtain that $| \partial_w
	{\Delta}(w)|^2 \le  C_1{\Delta}(w) \le C_1{\varepsilon}$ when $w=w_0$
	by~\eqref{d0var}. This gives
	\begin{multline}\label{Dest0}
	|{\Delta}(w)| \le |{\Delta}(w_0)| + |\partial_{w}{\Delta}(w_0)||w-w_0| +
	C_2|w-w_0|^2\\ \le C_3({\varepsilon} + |w-w_0|^2)\quad\text{for $|w| \le c_4$}
	\end{multline}
	which proves~\eqref{Dest}.  Since $|\partial_{w'}{\delta}_1(w_0)| =0$ we
	find that $|\partial_{w'}{\delta}_2(w)| \le C_4(\sqrt{\varepsilon} +
	|w-w_0|) \ll 1$ when $|w-w_0| \ll 1$ and ${\varepsilon} \le 2c_0 \ll 1$.  Now $f_2(\ol
	w) =0$ for some $|\ol w| \le {\varepsilon} $. Thus for $c_0 \ll 1$ we obtain
	$|\partial_{w'}{\delta}_2(\ol w)| \ll 1$, which gives that
	$|\partial_{w_1}f_2(\ol w)| \ge c_5 |\partial_{w}f_2(\ol w)| \ge c_5^2
	> 0$ for some $c_5 > 0$. Since $\sgn(f_2(w_1,0)) = 1$ when $w_1 > 0$, 
	we obtain that $\partial_{w_1}f_2(w) \ge
	c_6|\partial_{w}f_2(w)| \ge c_6^2$ 
	when $|w| \le c_6$ for some $c_6 > 0$.
	
	By using the implicit function theorem, we obtain 
	$b_j(w') \in C^\infty $, so that that $\pm f_j(w)
	> 0$ if and only if $\pm(w_1 - b_j(w')) \ge 0$ when $|w| \le c_7 > 0$, $j
	= 1$, $2$. Since $f_1(0) = |\partial_{w'}f_1(0)| = 0$ we obtain that
	$b_1(0) = |b_1'(0)| = 0$. This gives that $ |b_1(w')| \le C_5|w'|^2 $
	and proves the positive part of ~\eqref{betapmdef} by the sign
	condition.  Observe that the sign condition gives that  $b_1(w') \ge b_2(w')$. 
	Now $|\delta_2(w_0)| \le |\delta_1(w_0)| + \varepsilon$, thus we find
	$-\varepsilon \le b_2(\ol w') \le b_1(\ol w')$ for some $|\ol w'| \le
	C\sqrt \varepsilon \le C\sqrt{2c_0} \le c_7$ for $c_0 \ll 1$. 
	This gives $b_2(\ol w') \le C_5C^2 \varepsilon$ and $|b_1'(\ol
	w')| \le C_6\sqrt \varepsilon$, and 
	we obtain as before that
	$ |b_1'(\ol w') - b_2'(\ol w')| \le C_7 \sqrt{{\varepsilon}}$. 
	As in ~\eqref{Dest0}, we find
	$$ 
	|b_2(w')| \le C_8({\varepsilon} + |w'- \ol w'|^2) \le
	C_9({\varepsilon} + |w'|^2) \qquad \text{for $|w| \le c_7$}
	$$ 
	which proves  the negative part of ~\eqref{betapmdef} and the proposition.
\end{proof}

\section{The Weight function}\label{weight}

In this section, we shall define the weight ~$m$ we shall use. 
 We shall follow Section 5 in \cite{de:X} with some necessary changes because of the different conditions and normal forms.
We shall use the same notation as in Section~\ref{symb}, and let
${\delta}(t, \tau, w)$, $H^{-1/2}(t,x,\tau,w)$ and $ M(t,x,\tau,w) $  be given by Definitions
~\ref{d0deforig}, \ref{g1def} and \ref{Mdef}  for $f(t,x,\tau, w) \in  S(h^{-1},
hg^\sh)$ satisfying condition~$ \subr(\ol \Psi) $ given by~\eqref{pcond0} such that $|f'| \le h^{-1/2}$. As before, we shall include $ \tau $ in $ w $ but $ t $ will be a parameter.

The weight~$m$ will essentially measure how much $t \mapsto
{\delta}(t,w)$ changes between the minima of $t \mapsto
H^{1/2}(t,x,w)\w{{\delta}(t,w)}^2$, which will give restrictions on
the sign changes of the symbol.  As before, we assume that we have
choosen $g^\sh$ orthonormal coordinates so that $g^\sharp(w) $ is the euclidean metric,
and the results will only depend on the
seminorms of~ $f$. 
The following definition uses that $ t \mapsto \delta(t,w) $ is nondecreasing and $ \delta $ is constant in~$ x $, and assumes that $ H $ is  only defined in $ |x | \le C $.

\begin{defn}\label{h0def}
	Let $ H_1(t,w) = \min_x H(t,x,w) $ and
	\begin{multline}\label{mphodef}
	m(t,w) = \inf_{t_1 \le t \le
		t_2} \big\{ {\delta}(t_2,w) - {\delta}(t_1,w) \\ +
	\max\big(H_1^{1/2}(t_1,w)\w{ {\delta}(t_1,w)}^2,
	H_1^{1/2}(t_2,w)\w{ {\delta}(t_2,w)}^2\big)/2 \,\big\}
	\end{multline}
	where $\w{{\delta}} = 1 + |{\delta}| \le H_1^{-1/2} =  \max_x H^{-1/2}$.  Thus $ m $ is constant on the leaves of $ \st $.
\end{defn}

This is actually Definition~5.1 in \cite{de:X} with $ H $ replaced by $ H_1 $. It will be important in the proof that this weight is constant on the leaves of $ \st $.

\begin{rem} 
	When $t \mapsto {\delta}(t,w)$ is constant for fixed ~$w$, we find
	that $t \mapsto m(t,w)$ is equal to the largest quasi-convex
	minorant of $t \mapsto H_1^{1/2}(t,w)\w{{\delta}(t,w)}^2/2$, i.e., $\sup_I
	m = \sup_{\partial I} m $ for compact intervals $I \subset \br$,
	see~\cite[Definition~1.6.3]{ho:conv}.
\end{rem}

\begin{rem}\label{mdefrem}
	One can also make a local definition of $ m $ by taking the infimum over $ -T \le t_1 \le t \le
	t_2 \le T $ in~\eqref{mphodef} for some $0 <  T \ll 1$. Then the results of this  section will hold when $ |t| \le T $. By making a translation in $ t $ we can of course define $ m $ in a neighborhood of any point.
\end{rem}

\begin{prop}\label{weightprop}
	We have that $m \in L^\infty $, such
	that $w \mapsto m(t,w)$ is uniformly Lipschitz continous, $\forall\, t$,
	and
	\begin{equation} \label{hhhest} 
	h^{1/2}\w{ {\delta}}^2/6 \le
	m \le H_1^{1/2}\w{ {\delta}}^2/2 \le H^{1/2}\w{ {\delta}}^2/2 \le \w{ {\delta}}/2 
	\end{equation}  
	We may for any $ (t_0,w_0) $ choose $t_1 \le t_0 \le t_2$
	so that 
	\begin{equation}\label{6.11a}
	\max_{j= 0,1,2}\w{{\delta}(t_j,w_0)} \le  2\min_{j=
		0,1,2}\w{{\delta}(t_j,w_0)} 
	\end{equation}
	and
	\begin{equation} \label{defH0}
	H_0^{1/2} = \max(H_1^{1/2}(t_1,w_0),\, H_1^{1/2}(t_2,w_0))
	\end{equation}
	satisfies
	\begin{equation}\label{6.6}
	H_0^{1/2} <
	16 m(t_0,w_0)/\w{{\delta}(t_j,w_0)}^2
	\qquad \text{for $j=0$, $1$, $2$} 
	\end{equation} 
	If $m(t_0,w_0)\le {\varrho}\w{{\delta}(t_0,w_0)}$ for ${\varrho} \ll 1$,
	then we may
	choose $g^\sh$ ~orthonormal coordinates so
	that $w_0 = (x_1,0)$, $|x_1| < 2\w{{\delta}(t_0,w_0)} <
	32 {\varrho}H_0^{-1/2}$, and
	\begin{align}
	\sgn(w_1)f(t_0,w) \ge 0\quad\text{when $|w_1| \ge (m(t_0,w_0) +
		H_0^{1/2}|w'|^2)/c_0$} \label{signref}\\
	|{\delta}(t_1,w) -{\delta}(t_2,w) | \le 
	(m(t_0,w_0) + H_0^{1/2}|w-w_0|^2)/c_0 \label{d0est}
	\end{align}
	when $|w| \le c_0 H_0^{-1/2}$.
	The constant $c_0$ only depends on the seminorms of $f$.
\end{prop}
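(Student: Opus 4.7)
I will adapt the argument of Proposition~5.3 in~\cite{de:X}, modified to account for the fact that here $H$ depends on the spatial variable $x$ while $\delta$ and $m$ do not, which is the reason Definition~\ref{h0def} uses $H_1 = \min_x H$ rather than $H$ itself.

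For the chain of inequalities~\eqref{hhhest} and the $L^\infty$ bound: the upper bound $m \le H_1^{1/2}\w\delta^2/2$ is obtained by taking $t_1 = t_2 = t$ in~\eqref{mphodef}; the remaining upper bounds follow from $H_1 \le H$ and from $H^{1/2}\w\delta \le 1$ (equivalent to $H^{-1/2} \ge 1 + |\delta|$ in~\eqref{h2def}). For the lower bound, \eqref{H1hest} gives $H_1^{1/2} \ge h^{1/2}/3$; since $t \mapsto \delta(t,w)$ is non-decreasing, whichever sign $\delta(t_0,w)$ has, at least one of $\w{\delta(t_j,w)}$, $j=1,2$, must dominate $\w{\delta(t_0,w)}$, and combined with $\delta(t_2)-\delta(t_1) \ge 0$ this yields $m(t_0,w) \ge h^{1/2}\w{\delta(t_0,w)}^2/6$. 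The uniform Lipschitz continuity of $w \mapsto m(t,w)$ follows by a near-optimality argument: for an $\varepsilon$-optimal pair $(t_1,t_2)$ for $m(t,w_0)$, evaluate the same expression at $w$ and use the 1-Lipschitz property of $w \mapsto \delta(s,w)$, the Lipschitz continuity of $H_1^{-1/2}$ (inherited from Proposition~\ref{g1prop} as a max of Lipschitz functions), and the bound $H_1^{1/2}\w\delta \le 1$, so that each term changes by $O(|w - w_0|)$ uniformly in $(t_1,t_2)$; letting $\varepsilon \to 0$ gives the bound.

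To select $(t_1,t_0,t_2)$ satisfying~\eqref{6.11a} and~\eqref{6.6}, take an $\varepsilon$-near-optimal pair in~\eqref{mphodef}. Since $m(t_0,w_0) \le \w{\delta(t_0,w_0)}/2$ by~\eqref{hhhest}, one has $\delta(t_2,w_0) - \delta(t_1,w_0) \le m(t_0,w_0) + \varepsilon$, and the monotonicity of $\delta$ in $t$ forces $\w{\delta(t_j,w_0)} \in [\tfrac12\w{\delta(t_0,w_0)}, \tfrac32\w{\delta(t_0,w_0)}]$ for $j = 1,2$, giving~\eqref{6.11a}. The near-optimality also yields $\tfrac12 H_0^{1/2}\max_j\w{\delta(t_j,w_0)}^2 \le m(t_0,w_0) + \varepsilon$, which combined with~\eqref{6.11a} and $\varepsilon \to 0$ gives $H_0^{1/2} < 16 m(t_0,w_0)/\w{\delta(t_j,w_0)}^2$, i.e.\ \eqref{6.6}.

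For the final assertion, the hypothesis $m(t_0,w_0) \le \varrho\w{\delta(t_0,w_0)}$ with $\varrho \ll 1$ combined with~\eqref{6.6} gives $H_0^{1/2}\w{\delta(t_j,w_0)} \le 32\varrho \ll 1$ for $j \in \{0,1,2\}$, placing each slice $f(t_j,\cdot)$ in the regime $\w\delta \le \kappa_1 H^{-1/2}$ of Proposition~\ref{ffactprop}. This yields factorizations $f(t_j,w) = \alpha_j(w)\,\delta(t_j,w)$ with $\alpha_j > 0$, so $\sgn f(t_j,\cdot) = \sgn \delta(t_j,\cdot)$ and each $f(t_j,\cdot)^{-1}(0)$ is a uniform $C^\infty$ hypersurface with curvature controlled by $H_0^{1/2}$. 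Condition $\subr(\ol\Psi)$ in~\eqref{pcond0} ensures $f(t_1,w) > 0 \Rightarrow f(t_2,w) \ge 0$, so Proposition~\ref{betalemma} applies with parameter $|\delta(t_1,w_0) - \delta(t_2,w_0)| \ls m(t_0,w_0)$; rescaling the resulting $g^\sh$-orthonormal coordinates by $H_0^{-1/2}$ yields~\eqref{signref} for $j = 1,2$ and the quadratic estimate~\eqref{d0est}, while the case $j = 0$ follows from $t_1 \le t_0 \le t_2$ and the fact that the sign structure at $t_0$ is squeezed between those at $t_1$ and $t_2$. The main obstacle is the Lipschitz continuity established in paragraph two, since $H^{1/2}\w\delta^2$ is not globally Lipschitz in $w$; one relies crucially on the infimum structure and the bound $H^{1/2}\w\delta \le 1$ to propagate Lipschitz control through the maximum and the product.
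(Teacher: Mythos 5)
Your first two paragraphs follow the paper's argument closely and are essentially correct, up to two imprecisions: the inclusion $\w{\delta(t_j,w_0)} \in [\tfrac12 a, \tfrac32 a]$ with $a = \w{\delta(t_0,w_0)}$ gives a ratio of $3$, not $2$, so it does not by itself yield~\eqref{6.11a} — one must also use that the spread $\max_j\w{\delta(t_j,w_0)} - \min_j\w{\delta(t_j,w_0)} \le |\delta(t_1,w_0)-\delta(t_2,w_0)| < a/2$ together with $\min_j \ge a/2$; and the near-optimality bound controls $\max_j H_1^{1/2}(t_j,w_0)\w{\delta(t_j,w_0)}^2$, not the decoupled product $H_0^{1/2}\max_j\w{\delta(t_j,w_0)}^2$ (the maxima may be attained at different $j$), so~\eqref{6.6} should be derived as the paper does: bound the $j^*$ at which $H_0^{1/2}$ is attained and then use~\eqref{6.11a} to transfer to the other indices.

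The third paragraph has a genuine gap. Proposition~\ref{betalemma} requires $|f'_j(w_0)| \ge c_1 > 0$ for the $w$-slices, i.e.\ the gradient of $w \mapsto f(t_j,x_j,w)$ (after rescaling by $H_0^{-1/2}$) must be bounded away from zero. This is not automatic from the factorization $f_j = \alpha_j\delta$ of Proposition~\ref{ffactprop}: that factorization gives $|\delta'| = 1$ for the full gradient in $(t,w)$, and the gradient could in principle lie mostly in the $t$-direction, making the $t_j$-slice degenerate in $w$. The paper excludes this with a separate contradiction argument: if $|\partial_t\delta|^{g^\sh} \ge \varepsilon$ at $(t_j,0)$, then $\delta$ would vary by $\gtrsim \varepsilon H_1^{-1/2}$ over the interval $|t - t_j| \le c_0 h^{1/2}H_1^{-1/2}$, which combined with~\eqref{fsymbolny} and~\eqref{6.11a} contradicts~\eqref{6.6a} for $\varrho$ small; hence $|\partial_w\delta| \ge 1/2$ at $(t_j,0)$. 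Your proposal simply asserts that $f(t_j,\cdot)^{-1}(0)$ is a uniform hypersurface, which presupposes exactly this nondegeneracy. Related to this, you announce that you will account for the $x$-dependence of $H$, but then do not: the crucial device in this part of the proof is to pick $x_j$ so that $H(t_j,x_j,0) = H_1(t_j,0)$ (the minimizing $x$), so that the regime $\w{\delta} \le \kappa H^{-1/2}$ of Propositions~\ref{g1prop} and~\ref{ffactprop} is attained, and to run the betalemma argument with the slices $f(t_j,x_j,\cdot)$ at those particular $x_j$; this is the reason Definition~\ref{h0def} is built from $H_1 = \min_x H$ rather than $H$, and your proposal does not show how this choice enters.
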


Observe that condition~\eqref{signref} is not empty when
$m(t_0,w_0)\le {\varrho}\w{{\delta}(t_0,w_0)}$ for ~${\varrho}$ sufficiently small since then $ H_0^{-1/2} \gtrsim \w{\delta}^2/m \gg m  $ by \eqref{6.6} .

\begin{proof}
	If we let
	$$F(s,t,w) =
	|{\delta}(s,w) -{\delta}(t,w)| + \max(H_1^{1/2}(s,w)
	\w{{\delta}(s,w)}^2,H_1^{1/2}(t,w)
	\w{{\delta}(t,w)}^2)/2$$ 
	then we find that $w \mapsto
	F(s,t,w)$ is uniformly Lipschitz continuous. Now, it
	suffices to show this when $|{\Delta}w| = |w - w_0| \ll 1$, and we know that
	$\w{{\delta}}$ and $H^{-1/2}$  are uniformly Lipschitz continuous  by
	Proposition~\ref{g1prop} which gives that $H_1^{-1/2} =  \max_x H^{-1/2}$ is uniformly Lipschitz continuous.
	The first term $|{\delta}(s,w) -{\delta}(t,w)|$ is obviously
	uniformly Lipschitz continuous. We have 
	for fixed $t$ that
	\begin{equation*}
	\left|{\Delta}(H_1^{1/2} \w{{\delta}}^2) \right| \le
	C(\w{{\delta}}^2|{\Delta}H_1^{1/2}| + H_1^{1/2}
	\w{{\delta}}|{\Delta}{\delta}|) 
	\end{equation*} 
	where $H_1^{1/2} \w{{\delta}} \le 1$, $|{\Delta}{\delta}| \le |{\Delta}w |  $ and
	$|{\Delta}H_1^{1/2}| \le CH_1|{\Delta}H_1^{-1/2}| \le C'H_1|{\Delta}w|$, 
	which gives the  uniform Lipschitz continuity of ~$F(s,t,w)$.
	By taking the infimum, we obtain ~\eqref{hhhest} and the uniform 
	Lipschitz continuity of ~$m$. 
	In fact, $h^{1/2}/3 \le
	H_1^{1/2}$ by~\eqref{H1hest} and since $t \mapsto \delta(t,w)$ is monotone, we find that 
	$t \mapsto \w{\delta(t,w)}$ is quasi-convex. Thus
	$h^{1/2}\w{{\delta}(t_0,w_0)}/6 \le F(s,t,w_0)$ when $s \le t_0 \le t$.
	
	By approximating the infimum, we may choose $t_1 \le t_0 \le t_2$ so
	that $F(t_1,t_2,w_0) < m(t_0,w_0) + h^{1/2}/6$. Since $h^{1/2}/6 \le
	m \le H_1^{1/2}\w{{\delta}}^2/2$ by ~\eqref{hhhest}, we find that
	\begin{align}\label{6.6a}
	&|{\delta}(t_1,w_0) -{\delta}(t_2,w_0)| <
	m(t_0,w_0) \le \w{{\delta}(t_0,w_0)}/2
	\qquad\text{and}\\ 
	&H_1^{1/2}(t_j,w_0)\w{{\delta}(t_j,w_0)}^2/2 <
	2m(t_0,w_0) 
	\qquad\text{for $j = 1$ and $2$}
	\label{6.7a}
	\end{align}
	Since $t \mapsto {\delta}(t,w_0)$ is monotone, we obtain
	~\eqref{6.11a} from \eqref{6.6a}, and ~\eqref{6.6} from~\eqref{6.7a} and ~\eqref{6.11a}.
	
	Next assume that $m(t_0,w_0) \le 
	{\varrho}\w{{\delta}(t_0,w_0)}$ for some $0 < {\varrho} \le 1$.
	Then we find from ~\eqref{6.6} that
	\begin{equation}\label{fsymbolny} 
	1 + |{\delta}(t_j,w_0)|  < 16{\varrho}H_0^{-1/2}\qquad\text{for $ j
		=0, \, 1, \, 2$}
	\end{equation}
	We may choose $g^\sh$ ~~orthonormal coordinates so that $w_0 = 0$. Since $ \delta $, $ H_1 $, $ H_0 $ and $ m $  are constant in $ x $,  the results will hold for any $ x $. 	
	I	
	If we choose $ x_j $ so that $ H_1(t_j ,0) =  H(t_j , x_j,0) $ 
	 then $\w{{\delta}(t_j,0)} < 16{\varrho}H^{-1/2}(t_j,x_j, 0)$  for $  j=1, 2 $ by~\eqref{fsymbolny} so we find from Proposition~\ref{g1prop} that
	\begin{equation}\label{dfest}
	h^{1/2} \le  | f'(t_j,x_j,0)| \cong | f'(t_j,x_j,w)|\quad \text{for }  |w| \le c H_0^{-1/2} \le c
		H^{-1/2}(t_j,x_j,0)
	\end{equation}
	when ${\varrho} \ll 1$ and $ j = 1,\, 2 $. Since $H_0^{-1/2} \le 3 h^{-1/2}$ we find from \eqref{fsymbolny} that
	$f(t_j,x_j,\wt w_j) = 0$ for some
	$|\wt w_j| < 16 {\varrho}H_0^{-1/2}$ by~\eqref{fsymbolny} when ${\varrho} < 1/48$ and $j = 1,\, 2$. Thus, when $16{\varrho} \le c$ we obtain from \eqref{dfest} for $ j = 1,\, 2 $ that 
	$$ 
	|f(t_j,x_j,w)| \le C |f'(t_j,x_j,0)| H_0^{-1/2}\qquad\text{when
		$|w| < c H_0^{-1/2}$}
	$$ 
	and then ~\eqref{dfsymbolest} gives $f(t_j,x_j,w) \in
	S(|f'(t_j,x_j,0)|H_0^{-1/2}, H_0g^\sh)$ since
	$H^{1/2}(t_j,x_j,0) \le H_0^{1/2}$, $ j =1,\,2$.  
	By Proposition~\ref{ffactprop} we have that $ f_j = \alpha \delta $ where $ \delta \in S(H_0^{-1/2}, H_0g^\sh) $ and $  \alpha \in S( |f'(t_j,x_j,0)|, H_0g^\sh)$ in a $ H_0g^\sh$ neighborhood of $ (t_j,0)$  such that $| \alpha | = |f'(t_j,x_j,0)| $ and $| \delta' | = 1$ at  $ (t_j,0)$. Now $ \partial_t \delta \ge 0$
	so if $ | \partial_t \delta |^{g^\sh} \ge \varepsilon > 0$ at $ (t_j,0) $ for $ j = 1 $ or $ 2 $ then $ \partial_t \delta \ge c \varepsilon h^{-1/2} $  in a small $ H_1g^\sh $ neighborhood. The interval $ \{(t, 0): \, |t - t_j | \le c_0 h^{1/2}H_1^{-1/2} \}$ is contained in this neighborhood for small enough $ c_0 > 0 $. Then we find 
	$$ |\delta(t_0,0) - \delta(t_j,0)| \ge c c_0 \varepsilon H_1^{-1/2}(t_j,0) \ge c c_0 \varepsilon  H_0^{-1/2} \ge c c_0 \varepsilon \w{\delta(t_j, 0)}/16\varrho
	$$
	by~\eqref{fsymbolny}, which by~\eqref{6.11a} contradicts~\eqref{6.6a} for small enough $ \varrho $. Thus, we may assume that $| \partial_w \delta | \ge 1/2 $ at $ (t_j,0) $ for $ j = 1,\, 2 $.
	
	Choose coordinates $z= H_0^{1/2} w$, we shall use  Proposition~\ref{betalemma} 
	with
	$$
	f_j(z) = H_0^{1/2} f(t_j, x_j, H_0^{-1/2}
	z)/| f'(t_j,0)|\in C^\infty\qquad\text{for
		$ j =1, \, 2$} 
	$$ 
	Let ${\delta}_j(z) = H_0^{1/2}{\delta}(t_j, H_0^{-1/2} z) \in C^\infty$ be the signed
	distance function to $f_j^{-1}(0)$ in $ z $ coordinates, then~\eqref{fsymbolny} gives that $ | \delta_j(0)| \le 16 \varrho  $ for $  = 0,\, 1,\,2 $. 
	Now  $| \partial_z \delta_j(t_j,0) | \ge 1/2 $, which for small enough $ \varrho $ gives
	$|  \partial_z f_j(0)|  \ge c_0$ for some $ c_0 > 0 $. 
	In fact, we have that $ f_j = a_j \delta_j $  where $ a_j \in  C^\infty$ is uniformly bounded and $ a_j(0) = 1 $. Then we obtain that $\partial_z f_j(0) =  a_j(0) \partial_z \delta_j(0)  +  \delta_j(0)  \partial_z a_j(0) \ge 1/2 - c \varrho $. 
	Because of condition $ \subr (\ol \Psi) $ given by~\eqref{pcond0} we find that $ f_1(z) > 0  \implies f_2(z) \ge 0$. 
	 Since  $|{\delta}_j(0)| < 16 {\varrho}$ we find that
	\begin{equation}
	\label{ddiff}
	|{\delta}_1(0) - {\delta}_2(0)| = {\varepsilon} <
	H_0^{1/2}m(t_0,0)\le
	H_0^{1/2}\w{{\delta}(t_0,0)}/2 < 8{\varrho}
	\end{equation}
	by~\eqref{6.6a}. Thus, for sufficiently small
	~${\varrho}$ we may use Proposition~\ref{betalemma} with this choice of $ f_j $ to obtain $g^\sh$
	orthogonal coordinates $(z_1,z')$ so that $w_0 = z_0= (y_1,0)$,
	$|y_1| = |{\delta}_1(0)|$ 
	and
	\begin{equation*}
	\left\{ 
	\begin{aligned}
	& \sgn(z_1)f_j(z) > 0 \quad\text{when $|z_1| \ge ({\varepsilon} +
		|z'|^2)/c_0 $}\\
	& |{\delta}_1(z) - {\delta}_2(z)| \le ({\varepsilon} + |z -
	z_0|^2)/c_0
	\end{aligned}\right.
	\end{equation*}
	when $|z| \le c_0$.
	Let $x_1 = H_0^{-1/2}y_1$ then $|x_1| < 2\w{{\delta}(t_0,0)} < 32 {\varrho}H_0^{-1/2}$
	by ~\eqref{6.11a} and~\eqref{fsymbolny}.
	We obtain~\eqref{signref}--\eqref{d0est}  by the condition~$\subr(\ol
	{\Psi})$, since $H_0^{-1/2}{\varepsilon} < m(t_0,0)$ by~\eqref{ddiff}. 
\end{proof}

\begin{prop}\label{h0slow}
	There exists $C>0$ such that
	\begin{equation}\label{wtemp}
	m(t_0,w) \le C m(t_0,w_0)(1 + |w-w_0|/\w{{\delta}(t_0,w_0)})^{3}
	\end{equation}
	thus $m$ is a weight for $g^\sh$.
\end{prop}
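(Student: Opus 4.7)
The plan is to fix near‑minimizing times $t_1\le t_0\le t_2$ for $m(t_0,w_0)$ as provided by Proposition~\ref{weightprop}, so that ~\eqref{6.11a}, \eqref{defH0} and \eqref{6.6} hold together with the approximations $\delta(t_2,w_0)-\delta(t_1,w_0)\le m(t_0,w_0)$ and $H_1^{1/2}(t_j,w_0)\langle\delta(t_j,w_0)\rangle^2\le 4m(t_0,w_0)$. I then use the \emph{same} pair $(t_1,t_2)$ as a test pair in the infimum defining $m(t_0,w)$, and split the argument according to whether $|w-w_0|$ is small or large compared to $H_0^{-1/2}$. Throughout, I exploit the Lipschitz continuity (with constant $1$) of $w\mapsto\delta(t,w)$, the Lipschitz continuity of $H^{-1/2}$ from Proposition~\ref{g1prop} (which passes to $H_1^{-1/2}=\max_xH^{-1/2}$ since the max of uniformly Lipschitz functions is Lipschitz), and the bound $\langle\delta(t_j,w_0)\rangle\simeq\langle\delta(t_0,w_0)\rangle$ from \eqref{6.11a}.

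In the regime $|w-w_0|\le c_0 H_0^{-1/2}$, Lipschitz of $\delta$ gives $\delta(t_2,w)-\delta(t_1,w)\le m(t_0,w_0)+2|w-w_0|$, and the error $2|w-w_0|$ is absorbed by $m(t_0,w_0)$ thanks to \eqref{6.6}: $|w-w_0|\le c_0 H_0^{-1/2}\le 16c_0\, m(t_0,w_0)/\langle\delta(t_0,w_0)\rangle^2\lesssim m(t_0,w_0)$, using $\langle\delta\rangle\ge 1$. Choosing $c_0$ sufficiently small makes $H_1^{1/2}(t_j,w)\le 2H_1^{1/2}(t_j,w_0)$, so
\begin{equation*}
H_1^{1/2}(t_j,w)\langle\delta(t_j,w)\rangle^2\le 2H_1^{1/2}(t_j,w_0)\langle\delta(t_j,w_0)\rangle^2\bigl(1+|w-w_0|/\langle\delta(t_j,w_0)\rangle\bigr)^2\lesssim m(t_0,w_0)\bigl(1+|w-w_0|/\langle\delta(t_0,w_0)\rangle\bigr)^2,
\end{equation*}
by the infimum approximation and \eqref{6.11a}. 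This yields \eqref{wtemp} with an exponent $2$ in this regime.

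In the complementary regime $|w-w_0|>c_0 H_0^{-1/2}$, I invoke the uniform Lipschitz continuity of $w\mapsto m(t_0,w)$ already established in Proposition~\ref{weightprop}: $m(t_0,w)\le m(t_0,w_0)+L|w-w_0|$. The case assumption, combined with $H_0^{-1/2}\ge\langle\delta(t_0,w_0)\rangle^2/(16m(t_0,w_0))$ from \eqref{6.6}, forces $m(t_0,w_0)R\ge c_0\langle\delta(t_0,w_0)\rangle/16$ where $R:=|w-w_0|/\langle\delta(t_0,w_0)\rangle$. Hence
\begin{equation*}
m(t_0,w_0)(1+R)^3\ge m(t_0,w_0)\,R\,(1+R)^2\ge (c_0/16)\langle\delta(t_0,w_0)\rangle(1+R)^2\ge (c_0/16)\langle\delta(t_0,w_0)\rangle(1+R)\gtrsim L|w-w_0|,
\end{equation*}
yielding \eqref{wtemp} in this regime as well. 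Combining the two regimes proves \eqref{wtemp}. Since $\langle\delta(t_0,w_0)\rangle\ge 1$, \eqref{wtemp} gives polynomial growth in $|w-w_0|$, hence the $\sigma,g^\sh$-temperate property; $g^\sh$-continuity of $m$ follows by applying \eqref{wtemp} both with $(w_0,w)$ and with $(w,w_0)$ for $|w-w_0|\le 1/2$, because then $\langle\delta(t_0,w)\rangle\ge\langle\delta(t_0,w_0)\rangle-|w-w_0|\ge 1/2$ by Lipschitz of $\delta$. Thus $m$ is a weight for $g^\sh$.

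The main obstacle is that in Case A the test pair argument natively produces the factor $(1+R)^2$, not $(1+R)^3$; the extra power appears only in Case B, where it comes from the interplay between the Lipschitz estimate for $m$ and the lower bound $m(t_0,w_0)|w-w_0|\gtrsim\langle\delta(t_0,w_0)\rangle^2$ implicit in \eqref{6.6}. Getting the correct cubic exponent therefore relies on precisely the splitting threshold $|w-w_0|\simeq H_0^{-1/2}$ tied to the near-infimum structure, rather than on any single uniform bound.
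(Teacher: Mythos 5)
Your splitting threshold ($|w-w_0|\simeq H_0^{-1/2}$) and your far‑regime argument (Case B) agree with the paper, but your near‑regime argument (Case A) has a genuine gap. You write
\[
|w-w_0|\le c_0 H_0^{-1/2}\le 16c_0\, m(t_0,w_0)/\w{\delta(t_0,w_0)}^2\lesssim m(t_0,w_0),
\]
but this inverts the inequality \eqref{6.6}. That estimate reads $H_0^{1/2}<16\,m(t_0,w_0)/\w{\delta(t_0,w_0)}^2$, which gives the \emph{lower} bound $H_0^{-1/2}>\w{\delta(t_0,w_0)}^2/(16\,m(t_0,w_0))$, not an upper bound. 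Since $m\le\w{\delta}/2$, this lower bound is at least $\w{\delta}/8$, so the near regime $|w-w_0|\le c_0H_0^{-1/2}$ contains points with $|w-w_0|$ far larger than $m(t_0,w_0)$; the additive error $2|w-w_0|$ coming from the crude Lipschitz estimate for $\delta$ then cannot be absorbed by $m(t_0,w_0)(1+R)^2$. Concretely, if $m\ll\w{\delta}$ and $|w-w_0|\simeq\w{\delta}$ (which is allowed, since $\w{\delta}\ll\w{\delta}^2/m\le H_0^{-1/2}$), then $R\simeq1$ and $m(1+R)^2\simeq m$, while $2|w-w_0|\simeq2\w{\delta}\gg m$.

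What is missing is the genuinely quadratic bound
\[
|\delta(t_1,w)-\delta(t_2,w)|\le\bigl(m(t_0,w_0)+H_0^{1/2}|w-w_0|^2\bigr)/c_0
\]
of \eqref{d0est}, which together with \eqref{6.6} gives $H_0^{1/2}|w-w_0|^2<16\,m(t_0,w_0)\,R^2$ and hence the required $m(t_0,w_0)(1+R)^2$ control. This quadratic gain is not a Lipschitz statement: it comes from the sign condition via Proposition~\ref{betalemma}, and it is only available under the hypothesis $m(t_0,w_0)\le\varrho\w{\delta(t_0,w_0)}$. The paper therefore first disposes of the case $m(t_0,w_0)>\varrho\w{\delta(t_0,w_0)}$ directly (using $m\le\w{\delta}/2$ and \eqref{6.11}), and only then invokes \eqref{d0est} in the near regime. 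Your proposal omits both this reduction and \eqref{d0est}, and the resulting Case A does not close; the rest of the structure (the treatment of the $H_1^{1/2}\w{\delta}^2$ term via slow variation and \eqref{6.11a}, and Case B) is sound.
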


\begin{proof}
	Since $m \le \w{{\delta}}/2$ we only have
	to consider the case when
	\begin{equation} \label{h0rhoest}
	m(t_0,w_0)  \le {\varrho}\w{{\delta}(t_0,w_0)}
	\end{equation} 
	for some ${\varrho} > 0$.
	In fact, otherwise we have by \eqref{hhhest} that
	\begin{equation*}
	m(t_0,w) \le \w{{\delta}(t_0,w)}/2 <
	m(t_0,w_0)(1 + |w-w_0|/\w{{\delta}(t_0,w_0)})/2{\varrho}
	\end{equation*}
	since the Lipschitz continuity of $w \mapsto {\delta}(t_0,w)$ gives 
	\begin{equation}\label{6.11}
	\w{{\delta}(t,w)} \le \w{{\delta}(t,w_0)}(1
	+ |w-w_0|/\w{{\delta}(t,w_0)}) \qquad \forall\,t
	\end{equation}
	If ~\eqref{h0rhoest} holds for ${\varrho} \ll 1$, 
	then Proposition~\ref{weightprop} gives $t_1 \le t_0 \le t_2$ such that ~\eqref{6.11a}, \eqref{6.6} and~\eqref{d0est} 
	hold when $|w| \le c_0 H_0^{-1/2}$ with $H_0^{1/2} = \max(H_1^{1/2}(t_1,w_0),\,
	H_1^{1/2}(t_2,w_0)).$ 
	
	Now, for fixed $w_0$ it suffices to prove
	~\eqref{wtemp} when 
	\begin{equation}\label{suffref} 
	|w-w_0|\le {\varrho}H_0^{-1/2}
	\end{equation}
	for some ${\varrho} > 0$.
	In fact, when $|w-w_0| >
	{\varrho}H_0^{-1/2}$ we obtain from  \eqref{6.6}  that 
	\begin{multline*} 
	|w-w_0|^2/\w{{\delta}(t_0,w_0)}^2 >
	{\varrho}^2 H_0^{-1}/\w{{\delta}(t_0,w_0)}^2  > 
	{\varrho}^2\w{{\delta}(t_0,w_0)}^2/256 m^2(t_0,w_0) \\ \ge
	{\varrho}^2\w{{\delta}(t_0,w_0)}m(t_0,w)/64\w{{\delta}(t_0,w)}m(t_0,w_0)
	\end{multline*}
	since $\w{{\delta}} \ge 2m$. By ~\eqref{6.11} we obtain that ~\eqref{wtemp}
	is satisfied with $C= 64/{\varrho}^2$. Thus in the following we shall only consider
	$w$ such that ~\eqref{suffref} is satisfied
	for  $ {\varrho} \ll 1$. 
	We find by ~~\eqref{6.6}  and~\eqref{d0est} that
	\begin{multline}\label{dpmvar1}
	|{\delta}(t_1,w) - {\delta}(t_2,w)|
	\le ( m(t_0,w_0) + H_0^{1/2}|w-w_0|^2)/c_0 \\ 
	< 16 m (t_0,w_0) 
	(1 + |w-w_0|^2/\w{{\delta}(t_0,w_0)}^2)/c_0
	\end{multline}
	when $|w-w_0| \le c_0 H_0^{-1/2}$. Now $G$ is 
	slowly varying, thus we find for small enough
	${\varrho{}} >0$ that
	\begin{equation*}
	H_1^{1/2}(t_j,w) \le CH_1^{1/2}(t_j,w_0)
	\qquad\text{when  $|w-w_0| \le
		{\varrho{}}H_0^{-1/2}  \le 
		{\varrho{}}H_1^{-1/2}(t_j,w_0)$}
	\end{equation*}
	for  $j = 1$, $2$.
	By ~\eqref{6.11} and ~\eqref{6.11a} we obtain that 
	\begin{equation}\label{h1est}
	H_1^{1/2}(t_j,w) \w{{\delta}(t_j,w) }^2  \le
	4C H_1^{1/2}(t_j,w_0) \w{{\delta}(t_j,w_0) }^2 (1 +
	|w-w_0|/\w{{\delta}(t_0,w_0)})^2 
	\end{equation}
	when $j = 1, \ 2$, and $|w-w_0| \le c_0H_0^{-1/2}$. 
	Now $H_1^{1/2}(t_j,w_0) \w{{\delta}(t_j,w_0)}^2 < 16
	m(t_0,w_0)$ by ~\eqref{6.6} for $j=1$, 2. Thus, by using
	\eqref{dpmvar1}, \eqref{h1est} and taking the infimum we obtain
	$$ 
	m(t_0,w)  \le C_0 m(t_0,w_0)(1 + 
	|w-w_0|/\w{{\delta}(t_0,w_0)})^2$$ 
	when $|w-w_0|   \le {\varrho}H_0^{-1/2}$ 
	for ${\varrho} \ll 1$. 
\end{proof}

The following result will be important for
the proof of Proposition~\ref{mainprop} in Section~\ref{lower}.

\begin{prop}\label{mestprop}
	Let $M$ be given by Definition~\ref{Mdef} and $m$ by
	Definition~\ref{h0def}. Then there exists $C_0>0$ such that
	\begin{equation}\label{Mest0}
	MH^{3/2} \le C_0m/\w{{\delta}}^2
	\end{equation} 
\end{prop}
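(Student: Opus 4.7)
The plan is to reduce the target estimate to a bound on the Hessian $|f''|$ and then handle the resulting inequality by case analysis, using the sign condition from Proposition~\ref{weightprop} together with the one-variable estimate of Proposition~\ref{newestprop}.

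First, by Proposition~\ref{mproplem} one has $MH^{3/2} \le C(|f''|H^{1/2} + h^{1/2})$, and Proposition~\ref{weightprop} gives the lower bound $h^{1/2}\w\delta^2 \le 6m$, which disposes of the $h^{1/2}$ contribution. So the task reduces to showing
\[
|f''(t_0,x,w_0)|\,H^{1/2}(t_0,x,w_0)\,\w{\delta(t_0,w_0)}^2 \le C m(t_0,w_0)
\]
uniformly in $x$. Fix a small $\varrho>0$ (to be chosen later) and split according to the size of $m/\w\delta$.

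In the easy regime $m(t_0,w_0) \ge \varrho\w{\delta(t_0,w_0)}$ I simply use the a priori bound $|f''|\le C$ (valid since $f\in S(h^{-1},g)$) together with $H^{1/2}\w\delta\le 1$ (built into the definition of $H$), so that $|f''|H^{1/2}\w\delta^2 \le C\w\delta \le C\varrho^{-1}m$. The remaining and main case is $m(t_0,w_0) < \varrho\w{\delta(t_0,w_0)}$ with $\varrho\ll 1$. Here Proposition~\ref{weightprop} supplies times $t_1\le t_0\le t_2$ and $g^\sh$-orthonormal coordinates such that $w_0=(x_1,0)$ with $|x_1|<32\varrho H_0^{-1/2}$, together with the sign condition \eqref{signref} for $f(t_0,x,\cdot)$ (valid for every $x$, since $\sgn(f)$ is constant on the leaves of $\st$) and the normalization $H_0^{1/2}\w{\delta(t_j,w_0)}^2<16m$ for $j=0,1,2$.

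For each fixed $x$, I apply Proposition~\ref{newestprop} to $w\mapsto f(t_0,x,w)$ with $\varepsilon=m/c_0$ and $r=c_0 H_0^{-1/2}$; the hypothesis $\varepsilon\le r/5$ is met for $\varrho$ small since $m H_0^{1/2}\le 8\varrho$. This yields $|f''(t_0,x,0)| \le C(|f'(t_0,x,0)|/\rho + \rho h^{1/2})$ for any $\rho\in[\varepsilon,\,r/\sqrt{10}]$. Taking $\rho$ near the upper endpoint gives $|f''(t_0,x,0)| \le C(|f'|H_0^{1/2}+h^{1/2}H_0^{-1/2})$; transferring to $w_0=(x_1,0)$ via Taylor expansion costs only $C|x_1|h^{1/2}\lesssim \varrho h^{1/2}H_0^{-1/2}$, which is absorbed. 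Multiplying by $H^{1/2}\w\delta^2$ and using $|f'|H^{1/2}\le C_2$ from \eqref{dfest0} together with $H_0^{1/2}\w\delta^2\le 32m$ from \eqref{6.6} bounds the first piece by $32 C_2 m$.

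The main obstacle is then the remaining piece $h^{1/2}H_0^{-1/2}H^{1/2}\w\delta^2$: the weight $H$ is evaluated at $(t_0,x,w_0)$ while $H_0$ is computed from $t_1,t_2$, and the crude bound $H_0^{-1/2}\le 3h^{-1/2}$ is far too weak. To circumvent this, I plan to subdivide Case B further according to whether $|f'(t_0,x,w_0)|$ dominates $h^{1/2}H_0^{-1}$ or not. When it does, the bound $|f''|\le 2C|f'|H_0^{1/2}$ obtained from the balanced choice $\rho\approx \sqrt{|f'|}h^{-1/4}$ (checked to lie in the admissible range via $|f'|\le C_2H^{-1/2}$ and \eqref{6.6}) gives the result directly from the first piece above. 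When $|f'|\lesssim h^{1/2}H_0^{-1}$ is small, the factorization $f=\alpha\delta$ with $\alpha\asymp MH^{1/2}$ from Proposition~\ref{ffactprop} and the $\sigma$-temperateness of $G$ from Proposition~\ref{g1prop} allow one to relate $H(t_0,x,w_0)$ to $H_0$ up to factors controlled by $\w\delta$, after which the estimate $h^{1/2}\w\delta^2\le 6m$ closes the argument. Combining both pieces produces $|f''|H^{1/2}\w\delta^2\le C m$ and hence the inequality $MH^{3/2}\le C_0 m/\w\delta^2$ claimed in the proposition.
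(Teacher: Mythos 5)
Your overall strategy matches the paper's: reduce via Proposition~\ref{mproplem} to bounding $|f''|H^{1/2}\w\delta^2$ by $m$, dispose of the case $m\gtrsim\w\delta$ trivially, invoke Proposition~\ref{weightprop} in the main case, and then apply Proposition~\ref{newestprop}. The first piece of your estimate (the $|f'|H_0^{1/2}$ contribution controlled by $(|f'|H^{1/2})(H_0^{1/2}\w\delta^2)\lesssim C_2\cdot 32m$) is correct. The gap is in your treatment of the residual piece $h^{1/2}H_0^{-1/2}H^{1/2}\w\delta^2$, and it is genuine.

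First, your ``balanced choice'' $\varrho\approx\sqrt{|f'|}\,h^{-1/4}$ is \emph{not} in the admissible range $[\varepsilon, r/\sqrt{10}]$ of Proposition~\ref{newestprop} precisely in the regime $|f'|\gtrsim h^{1/2}H_0^{-1}$ where you invoke it: the upper endpoint is $\sim H_0^{-1/2}$, and $\sqrt{|f'|}\,h^{-1/4}\lesssim H_0^{-1/2}$ is equivalent to $|f'|\lesssim h^{1/2}H_0^{-1}$, the opposite regime. (That subcase can be salvaged differently: with $\varrho$ at the endpoint $\sim H_0^{-1/2}$ one has $h^{1/2}H_0^{-1/2}\lesssim|f'|H_0^{1/2}$, so the second piece is absorbed into the first — but that is not what you argued.) Second, and more seriously, in the regime $|f'|\lesssim h^{1/2}H_0^{-1}$ your plan to relate $H(t_0,x,w_0)$ to $H_0$ via Proposition~\ref{ffactprop} and the $\sigma$-temperateness of $G$ does not close: to bound $h^{1/2}H_0^{-1/2}H^{1/2}\w\delta^2$ by $m$ you would need an \emph{upper} bound $H\lesssim H_0$, whereas \eqref{h0eq1} (namely $H_0^{1/2}<16m/\w\delta^2\le 8H^{1/2}(w_0)$) gives only the \emph{lower} bound $H_0\lesssim H$, and no upper bound is available since $H$ is evaluated at a different time and $x$. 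The paper avoids the whole issue by choosing $\varrho_0=\varepsilon+\lambda|F'(0)|^{1/2}$ on the rescaled symbol $F(z)=H_0f(H_0^{-1/2}z)$, yielding $|f''(0)|\lesssim H_0^{1/4}|f'(0)|^{1/2}+h^{1/2}m(w_0)$; when the second term dominates, one concludes directly from $h^{1/2}\w\delta\le 1$ and $H^{1/2}\w\delta\le 1$ that $|f''|H^{1/2}\w\delta^2\lesssim h^{1/2}m\,H^{1/2}\w\delta^2\le m$, and otherwise the algebraic identity \eqref{fundest0}, which exploits the exact form of $H^{-1/2}$ in Definition~\ref{g1def}, gives $|f''|H^{1/2}\le CH_0^{1/2}$. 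This combination of the balanced $\varrho_0$ and the structural identity~\eqref{fundest0} is the idea your argument is missing.
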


\begin{proof}
	In the proof, we shall include  the $t$ variable in the $ w $ variables. Observe that since $h^{1/2}
	\w{\delta}^2/6 \le m$ we find that \eqref{Mest0} is equivalent to
	\begin{equation}\label{h0eq0}
	|{f''}| H^{1/2} \le  C m/\w{{\delta}}^2
	\end{equation}
	by Proposition~\ref{mproplem}. First we note that 
	if $m \ge c\w{{\delta}} >0$, then $MH^{3/2}\w{{\delta}}^2 \le C\w{{\delta}}
	\le Cm/c$ since $\w{{\delta}} \le H^{-1/2}$ and $M \le CH^{-1}$ by
	Proposition~\ref{mproplem}. 
	
	Thus, we only have to consider the case $m \le {\varrho}\w{{\delta}}$ at $w_0$
	for some ${\varrho}>0$ to be chosen later. 
	Then we may use Proposition~\ref{weightprop} for ${\varrho}\ll 1$ to choose
	$g^\sh$ ~~orthonormal coordinates so that $|w_0| < 2\w{{\delta}(w_0)}
	< 32 {\varrho} H_0^{-1/2}$ with $ H_0 $ given by~\eqref{defH0}, and $f$ satisfies~\eqref{signref}
	with
	\begin{equation} \label{h0eq1}
	h^{1/2}/3 \le H_0^{1/2} < 
	16 m(w_0)/\w{{\delta}(w_0)}^2 \le
	8 H^{1/2}(w_0)
	\end{equation}
	by~\eqref{H1hest}, \eqref{hhhest} and~\eqref{6.6}.  Thus 
	it suffices to prove the estimate 
	\begin{equation}\label{fundest} 
	|{f''}| H^{1/2} \le CH_0^{1/2}
	\qquad\text{} 
	\end{equation}
	at $w_0$. Now it actually suffices to prove ~\eqref{fundest} at
	$w=0$. In fact, \eqref{tempest} gives 
	\begin{equation}\label{hhest}
	H(w_0) \le C_0H(0)(1 + H(w_0)|w_0|^2) \le 5C_0H(0)
	\end{equation} 
	since $|w_0| < 2 \w{{\delta}(w_0)}
	\le  2H^{-1/2}(w_0)$. Thus
	Taylor's formula gives
	\begin{equation} \label{h0eq2}
	|{f''(w_0)}| H^{1/2}(w_0) \le \left(|{f''(0)}| + C_3h^{1/2}|w_0|
	\right) H^{1/2}(w_0)
	\le C_1(|{f''(0)}|H^{1/2}(w_0) + h^{1/2})
	\end{equation}
	since $|{f^{(3)}}| \le C_3h^{1/2}$, which gives  \eqref{fundest}  at $w=0$ by  \eqref{h0eq1} and \eqref{hhest}.
	
	By Definition~\ref{g1def} we
	find that
	\begin{multline*} 
	H^{-1/2} \ge 1 +
	|f'|/(|{f''}| + h^{1/4}|f'|^{1/2} + h^{1/2})\\ \ge (|f''| + |{f'}|
	+ h^{1/2} )/(|{f''}| + h^{1/4}|f'|^{1/2} + h^{1/2})
	\end{multline*}
	thus~\eqref{fundest} follows if we prove
	\begin{equation}\label{fundest0}
	|{f''}|(|{f''}| + h^{1/4}|f'|^{1/2} + h^{1/2}) \le C\left(|f'| + |{f''}|
	+ h^{1/2} \right) H_0^{1/2} \qquad\text{at $0$}
	\end{equation}
	Since $h^{1/2}/3 \le H_0^{1/2}$ we obtain ~\eqref{fundest0} 
	by the Cauchy-Schwarz inequality if
	we prove that
	\begin{equation}\label{essest}
	|{f''(0)}| \le C (H_0^{1/4}|f'(0)|^{1/2} + h^{1/2}) 
	\end{equation}
	Let $F(z) = H_0 f(H_0^{-1/2}z)$, then
	~\eqref{signref} gives
	\begin{equation*}
	\sgn(z_1)F(z) \ge 0 \quad\text{when $|z_1| \ge {\varepsilon} +
		|z'|^2/r $ and $|z| \le r$}
	\end{equation*}
	where $r = c_0$ and 
	$${\varepsilon} =
	H_0^{1/2}m(w_0)/c_0 \le 16m^2(w_0)/c_0\w{{\delta}(w_0)}^2 \le
	16{\varrho}^2/c_0 \le c_0/5 
	$$ 
	by~\eqref{h0eq1} when ${\varrho} \le c_0/4\sqrt{5}$ which we shall
	assume. Proposition~\ref{newestprop} then gives that
	\begin{equation*}
	|{F''(0)}| \le C_1\left(|F'(0)|/{\varrho}_0 +
	H_0^{-1/2}h^{1/2}{\varrho}_0\right) \qquad \varepsilon \le
	{\varrho}_0 \le c_0/\sqrt{10}
	\end{equation*}
	since $\mn{F^{(3)}}_\infty \le C_3H_0^{-1/2} h^{1/2}$. 
	Observe that $|F'(0)| \le C_2$ since
	$H_0^{1/2} \le 8H^{1/2}(w_0)\le
	CH^{1/2}(0)$  by \eqref{h0eq1} and \eqref{hhest}, and $|f'(0)| \le CH^{-1/2}(0)$ by \eqref{dfest0}. 
	Choose
	$${\varrho}_0 = \varepsilon + {\lambda} |F'(0)|^{1/2} \le c_0/\sqrt{10}$$ 
	with ${\lambda} = c_0(\sqrt{10}-2)/10\sqrt{C_2}$, then we obtain that 
	\begin{equation}
	 |{F''(0)}| \le
	C_4(|F'(0)|^{1/2} + h^{1/2}m(w_0)) \label{F''est}	
	\end{equation}
	since $H_0^{-1/2} \le 3h^{-1/2}$ and ${\varepsilon} =
	H_0^{1/2}m(w_0)/c_0$. 
	If $h^{1/2}m(w_0) \le
	|F'(0)|^{1/2}$ then we obtain
	~\eqref{essest} 
	since $F' = H_0^{1/2}f'$ 
	and $F'' = f''$. If $|F'(0)|^{1/2} \le h^{1/2}m(w_0)$ then we
	find from~\eqref{F''est} that
	$$
	|{f''(0)}| \le 2C_4  h^{1/2}m(w_0) \le 4C_2 m(w_0)/\w{{\delta}(w_0)}
	$$
	Then \eqref{h0eq0} follows from \eqref{wtemp}, \eqref{6.11} and~\eqref{hhest}	since $ H^{1/2}(w_0)
	\le \w{{\delta}(w_0)}^{-1}$, which completes the
	proof of the proposition.
\end{proof}

Next, we shall prove a convexity property of $t \mapsto
m(t,w)$, which will be essential for the proof. 

\begin{prop}\label{qmaxpropo}
	Let $m$ be given by Definition~\ref{h0def}. Then
	\begin{equation}\label{qmaxwd}
	\sup_{t_1 \le t \le t_2}m(t,w) \le {\delta}(t_2,w) - {\delta}(t_1,w) + m(t_1,w) + m(t_2,w)\qquad \forall\,w 
	\end{equation} 
\end{prop}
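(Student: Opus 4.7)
The plan is to exploit the infimum structure of $m$ directly: given $\varepsilon>0$, pick admissible pairs $(a_1,a_2)$ and $(b_1,b_2)$ that almost realize the infima for $m(t_1,w)$ and $m(t_2,w)$, and then, for any $s\in[t_1,t_2]$, combine them into an admissible competitor for $m(s,w)$.

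Fix $w$ and $\varepsilon>0$. By Definition~\ref{h0def} choose $a_1\le t_1\le a_2$ and $b_1\le t_2\le b_2$ such that
\begin{align*}
\delta(a_2,w)-\delta(a_1,w)+\tfrac12\max\bigl(H_1^{1/2}(a_1,w)\w{\delta(a_1,w)}^2,H_1^{1/2}(a_2,w)\w{\delta(a_2,w)}^2\bigr)&\le m(t_1,w)+\varepsilon,\\
\delta(b_2,w)-\delta(b_1,w)+\tfrac12\max\bigl(H_1^{1/2}(b_1,w)\w{\delta(b_1,w)}^2,H_1^{1/2}(b_2,w)\w{\delta(b_2,w)}^2\bigr)&\le m(t_2,w)+\varepsilon.
\end{align*}
For any $s\in[t_1,t_2]$ I would then split into three cases: (i) if $s\le a_2$, use $(a_1,a_2)$ as a competitor for $m(s,w)$, giving $m(s,w)\le m(t_1,w)+\varepsilon$; (ii) if $s\ge b_1$, use $(b_1,b_2)$, giving $m(s,w)\le m(t_2,w)+\varepsilon$; (iii) if $a_2<s<b_1$ (which in particular forces $a_2<b_1$), use the competitor $(a_1,b_2)$, which is admissible since $a_1\le t_1\le s\le t_2\le b_2$.

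In case (iii) the estimate requires combining the two approximations. Using monotonicity of $t\mapsto\delta(t,w)$ I would write
\[\delta(b_2,w)-\delta(a_1,w)=\bigl[\delta(a_2,w)-\delta(a_1,w)\bigr]+\bigl[\delta(b_1,w)-\delta(a_2,w)\bigr]+\bigl[\delta(b_2,w)-\delta(b_1,w)\bigr],\]
and use the elementary bound $\max(A,D)\le\max(A,B)+\max(C,D)$ for nonnegative quantities to split the max over $(a_1,b_2)$ as the sum of the maxes over $(a_1,a_2)$ and $(b_1,b_2)$. Since $a_2\ge t_1$ and $b_1\le t_2$, monotonicity of $\delta$ gives $\delta(b_1,w)-\delta(a_2,w)\le\delta(t_2,w)-\delta(t_1,w)$. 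Substituting the two near-optimal estimates yields
\[m(s,w)\le \delta(t_2,w)-\delta(t_1,w)+m(t_1,w)+m(t_2,w)+2\varepsilon.\]

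Finally, since $m\ge 0$ and $\delta(t_2,w)-\delta(t_1,w)\ge 0$, cases (i) and (ii) are also dominated by the right-hand side of this last inequality. Taking the supremum over $s\in[t_1,t_2]$ and letting $\varepsilon\to 0$ gives \eqref{qmaxwd}. The only real point to be careful about is that the competitor $(a_1,b_2)$ is actually admissible for $m(s,w)$ in the gap case, which is immediate from $a_1\le t_1\le s\le t_2\le b_2$; everything else reduces to telescoping $\delta$ and splitting a max, so no serious obstacle is expected.
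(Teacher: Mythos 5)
Your argument is correct and in essence the same as the paper's: both proofs observe that a near-minimizing pair $(a_1,a_2)$ for $m(t_1,w)$ and $(b_1,b_2)$ for $m(t_2,w)$ can be combined to produce the admissible pair $(a_1,b_2)$ for $m(s,w)$ (any $s\in[t_1,t_2]$), and then telescope $\delta$ and use $\max\le\text{sum}$ to split the maximum term. The paper carries this out directly at the level of infima (replacing the max by the sum so the infimum factors), whereas you use explicit $\varepsilon$-near-minimizers; this is a cosmetic difference. One small simplification you could make: the case split into (i), (ii), (iii) is unnecessary. The pair $(a_1,b_2)$ is admissible for \emph{every} $s\in[t_1,t_2]$ since $a_1\le t_1\le s\le t_2\le b_2$, and the chain
\[
\delta(b_2,w)-\delta(a_1,w)+\tfrac12\max(A,D)\le \bigl[\delta(a_2,w)-\delta(a_1,w)+\tfrac12\max(A,B)\bigr]+\bigl[\delta(b_1,w)-\delta(a_2,w)\bigr]+\bigl[\delta(b_2,w)-\delta(b_1,w)+\tfrac12\max(C,D)\bigr]
\]
together with $\delta(b_1,w)-\delta(a_2,w)\le\delta(t_2,w)-\delta(t_1,w)$ (monotonicity, $a_2\ge t_1$, $b_1\le t_2$) already gives the bound for all $s$, without needing $a_2<s<b_1$. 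So cases (i) and (ii) never need to be invoked separately.
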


\begin{proof}
	By definition we find that
	\begin{equation}\label{convest}
	\inf_{\pm(t-t_0) \ge 0} \left(|{\delta}(t,w) 
	-{\delta}(t_0,w)| + H_1^{1/2}(t,w)\w{{\delta}(t,w)}^2/2\right) \le
	m(t_0,w)
	\end{equation}
	Let $t \in [t_1,t_2]$, then by  taking the independent infima, we obtain that
	\begin{multline*} 
	m (t,w) \le \inf_{r \le t_1 < t_2\le s} {\delta}(s,w) - {\delta}(r,w)
 + H_1^{1/2}(s,w)\w{{\delta}(s,w)}^2/2  +
	H_1^{1/2}(r,w)\w{{\delta}(r,w)}^2/2  \\ \le
	{\delta}(t_2,w) - {\delta}(t_1,w)  + \inf_{t \ge t_2} \left(|{\delta}(t,w)
	-{\delta}(t_2,w)| + H_1^{1/2}(t,w)\w{{\delta}(t,w)}^2/2\right) \\
	+ \inf_{t \le t_1 } \left(|{\delta}(t,w)
	-{\delta}(t_1,w)| + H_1^{1/2}(t,w)\w{{\delta}(t,w)}^2/2\right)
	\end{multline*}
	By using ~\eqref{convest}  for $t_0 = t_1$,
	$t_2$, we obtain~\eqref{qmaxwd} after taking the supremum.
\end{proof}

Next, we shall construct the pseudo-sign $B = {\delta} + {\varrho}_0$,
which we shall use in Section~\ref{lower} to prove
Proposition~\ref{mainprop} with the multiplier $b^w = B^{Wick}$.

\begin{prop}\label{apsdef}
	Assume that ${\delta}$ is given by
	Definition~\ref{d0deforig} and  $m$ is given by
	Definition~\ref{h0def}. Then for $T>0$ there exists real 
	valued ${\varrho}_T(t,w)
	\in L^\infty$ with the property that $w \mapsto
	{\varrho}_T(t,w)$ is uniformly Lipschitz continuous, and
	\begin{align}\label{r0prop0}
	&|{\varrho}_T| \le m\\
	&T\partial_t({\delta} + {\varrho}_T) \ge m/2 \qquad \text{in
		$\Cal D'(\br)$}
	\label{r0prop1}
	\end{align}
	when $|t| <T$. 
\end{prop}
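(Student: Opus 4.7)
I would construct $\varrho_T$ as the Perron-type running supremum
\[
F_T(t,w) = \sup_{-T < s \le t}\,\Phi_T(s,t,w),\qquad
\Phi_T(s,t,w) = {\delta}(s,w) - m(s,w) + \int_s^t \frac{m(r,w)}{2T}\,dr,
\]
and set $\varrho_T(t,w) = F_T(t,w) - {\delta}(t,w)$ for $|t|<T$. The idea is that extending the parameter $s$ up to the current time forces $F_T$ to increase at rate at least $m/(2T)$, while Proposition~\ref{qmaxpropo} confines the sup to a tube of width $m$ around ${\delta}$. The factor $1/(2T)$ is calibrated so that $(t-s)/(2T)\le 1$ throughout $-T<s\le t<T$, which is the crucial size constraint making the two bounds compatible.

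The lower bound $\varrho_T \ge -m$ is immediate from taking $s=t$ in the sup, giving $F_T(t,w) \ge {\delta}(t,w) - m(t,w)$. For the upper bound $\varrho_T \le m$, I use $(t-s)/(2T)\le 1$ to estimate $\int_s^t m(r,w)/(2T)\,dr \le \sup_{r\in[s,t]} m(r,w)$, and then the quasi-convexity of Proposition~\ref{qmaxpropo},
\[
\sup_{r\in[s,t]} m(r,w) \le {\delta}(t,w) - {\delta}(s,w) + m(s,w) + m(t,w),
\]
to conclude $\Phi_T(s,t,w) \le {\delta}(t,w) + m(t,w)$; taking the supremum gives $F_T \le {\delta} + m$. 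The uniform Lipschitz continuity of $w \mapsto \varrho_T(t,w)$ follows from the uniform Lipschitz continuity in $w$ of ${\delta}$ (Proposition~\ref{g1prop}) and $m$ (Proposition~\ref{weightprop}), combined again with $(t-s)/(2T)\le 1$: each $\Phi_T(s,t,\cdot)$ is Lipschitz in $w$ with constant independent of $s,t,T$, and this bound survives the supremum.

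For the derivative condition, fix $w$ and $-T<t_1<t_2<T$, and for $\varepsilon>0$ pick $s^*\in(-T,t_1]$ with $\Phi_T(s^*,t_1,w) > F_T(t_1,w)-\varepsilon$. Since $s^*\le t_1\le t_2$, the point $s^*$ is still admissible in $F_T(t_2,w)$, and the defining integral splits:
\[
F_T(t_2,w) \ge \Phi_T(s^*,t_2,w) = \Phi_T(s^*,t_1,w) + \int_{t_1}^{t_2}\frac{m(r,w)}{2T}\,dr \ge F_T(t_1,w) - \varepsilon + \int_{t_1}^{t_2}\frac{m(r,w)}{2T}\,dr.
\]
Letting $\varepsilon\downarrow 0$ gives $F_T(t_2,w)-F_T(t_1,w) \ge \int_{t_1}^{t_2} m(r,w)/(2T)\,dr$. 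Since $F_T$ is nondecreasing in $t$ (each $\Phi_T(s,\cdot,w)$ has nonnegative derivative $m(\cdot,w)/(2T)$ and the admissible range of $s$ only grows with $t$), this integral estimate passes to $\partial_t F_T \ge m/(2T)$ in $\Cal D'(\br)$, yielding $T\partial_t({\delta}+\varrho_T) \ge m/2$ on $|t|<T$. The main obstacle in the scheme is confining $\varrho_T$ inside the tight band $|\varrho_T|\le m$: without the quasi-convexity of Proposition~\ref{qmaxpropo}, one would only get $|\varrho_T|\lesssim m$ with a suboptimal constant, which is insufficient for the multiplier estimate of Proposition~\ref{mainprop} in Section~\ref{lower}.
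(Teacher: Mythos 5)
Your construction is the same Perron running supremum the paper uses (the paper writes $\varrho_T$ directly as the supremum of $\delta(s,w)-\delta(t,w)+\tfrac1{2T}\int_s^t m\,dr - m(s,w)$, which is your $F_T-\delta$), you invoke the same calibration $(t-s)/(2T)\le 1$ together with Proposition~\ref{qmaxpropo} for the upper bound $\varrho_T\le m$, use $s=t$ for the lower bound, and deduce the distributional derivative bound by the same monotone-supremum observation. This is essentially the paper's proof, and it is correct.
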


\begin{proof}
	(We owe this argument to Lars H\"ormander ~\cite{ho:NT}.)
	Let 
	\begin{equation}\label{r0def}
	{\varrho}_{T}(t,w) = \sup_{-T \le s
		\le t}\left({\delta}(s,w) - {\delta}(t,w) + \frac{1}{2T} \int_s^t
	m(r,w)\,dr - m(s,w)\right)
	\end{equation}
	for $|t| \le T$, then
	\begin{multline*}
	{\delta}(t,w) + {\varrho}_T(t,w) = \sup_{-T \le s
		\le t}\left({\delta}(s,w) - \frac{1}{2T}\int_0^s m(r,w)\,dr -
	m(s,w)\right) \\+ 
	\frac{1}{2T} \int_0^t m(r,w)\,dr
	\end{multline*}
	which immediately gives ~\eqref{r0prop1} since the
	supremum is nondecreasing. Since  $w \mapsto {\delta}(t,w)$ and $w \mapsto
	m(t,w)$ are uniformly   
	Lipschitz continuous by Proposition~\ref{weightprop}, we
	find that $w \mapsto {\varrho}_T(t,w)$
	is uniformly Lipschitz continuous  by taking the supremum.
	Since $ \delta(s,w ) \le \delta(t,w) $ when $ s \le t \le T $, we find from Proposition~\ref{qmaxpropo} that
	\begin{equation*}
	{\delta}(s,w) - {\delta}(t,w) + \frac{1}{2T}\int_s^t m(r,w)\,dr -
	m(s,w) \le m(t,w) \qquad -T \le s \le t \le T
	\end{equation*}
	By taking the supremum, we obtain that $ -m(t,w) \le
	{\varrho}_T(t,w) \le m(t,w)$ when $|t| \le T$, which proves the result.
\end{proof}

We shall also include a term in the multiplier to control  the error terms involving $ D_x u $.

\begin{lem}\label{multlem}
Assume that $ A $ satisfies the conditions in Proposition~\ref{prepprop} near $ w_0 \in \st $. Then there exists a matrix $ L $ and constant $ c _1 > 0$ such that $ \{A, \w{L(x-x_0), \xi}\}  \ge |\xi|^2 - c_1 $ microlocally near $ w_0 \in \st$, where $ x_0 $ is the value of $ x  $ at $ w_0 $. The constants only depend on the seminorms of $ A $.
\end{lem}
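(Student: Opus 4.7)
The plan is to choose $L$ as a constant symmetric matrix adapted to the $\xi$-Hessian of $A_2$ at $w_0$, and then verify the bound by a direct Poisson-bracket computation, absorbing lower-order contributions either into a fraction of the leading $|\xi|^2$ part or into the constant $c_1$. Using the normal form \eqref{adef1}, write
\[
A = \sum_{jk} a_{jk}\xi_j\xi_k + \sum_j a_j\xi_j + a_0
\]
with real $a_{jk}, a_j, a_0 \in S(1,g)$ and $H = (a_{jk}(w_0))$ real symmetric and invertible by the nondegeneracy of $\hess A_2$ on the normal bundle. I would set $L = 2H^{-1}$ and $b = \w{L(x-x_0),\xi}$. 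Since $b$ depends only on $(x,\xi)$,
\[
\{A,b\} = \sum_\ell (\partial_{\xi_\ell}A)(L\xi)_\ell - \sum_\ell (\partial_{x_\ell}A)(L(x-x_0))_\ell .
\]

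The leading contribution comes from $\partial_{\xi_\ell}A_2 = 2\sum_k a_{\ell k}\xi_k + O(|\xi|^2)$, producing the symmetric quadratic form $\xi^T(LA_{\mathrm{hess}} + A_{\mathrm{hess}}L)\xi$. At $w_0$ this evaluates to $4|\xi|^2$, so by continuity of $(a_{jk})$ it is bounded below by $3|\xi|^2$ on a conic neighborhood of $w_0$. The cubic-in-$\xi$ remainder, coming from $\partial_\xi a_{jk}$, is $O(|\xi|^3/\w{(\tau,\eta)})$ and is therefore $\le \epsilon|\xi|^2$ in a sufficiently narrow conic neighborhood of $\st$, where $|\xi|/\w{(\tau,\eta)}$ is small. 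The linear-in-$\xi$ contribution from $\partial_{\xi_\ell}A_1 \approx a_\ell$ is $O(|\xi|)$, absorbed by Cauchy--Schwarz into $\epsilon|\xi|^2 + C_\epsilon$.

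The terms with the factor $L(x-x_0)$ are controlled by the vanishing orders of $\partial_{x_\ell}A_j$: namely $\partial_{x_\ell}A_2 \cdot L(x-x_0) = O(|x-x_0||\xi|^2)$, $\partial_{x_\ell}A_1 \cdot L(x-x_0) = O(|x-x_0||\xi|)$, and $\partial_{x_\ell}A_0 \cdot L(x-x_0) = O(|x-x_0|)$, all absorbed into $\epsilon|\xi|^2 + C_\epsilon$ for $|x-x_0|$ sufficiently small. Combining, $\{A,b\} \ge (3-\epsilon)|\xi|^2 - C_\epsilon \ge |\xi|^2 - c_1$ on a small enough conic microlocal neighborhood of $w_0$. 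Since $L$ is built from $H^{-1}$ and all bounds come from the $S(1,g)$-seminorms of $a_{jk}, a_j, a_0$, both $L$ and $c_1$ depend only on the seminorms of $A$.

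No real obstacle arises; the lemma is a purely quadratic positive-commutator statement for $A$ alone, and neither the subprincipal structure nor condition $\subr(\Psi)$ enters. The only care needed is to shrink the conic neighborhood of $w_0$ enough that both the cubic-in-$\xi$ remainder from the $\xi$-dependence of $a_{jk}$ and the $|x-x_0|$ factor in the $\partial_x A_2$ contribution are absorbed into a fraction of the principal $|\xi|^2$ part, leaving the bounded contributions of $a_\ell$ and $\partial_x A_0 \cdot L(x-x_0)$ in $c_1$.
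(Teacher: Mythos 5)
Your proposal is correct and follows essentially the same route as the paper: both choose $L$ as a (scalar multiple of the) inverse of the $\xi$-Hessian of $A_2$ at $w_0$, compute $\{A,\w{L(x-x_0),\xi}\}$ directly, and observe that the leading $\xi$-quadratic term is positive definite at $w_0$ while the remaining contributions (the $\partial_\xi a_{jk}$ cubic error, the linear-in-$\xi$ and bounded terms, and the $\partial_x A$ terms carrying the factor $L(x-x_0)$) are absorbed into a fraction of $|\xi|^2$ plus a constant by shrinking to a small conic neighborhood of $w_0$. The paper is terser and takes $L = C_2^{-1}(0,0,w_0)$, yielding $c_2 = 2$ at $w_0$, versus your $L = 2H^{-1}$ yielding $4|\xi|^2$; the extra factor of $2$ is immaterial.
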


\begin{proof}
    Let $ w = (x,\xi, z) $, then we find from the conditions that 
	$$ 
	A(x,\xi,z) = \w{C_2(x,\xi,z)\xi,\xi} +\w{C_1(x,z), \xi}  +  C_0(x,z)
	$$ 
	where $ C_j \in S^0 $ is real valued, $ \forall\, j $, and $ C_2 $ is a  symmetric and  nondegenerate  matrix microlocally near $ w_0 $. By a translation 
	we may assume that $ x = 0 $ at $ w_0  $. If we take $ L =  C_2^{-1}(0,0,w_0) $ then we find that 
		\begin{equation}\label{Lest}
			 \{A, \w{Lx, \xi}\} =   \w{L \xi,\partial_\xi A}  -  \w{Lx, \partial_x A }   \ge c_2|\xi|^2 - c_0
		\end{equation}
	where $ c_2 = 2 $ at $w_0  $, since  $\partial_\xi A = 2C_2\xi +  \w{\partial_\xi C_2\xi,\xi}  + C_1$. 
	By continuity, we get the estimate in a neighborhood of $ w_0 $ where $ |\partial_\xi C_2 \xi | \ll  1$.
\end{proof}

Because of the cut-off in the estimate~\eqref{propest} we will only need the lower bound in a neighborhood of $ w_0$.

\begin{defn}\label{multdef}
	 Let the multiplier symbol  $B_T = {\delta}_0 + {\varrho}_T + \lambda_T$, where ${\delta}_0 = {\delta}$ is given by Definition~~\ref{d0deforig}, ${\varrho}_T$ is given by Definition~~\ref{apsdef} for $ T > 0 $ so it is
	real valued and Lipschitz continuous,
	satisfying $|{\varrho}_T| \le m$ when $ |t| \le T $, with ~$m \le \w{{\delta}_0}/2$ given by
	Definition~\ref{h0def}, and $ \lambda_T = \epsilon h^{1/2}\w{L(x-x_0), \xi}/T \in S(h^{-1/2}, g)$ uniformly when $ |x -x_0| \le T $ and $ | \xi | \ls h^{-1} $, where $ 0 < \epsilon \le 1 $ and $ L $ is given by Lemma~\ref{multlem}, so $ \lambda_T $ is  Lipschitz continuous.
\end{defn}

\section{The Wick quantization}\label{norm}

In order to define the multiplier we shall use the Wick quantization.
We shall start by recapitulating some results from Section~6 in \cite{de:X} about the Wick operators.
As before, we shall assume that $g^\sh =
(g^\sh)^{\sigma}$ and the coordinates chosen so that $g^\sh(w) =
|w|^2$.  For $a \in L^\infty(T^*\br^n)$ we define the Wick
quantization:
\begin{equation*}
a^{Wick}(x,D_x)u(x) = \int_{T^*\br^n}a(y,{\eta})
{\Sigma}^w_{y,{\eta}}(x,D_x)u(x)\,dyd{\eta}\qquad u \in  C_0^\infty
\end{equation*}
using the orthonormal projections ${\Sigma}^w_{y,{\eta}}(x,D_x)$ with Weyl symbol
$${\Sigma}_{y,{\eta}}(x,{\xi}) =
{\pi}^{-n}\exp(-g^\sharp(x-y,{\xi}-{\eta}))
$$
(see~\cite[Appendix~B]{de:suff} or~\cite[Section~4]{ln:coh}).
We find that 
$a^{Wick}$: $ \Cal S \mapsto \Cal S' $ so that 
\begin{equation} \label{poswick}
a \ge 0 
\implies
\sw{a^{Wick}(x,D_x)u,u} \ge
0 \qquad u \in C_0^\infty
\end{equation}
$(a^{Wick})^* = (\overline a)^{Wick}$ and
$ 
\mn{a^{Wick}(x,D_x)}_{\Cal L(L^2(\br^n))} \le
\mn{a}_{L^\infty(T^*\br^n)}, 
$ 
which is the main advantage with the Wick quantization
(see \cite[Proposition~4.2]{ln:coh}).  

We obtain from the definition that
$a^{Wick} = a_0^{w}$ where
\begin{equation}\label{gausreg}
a_0(w) = {\pi}^{-n}\int_{T^*\br^n} a(z)\exp(-|w-z|^2)\,dz 
\end{equation}
is the Gaussian regularization, thus Wick operators with real symbols
have real Weyl symbols.  This convolution also maps polynomials to polynomials.

\begin{rem}\label{wickrem}
	Observe that $ a^{Wick} = a^w$ if $ a(x,\xi) $ is affine in $ x$ for fixed $ \xi $ and affine in $ \xi$ for fixed $ x $, for example if $ a(x,\xi) = \w{Lx, \xi} $ with a constant matrix  $ L $ .
\end{rem}

In fact, $ a^{Wick} = a^w $ if and only if 
\[ 
{\pi}^{-n}\int_{T^*\br^n} (a(z)  - a(w)) \exp(-|w-z|^2)\,dz = 0
 \]
This vanishes if $ a $ is affine in $ x $ for fixed $ \xi $ and affine in $ \xi $ for fixed $ x $ since
$$  a(x,\xi)- a(y,\eta)= a(x,\xi)- a(y,\xi) + a(y,\xi)- a(y,\eta) $$ 
which are two odd integrands giving vanishing integrals.

In the following, we shall assume that
$G =Hg^\sh \le g^\sh$ is a slowly varying metric satisfying 
\begin{equation}\label{gentemp}
H(w) \le C_0 H(w_0) (1 + |w-w_0|)^{N_0} 
\end{equation}
and $m$ is a weight for $G$ satisfying~~\eqref{gentemp} with ~~$H$
replaced by ~~$m$, by Propositions ~\ref{g1prop}  and ~\ref{h0slow}. This means that $G$ and $m$ are strongly ${\sigma}$
~temperate in the sense of \cite[Definition~7.1]{bc:sob}. Recall the
symbol class $S^+(1,g^\sh)$ given by Definition~\ref{s+def}.

\begin{prop}\label{propwick} 
	Assume that $a \in L^\infty $ such that $|a| \le Cm$, where $ m $ is a weight for $ g^\sh $, then
	$ a^{Wick} = a_0^w$ where $a_0\in S(m,g^\sh)$ is given by
	~\eqref{gausreg}. If $a \ge m$ we obtain that $a_0 \ge c_0m$ for a
	fixed constant $c_0>0$. If $a \in S(M,G)$, where  $ M $ is a weight for $ G $, then $a_0 \cong a$ modulo 
	$S(mH,G)$. If $|a| \le Cm$ and $a = 0$ in a fixed
	$G$~ball with center ~$w$, then $a \in S(mH^N,G)$ at ~$w$ for any $N$.
	If $ a $ is polynomial in the variable $ \xi $ then $ a_0 $ is polynomial  in $ \xi $ with the same degree as $ a $, and if  $a$ is Lipschitz continuous then we have $a_0\in S^+(1, g^\sh)$.
\end{prop}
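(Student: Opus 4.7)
My approach is to prove all six claims directly from the explicit Gaussian regularization formula \eqref{gausreg}, observing that by the very definition of the Wick quantization and linearity of the Weyl calculus, $a^{Wick} = a_0^w$. Most of the claims reduce to standard manipulations of Gaussian convolutions together with polynomial growth of the weight.

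For the smoothness-and-size claim $a_0 \in S(m,g^\sh)$ I would differentiate under the integral,
\[
\partial^j a_0(w) = \pi^{-n}\int a(z)\, Q_j(w-z)\, e^{-|w-z|^2}\,dz,
\]
where $Q_j$ is a polynomial of degree $j$ coming from $\partial^j_w e^{-|w-z|^2}$. The weight inequality $m(z) \le C m(w)(1+|w-z|)^{N_0}$ (which holds since $m$ is $\sigma$, $g^\sh$-temperate) combined with $|a| \le Cm$ and Gaussian decay gives $|\partial^j a_0(w)| \le C_j m(w)$. For the lower bound $a_0 \ge c_0 m$ when $a \ge m$, slow variation of $m$ yields $m(z) \ge c\, m(w)$ on a fixed $g^\sh$-ball around $w$; restricting the nonnegative integrand $a(z) e^{-|w-z|^2}$ to that ball produces a contribution bounded below by $c\,\pi^{-n} m(w) \int_{|\zeta|\le r}e^{-|\zeta|^2}d\zeta$.

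For the semiclassical approximation $a_0 \cong a$ modulo $S(mH,G)$ when $a \in S(M,G)$, I would Taylor expand $a(z)$ about $w$: the zeroth order term reproduces $a(w)$, the linear term vanishes by the odd symmetry of the Gaussian, and the quadratic term contributes $\frac{1}{2}\tr(\partial^2 a(w))$ times fixed Gaussian moments, which lies in $S(MH,G)$ since $|\partial^2 a| \le C M H$ in $g^\sh$-coordinates. The cubic Taylor remainder on $|z-w| \le cH^{-1/2}$ is controlled by $|\partial^3 a| \le C M H^{3/2}$ (using temperateness of $G$ and $M$), while on $|z-w| \ge cH^{-1/2}$ the Gaussian decay $e^{-c^2/H}$ beats any power of $H$ against the polynomial growth of $m$; analogous arguments for higher derivatives finish the claim. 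The same tail argument proves claim (4): if $a$ vanishes in the $G$-ball $\{|z-w| \le cH(w)^{-1/2}\}$, the integrand is supported only on this tail, so $|\partial^j a_0(w)|$ is bounded by $m(w) e^{-c^2/2H(w)}$ times a polynomial factor, hence is $\Cal O(m(w) H(w)^N)$ for every $N$.

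Claim (5) follows by the change of variables $z = w + \zeta$: since polynomial translation does not raise the degree, $a(w+\zeta)$ is polynomial in the $\xi$-component of $w$ of the same degree as $a$, and integration against the Gaussian preserves the degree. For the last claim, if $a$ is Lipschitz continuous then the key identity $\int Q_j(w-z) e^{-|w-z|^2}\,dz = 0$ for $j \ge 1$ (which holds because $\int e^{-|w-z|^2}\,dz$ is constant in $w$) allows us to write
\[
\partial^j a_0(w) = \pi^{-n}\int (a(z) - a(w))\, Q_j(w-z)\, e^{-|w-z|^2}\,dz,
\]
so that $|a(z)-a(w)| \le C|z-w|$ yields $|\partial^j a_0(w)| \le C_j$ uniformly for all $j \ge 1$, which is precisely the statement $a_0 \in S^+(1,g^\sh)$. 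The main delicate point throughout is the $H$-gain in claim (3): one must split the convolution at the natural scale $|z-w| \sim H^{-1/2}$, use Taylor's formula inside and Gaussian super-polynomial decay outside, being careful that the $M$-bounds on derivatives of $a$ hold in the $G$-neighborhood by temperateness.
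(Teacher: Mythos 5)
Your proposal is correct and follows essentially the same approach as the paper's proof: all claims are read directly off the explicit Gaussian regularization formula by differentiation under the integral, Taylor expansion with odd-symmetry cancellation of the linear term, and temperateness of the weights. The only cosmetic departures (your domain split at scale $H^{-1/2}$ in the third claim, where the paper instead keeps the integral Taylor remainder over all of $T^*\br^n$ and lets temperateness absorb the tail without splitting; and your vanishing-moment subtraction of $a(w)$ in the Lipschitz claim, where the paper simply transfers the derivative to $a$ via $\partial a_0 = (\partial a)_0$ and applies the first claim with a constant weight) do not change the substance of the argument.
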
 

By localization we find, for example, that if $|a| \le Cm$ and $a \in
S(m,G)$ in a $G$~neighborhood of ~$w_0$, then $a_0 \cong a$ modulo
$S(mH,G)$ in a smaller $G$~neighborhood of ~$w_0$. Observe that the
results are uniform in the metrics and weights.  The results are well
known, but for completeness we give a proof.

\begin{proof} 
	Since $a$ is measurable satisfying $|a| \le C m$, where $m(z) \le C_0
	m(w)(1 + |z-w|)^{N_0}$ by ~\eqref{gentemp}, we find that
	$a^{Wick} = a_0^{w}$ where $a_0 = \Cal O(m)$ is given by
	~\eqref{gausreg}. By differentiating on the exponential factor, we find
	$a_0 \in S(m,g^\sh)$, and similarly we find that $a_0 \ge m/C$ if $a
	\ge m$ since $ m(z) \gtrsim m(w)/(1 + |z - w|)^{N_0} $.
	
	If $a = 0$ in a $G$
	ball of radius ${\varepsilon}>0$ and center at $w$, then we can write
	\begin{equation*}
	{\pi}^{n}a_0(w) = 
	\int_{|z-w| \ge {\varepsilon}H^{-1/2}(w)} a(z)\exp(-|w-z|^2)\,dz =\Cal
	O(m(w) H^N(w)) 
	\end{equation*}
	for any $N$ even after repeated differentiation. 
	
	If $a \in S(m,G)$ then
	Taylor's formula gives
	\begin{equation*}
	a_0(w) = a(w) + {\pi}^{-n} \int_0^1\int_{T^*\br^n} (1-{\theta})^2\w{a''(w + {\theta}z)z,z} e^{-|z|^2}\,dzd{\theta}/2
	\end{equation*}
	where $a'' \in S(mH,G)$ since $G = Hg^\sh$.  Since $m(w + {\theta}z)
	\le C_0 m(w)(1 + |z|)^{N_0}$ and $H(w + {\theta}z) \le C_0H(w)(1 +
	|z|)^{N_0} $ when $|{\theta}| \le 1$, we find that $a_0(w) \cong a(w)$
	modulo $S(mH,G)$.  
	
	If $ a $ is polynomial in the variable $ \xi $ of degree $ k $ then $ \partial^\alpha_\xi a \equiv 0 $,$ \forall | \alpha | > k $, which gives $ \partial^\alpha_\xi a_0 \equiv 0 $. Thus $ a_0 $ is of degree $ \le k $.
	The  Lipschitz continuity of $a$ means that 
	$\partial a \in L^\infty(T^*\br^n)$. 
	Since we have $\partial a_0(w) = {\pi}^{-n}\int_{T^*\br^n}
	\partial a(z)\exp(-|w-z|^2)\,dz $, we obtain the proposition.
\end{proof}

We shall need the following result about the composition of Wick
operators.  

\begin{prop}\label{wickcomp0} Assume that $a$ and $b \in L^\infty $.
	If $|a| \le m_1$ and $|\partial b| \le m_2$, where $m_j$ are weights for
	$g^\sh$ satisfying ~\eqref{gentemp} with $ H = m_j $, then
	\begin{equation}\label{wickcomp1}
	a^{Wick}b^{Wick} = (ab )^{Wick} + r^w
	\end{equation}
	with $r \in S(m_1m_2,g^\sh)$. 
	If $a$ and $b$ are real such that  $|a| \le m_1$ and $| \partial^2 b|  \le m_2$, then
	\begin{equation}\label{wickcomp2}
	\re a^{Wick}b^{Wick} = \left(ab  - \frac{1}{2}\partial a\cdot \partial  b\right)^{Wick} + R^w
	\end{equation}
	with $R \in S(m_1m_2,g^\sh)$. By taking the adjoints, we get these results with $ a $ and $ b $ switched.
\end{prop}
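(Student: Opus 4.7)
The plan is to reduce to Weyl calculus via Proposition~\ref{propwick}, writing $a^{Wick}=a_0^w$ and $b^{Wick}=b_0^w$ with Gaussian regularizations satisfying $a_0\in S(m_1,g^\sh)$ (from $|a|\le m_1$) and $b_0$ having derivatives of order $\ge 1$ (resp.\ $\ge 2$) bounded by $m_2$ in part (i) (resp.\ part (ii)). Lemma~\ref{calcrem} with $g_1=g_2=g^\sh$ and $h_1=h_2=h=1$ then gives $a_0^wb_0^w=(a_0b_0)^w+r_1^w$ with $r_1\in S(m_1m_2,g^\sh)$, and for real $a,b$ the sharper~\eqref{2.3a} yields $\re(a_0^wb_0^w)=(a_0b_0)^w+R_1^w$ with $R_1\in S(m_1m_2,g^\sh)$, the leading imaginary Poisson-bracket term dropping out.

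The heart of the matter is to compare the pointwise product $a_0b_0$ with the smoothing $(ab)_0$. Using $\int e^{-|w-z_2|^2}dz_2=\pi^n$ to recast $(ab)_0$ as a double integral gives
\[
a_0(w)b_0(w)-(ab)_0(w)=\pi^{-2n}\iint a(z_1)\bigl[b(z_2)-b(z_1)\bigr]e^{-|w-z_1|^2-|w-z_2|^2}\,dz_1dz_2.
\]
For part (i), the first-order Taylor identity $b(z_2)-b(z_1)=\int_0^1\partial b(z_1+t(z_2-z_1))\cdot(z_2-z_1)\,dt$, the bound $|\partial b|\le m_2$, Gaussian decay, and the slow variation of the weights give $|a_0b_0-(ab)_0|\le Cm_1m_2$. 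Differentiating the integral representation puts the $w$-derivatives on the Gaussian kernels and produces analogous bounds on all seminorms, so $a_0b_0-(ab)_0\in S(m_1m_2,g^\sh)$, which together with the Moyal error proves~\eqref{wickcomp1}.

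For part (ii) I expand one Taylor order further: $b(z_2)-b(z_1)=\partial b(z_1)\cdot(z_2-z_1)+\int_0^1(1-t)\,\partial^2 b(z_t)(z_2-z_1,z_2-z_1)\,dt$. The quadratic remainder is in $S(m_1m_2,g^\sh)$ by $|\partial^2 b|\le m_2$. For the linear piece I perform the $z_2$-integration using $\int(z_2-z_1)e^{-|w-z_2|^2}dz_2=\pi^n(w-z_1)$ and then integrate by parts in $z_1$ via $(w-z_1)e^{-|w-z_1|^2}=\tfrac12\partial_{z_1}e^{-|w-z_1|^2}$, which produces $-\tfrac12(\partial a\cdot\partial b)_0-\tfrac12(a\Delta b)_0$. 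Since $|a\Delta b|\le Cm_1m_2$, the second summand lies in $S(m_1m_2,g^\sh)$, so
\[
a_0b_0-(ab)_0+\tfrac12(\partial a\cdot\partial b)_0\in S(m_1m_2,g^\sh),
\]
which combined with $\re(a_0^wb_0^w)=(a_0b_0)^w+R_1^w$ yields~\eqref{wickcomp2}.

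The principal technical obstacle is extending the pointwise estimates on the remainders to full symbol-class bounds: one must differentiate the integral representations and re-run the same Gaussian/slow-variation arguments for every seminorm. A secondary subtlety is that the integration by parts in part (ii) tacitly requires $\partial a$ to be defined in $L^\infty$, consistent with the appearance of $\partial a\cdot\partial b$ in the stated conclusion. The versions with $a$ and $b$ switched then follow by taking adjoints, using that $(a^{Wick})^*=a^{Wick}$ for real $a$.
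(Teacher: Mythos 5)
Your proof is correct and follows essentially the same route as the paper: reduce via Proposition~\ref{propwick} and Lemma~\ref{calcrem} to comparing the pointwise product $a_0b_0$ with the Gaussian regularization $(ab)_0$, then Taylor-expand $b$ to first or second order inside the double Gaussian integral and, for \eqref{wickcomp2}, integrate by parts on the linear Taylor term (the paper uses the shifted variables $z_j \mapsto w+z_j$, so the odd term drops by symmetry rather than producing the factor $w-z_1$, but the computation is identical and both yield the extra $-\tfrac12(a\Delta b)^{Wick}$ which is absorbed into $R^w$ since $|a\Delta b|\le m_1m_2$). The one point to correct is your closing remark: the paper does not tacitly assume $\partial a\in L^\infty$; it defines $\partial a\cdot\partial b$ as the distribution $\varphi\mapsto-\iint a\,\partial(\varphi\,\partial b)\,dw$ (see the remark following the proposition), and your integration by parts is exactly the distributional pairing that gives $(\partial a\cdot\partial b)_0$ a meaning with only $a\in L^\infty$ and $\partial b$ Lipschitz.
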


Observe that since $ a \in L^\infty $ and $\partial b$ is Lipschitz continuous in \eqref{wickcomp2}, we find that $\partial a \cdot \partial b$ is a
well-defined distribution. In fact,  we can define it as $ \partial a \cdot\partial b(\varphi)  = -\int  a\partial (\varphi\partial b )dw$.
Proposition~\ref{wickcomp0} essentially follows from Proposition~3.4
in ~\cite{ln:wick} 
and Lemma~A.1.5 in ~\cite{ln:cutloss} but we shall for completeness
give a proof.

\begin{proof}
	By Proposition~\ref{propwick}
	we have $a^{Wick}b^{Wick} = a_0^wb_0^w$ in~\eqref{wickcomp1} where $a_0 \in S(m_1,g^\sh)$
	and $b_0 \in S^+(m_2,g^\sh)$. By  Lemma~\ref{calcrem} we find
	$a^{Wick}b^{Wick} \cong (a_0b_0)^w$ modulo $\op S(m_1m_2,g^\sh) $, where
	\begin{equation} \label{wickcomp}
	a_0(w)b_0(w) = \pi^{-2n} \iint a(w+z_1)b(w+z_2)e^{- |z_1|^2 - |z_2|^2}\,dz_1dz_2.
	\end{equation}
	By using the Taylor formula we find that $b(w + z_2) = b(w + z_1) + r_1(w,z_1,z_2)$
	where $|r_1(w,z_1,z_2)| \le Cm_2(w)(1 + |z_1|+ |z_2|)^N$
	by~\eqref{gentemp}. Integration in ~$z_2$ then gives ~\eqref{wickcomp1}.
	
	For the proof of~\eqref{wickcomp2} we use that $\re a_0^wb_0^w \cong
	(a_0b_0)^w$ modulo  $\op S(m_1m_2,g^\sh) $ by Lemma~\ref{calcrem},
	since $a_0$ and $b_0$ are real and $\partial^2 b_0 \in S(m_2,g^\sh)$. We use 
	the Taylor formula again:
	$$b(w + z_2) = b(w + z_1) + 
	\partial b(w + z_1)\cdot(z_2-z_1) + r_2(w,z_1,z_2)
	$$
	where $|r_2(w,z_1,z_2)| \le Cm_2(w)(1 + |z_1|+ |z_2|)^N$.
	The term with $z_2$ is odd and gives a vanishing contribution in ~\eqref{wickcomp}.
	Since $\partial_{z_1}e^{-|z_1|^2-|z_2|^2} = - 2z_1 e^{-|z_1|^2}$ we
	obtain ~\eqref{wickcomp2} after an integration by parts, since 
	$|a	\partial^2 b| \le m_1m_2$.
\end{proof}

\begin{exe}\label{wickcompexe} If $a \in S(H^{-1/2}, g^\sh) \bigcap S^+(1, g^\sh)$
	and $b \in S(M, G)$, then $\re a^{Wick}b^{Wick} \cong (ab )^{Wick}$ modulo
	$\op S(MH^{1/2}, g^\sh)$.
\end{exe}

We shall compute the Weyl symbol for the Wick operator
$B_T^{Wick} = ({\delta}_0 + {\varrho}_T + \lambda_T)^{Wick}$  given by Definition~\ref{multdef}.  In the
following, we shall suppress the  $ T $ parameter.

\begin{prop}\label{wickweyl}
	Let $B = {\delta}_0 + {\varrho}_0+ \lambda$ be given by
	Definition~~\ref{multdef}, 
	then we have $ B^{Wick} = b^w $ 
	where $b= {\delta}_1 + {\varrho}_1 + \lambda$  is real, ${\delta}_1 \in
	S(H^{-1/2},g^\sh)\bigcap 
	S^+(1,g^\sh)$, and ${\varrho}_1 \in S(m, g^\sh)\bigcap S^+(1, g^\sh)$ uniformly when $| t |\le T $.
	Also, there exists ${\kappa}_2 >0$ so that ${\delta}_1 \cong {\delta}_0$
	modulo $S(H^{1/2},G)$ when $\w{{\delta}_0} \le {\kappa}_2H^{-1/2}$.  
	For any $\varepsilon > 0$ we find that $|{\delta}_0| \ge
	\varepsilon H^{-1/2}$ and $H^{1/2} \le \varepsilon/3$ imply
	that $ 	| \delta_0 + \varrho_0  | \ge \varepsilon H^{-1/2}/3$. 
\end{prop}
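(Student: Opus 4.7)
The plan is to compute the Weyl symbol of $B^{Wick}$ piece by piece via the Gaussian regularization formula~\eqref{gausreg}, reading off the stated symbol-class memberships from Proposition~\ref{propwick}. Since Wick quantization is linear in the symbol, one splits $B^{Wick} = \delta_0^{Wick} + \varrho_0^{Wick} + \lambda^{Wick}$. The term $\lambda = \epsilon h^{1/2}\w{L(x-x_0),\xi}/T$ is affine in $x$ for fixed $\xi$ and affine in $\xi$ for fixed $x$, so Remark~\ref{wickrem} gives $\lambda^{Wick} = \lambda^w$. Thus the Weyl symbol of $B^{Wick}$ is $b = \delta_1 + \varrho_1 + \lambda$ with $\delta_1 = (\delta_0)_0$ and $\varrho_1 = (\varrho_0)_0$, and $b$ is real because $\delta_0$, $\varrho_0$, $\lambda$ are.

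The two symbol-class memberships follow from Proposition~\ref{propwick}. For $\delta_1$: the signed distance $\delta_0$ is Lipschitz with constant $1$ and satisfies $|\delta_0| \le H^{-1/2}$, while $H^{-1/2}$ is a weight for $g^\sh$ since it is Lipschitz with $H^{-1/2} \ge 1$ by~\eqref{H1hest}; Proposition~\ref{propwick} then yields $\delta_1 \in S(H^{-1/2}, g^\sh)\bigcap S^+(1, g^\sh)$. For $\varrho_1$: Proposition~\ref{apsdef} gives $|\varrho_0| \le m$ and uniform Lipschitz continuity in $w$, while $m$ is a weight for $g^\sh$ by Propositions~\ref{weightprop} and~\ref{h0slow}; Proposition~\ref{propwick} then delivers $\varrho_1 \in S(m, g^\sh)\bigcap S^+(1, g^\sh)$ uniformly in $|t| \le T$.

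The refined comparison $\delta_1 \cong \delta_0$ modulo $S(H^{1/2}, G)$ on the set $\w{\delta_0} \le \kappa_2 H^{-1/2}$ uses Proposition~\ref{ffactprop}: when $\w{\delta_0(w_0)} \le \kappa_1 H^{-1/2}(w_0)$, the factorization $f = \alpha\,\delta_0$ places $\delta_0$ in $S(H^{-1/2}, G)$ on a $G$-ball around $w_0$. The localized version of Proposition~\ref{propwick} (stated immediately after its proof) then gives $\delta_1 \cong \delta_0$ modulo $S(H^{-1/2}\cdot H, G) = S(H^{1/2}, G)$ on a smaller $G$-neighborhood, and choosing $\kappa_2 < \kappa_1$ small enough makes the estimate uniform on the whole set $\{\w{\delta_0} \le \kappa_2 H^{-1/2}\}$.

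The final pointwise lower bound is a reverse-triangle argument. Assume $|\delta_0| \ge \varepsilon H^{-1/2}$ and $H^{1/2} \le \varepsilon/3$. The second hypothesis gives $H^{-1/2} \ge 3/\varepsilon$, which forces $|\delta_0| \ge 3$ and therefore $\w{\delta_0} = 1 + |\delta_0| \le 4|\delta_0|/3$. Combining $|\varrho_0| \le m$ from Proposition~\ref{apsdef} with $m \le \w{\delta_0}/2$ from~\eqref{hhhest} gives $|\varrho_0| \le 2|\delta_0|/3$, hence $|\delta_0 + \varrho_0| \ge |\delta_0| - |\varrho_0| \ge |\delta_0|/3 \ge \varepsilon H^{-1/2}/3$. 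The most delicate step in the whole argument is the third paragraph, since it requires the factorization of Proposition~\ref{ffactprop} to upgrade $\delta_0$ from a merely Lipschitz function into a genuine element of the $G$-scale symbol class $S(H^{-1/2}, G)$; the rest is careful bookkeeping between the $g^\sh$ and $G = H g^\sh$ calculi.
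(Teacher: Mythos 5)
Your argument is correct and follows essentially the same route as the paper: split $B^{Wick}$ by linearity, use Remark~\ref{wickrem} for $\lambda$, read off the $S(\cdot,g^\sh)\cap S^+(1,g^\sh)$ memberships of $\delta_1,\varrho_1$ from Proposition~\ref{propwick}, localize to get $\delta_0\in S(H^{-1/2},G)$ and hence $\delta_1\cong\delta_0$ modulo $S(H^{1/2},G)$, and finish with the reverse-triangle estimate. The one cosmetic divergence is that the paper cites Proposition~\ref{mproplem} at the step where $\delta_0\in S(H^{-1/2},G)$ is established, whereas you cite Proposition~\ref{ffactprop}; since the latter is the statement that actually yields $\delta=f/\alpha\in S(H^{-1/2},G)$ (and itself rests on Proposition~\ref{mproplem}), your citation is the more directly applicable one. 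Your rewording of the final arithmetic ($H^{-1/2}\ge 3/\varepsilon\Rightarrow|\delta_0|\ge 3\Rightarrow\w{\delta_0}\le\tfrac43|\delta_0|$) is equivalent to the paper's chain $\w{\delta_0}/2\le(1+H^{1/2}/\varepsilon)|\delta_0|/2\le\tfrac23|\delta_0|$.
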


\begin{proof}
	Let ${\delta}_0^{Wick} = {\delta}_1^w$ and ${\varrho}_0^{Wick} =
	{\varrho}_1^w$.  Since $|{\delta}_0|\le H^{-1/2}_1$, $|{\varrho}_0|
	\le m$ and the symbols are real valued, we obtain from
	Proposition~\ref{propwick} that ${\delta}_1 \in S(H^{-1/2},g^\sh)$
	and ${\varrho}_1 \in S(m, g^\sh)$ are real valued.  Since
	${\delta}_0$ and ${\varrho}_0$ are
	uniformly Lipschitz continuous, we find that ${\delta}_1$ and ${\varrho}_1 \in
	S^+(1,g^\sh)$ by Proposition~\ref{propwick}. By Remark~\ref{wickrem} we have $ \lambda^{Wick} = \lambda^w $.

	If $\w{{\delta}_0} \le {\kappa}H^{-1/2}$ at ~$w_0$ for sufficiently
	small ${\kappa} >0$, then we find by the Lipschitz continuity of~
	${\delta}_0$ and the slow variation of ~$G$ that $\w{{\delta}_0} \le
	C_0{\kappa}H^{-1/2}$ in a fixed $G$ neighborhood
	${\omega}_{\kappa}$ of ~$w_0$ (depending on ${\kappa}$). For ${\kappa}
	\ll 1$ we find that 
	${\delta}_0 \in S(H^{-1/2}, G)$ in ${\omega}_{\kappa}$ by
	Proposition~\ref{mproplem}, which implies that ${\delta}_1 \cong
	{\delta}_0$ modulo $S(H^{1/2},G)$ near ~$w_0$ by
	Proposition~\ref{propwick} after localization.
	
	When $|{\delta}_0| \ge \varepsilon H^{-1/2} \ge \varepsilon  > 0$ at
	$w_0$, then we find that 
	$$|{\varrho}_0| \le m \le \w{{\delta}_0}/2 \le
	(1 + H^{1/2}/\varepsilon)|{\delta}_0|/2.$$
	We obtain that $|{\varrho}_0| \le 2|{\delta}_0|/3$ and
	$| \delta_0 + \varrho_0  | \ge |{\delta}_0|/3 \ge
	\varepsilon H^{-1/2}/3 $ 
	when $H^{1/2}  \le \varepsilon/3$, which completes the proof.
\end{proof}

Let $m$ be given by Definition~\ref{h0def}, then
$m$ is a weight for $g^\sh$ according to
Proposition~\ref{h0slow}. We are going to use the symbol classes
$S(m^k, g^\sh)$, $k \in \br$. 
The following proposition shows that  the operator $m^{Wick}$ 
dominates all operators in $\op S(m,g^\sh)$.

\begin{prop}\label{wprop} 
	If $c \in S(m,g^\sh)$ then there exists a positive constant  $C_0$ such that
	\begin{equation}\label{west3}
	|\w{c^wu,u}| \le  C_0\sw{m^{Wick}u,u}
	\qquad u \in C_0^\infty.
	\end{equation}
	Here $C_0$ only depends on the seminorms of $c \in S(m,g^\sh)$.
\end{prop}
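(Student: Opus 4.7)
The plan is to construct a positive square root of $m^{Wick}$ within the Weyl calculus and reduce the estimate to $L^2$ boundedness via Calder\'on-Vaillancourt.

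First, by Proposition~\ref{propwick} applied to $a = m$, we have $m^{Wick} = m_0^w$ where $m_0 \in S(m,g^\sh)$ satisfies $m_0 \ge c_0 m > 0$ for some fixed $c_0$. Thus $m_0$ is itself a positive weight for $g^\sh$ equivalent to $m$, and $m_0^w$ is a self-adjoint non-negative operator on $L^2$ that is bounded below on its natural domain (the Sobolev-type space associated with the weight $m^{1/2}$).

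Second, using Beals' characterization of pseudodifferential operators together with functional calculus, for example via the Helffer-Sj\"ostrand formula applied to $z \mapsto z^{1/2}$ and the resolvent $(m_0^w - z)^{-1}$, the positive square root $A = (m_0^w)^{1/2}$ is of the form $A = a^w$ with $a \in S(m^{1/2}, g^\sh)$ and $a \gtrsim m^{1/2}$; likewise $A^{-1} = \tilde a^w$ with $\tilde a \in S(m^{-1/2}, g^\sh)$. This is the main technical ingredient: it rests on uniform symbol bounds for the resolvent family, which are available because $g^\sh$ is a H\"ormander metric with $g^\sh = (g^\sh)^\sigma$ and $m_0$ is bounded from below by $c_0 m$. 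This part is where the real work lies.

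Third, I form the composition $R = A^{-1} c^w A^{-1}$. Since $\tilde a \in S(m^{-1/2}, g^\sh)$, $c \in S(m, g^\sh)$ and the metric $g^\sh$ is its own dual (so $h \equiv 1$ and there is no need for any gain), Lemma~\ref{calcrem} (applied twice) gives $R = r^w$ with
\begin{equation*}
r \in S(m^{-1/2} \cdot m \cdot m^{-1/2}, g^\sh) = S(1, g^\sh).
\end{equation*}
By the Calder\'on-Vaillancourt theorem, $R$ is bounded on $L^2$ with norm $C_0$ controlled by a finite number of the seminorms of $c$ in $S(m,g^\sh)$.

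Finally, for $u \in C_0^\infty$ one writes $\langle c^w u, u\rangle = \langle R\, Au, Au\rangle$, so
\begin{equation*}
|\langle c^w u, u\rangle| \le \|R\|_{\Cal L(L^2)} \|Au\|^2 = C_0 \langle A^2 u, u\rangle = C_0 \langle m_0^w u, u\rangle = C_0 \sw{m^{Wick} u, u},
\end{equation*}
which is~\eqref{west3}. The obstacle, as noted, is the second step: verifying the symbolic expansion and uniform commutator estimates needed to put $A$ and $A^{-1}$ into the Weyl calculus with symbols in $S(m^{\pm 1/2}, g^\sh)$. Once this is granted, the remainder of the argument is purely formal.
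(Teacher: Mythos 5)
Your plan is sound in outline but takes a genuinely different route from the paper, and the step you flag as "where the real work lies" is in fact the whole difficulty, not something that can be waved through.

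Your approach reduces the estimate to the assertion that the exact operator square root $A=(m_0^w)^{1/2}$ and its inverse belong to $\op S(m^{1/2},g^\sh)$ and $\op S(m^{-1/2},g^\sh)$ respectively. In the critical calculus $g^\sh=(g^\sh)^\sigma$ the Planck gain $h$ is identically $1$, so iterated commutators with linear forms do not get smaller, asymptotic expansions do not converge, and the usual parametrix/square-root constructions collapse. What you are invoking is a spectral-invariance theorem (Beals characterization plus functional calculus, or the Bony--Chemin realization of weights in \cite{bc:sob}), and while such a result is true, it is a substantial piece of machinery whose proof contains exactly the kind of regularization argument that the present proposition needs. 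Your argument is not circular, but it is not self-contained either; it imports a theorem that is at least as hard as what is being proved.

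The paper (following an argument of H\"ormander) sidesteps this entirely. Instead of taking the exact square root in the critical calculus, it mollifies $m$ to a smooth $m_\varrho\in S(m_\varrho,g_\varrho)$ with the scaled metric $g_\varrho=\varrho^2 g^\sh$, so that $g_\varrho/g_\varrho^\sigma=\varrho^4$ and the calculus now has a genuine gain $\varrho^2$. Taking the symbol-level square root $a_\varrho=\mu_\varrho^{-1/2}$ (where $m_\varrho^{Wick}=\mu_\varrho^w$) one gets $a_\varrho^w(a_\varrho^{-1})^w=1+r_\varrho^w$ and $a_\varrho^w\mu_\varrho^w a_\varrho^w=1+s_\varrho^w$ with $\|r_\varrho^w\|,\|s_\varrho^w\|\lesssim\varrho^2$. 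For $\varrho$ fixed small, $a_\varrho^w(a_\varrho^{-1})^w$ is a homeomorphism on $L^2$, $a_\varrho^wc^wa_\varrho^w\in\op S(1,g^\sh)$ is bounded, and the estimate follows by conjugation, with all constants controlled by finitely many seminorms of $c$. In short: your argument replaces H\"ormander's approximate-inverse-with-small-remainder trick by the exact inverse, at the cost of importing a heavy functional-calculus result; the paper's route is more elementary and fully self-contained. If you want to make your version rigorous you would need to cite Bony--Chemin explicitly rather than gesturing at Helffer--Sj\"ostrand, and you should be aware that the proof of that citation essentially repeats the paper's regularization device.
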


\begin{proof} 
	We shall use an argument by H\"ormander~\cite{ho:NT}.
	Let $0 < {\varrho} \le 1$
	\begin{equation}
	M_{\varrho}(w_0) = \sup_wm(w)/(1 + {\varrho}|w-w_0|)^3
	\end{equation}
	then $m \le M_{\varrho} \le Cm/{\varrho}^3$ and
	\begin{equation}\label{nytemp}
	M_{\varrho}(w) \le CM_{\varrho}(w_0)(1 + {\varrho}|w-w_0|)^3\qquad\text{uniformly in $0 < {\varrho} \le 1$}
	\end{equation}
	by the triangle inequality. 
	Thus, $M_{\varrho}$ is a weight for $g_{\varrho} = {\varrho}^2g^\sh$,
	uniformly in ~${\varrho}$.
	Take $0 \le {\chi} \in
	C_0^\infty $ such that $\int_{T^*\br^n} {\chi}(w) \,dw > 0$
	and let 
	$$
	m_{\varrho}(w) = {\varrho}^{-2n}\int {\chi}({\varrho}(w
	-z))M_{\varrho}(z)\,dz
	$$
	Then by ~\eqref{nytemp} we find $1/C_0 \le m_{\varrho}/M_{\varrho}
	\le C_0$, and 
	$
	|\partial^{\alpha}m_{\varrho}| \le
	C_{\alpha}{\varrho}^{|{\alpha}|} m_{\varrho}
	$ 
	thus $m_{\varrho} \in S(m_{\varrho},g_{\varrho})$ uniformly in $0 < {\varrho} \le 1$.
	Let $m_{\varrho}^{Wick} = {\mu}_{\varrho}^w$ then
	Proposition~\ref{propwick} and~\eqref{nytemp} give $m_{\varrho}/c \le
	{\mu}_{\varrho} \in  S(m_{\varrho},g_{\varrho})$ 
	uniformly in $0 < {\varrho} \le 1$.
	Since $m \cong m_{\varrho}$ (depending on $ \varrho $) we may replace $m^{Wick} $ with $m_{\varrho}^{Wick} = {\mu}_{\varrho}^w$ in~\eqref{west3} for any fixed ${\varrho}
	> 0$. 
	
	Let $a_{\varrho} = {\mu}_{{\varrho}}^{-1/2} \in S(m^{-1/2}_{\varrho},
	g^\sh_{\varrho})$ with $0 < {\varrho} \le 1$ to be chosen later. 
	Since $g_{\varrho}$ is uniformly ${\sigma}$
	~temperate, $g_{\varrho}/g_{\varrho}^{\sigma} = {\varrho}^4$,
	$m_{\varrho}$ is uniformly ${\sigma}$, $g_{\varrho}$ ~temperate, and
	${\mu}_{\varrho}^{\pm1/2} \in S(m_{\varrho}^{\pm1/2},g_{\varrho})$
	uniformly, the calculus gives that $ a_{\varrho}^w(a_{\varrho}^{-1})^w = 1 +
	r^w_{\varrho} $ where $r_{\varrho}/{\varrho}^2 \in S(1,g^\sh)$
	uniformly for $0 < {\varrho} \le 1$. Similarly, we find that 
	$a_{\varrho}^w {\mu}_{\varrho}^wa_{\varrho}^w = 1 +
	s^w_{\varrho} 
	$ 
	where $s_{\varrho}/{\varrho}^2 \in S(1,g^\sh)$ uniformly.
	We obtain that the $L^2$ operator
	norms 
	$$\mn{r^w_{\varrho}}_{\Cal L(L^2)} + \mn{s^w_{\varrho}}_{\Cal
		L(L^2)} \le C{\varrho}^2 \le 1/2
	$$ 
	for sufficiently small  ${\varrho}$.
	By fixing such a value of ${\varrho}$ we find that
	$1/2 \le a_{\varrho}^w {\mu}_{\varrho}^wa_{\varrho}^w \le 2$ and
	\begin{equation}\label{west1} 
	\frac{1}{2}\mn u \le  \mn {a_{\varrho}^w(a^{-1}_{\varrho})^w u}
	\le 2 \mn u
	\end{equation}
	thus $u \mapsto a_{\varrho}^w(a^{-1}_{\varrho})^w u$ is an homeomorphism on  $L^2$.
	The estimate~\eqref{west3} then follows from 
	$$ 
	|\w{c^wa_{\varrho}^w(a^{-1}_{\varrho})^w u,
		a_{\varrho}^w(a^{-1}_{\varrho})^w u }| \le  C\mn{(a^{-1}_{\varrho})^w u}^2 \le
	2C\w{\mu_\varrho^w
		a_{\varrho}^w(a^{-1}_{\varrho})^w
		u,a_{\varrho}^w(a^{-1}_{\varrho})^w u}
	$$
	which holds since $a_{\varrho}^wc^wa_{\varrho}^w \in \op S(1,g^\sh)$ 
	is bounded in $L^2$. Observe that the bounds only depend on the
	seminorms of $c$ in $S(m,g^\sh)$, since ${\varrho}$ and
	$a_{\varrho}$ are fixed.
\end{proof}

We shall also need a weight to handle the calculus errors in the $ \xi $ variables. Let $ \mu = h^{1/2}\w{\xi}^2$ which is a weight for $ g_0(dx,d\xi) = | dx|^2 + | d\xi |^2/\w{\xi}^2$ so that $ \lambda \in S(\mu, g_0) $ uniformly in $ h $. In order to handle the compositions with $ \lambda $  and $ f_0 $ we shall need the following metric:
\begin{equation}\label{G0def}
	G_0 = H|dw|^2 + H(dt^2 + |dx|^2)/h + Hh d\tau^2 + | d\xi|^2/\w{\xi}^2
\end{equation}
which is strongly $ \sigma $ temperate in the sense that 
\begin{equation}
	G_{0,z} \ls G_{0,z_0}(1 + G_{0,z_0}(z - z_0))
\end{equation}
but since $ G_0/G_0^\sigma \not \ls 1$ it is not $ \sigma $ temperate. But since $ \lambda $  and $ f_0 $ are linear in $ \xi $ we shall only use the calculus on $ \st $, i.e., in  $(t,x,w) $, and $ G_0 =  G$  on $ T\st$. 

To handle the compositions with $ b_1 = \delta_1 + \varrho_1 \in S(H^{-1/2}, g^\sh) $ we shall use the metric:
\begin{equation}\label{g0def}
g^\sh_0 = |dw|^2 + (dt^2 + |dx|^2)/h + h d\tau^2 + | d\xi|^2/\w{\xi}^2
\end{equation}
which is strongly $\sigma  $ temperate on $ \st $, since $ g^\sh_0 = g^\sh $ on $ T\st$.
Then $ \lambda f \in  S(Mh^{1/2} \w{\xi} , G_0) $ and $ b_1 f_0 \in S(MH^{1/2}h^{1/2} \w{\delta_0}\w{\xi} , g^\sh_0)$.

\begin{lem}\label{f0prop}
If $ | C| \ls \mu $ then we have
\begin{equation}\label{muest}
|\w{C^{Wick}u,u}| \ls 	\w{\mu^{Wick}u,u} \ls h^{1/2} \mn{\w{D_x} u}^2 =   h^{1/2}( \mn{D_x u}^2  + \mn{u}^2)
\end{equation}
where $ D_x u = (D_{x_1}u, D_{x_2}u, \dots ) $.	
\end{lem}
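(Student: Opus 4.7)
The plan is to prove the two inequalities in \eqref{muest} separately, each via a short argument.

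The first inequality $|\langle C^{Wick}u,u\rangle| \lesssim \langle\mu^{Wick}u,u\rangle$ follows from the positivity property \eqref{poswick} of the Wick quantization. I would decompose $C = \re C + i\im C$ into real and imaginary parts. The hypothesis $|C| \lesssim \mu$ gives $|\re C|, |\im C| \le |C| \le C_0 \mu$ for some $C_0 > 0$, so the four real-valued symbols $C_0\mu \pm \re C$ and $C_0\mu \pm \im C$ are pointwise nonnegative. By \eqref{poswick} their Wick quantizations are nonnegative self-adjoint operators, which immediately yields
\[
|\langle (\re C)^{Wick}u,u\rangle|,\ |\langle (\im C)^{Wick}u,u\rangle| \le C_0\langle \mu^{Wick}u,u\rangle.
\]
Writing $\langle C^{Wick}u,u\rangle = \langle(\re C)^{Wick}u,u\rangle + i\langle(\im C)^{Wick}u,u\rangle$ and applying the triangle inequality completes the first estimate.

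For the second inequality $\langle\mu^{Wick}u,u\rangle \lesssim h^{1/2}\|\langle D_x\rangle u\|^2$, I would compute $\mu^{Wick}$ explicitly, exploiting the special structure $\mu(w) = h^{1/2}(1 + |\xi|^2)$. Since $\mu$ is a polynomial of degree $2$ in $\xi$ and is independent of the remaining phase-space variables, the last assertion of Proposition~\ref{propwick} guarantees that the Gaussian regularization $\mu_0$ is itself a polynomial in $\xi$ of degree at most $2$. I would evaluate \eqref{gausreg} directly by integrating out the variables on which $\mu$ is constant and computing the second $\xi$-moment of the $g^\sh$-Gaussian. This produces $\mu_0 = \mu + c$ for an explicit (dimensional) constant $c$. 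Because Weyl quantization is exact on polynomials of degree at most $2$, it follows that $\mu^{Wick} = \mu_0^w = h^{1/2}\langle D_x\rangle^2 + c\,I$, and the inner-product identity
\[
\langle h^{1/2}\langle D_x\rangle^2 u,u\rangle = h^{1/2}(\|D_x u\|^2 + \|u\|^2)
\]
yields the stated bound after absorbing the bounded additive term.

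The proof is essentially routine. The main bookkeeping lies in the Gaussian second-moment computation for $\mu_0$ in the second step, but since $\mu$ is quadratic in $\xi$ with vanishing higher-order derivatives, the relevant Taylor expansion of \eqref{gausreg} terminates exactly. The first step is a direct application of Wick positivity.
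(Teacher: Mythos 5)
Your first step (Wick positivity applied to $C_0\mu \pm \re C$ and $C_0\mu \pm \im C$) is correct and is exactly the paper's argument for the first inequality.

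The second step contains a concrete computational error that breaks the argument. You write that evaluating the Gaussian regularization \eqref{gausreg} gives $\mu_0 = \mu + c$ for a ``dimensional constant $c$,'' and then absorb $c\,\|u\|^2$ as a bounded term. But the Wick Gaussian $\exp(-g^\sh)$ has $g^\sh$-width~$1$, and $g^\sh$ restricted to the $\xi$-direction is $h\,|d\xi|^2$ (see \eqref{gdef}), so the Gaussian has {\em Euclidean} width $h^{-1/2}$ in $\xi$. Its second $\xi$-moment is therefore $\sim h^{-1}$, not $\mathcal O(1)$. Carrying out the computation you describe actually yields $\mu_0 = \mu + \tfrac{m}{2}h^{-1/2}$ where $m = \dim \xi$, so that
\[
\langle \mu^{Wick}u,u\rangle = h^{1/2}\bigl(\|D_xu\|^2 + \|u\|^2\bigr) + \tfrac{m}{2}h^{-1/2}\|u\|^2 .
\]
The extra term $\tfrac{m}{2}h^{-1/2}\|u\|^2$ is not bounded uniformly in $h$, and more to the point it cannot be dominated by $h^{1/2}\|\langle D_x\rangle u\|^2$ for every $u \in C_0^\infty$: for a low-frequency $u$ one would need $h^{-1/2} \lesssim h^{1/2}$, which fails as $h \to 0$. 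So ``absorbing the bounded additive term'' is exactly where the proof gaps.

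The paper's route for the second inequality avoids this explicit moment computation: it writes $\mu^{Wick} = \nu^w$ and places $\nu$ in a symbol class $S(\mu,g^\sh_0)$ adapted to the $\xi$-weight $\langle\xi\rangle$, then conjugates by $\langle D_x\rangle^{-1}$ to obtain $\langle D_x\rangle^{-1}\nu^w\langle D_x\rangle^{-1} \in \op S(h^{1/2},g^\sh_0)$, i.e., an $L^2$-bounded operator of norm $\lesssim h^{1/2}$. If you want to repair your explicit computation, you must track the $h^{-1/2}$ factor honestly and then explain why the constant term is harmless in the microlocal regime in which the lemma is actually invoked; the symbol-calculus formulation packages that control more cleanly.
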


\begin{proof}
By taking the real and imaginary part it suffices to prove the estimate for real valued $ C $. We have $ | C | \ls \mu $ so $\pm \w{C^{Wick}u,u} \ls 	\w{\mu^{Wick}u,u}  $. Now  $ \mu^{Wick} = \nu^w  $ where $ \nu \in S(\mu, g^\sh_0) $ by Proposition~\ref{propwick}, so $\nu_0^w =  \w{D_x}^{-1}\nu^w \w{D_x}^{-1}\in \op S(h^{1/2}, g^\sh_0) $ which gives
$$
\w{ \mu^{Wick} u , u } = \w{ \nu_0^w \w{D_x} u, \w{D_x}u}\ls h^{1/2}\mn{\w{D_x} u}^2
$$
which proves the result.
\end{proof}

\section{The multiplier estimate}\label{lower}

In this section we shall obtain a proof of Proposition~\ref {mainprop}
which involves giving lower bounds on  $\re b_{T}^w f_1^w$, with the multipler $b_{T}^w = B_T^{Wick}$ having symbol $B_T = {\delta}_0 + {\varrho}_0+ \lambda $ given by Proposition~\ref{wickweyl}. Also,  $ f_1 \cong  f + f_0 $ modulo $ R $, where $ f \in S(M,G) $ and $ f_0 = \partial_\eta f \cdot  r \cdot \xi $  with $ r \in S(1,g) $ so that $ f_0 \in S(MH^{1/2}h^{1/2}\w{\xi}, G_0) $ by Remark~\ref{f0rem}.
Here $ G = Hg/h = H g^\sh$, with constant $ g \le h^2 g^{\sigma} $ and $ H $ given by Definition~\ref{g1def}. The weight $ M $ is given by Definition~\ref{Mdef}, the metric $ G_0 $ by~\eqref{G0def} and the weight $ m $ by Definition~\ref{h0def}.
 The results will only depend on the seminorms of $f$ in $ S(h^{-1},g)$, and we will assume the coordinates chosen so that $ t= 0 $ and $ x = 0 $ at $ w_0 \in \st $.
 We shall follow Section 7 in \cite{de:X} with some necessary changes because of the different conditions, metrics and normal forms.

\begin{prop}\label{lowersign}
	Let $B_T = {\delta}_0 + {\varrho}_0 + \lambda$ given by Definition~\ref{multdef}, so ${\delta}_0 = \delta$  is given by
	Definition~~\ref{d0deforig}, ${\varrho}_0 =  \varrho$ is 
	real valued and Lipschitz continuous,
	satisfying $|{\varrho}_0| \le m$ when $ |t| \le T $, with ~$m \le \w{{\delta}_0}/2$ given by
	Definition~\ref{h0def} and $ \lambda = \epsilon h^{1/2}\w{Lx,\xi}/T \in S(h^{1/2}\w{\xi},G_0)$ uniformly when $ | x| \le T $, where  $ 0 < \epsilon \le 1 $ and  $ L $ is given by Lemma~\ref{multlem}. Then  for small enough $ T $ we can find $C \in S(m, g^\sh) + S(\mu, g_0^\sh)$ so that
	\begin{equation}\label{lowerbound}
	\re\sw{B_T^{Wick}f_1^w u,u} \ge \sw{C^w u,u}  
	\end{equation}
	if $ u \in C_0^\infty$ has support where $ | t | \le T $ and $ | x | \le T $.
\end{prop}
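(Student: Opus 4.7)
The plan is to decompose according to the splittings $B_T = B + {\lambda}$ with $B = {\delta}_0 + {\varrho}_0$ and $f_1 = f + f_0$, so that by Remark~\ref{wickrem} and the affinity of ${\lambda}$ in $(x,{\xi})$,
\begin{equation*}
\re B_T^{Wick} f_1^w = \re B^{Wick} f^w + \re B^{Wick} f_0^w + \re {\lambda}^w f^w + \re {\lambda}^w f_0^w.
\end{equation*}
I will treat the principal term $\re B^{Wick} f^w$ by Wick calculus and show its contribution is lower-bounded by a Weyl operator with symbol in $S(m, g^\sh)$, while the three remaining terms will yield errors in $\op S({\mu}, g_0^\sh) + \op S(m, g^\sh)$.

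For the principal term, I first replace $f^w$ by $f^{Wick}$ via Proposition~\ref{propwick}, the difference being in $\op S(MH, G) \subset \op S(m, g^\sh)$ by Proposition~\ref{mestprop}. Then I apply~\eqref{wickcomp2} of Proposition~\ref{wickcomp0} with $a = B$ ($|a| \le H^{-1/2}$, Lipschitz) and $b = f$ ($|{\partial}^2 f| \le MH$) to obtain
\begin{equation*}
\re B^{Wick} f^{Wick} = \left(Bf - \tfrac{1}{2}\,{\partial} B \cdot {\partial} f\right)^{Wick} + R^w, \qquad R \in S(MH^{1/2}, g^\sh) \subset S(m, g^\sh).
\end{equation*}
The crux is the pointwise bound $Bf - \tfrac{1}{2}\, {\partial} B \cdot {\partial} f \ge -C_0 m$, which I would establish by splitting into two regions. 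In $\w{{\delta}_0} \le {\kappa} H^{-1/2}$, Proposition~\ref{ffactprop} gives $f = {\alpha}{\delta}_0$ with $0 \le {\alpha} \cong MH^{1/2} \in S(MH^{1/2}, G)$, so $Bf = {\alpha}{\delta}_0^2 + {\alpha}{\varrho}_0 {\delta}_0 \ge \tfrac{1}{2}{\alpha}{\delta}_0^2 - \tfrac{1}{2}{\alpha} m^2 \ge -m/2$ by an AM-GM estimate, using $m \le \w{{\delta}_0}/2$ and ${\alpha} m^2 \le m^3/\w{{\delta}_0}^2 \le m$ from Proposition~\ref{mestprop}, while the gradient terms ${\alpha}|{\partial}{\delta}_0|^2$, ${\alpha}\, {\partial}{\delta}_0 \cdot {\partial}{\varrho}_0$ and ${\delta}_0\, {\partial}{\delta}_0 \cdot {\partial}{\alpha}$ are all $\le Cm$ by the Lipschitz bounds on ${\delta}_0, {\varrho}_0$ and the symbol-class bounds on ${\alpha}$. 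In the complementary region $\w{{\delta}_0} > {\kappa} H^{-1/2}$, the positive term ${\delta}_0 f = |{\delta}_0||f|$ dominates $|{\varrho}_0 f| \le m|f| \le \tfrac{1}{2}\w{{\delta}_0}|f|$, so $Bf \ge 0$, while $|{\partial} B \cdot {\partial} f| \le MH^{1/2} \le m$. Wick positivity~\eqref{poswick} then gives $\left(Bf - \tfrac{1}{2}\,{\partial} B \cdot {\partial} f + C_0 m\right)^{Wick} \ge 0$, so $\re B^{Wick} f^{Wick} \ge -C_0 m^{Wick} + R^w$ with the Weyl symbol of $-C_0 m^{Wick}$ in $S(m, g^\sh)$ by Proposition~\ref{propwick}.

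The three remaining terms are absorbed into $\op S({\mu}, g_0^\sh) + \op S(m, g^\sh)$ via the calculus on the strongly ${\sigma}$-temperate metric $g_0^\sh$, exploiting the linearity of ${\lambda}$ in $(x, {\xi})$ and the factor ${\xi}$ in $f_0 = {\partial}_{\eta}f \cdot r \cdot {\xi}$. The term $\re B^{Wick} f_0^w$ is controlled by Lemma~\ref{f0prop} since $f_0 \in S(MH^{1/2}h^{1/2}\w{{\xi}}, G_0)$ and $B^{Wick}$ has Weyl symbol of size $H^{-1/2}$. For $\re {\lambda}^w f^w$, the symbol product ${\lambda}f$ and the Poisson-bracket correction $\{{\lambda},f\} = {\epsilon} h^{1/2}\w{Lx, {\partial}_x f}/T$ (simplified by ${\partial}_{\xi}f = 0$) are both in $S({\mu}, g_0^\sh) + S(m, g^\sh)$ modulo calculus remainders, with smallness coming from the factor ${\epsilon} h^{1/2}/T$. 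The term $\re {\lambda}^w f_0^w$ is the smallest and is absorbed similarly. Summation yields the required $C \in S(m, g^\sh) + S({\mu}, g_0^\sh)$. The main obstacle is the uniform pointwise estimate $Bf - \tfrac{1}{2}\,{\partial} B \cdot {\partial} f \ge -C_0 m$, particularly handling the merely Lipschitz regularity of ${\delta}_0$ and ${\varrho}_0$ and the transition between the two regions at $\w{{\delta}_0} \cong {\kappa} H^{-1/2}$.
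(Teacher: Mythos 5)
Your proposed approach has a genuine gap at its crux: the pointwise estimate $Bf - \tfrac12\,\partial B\cdot\partial f \ge -C_0 m$ is false in the region $\w{\delta_0}\ll H^{-1/2}$, and the intermediate inequalities you invoke there do not hold. From Proposition~\ref{mestprop} one has $MH^{3/2}\w{\delta_0}^2\lesssim m$, which gives $\alpha\cong MH^{1/2}\lesssim m/(H\w{\delta_0}^2)$ — \emph{not} $\alpha\lesssim m/\w{\delta_0}^2$, so your claim $\alpha m^2\le m^3/\w{\delta_0}^2\le m$ is off by a factor of $H^{-1}\gg 1$. More decisively, the gradient term $\alpha|\partial\delta_0|^2\cong MH^{1/2}$ is of order $m/(H\w{\delta_0}^2)\gg m$ when $\w{\delta_0}\ll H^{-1/2}$, and at points where $\delta_0=0$ the positive term $Bf=\alpha\delta_0^2+\alpha\varrho_0\delta_0$ vanishes, so there is nothing to compensate. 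A one-dimensional model $f(s)=s$ with $H^{-1/2}\cong h^{-1/4}$ and $m\cong h^{1/4}$ makes this concrete: at $s=0$ the symbol $Bf-\tfrac12\partial B\cdot\partial f\cong-\tfrac12$ while $m\cong h^{1/4}\to 0$. For the same reason the inclusion $S(MH^{1/2},g^\sh)\subset S(m,g^\sh)$ you use for the Wick-composition remainder fails.

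The cancellation you need is not available at the level of symbols — it requires working at the operator level and completing a square. That is exactly what the paper's proof (Section~\ref{lower}) does: after localizing with a $G$-adapted partition of unity $\{\phi_j\}$, it writes (in the $\w{\delta_0}\le\kappa H^{-1/2}$ regime) $B_1^{Wick}\cong \delta_0^{w}B_0^{Wick}$ with $B_0=1+\chi_0\varrho_0/\delta_0\ge 1/4$, factors $f=\alpha_0\delta_0$, sets $a_j=\alpha_0^{1/2}\delta_0\phi_j\in S(M^{1/2}H^{-1/4},G)$, and obtains $\re B^{Wick}f_j^w\gtrsim\tfrac16(a_j^w+3S_j^w)^*(a_j^w+3S_j^w)\ge 0$ modulo $\op S(m,g^\sh)+\op S(\mu,g_0^\sh)$ by completing the square; the square-completed operator is manifestly nonnegative even though neither $a_j^wa_j^w$ (of size $MH^{1/2}\w{\delta_0}^2$) nor the cross term $\re a_j^wS_j^w$ is individually $\lesssim m$. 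The localization is also essential because the factorization $f=\alpha_0\delta_0$ and the size of $M$, $H$ are only controlled on $G$-balls, and because the passage between the regimes $\w{\delta_0}\ll H^{-1/2}$ and $\w{\delta_0}\gtrsim H^{-1/2}$ has to be made on each $G$-neighborhood. Your treatment of the side terms ($\lambda$ and $f_0$) is directionally in line with the paper, but without the localization and the operator-level square completion the main bound does not close.
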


\begin{proof} 
     In the proof we shall treat~$ \xi  $ as parameter and assume 
     the coordinates $ w = (t,x,y;\tau, \eta) $ chosen so that $g^\sh(w) = |w|^2$, i.e., $ g^\sh $ ON coordinates on $ \st $.
     We shall localize in $w $ with respect to the metric $G \ge g$ on $ \st $, i.e., when $ \xi = 0 $,	and then estimate the localized operators.
	We shall include $ t $ and $ x $  in $ w $ and use the neighborhoods 
	\begin{equation}\label{omegajdef} 
	{\omega}_{w_0}({\varepsilon}) = \set{w:\ |w-w_0| <
		{\varepsilon}H^{-1/2}(w_0)} 
	\end{equation}
	which gives that $\max (  | t  |  , | x  | )< {\varepsilon}H^{-1/2}(w_0) h^{1/2} \le 3\varepsilon$.
	Now, if
	${\varepsilon}_0$ is small enough then
	$ H(w)$ and $ M(w)$ will only vary with a fixed factor
	in~${\omega}_{w_0}(2{\varepsilon}_0)$.  
	By the uniform Lipschitz continuity of
	$w \mapsto {\delta}_0(w)$ we can find ${\kappa}_0 >0$
	with the following property: for $0 < {\kappa}\le {\kappa}_0$ there
	exist positive constants $c_{\kappa}$ and ${\varepsilon}_{\kappa} \le \varepsilon_0$ so
	that
	\begin{alignat}{2}
	&|{\delta}_0(w)| \le {\kappa}H^{-1/2}(w)&\qquad &w \in
	{\omega}_{w_0}(2{\varepsilon}_{\kappa})  \qquad\text{or}\label{d0case}\\
	&|{\delta}_0(w)| \ge c_{\kappa} H^{-1/2}(w)&\qquad &w \in
	{\omega}_{w_0}(2{\varepsilon}_{\kappa}) \label{d1case}
	\end{alignat}
	In fact, we have by the Lipschitz continuity that $|{\delta}_0(w) -
	{\delta}_0(w_0)| \le {\varepsilon} H^{-1/2}(w_0)$ when
	$w\in {\omega}_{w_0}({\varepsilon})$. Thus, if
	${\varepsilon}_{\kappa} \ll {\kappa}$ we obtain that ~\eqref{d0case}
	holds when $|{\delta}_0(w_0)| \ll {\kappa}H^{-1/2}(w_0)$ and
	~\eqref{d1case} holds when $|{\delta}_0(w_0)| \ge
	c{\kappa}H^{-1/2}(w_0)$.

	By shrinking ${\kappa}_0$ we may assume 
-	that $M \cong |f'|H^{-1/2}$ when $
	|{\delta}_0| \le {\kappa}_0H^{-1/2}$ and $H^{1/2} \le {\kappa}_0$
	according to Proposition~\ref{mproplem}.
	Let ${\kappa}_1$ be given by Proposition~\ref{ffactprop},
	${\kappa}_2$ by Proposition~\ref{wickweyl}, and
	let ${\varepsilon}_{\kappa}$ and $c_{\kappa}$ be given by
	\eqref{d0case}--\eqref{d1case} for ${\kappa} = \min
	({\kappa}_0,{\kappa}_1, {\kappa}_2)/2$. 
	Using Proposition~\ref{wickweyl} with $\varepsilon = c_{\kappa}$
	we find that 
	\begin{equation} \label{kappa3ref}
	\sgn( f)(\delta_0 + \varrho_0) \ge c_{\kappa}H^{-1/2}/3\qquad\text{in ~$
		{\omega}_{w_0}(2{\varepsilon}_{\kappa})$} 
	\end{equation}
	if $H^{1/2} \le c_{\kappa}/3$ and ~\eqref{d1case} holds in ~$
	{\omega}_{w_0}({\varepsilon}_{\kappa})$. 
	
	Choose real symbols
	$\set{{\psi}_j(w)}_j$ and $\set{{\phi}_j(w)}_j
	\in S(1,G)$ with values in ~~$\ell^2$, such that $ 0 \le \psi_j \le 1 $, 
	$\sum_{k} {\psi}_j^2 \equiv 1$, ${\psi}_j{\Psi}_j = {\psi}_j$ with
	$0 \le {\Psi}_j = {\phi}_j^2 \le 1$ which gives 
	$\set{{\Psi}_j(w)}_j \in S(1,G)$ with values in~$\ell^2$ so that  
	\begin{equation}\label{Odef}
		\supp {\phi}_j \subseteq
		{\omega}_j = {\omega}_{w_j}({\varepsilon}_{\kappa})
	\end{equation}
	We shall suppress $ T $, writing $B^{Wick} = b^w$ where $b= {\delta}_1 + {\varrho}_1 + \lambda$ is
	given by Proposition~\ref{wickweyl}. In the following, we shall for $ j  \in \bn $ denote $ A_{jk} = {\Psi}_kf_j b = f_{jk} b$ for $ j = 0,\, 1 $,
	and $A_{k} = {\Psi}_k f b = f_k b  $ where $ f_k $ should not be confused with $ f_0 $ and $ f_1 $.  
	
	\begin{lem}\label{bfconglem}
		When $|t| \le T$ and $|x| \le T$ we have  $A_{1j} \in S(MH^{-1/2},g^\sharp)\bigcap
		S^+(M,g^\sharp) + S(Mh^{1/2}\w{\xi}, g_0^\sharp)$ uniformly in $j$, 
		\begin{equation}\label{bfcong}
		\re b^w f_1^w \cong  \sum_{k}^{} {\psi}_k^w A_{1k}^w{\psi}_k^w  
		\qquad \text{modulo $\op S(m,g^\sharp) + \op S(\mu, g_0^\sharp) $ uniformly}
		\end{equation}
		 and $A_{1k}^w \cong \re b^w  f_{1k}^w $ modulo $\op S(m, g^\sh) + S( \mu ,g_0^\sh)$ uniformly in ~$j$.
	\end{lem}

	In the proof, we will use that  Proposition~\ref{mestprop}
    gives
    \begin{equation} \label{fprimest}
	MH^{3/2}\w{{\delta}_0}^2 \le Cm
    \end{equation} 
   so we may ignore terms with symbols in $\op
   S(MH^{3/2}\w{{\delta}_0}^2,g^\sharp) $. 

	\begin{proof}
		 Since $ f_1 = f + f_0 $  we have $ A_{1k} = A_k+ A_{0k}$ and we will start with~$ f_{0} $. First we note that  $ f_0 = r_0 \cdot \xi $ with  $r_0 \in S(MH^{1/2} h^{1/2},G) \bigcap S(1,g) $, which gives $ \lambda^wf_0^w  \in \op S(\mu, G_0)$ so we can skip this term. Let $b_1 = \delta_1 + \varrho_1 \in S(H^{-1/2}, g^\sh) \bigcap S^+(1, g^\sh) $. Since $ b_1 $ is constant in $ x $ we obtain $ \re b_1^w f_0^w =  (b_1f_0)^w + R^w $, where $ R = r\xi $ with $r \in S(h, g^\sh) $ giving  $R^w \in  \op S(\mu, g_0^\sh) $. In fact, $ r $ has an asymptotic expansion in $ S(h^{k/2}, g^\sh) $ for $ k \ge 0 $.
		
		We also find that $  \psi_k^wA_{0k}^w \psi_k^w  = ( A_{0k} \psi^2_k)^w + C^w$ with  $ C = c_0 + c_1\xi $ with $ c_0 \in S(MH^{3/2}, g^\sh)  $  and $c_1 \in S(MH^{3/2}h^{1/2}, g^\sh) $ which gives  $C^w \in  \op S(m, g^\sh) + S(\mu, g_0^\sh) $.  In fact, $ c_0  $ has an asymptotic expansion in $ S(MH^{k/2}, g^\sh)  $ for $ k \ge -1 $  and $c_1 \in S(MH^{k/2}h^{1/2}, g^\sh) $ for $ k \ge 0 $ and any $ x $ derivative of the symbols gives the factor $ H^{1/2}h^{-1/2} $.

		 Next, we will study $ f \in S(M,G) $. In that case $\lambda^w f^w  = (f\lambda - i h^{1/2} \w{Lx, \partial_x f}/2T)^w  $ since $ \partial_\xi f \equiv 0 $, which gives $\re \lambda^w  f^w = (f \lambda)^w $. We have 
		$A_k \in S(MH^{-1/2},g^\sharp)\bigcap
		S^+(M,g^\sharp) + \op S(M h^{1/2}\w{\xi} , G_0)$ uniformly in $j$.
	Since $b \in
		S(H^{-1/2},g^\sh) + S(h^{1/2}\w{\xi} ,G_0)$, $\set{{\psi}_k}_k \in S(1,G)$, $A_k \in S(MH^{-1/2},g^\sh)  + \op S(M h^{1/2}\w{\xi} ,G_0) $ uniformly  with  values in
		$\ell^2$, $h^{1/2}\w{\xi} \le \mu  $ and $ H^{-1/2} \lesssim h^{-1/2} $, we find by
		Lemma~\ref{calcrem} and Remark~\ref{vvcalc} that the symbols of
		$b^wf^w$, $b^wf_k^w$ and $\sum_k {\psi}_k^wA_k^w{\psi}_k^w$ have expansions in
		$S(MH^{j/2}, g^\sh) + S(M  H^{k/2}\mu , G_0)$. 
		Observe that in the domains ~${\omega}_k$
		where $H^{1/2} \ge c > 0$, we find that $ M \lesssim H^{-1} \lesssim 1 $ so the
		symbols of $\sum_{k} {\psi}_k^w A_k^w{\psi}_k^w$, $b^w f_k^w$ and
		$b^wf^w$ are in $ S(MH^{3/2}, g^\sharp) + S(\mu, g^\sharp_0)$ giving the result in this
		case. Thus we may assume $H^{1/2} \le {\kappa}_2/2$ in what follows.
		We shall consider the neighborhoods where~\eqref{d0case}
		or~\eqref{d1case} holds.
		
		If~\eqref{d1case} holds then we find  $\w{{\delta}_0} \cong
		H^{-1/2}$ so $ S(MH^{1/2},g^\sharp) \subseteq S(m, g^\sharp)$  	in ${\omega}_k$ by ~\eqref{fprimest} and $S(MH \mu ,G_0) \subseteq S(\mu ,G_0)$
	  since $M \ls H^{-1}$.  Since $b_1\in S^+(1,g^\sharp)$ 
		  we find that $A_k \in S^+(M,g^\sh) + S(M\mu ,G_0)$ and the symbols of both $b^w f^w $ and
		$\sum_{k}^{} {\psi}_k^w A_k^w {\psi}_k^w$ are equal to $\sum_k
		{\psi}_k^2 A_k \cong fb$ modulo $S(MH^{1/2}, g^\sharp) + S(MH\mu ,G_0)$ in
		~${\omega}_k$. Similarly, we find that the symbol of $ b^w f_k^w$ is equal to
		$A_k$ modulo $S(MH^{1/2},g^\sharp) + S(\mu ,G_0)$, which proves the result in this
		case.
		
		Next, we consider the case when ~\eqref{d0case} holds with ${\kappa} =
		\min ({\kappa}_0,{\kappa}_1, {\kappa}_2)/2$ and $H^{1/2} \le
		{\kappa}_2/2$ in~${\omega}_k$. Then $\w{{\delta}_0} \le {\kappa}_2
		H^{-1/2}$ so $b = {\delta}_1 + {\varrho}_1 + \lambda \in S(H^{-1/2},G)+
		S(m, g^\sh) + S(\mu ,G_0)$ in~${\omega}_k$ by Proposition ~\ref{wickweyl}.
		Since $ \re \lambda^w f^w = (\lambda f)^w $ we obtain from Lemma~\ref{calcrem} that the symbol
		of $\re b^w f^w -(fb)^w$ is in $S(MH^{3/2}, G) + S(MHm,
		g^\sh)  \subseteq S(m,g^\sh) $ in ~${\omega}_k$. Similarly, we find that $A_k^w \cong \re b^w f_k^w$ modulo
		$\op S(m,g^\sh)$.  Since $A_k \in S(MH^{-1/2},G)+
		S(Mm,g^\sharp) + S(M\mu ,G_0)$ uniformly in this case, we find that the symbol of $\sum_{k}
		{\psi}_k^w A_k^w {\psi}_k^w$ is equal to $bf$ modulo $ S(m,g^\sh) + S(\mu ,G_0)$
		in ~${\omega}_k$, which proves ~\eqref{bfcong} and
		Lemma~\ref{bfconglem}.
	\end{proof}
	
	In order to estimate the localized operator
	we shall use the following
	
	\begin{lem}\label{lowerlem}
		If\/ $A_{1j} = {\Psi}_jf_1b$ is given by Lemma~\ref{bfconglem} then there
		exists\/ $C_{j} \in S(m, g^\sharp) + S(\mu ,g^\sharp_0)$ uniformly when $|t| \le T$ and $|x| \le T$, such that
		\begin{equation}\label{lowerest}
		\sw{A_{1j}^w u,u} \ge \sw{C^w_{j} u,u}\qquad
		\end{equation} 
		when $ u \in C_0^\infty $.
	\end{lem}

	We obtain from~\eqref{bfcong} and~\eqref{lowerest} that 
	$$ 
	\re\sw{b^w f^wu,u} \ge \sum_{j} \sw{{\psi}_j^wC^w_{j}{\psi}_j^w u,u}
	+\sw{R^wu,u}\qquad\text{$u \in C_0^\infty$}
	$$ 
	where $\sum_{j}{\psi}_j^wC^w_{j}{\psi}_j^w$ and $R^w \in \op
	S(m,g^\sharp)+ \op S(\mu ,g^\sharp_0)$, which gives Proposition~\ref{lowersign}.
\end{proof}

\begin{proof}[Proof of Lemma~\ref{lowerlem}]
	In the following, we shall assume that $ \max(|t|,| x|)  \le T $.
	As before we are going to consider the
	cases when $H^{1/2} \cong 1$ or $H^{1/2} \ll 1$, and when
	~\eqref{d0case} or ~\eqref{d1case} holds in $
	{\omega}_{w_j}(2{\varepsilon}_{\kappa})$ for ${\kappa} = \min
	({\kappa}_0,{\kappa}_1, {\kappa}_2)/2$. When $H^{1/2} \ge
	c > 0$ we find that
	$A_{1j} \in S(MH^{3/2},g^\sharp) + S(MHh^{1/2}\w{\xi} ,g^\sharp_0)
	\subseteq S(m,g^\sharp) + S(\mu ,g^\sharp_0)$ uniformly by
	~\eqref{fprimest} which gives the lemma with $C_j = A_{1j}$ in this
	case. 
	For handling this case where $ H^{1/2} \ll 1$ we shall need the following result.
	
	\begin{lem}\label{f0prep}
	If  $ F \in S(M_0,G) $, where $ M_0 $ is a weight for $ G $, and $ \pm F \ge 0 $ in  ${\omega}_{w_j}(2{\varepsilon}_{\kappa})$ then we have that $| \partial_\eta F| \le  C_\kappa \sqrt{F}M_0^{1/2}H^{1/2}h^{1/2}$ in   ${\omega}_j$.
	\end{lem}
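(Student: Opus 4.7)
\medskip

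\noindent\textbf{Proof proposal.} The plan is to reduce the statement to a one–variable Glaeser/Malgrange square–root inequality applied in the $\eta$ direction, and to track the metric weights carefully. Replacing $F$ by $-F$ if necessary, we may assume $F\ge 0$ on $\omega_{w_j}(2\varepsilon_\kappa)$, since this does not affect $|\partial_\eta F|$.

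First I would translate the symbol bound into the right pointwise estimate on $\partial_\eta^2 F$. Since $G=Hg^\sharp$ and $g^\sharp = (dt^2+|dx|^2+|dy|^2)/h + h(d\tau^2+|d\xi|^2+|d\eta|^2)$ by~\eqref{gdef}, a $g^\sharp$–orthonormal basis has its $\eta$–vector equal to $h^{1/2}\partial_\eta$. The second seminorm $|F|^G_2\le C_2 M_0$ therefore reads
\begin{equation*}
|\partial_\eta^2 F(w)| \le C\, M_0\, H\, h \qquad \text{for every } w\in\omega_{w_j}(2\varepsilon_\kappa),
\end{equation*}
and similarly the first seminorm gives the baseline bound $|\partial_\eta F|\le C M_0 H^{1/2} h^{1/2}$, together with the pointwise bound $F\le C_0 M_0$ (since $M_0$ is a $G$-weight).

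Next I would fix $w_0\in\omega_j=\omega_{w_j}(\varepsilon_\kappa)$ and look at the one–variable function $u(s)=F(w_0+s\,e_\eta)$, where $e_\eta$ is a unit vector in the $\eta$ direction (in standard coordinates). By the triangle inequality, $w_0+s e_\eta\in\omega_{w_j}(2\varepsilon_\kappa)$ as long as $s$ lies in an interval $|s|\le r_0$ with $r_0 \cong \varepsilon_\kappa H^{-1/2}(w_j) h^{-1/2}$ (using the slow variation of $H$ on $\omega_{w_j}(2\varepsilon_\kappa)$ from Proposition~\ref{g1prop}). On this interval $u\ge 0$ and $|u''|\le K$ with $K=C M_0 H h$. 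Taylor's formula applied to $u(\pm r)\ge 0$ gives, for any $0<r\le r_0$,
\begin{equation*}
|u'(0)| \le \frac{u(0)}{r}+\frac{K r}{2}.
\end{equation*}

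The last step is to optimize in $r$. If $r_*:=\sqrt{2u(0)/K}\le r_0$, choosing $r=r_*$ yields the clean Malgrange bound $|u'(0)|^2\le 2K u(0)$, i.e.\ $|\partial_\eta F(w_0)|\le C\sqrt{F(w_0)\,M_0 H h}$, which is the claim. Otherwise we are in the regime $F(w_0)>\tfrac12 K r_0^2\cong \varepsilon_\kappa^2 M_0$; taking $r=r_0$ and combining with the universal bound $F\le C_0 M_0$ shows that both terms $u(0)/r_0$ and $Kr_0$ are majorized by a $\varepsilon_\kappa$–dependent multiple of $\sqrt{F M_0 H h}$, so the desired inequality holds with a constant $C_\kappa$. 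The only delicate point, and the place one must be careful, is to make sure the $g^\sharp$–radius of nonnegativity in the $\eta$ direction really is the large number $\varepsilon_\kappa H^{-1/2}h^{-1/2}$ (because $d\eta^2$ carries the factor $h$ in $g^\sharp$); once this metric bookkeeping is correct the rest is routine.
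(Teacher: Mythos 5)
Your proposal is correct and is essentially the paper's argument: both fix a point in $\omega_j$, note that $F\ge 0$ on an $\eta$-interval of $g^\sharp$-radius $\cong\varepsilon_\kappa$ (i.e.\ Euclidean radius $\cong\varepsilon_\kappa H^{-1/2}h^{-1/2}$), invoke Taylor's formula with the bound $|\partial_\eta^2 F|\lesssim M_0Hh$, and optimize the radius to obtain the Glaeser/Malgrange square-root estimate. The paper avoids your case distinction by directly choosing $|\eta|=\varepsilon_\kappa\sqrt{F(0)/(M_0Hh)}$, which automatically stays in the admissible range since $F\lesssim M_0$, but this is a cosmetic difference only.
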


	\begin{cor}\label{f0cor}
	We obtain from Lemma~\ref{f0prep} that $ \partial_\eta f = \partial_\eta \alpha_0 \delta_0 + \alpha_0 \partial_\eta \delta  = \sqrt{\alpha_0} r_1$ where $ r_1 \in S(M^{1/2}H^{1/4}h^{1/2},G) $ in $ \omega_j $.
	\end{cor}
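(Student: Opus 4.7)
I would start by invoking Proposition~\ref{ffactprop}, which in the regime $\w{\delta_0} \le \kappa_1 H^{-1/2}$ relevant to $\omega_j$ produces the factorization $f = \alpha_0\delta_0$ with $\alpha_0 \in S(MH^{1/2},G)$ satisfying $\alpha_0 \ge \kappa_1 MH^{1/2} > 0$, and $\delta_0 = f/\alpha_0 \in S(H^{-1/2},G)$. The first equality in the corollary is then an immediate application of the Leibniz rule in $\partial_\eta$ to $f = \alpha_0\delta_0$.

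For the factored form $\partial_\eta f = \sqrt{\alpha_0}\,r_1$, I would simply set $r_1 := \partial_\eta f /\sqrt{\alpha_0}$ and verify its symbol class membership. Because $\alpha_0$ is elliptic in $S(MH^{1/2},G)$ with a positive lower bound of the same order, the standard chain rule for symbol classes applied to $s \mapsto s^{\pm 1/2}$ yields $\sqrt{\alpha_0} \in S(M^{1/2}H^{1/4},G)$ and $1/\sqrt{\alpha_0} \in S(M^{-1/2}H^{-1/4},G)$. On the other hand, $f \in S(M,G)$ together with $\partial_\eta = h^{1/2}\partial_{\tilde\eta}$ in $g^\sh$-orthonormal coordinates (which holds because $g^\sh$ contains the block $h\,d\eta^2$) places $\partial_\eta f$ in $S(MH^{1/2}h^{1/2},G)$. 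The product rule in $S(\cdot,G)$ then gives $r_1 \in S(M^{1/2}H^{1/4}h^{1/2},G)$ as required.

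To make the decomposition transparent term by term, I would check each summand from the Leibniz rule against this class. For $\alpha_0\partial_\eta\delta_0 = \sqrt{\alpha_0}\cdot\sqrt{\alpha_0}\partial_\eta\delta_0$, the same coordinate reasoning puts $\partial_\eta\delta_0 \in S(h^{1/2},G)$, so $\sqrt{\alpha_0}\partial_\eta\delta_0 \in S(M^{1/2}H^{1/4}h^{1/2},G)$. For $(\partial_\eta\alpha_0)\delta_0 = \sqrt{\alpha_0}\cdot(\partial_\eta\alpha_0/\sqrt{\alpha_0})\delta_0$, Lemma~\ref{f0prep} applied to $F = \alpha_0 \ge 0$ with $M_0 = MH^{1/2}$ produces the pointwise bound $|\partial_\eta\alpha_0| \le C_\kappa\sqrt{\alpha_0}\,M^{1/2}H^{3/4}h^{1/2}$, which combined with $|\delta_0| \le H^{-1/2}$ confirms the correct size of $(\partial_\eta\alpha_0)\delta_0/\sqrt{\alpha_0}$. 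The only delicate bookkeeping is tracking the $h^{1/2}$ factors produced by the change between $g^\sh$-orthonormal and original $\eta$-coordinates; everything else is a routine composition of Proposition~\ref{ffactprop}, Lemma~\ref{f0prep}, and the ellipticity of $\alpha_0$.
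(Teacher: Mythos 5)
Your proof is correct, but your primary argument takes a different and more elementary route than the paper. The paper proves the corollary essentially as in your third paragraph: it writes $\partial_\eta f=(\partial_\eta\alpha_0)\,\delta_0+\alpha_0\,\partial_\eta\delta_0$, controls $\partial_\eta\alpha_0$ by applying Lemma~\ref{f0prep} to $F=\alpha_0\ge 0$ with $M_0=MH^{1/2}$, and uses $\partial_\eta\delta_0\in S(h^{1/2},G)$; note that the bound you obtain, $|\partial_\eta\alpha_0|\lesssim\sqrt{\alpha_0}\,M^{1/2}H^{3/4}h^{1/2}$, is the one that actually balances against $|\delta_0|\le H^{-1/2}$ (the exponent $H^{1/4}$ in the paper's one-line justification is evidently a slip for $H^{3/4}$). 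Your second paragraph, by contrast, bypasses Lemma~\ref{f0prep} altogether: since $f\in S(M,G)$ and $G(\partial_\eta)=Hh$, one gets $\partial_\eta f\in S(MH^{1/2}h^{1/2},G)$ for free, and dividing by the elliptic factor $\sqrt{\alpha_0}\in S(M^{1/2}H^{1/4},G)$ with $\alpha_0\ge\kappa_1 MH^{1/2}$ gives $r_1=\partial_\eta f/\sqrt{\alpha_0}\in S(M^{1/2}H^{1/4}h^{1/2},G)$ directly. The two routes yield equivalent bounds because in $\omega_j$ one has $\alpha_0\cong MH^{1/2}$, so the sign-based gain of Lemma~\ref{f0prep} is vacuous when it is applied to a symbol bounded from below by its weight (its real force in the paper is the direct application to $f$ in the case~\eqref{d1case}). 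What your version buys is the observation that in the region~\eqref{d0case} the corollary needs only the factorization of Proposition~\ref{ffactprop} and the symbolic calculus; what the paper's version buys is a uniform presentation in which both cases are referred back to the same sign lemma.
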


In fact, we find from Lemma~\ref{f0prep} that  $ \partial_{\eta} \alpha_0 \in S(\sqrt{\alpha_0} M^{1/2}H^{1/4}h^{1/2}, G)   $   since  $MH^{1/2} \ls \alpha_0 \in S(M H^{1/2},G) $ and we have $ \partial_{\eta} \delta_0 \in S(h^{1/2}, G) $  in $ \omega_j $.

\begin{proof}[Proof of Lemma~\ref{f0prep}]
	For any $ w \in \omega_j $  we may choose $ g^\sh $ orthogonal coordinates so that $ w = 0 $ and $ F \ge 0 $ in $ | \eta | <  \varepsilon_\kappa H^{-1/2}h^{-1/2} $. Then by Taylor's formula and the slow variation there exists $ C_\kappa > 0$ so that
	\begin{equation}
	| \eta\cdot\partial_\eta F(0) | \le F(0) + C_\kappa M_0H h| \eta |^2
 	\end{equation} 
 	Then by choosing $ | \eta | = \varepsilon_\kappa \sqrt{F(0)/M_0 Hh} \ls \varepsilon_\kappa	 H^{-1/2}h^{-1/2}$ we find that \begin{equation}
 			| \partial_\eta F(0) | \le   C'_\kappa \sqrt{F(0)M_0 Hh} 
 	\end{equation}
 	which proves the result.
\end{proof}
	
	In the following, we shall assume that 
	\begin{equation}\label{kappa4def}
	H^{1/2} \le {\kappa}_4 = \min({\kappa}_0, {\kappa}_1, {\kappa}_2,
	{\kappa}_3
	)/2 \qquad \text{in ${\omega}_j$} 
	\end{equation}
	with ${\kappa}_3  = 2c_{\kappa}/3$
	so that \eqref{kappa3ref} follows from ~\eqref{d1case}.  
	First, we consider the case when $H^{1/2} \le {\kappa}_4$ and
	~\eqref{d1case} holds in ${\omega}_j$.  
	Since $|{\delta}_0(w)| \ge c_{\kappa} H^{-1/2}(w)$, we find $ \pm f \ge 0 $ and
	$\w{{\delta}_0} \cong H^{-1/2}$ in~${\omega}_{w_j}(2{\varepsilon}_{\kappa})$. As before we may ignore
	terms in $ S(MH\mu ,g^\sh_0)
	\subseteq  S(\mu ,g^\sh_0) $ and $S(MH^{1/2},g^\sharp) \subseteq
	S(m, g^\sharp)$  in ${\omega}_j$ by
	~\eqref{fprimest}. 
	We shall estimate $ A_{1j} = A_j + A_{0j}$, starting with $ A_j $.
	If $ B_1 = \delta_0 + \varrho_0 $ we find from ~\eqref{kappa3ref} that 
	$f B_1\ge 0$ in ${\omega}_{w_j}(2{\varepsilon}_{\kappa})$. We shall only consider the case $ \sgn(f) = \sgn(B_1) = 1 $, for the other case we may replace the symbols by their absolute values. Since $ B_1 \gtrsim H^{-1/2} $ and $ B_1 \in S(H^{-1/2}, g^\sh) \bigcap S^+(1, g^\sh)$ we find $ B^{1/2} \in S(H^{-1/4}, g^\sh) \bigcap S^+(H^{1/4}, g^\sh) $ and $ B^{-1/2} \in S(H^{1/4}, g^\sh) \bigcap S^+(H^{3/4}, g^\sh) $.
	Since $f_j \in S(M, G)$, we find $f_j^w \cong f_j^{Wick}$ modulo
	$\op S(MH,G)$ by Proposition~\ref{propwick}. 
	As before, $ \re \lambda^w f_j^{w}= (\lambda f_j) ^{w}$ so 
	we find from Example~\ref{wickcompexe} that
	\begin{equation*}
	A_j^w \cong \re b^w f_j ^w  \cong \re B^{Wick} f_j^{Wick} \cong (f_jB)^{Wick} 
	\end{equation*}
	modulo $\op S(m,g^\sh) + \op S(\mu, G_0)$. In fact, we have  $ b^w = B^{Wick}= (B_1 + \lambda)^{Wick} $ and $ \lambda f_j = s_1\cdot \xi $ with  $s \in S(Mh^{1/2}, G) $ so $ (\lambda f_j) ^{w} \cong  (\lambda f_j) ^{Wick}  $ modulo $ \op S(\mu, G_0)$. 
	
	Similarly,  since $ f_{0j} \in S(MH^{1/2}h^{1/2}\w{\xi}, G_0 ) $   is linear in $ \xi $ we find that $ f_{0j} ^w \cong  f_{0j}^{Wick}$ modulo $ S(MH^{3/2}h^{1/2}\w{\xi}, G_0 ) $, thus
	\begin{equation*}
	A_{0j}^w \cong \re b^w f_{0j}^w \cong \re B^{Wick} f_{0j}^{Wick} \cong (f_{0j}B)^{Wick} 
	\end{equation*}
	modulo $\op S(MHh^{1/2}\w{\xi},g^\sh_0) + \op S(MH^{3/2}h\w{\xi}^2, G_0) \subset \op S(\mu, g^\sh_0)$. By Lemma~\ref{f0prep} we have $ | \partial_\eta f | \ls \sqrt{f}M^{1/2}H^{1/2}h^{1/2}$ in $ \omega_j $ which gives
	\begin{equation}
		| f_{0j} B_1 | \le \varepsilon f_j B_1 + C_\varepsilon \Psi_j MH^{1/2}h\w{\xi}^2 \quad \forall \, \varepsilon > 0
	\end{equation}
	where $ MH^{1/2}h\w{\xi}^2 \ls \mu $.
    If $ \varepsilon \le 1/2 $ we find modulo $ S(\mu, g^\sh_0) $ that
    $$ 
    f_{1j} B \gtrsim f_j(B_1/2 + \lambda) = f_j\left(\sqrt{B_1/2} + \lambda/\sqrt{2B_1}\right)^2 - f_j \lambda^2/2B_1
    $$
    so $(f_{1j}B)^{Wick}  \ge  -(  f_j \lambda^2/2 B_1)^{Wick}  \in\op S(MH^{1/2}\lambda^2, g_0^\sh) \subset \op S(\mu, g_0^\sh)$.

     Finally, we consider the case when ~\eqref{d0case} holds with ${\kappa} =
	\min ({\kappa}_0,{\kappa}_1, {\kappa}_2)/2$ and $H^{1/2} \le
	{\kappa}_4\le {\kappa}$ in ${\omega}_j$. Then $\w{{\delta}_0} \le
	2{\kappa}H^{-1/2}$ so we obtain from Proposition
	~\ref{mproplem} that $M \cong |f'|H^{-1/2}$ and 
	${\delta}_0 \in S(H^{-1/2}, G)$  in~${\omega}_j$.
	We shall estimate $ A_{1j} = A_j + A_{0j}$, starting with $ A_j $
    using an argument of Lerner ~\cite{ln:cutloss}. We have that $b^w =
	({\delta}_0 + {\varrho}_0 + \lambda)^{Wick} = B^{Wick}$, where 
	$ 	|{\varrho}_0|\le m \le H^{1/2}\w{{\delta}_0}^2/2 $ 
	by ~\eqref{hhhest}. Also, Lemma~\ref{bfconglem}
	gives  $A_{1j}^w \cong \re b^{w}  f_{1j}^w= \re B^{Wick} f_{1j}^w$
	modulo $\op S(m,g^\sharp)+ \op S(\mu ,g^\sharp_0)$. As before, $ f_{1j} = f_j + f_{0j} $ and we shall start with $\re b^{w}  f_{j}^w  $.
	Take ${\chi}(t) \in C^{\infty}(\br)$ such that
	$0 \le {\chi}(t) \le 1$, $|t| \ge 2$ in
	$\supp {\chi}(t)$ and
	${\chi}(t) = 1$ for $|t| \ge 3$.
	Let ${\chi}_0 = {\chi}({\delta}_0)$, then $ {\chi}_0 \in S(1, g^\sh)  $,
	$2 \le |{\delta}_0|$ and
	$\w{{\delta}_0}/|{\delta}_0| \le 3/2$
	in $\supp {\chi}_0$, thus 
	\begin{equation}\label{b0ref} 
	1 + {\chi}_0{\varrho}_0/{\delta}_0 \ge 1 -
	{\chi}_0\w{{\delta}_0}/2|{\delta}_0| \ge 1/4
	\end{equation}
	Since $|{\delta}_0| \le 3$ in $\supp (1-{\chi}_0)$
	we find by Proposition~\ref{wickweyl}
	that 
	$$
	B^{Wick} \cong ({\delta}_0 + {\chi}_0{\varrho}_0 + \lambda)^{Wick} 
	$$ 
	modulo $\op S(m/\w{{\delta}_0}, g^\sh) \subseteq \op
	S(H^{1/2}\w{{\delta}_0}, g^\sh)$ by ~\eqref{hhhest}.
	Since 
	$|{\chi}_0{\varrho}_0/{\delta}_0| \le 3H^{1/2}
	\w{{\delta}_0}/4$ and $ \delta_0 \in S^+(1,g^{\sh}) $ we find from~\eqref{wickcomp1} that 
	\begin{equation}\label{BWick}
	B_1^{Wick} \cong  {\delta}_0^{Wick}B_0^{Wick}  
	\qquad\text{modulo $\op
		S(H^{1/2}\w{{\delta}_0}, g^\sh)$}.
	\end{equation} 
	where $B_0 = 1 + {\chi}_0{\varrho}_0/{\delta}_0$.
	Proposition~\ref{propwick} gives
	$({\chi}_0{\varrho}_0/{\delta}_0)^{Wick} \in \op
	S(H^{1/2}\w{{\delta}_0}, g^\sh)$ and
	${\delta}_0^{Wick} = {\delta}_1^w $ where  ${\delta}_1 = {\delta}_0 + {\gamma}$
	with ${\gamma} \in  S(H^{1/2}, G)$ in ${\omega}_j$.
	Thus Lemma~\ref{calcrem} gives
	\begin{equation}\label{nycalcref0}
	B^{Wick}\cong {\delta}_0^{Wick} B_0^{Wick}  + \lambda^{Wick} \cong {\delta}_0^wB_0^{Wick}  + \lambda^{w} + c^w  \qquad\text{modulo $\op
		S(H^{1/2}\w{{\delta}_0}, g^\sh)$}
	\end{equation}
	where $c \in S(H^{-1/2}, g^\sh)$ such that $\supp c \bigcap {\omega}_j = \emptyset$.
	
	We find from Proposition~\ref{mproplem} that $ f =
	{\alpha}_0\delta_0 $, where $MH^{1/2} \ls  {\alpha}_0 \in
	S(MH^{1/2}, G)$, which gives ${\alpha}_0^{1/2} \in S(M^{1/2}H^{1/4},
	G)$. Let
    \begin{equation}\label{ajdef}
    	a_j = {\alpha}_0^{1/2}{\delta}_0{\phi}_j \in S(M^{1/2}H^{-1/4},G)
    \end{equation}
	then the calculus gives
	\begin{equation}\label{nycalcref1}
	\re a_j^w  ({\alpha}_0^{1/2}{\phi}_j)^w \cong f_j^w\qquad\text{modulo $\op S(MH,G)$}.
	\end{equation}
	since $f_j = {\Psi}_jf = {\phi}_j^2f\/$.
	Similarly, we find that $f_j^wc^w \in \op S(MH^{3/2},g^\sh)$ by the expansion and
	\begin{equation}\label{nycalcref2}
	\re  f_j^w{\delta}_0^{w} \cong a_j^w  a_j^w\qquad\text{modulo $\op S(MH^{3/2},G)$}
	\end{equation}
	with imaginary part in $\op S(MH^{1/2},G)$. We obtain from~
	\eqref{nycalcref0} and~\eqref{nycalcref1} that
	\begin{multline}\label{W1}
	f_j^wB^{Wick} \cong f_j^w({\delta}_0^wB_0^{Wick} + \lambda^w  +  c^w + r^w) \\ \cong
	f_j^w{\delta}_0^wB_0^{Wick} + f_j^w\lambda^w + a_j^w R_j^w \quad\text{modulo $\op S(m,g^\sh)$}
	\end{multline}
	where $r \in S(H^{1/2}\w{{\delta}_0}, g^\sh)$ which gives $R_j= ({\alpha}_0^{1/2}{\phi}_j)^wr^w \in
	S(M^{1/2}H^{3/4}\w{{\delta}_0}, g^\sh)$. Since 
	\begin{equation*}
	\re FB = \re (\re F)B + i [\im F, B]
	\end{equation*}
	when $B^* = B$, we find from ~\eqref{nycalcref2} by taking $ F =  f_j^w{\delta}_0^w$ and $ B = B_0^{Wick} $ that 
	\begin{equation}\label{W2}
	\re f_j^w{\delta}_0^wB_0^{Wick}  \cong \re a_j^w  a_j^wB_0^{Wick}
	\qquad\text{modulo $\op S(m,g^\sh)$.}
	\end{equation}
	In fact, since $B_0 = 1 + {\chi}_0{\varrho}_0/{\delta}_0$ and
	$({\chi}_0{\varrho}_0/{\delta}_0)^{Wick} \in \op
	S(H^{1/2}\w{{\delta}_0}, g^\sh)$ we find
	\begin{equation}\label{commrel}
		[a^w,B_0^{Wick}] = [a^w,
	({\chi}_0{\varrho}_0/{\delta}_0)^{Wick}]
	\in \op S(M H^{3/2}\w{{\delta}_0}, g^\sh)
	\end{equation}
	when $a \in S(MH^{1/2}, G)$.
	Similarly, since $a_j \in S(M^{1/2}H^{-1/4},G)$  we obtain that
	\begin{equation}\label{W3}
	a_j^w a_j^wB_0^{Wick} \cong
	a_j^w(B_0^{Wick}a_j^w + s_j^w) \qquad\text{modulo $\op S(m,g^\sh)$}
	\end{equation}
	where $s_j \in S(M^{1/2}H^{3/4}\w{{\delta}_0}, g^\sh)$.
	We also have 
	\begin{equation}\label{W4}
	 \re  \lambda^{Wick} f_j^w =  \re f_j^w \lambda^w \cong  (f_j \lambda)^w \cong \re a_j^w (\alpha_0^{1/2} \lambda \phi_j)^w 
	\end{equation}
	modulo $ \op S(\mu, G_0) $ where $ \alpha_0^{1/2} \lambda \phi_j \in S(M^{1/2}H^{1/4}h^{1/2}\w{\xi}, G_0) $.
		Since $B_0 \ge 1/4$ we find from
	\eqref{W1}--\eqref{W4} that 
	\begin{equation}\label{fjest}
	\re B^{Wick} f_j^w  \gtrsim \frac{1}{4}a_j^w  a_j^w + \re a_j^w  S_j^w
	\qquad\text{modulo $\op S(m,g^\sh) + \op S(\mu, G_0)$} 
	\end{equation}
	where $S_j \in 
	S(M^{1/2}H^{3/4}\w{{\delta}_0}, g^\sh) + S(M^{1/2}H^{1/4}h^{1/2}\w{\xi}, G_0) $. 
	
	We are  going to complete the  square in~\eqref{fjest}, but before that we must handle the term $\re B^{Wick} f_{0j}^w  =  \re b^w f_{0j}^w $. As before, we find that $\lambda^{Wick}  f_{0j}^w  \in \op S(\mu ,G_0) $ uniformly since we have $ M H^{1/2}h^{1/2}\w{\xi}\lambda \ls \mu $. For the term $\re  B_1^{Wick} f_{0j}^w $ with $ B_1 = \delta_0 + \varrho_0 $ we need the following result.

	\begin{lem}\label{f0lem}
	  For any $ \varepsilon > 0 $ there exists  $ R_\varepsilon \in  S(m,g^\sh) + S(\mu, g^\sh_0)$ so that
	  \begin{equation}\label{f0jest}
	  | \re\w{ B_1^{Wick}  f_{0j}^w u,u} | \le \varepsilon\w{a_j^w  a_j^wu,u} +  \w{R_\varepsilon ^w u,u}\qquad u \in C^\infty_0 
	  \end{equation}
	\end{lem}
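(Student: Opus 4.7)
The plan is to exploit the factorization $\partial_\eta f = \sqrt{\alpha_0}\,r_1$ from Corollary~\ref{f0cor}, where $r_1 \in S(M^{1/2}H^{1/4}h^{1/2}, G)$ in $\omega_j$. Setting $p_j = \sqrt{\alpha_0}\,\phi_j \in S(M^{1/2}H^{1/4}, G)$ and $s_j = \phi_j\, r_1\cdot r\cdot \xi \in S(M^{1/2}H^{1/4}h^{1/2}\w{\xi}, G_0)$, we obtain $f_{0j} = p_j s_j$, while $a_j = \delta_0\, p_j$ by \eqref{ajdef}. In particular $\delta_0 f_{0j} = a_j s_j$, which is the algebraic identity that produces an explicit $a_j$-factor. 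Writing $B_1 = \delta_0 + \varrho_0$, I would split $B_1 f_{0j} = a_j s_j + \varrho_0 f_{0j}$ and handle the two summands separately.

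For the $a_j s_j$ summand, Lemma~\ref{calcrem}, Proposition~\ref{wickcomp0}, and Example~\ref{wickcompexe} together give $\delta_0^{Wick} f_{0j}^w \cong a_j^w s_j^w$ modulo operators in $\op(S(m, g^\sh) + S(\mu, g_0^\sh))$, with errors computed as in Lemma~\ref{bfconglem}. Then Cauchy–Schwarz combined with operator AM–GM yields
\[
|\re \w{a_j^w s_j^w u, u}| \le \varepsilon \w{a_j^w a_j^w u, u} + (4\varepsilon)^{-1} \w{(s_j^w)^* s_j^w u, u},
\]
and $(s_j^w)^* s_j^w \in \op S(\mu, g_0^\sh)$ since $|s_j|^2 \lesssim M H^{1/2} h \w{\xi}^2 \le C h^{1/2}\w{\xi}^2 = C\mu$ (using $MH^{1/2} \le H^{-1/2} \le 3 h^{-1/2}$), so this term is absorbable into $R_\varepsilon$.

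For the delicate $\varrho_0 f_{0j}$ summand, I would mirror the Lerner-type cutoff used in the main proof of Lemma~\ref{lowerlem}: introduce $\chi_0 = \chi(\delta_0) \in S(1, g^\sh)$ with $\chi = 1$ for $|\delta_0| \ge 3$ and $\chi = 0$ for $|\delta_0| \le 2$, and split $\varrho_0 = \chi_0 \varrho_0 + (1-\chi_0)\varrho_0$. On $\supp \chi_0$ the quotient $\chi_0 \varrho_0/\delta_0 \in S(H^{1/2}\w{\delta_0}, g^\sh)$ is bounded by a small constant in $\omega_j$ (since $H^{1/2}\w{\delta_0} \le \kappa_2$), so $\chi_0 \varrho_0 f_{0j} = (\chi_0 \varrho_0/\delta_0)\, a_j s_j$ is treated exactly as the $a_j s_j$ term with the bounded factor absorbed into $\varepsilon$. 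On $\supp(1-\chi_0)$ we have $|\delta_0| \le 3$, forcing $m \le H^{1/2}\w{\delta_0}^2/2 \le 8H^{1/2}$, so $|(1-\chi_0)\varrho_0 f_{0j}| \lesssim H^{1/2}\cdot M H^{1/2} h^{1/2}\w{\xi} = MH h^{1/2}\w{\xi} \le h^{1/2}\w{\xi} \le \mu$, contributing only an $\op S(\mu, g_0^\sh)$ remainder. The main obstacle is precisely this $\varrho_0 f_{0j}$ contribution: a naive pointwise AM–GM of the form $|\varrho_0 f_{0j}| \le \varepsilon a_j^2 + C_\varepsilon(\cdot)$ breaks down as $\delta_0 \to 0$ because of the singularity in $\varrho_0/\delta_0$, and only the cutoff $\chi_0$ separates the two regimes cleanly enough to produce an error in $S(m, g^\sh) + S(\mu, g_0^\sh)$ uniformly in $h$.
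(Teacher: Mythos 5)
Your proof is correct and follows essentially the same strategy as the paper's: use Corollary~\ref{f0cor} to extract the $\sqrt{\alpha_0}$-factor from $\partial_\eta f$, recombine it with $\delta_0\phi_j$ to produce $a_j$, introduce the Lerner cutoff $\chi_0$ to tame the singular quotient $\varrho_0/\delta_0$, and close by Cauchy--Schwarz with $(s_j^w)^*s_j^w$ absorbed into $\op S(\mu,g^\sh_0)$. The paper organizes the same decomposition by working directly with $B_0 = 1 + \chi_0\varrho_0/\delta_0$ from the outset and factoring the Wick symbol $\delta_0 B_0 f_{0j} = AB$ with $|A|\lesssim|a_j|$; your split $B_1 f_{0j} = B_0\,\delta_0 f_{0j} + (1-\chi_0)\varrho_0 f_{0j}$ is the algebraically identical presentation.
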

	
	By using~\eqref{fjest} and~\eqref{f0jest} we obtain for $ \varepsilon \le 1/12 $ that
	\begin{equation}\label{fjest0}
	\re B^{Wick} f_j^w  \ge \frac{1}{6}a_j^w  a_j^w + \re a_j^w  S_j^w
	\qquad\text{modulo $\op S(m,g^\sh) + \op S(\mu, g^\sh_0)$} 
	\end{equation}
	where $S_j \in 
	S(M^{1/2}H^{3/4}\w{{\delta}_0}, g^\sh) + S(M^{1/2}H^{1/4}h^{1/2}\w{\xi}, G_0) $. 	
    By completing the square, we find
	\begin{equation*}
	A_j^w \cong \re f_j^wB^{Wick} \gtrsim \frac{1}{6}\left(a_j^w + 3S_j^w\right)^*
	\left(a_j^w + 3S_j^w\right)
	\ge 0
	\end{equation*}
	modulo $\op S(m,g^\sh) +  \op S(\mu, G_0)$. In fact,  $(S_j^w)^*S_j^w \in
	\op S(m, g^\sh) + \op S(\mu, G_0)$
	since we have $ MH^{3/2}\w{{ \delta}_0}^2 \lesssim m $ and $MH^{1/2}h\w{\xi}^2 \lesssim \mu  $. This gives
	~\eqref{lowerest} and the lemma in this case.  	
	This completes the proof of Lemma~\ref{lowerlem}.
\end{proof}

\begin{proof}[Proof of Lemma~\ref{f0lem}]
     First, we note that $B_1^{Wick} = (\delta_0+ \varrho_0)^{Wick} \cong (\delta_0 B_0 )^{Wick}$
     modulo $ S(H^{1/2}, g^\sh) $, where $ (\delta_0 B_0)^{Wick} \in \op S^+(1,g^\sh) $. Thus by  Propositions~\ref{propwick} and~\ref{wickcomp0} we find that $ \re B_1^{Wick} f_{0j}^w  \cong  \re  B_1^{Wick} f_{0j}^{Wick}  \cong  (f_{0j} \delta_0 B_0)^{Wick} $ modulo $ \op S(\mu, g^\sh_0) $. 
     
     Now since $ \Psi_j = \phi_j^2 $ and $ f_{0j} = \Psi_j \partial_\eta f \cdot r \cdot {{\xi}}$ we can factor the symbol $ f_{0j} \delta_0 B_0 = AB $, where 
     $
     A = \partial_\eta f \delta_0B_0 M^{-1/2}H^{-1/4}h^{-1/2}\phi_j 
     $ 
     and 
     $ B = M^{1/2}H^{1/4}h^{1/2}r\cdot \xi\phi_j $. Then  Corollary~\ref{f0cor} gives $| A| \ls \sqrt{\alpha_0}|\delta_0|\phi_j $
     and Proposition~\ref{propwick} gives
     $$ 
     A^{Wick} \in \op S(M^{1/2}H^{1/4}\w{\delta_0}, g^\sh) \bigcap \op S^+(M^{1/2}H^{1/4}, g^\sh) 
     $$ 
     and $B^{Wick} = \wt B^{Wick} D_x $  with  $\wt B \in S(M^{1/2}H^{1/4}h^{1/2},G) \subset S(h^{1/4}, G)$.
     
     By   Proposition~\ref{wickcomp0} we have $ (AB)^{Wick} \cong \re A^{Wick}  B^{Wick}  $
     modulo $\op S(\mu, g_0^\sh) $. Thus, it suffices to estimate
     \begin{equation}\label{ABest}
     	 \re \w{A^{Wick}  B^{Wick} u,u} \le \varepsilon \mn{A^{Wick} u}^2 + C_\varepsilon \mn{B^{Wick}u}^2 \qquad u \in C^\infty_0
     \end{equation}
     where we have $\mn{B^{Wick}u}^2 \ls  \mn{h^{1/4}D_xu}^2 \le \w{\mu^wu,u}  $. 
     We shall prove~\eqref{f0jest} by estimating  $ A^2 \ls \alpha_0  \delta_0^2\phi_j^2 = f \delta_0\Psi_j  =   a_j^2 $. Here 
     $ a_j  = \sqrt{\alpha_0}\delta_0 \phi_j \in 
     S(M^{1/2}H^{-1/4},G) $ 
     is given by~\eqref{ajdef}. We may then estimate $ A^{Wick} \ls a_j^{Wick}  $ where $a_j^{Wick} = a_j^{w} +  r_2^w$ with $   r_2^w \in  S(M^{1/2}H^{3/4},  G)$.
     Thus we obtain
     \begin{equation}
     (A^{Wick})^2 \le \left( a_j^w + r_2^w\right) \left(a_j^w + r_2^w  \right) \\ \ls  a_j^wa_j^w + r_2^wa_j^w+ a_j^w r_2^w  \ls 2a_j^wa_j^w
     \end{equation}
     modulo $(r_2^w)^2  \in \op S(m, g^\sh) $,
     which gives~\eqref{f0jest} and Lemma~\ref{f0lem}. 
     \end{proof}

We shall finish the paper by giving a proof of
Proposition~\ref{mainprop}.

\begin{proof}[Proof of Proposition~\ref{mainprop}]
	By the assumptions in Proposition~\ref{mainprop} we have
	\begin{equation}
		P^* \cong D_t + A^w + i f_1^w \qquad \text{modulo $ R $}
	\end{equation}
microlocally near $ w_0 \in \st $,  here 
	\begin{equation}
		A = \sum_{jk} a_{jk}\xi_j\xi_k + \sum_j a_j\xi_j + a_0
	\end{equation}
	where $ a_{jk} $ and $ a_j  \in S(1, g)$ are real and $ \{ a_{jk}\}_{jk} $ is symmetric and nondegenerate,
	$f_1 = f + f_0$  where  $f \in 	S(h^{-1}, g)$ is real valued satisfying condition $ \subr(\ol\Psi) $ in~\eqref{pcond0} and $ f_0 = \partial_\eta f \cdot r \cdot \xi $. 
 	Now $ P^* $ in~\eqref{propest} can be perturbed with operators with symbols in $ R $ since $ b_T \in S(h^{-1/2}, g^\sh)  $, so $ |\w{b_T^wr^w u,u}| \ls h^{1/2}\mn{\w{D_x}u}^2 $ because $ b_T^w s^w \in \op S(h^{1/2}, g^\sh)  $ when $ s \in S(h, g) $.

	Let $B_T = {\delta}_0 + {\varrho}_T + \lambda_T$ be given by Definition~\ref{multdef}, where $ \lambda_T =  \epsilon h^{1/2}\w{L(x-x_0), \xi}/T $  given by Lemma~\ref{multlem} with $ x_0 $  being the value of $ x $ at $ w_0 $ and   $ 0 < \epsilon \le 1 $, ${\delta}_0 +
	{\varrho}_T$ is the Lipschitz continuous pseudo-sign for~ $f$ given by
	Proposition~\ref{apsdef} for $0 < T \le 1$, so that $|{\varrho}_{T}|
	\le m \le \w{{\delta}_0}/2$ when $  |t| \le T $. 	Proposition~\ref{apsdef} also gives that
	\begin{equation}\label{dtbt} 
	{\partial_t } ({\delta}_0 + {\varrho}_{T}) \ge 
	m/2T \qquad \text{when $  |t| \le T $ }
	\end{equation}
	We have $B_T^{Wick} = b_T^w$ where $b_T(t,w) \in 
	S(H^{-1/2}, g^\sh) \linebreak[3] 
	\bigcap S^+(1, g^\sh) + S(\mu, g^\sh_0)
	$ uniformly by Proposition~\ref{wickweyl} when $\max ( |t| , |x|)  \le T $. In the following we shall assume that $u \in  C_0^\infty$ has support  where $  \max ( |t| , |x|)  \le T $.

	We are going to consider 
	\begin{equation}
		\im \sw{P^*u, B_T^{Wick}u } = i\sw{[D_t + A^w, B_T^{Wick}]u,u}/2 + \re \sw{f_1^w u, B_T^{Wick} u}
	\end{equation}
	We find by~\eqref{poswick}
	and ~\eqref{dtbt} that
	\begin{equation}\label{dbest}
  i\sw{[D_t, B_T^{Wick}]u,u}/2 =	\sw{{\partial_t}B_T^{Wick} u,u}/2 \ge
	\sw{m^{Wick}u,u}/4T 
	\end{equation}
	when   $ u \in  C_0^\infty $.
	By Proposition~\ref{lowersign}, we find that
	\begin{equation}
	\re \sw{B_T^{Wick} f^w u,u} \ge  
	\sw{C^wu,u} \quad u \in C_0^\infty
	\end{equation}
	with $C \in S(m, g^\sh) + S(\mu, g_0^\sh)$. Propositions~\ref{wprop} and~\ref{f0prop} gives $C_0 > 0$ so that 
	\begin{equation}\label{cest}
	|\sw{C^w u,u}| \le C_0 (\sw{m^{Wick}u,u} + \sw{\mu^{Wick} u,u}) \qquad u \in C_0^\infty
	\end{equation}
	By Lemma~\ref{multlem}  and~\eqref{hhhest}, there exist $ c_j > 0 $ so that when $ | \xi |  \ll h^{-1}$ we have
	\begin{equation}
	 i \sw{[A^w, B_T^{Wick}]u,u}/2  \ge (\epsilon - 2c_1T )\sw{\mu^w u , u} /2T - 3 c_0\epsilon \sw{m^{Wick} u , u}/2T
	\end{equation}
	for $ u \in C^\infty_0 $. In fact, $ h^{1/2} \le 6 m $ by~\eqref{hhhest}  and since $ b_1 $ is constant in $ x $ we have $ [A^w, b_1^{w} ]  = \w{s_2^wD_x, D_x} + s_1^w D_x + s_0^w$ with $s_j \in S(h^{1/2}, g_0^\sh) $.

	We find from ~\eqref{dbest}--\eqref{cest} that we can find  $ \Psi \in S^{2} $ such that $ \st \bigcap \supp \Psi = \emptyset $ and
	\begin{multline}\label{finest}
	 	\im \sw{P^*u, B_T^{Wick}u } \ge \left( \frac{1}{4} - 6c_0 \epsilon
	- C_0 T \right)\sw{m^{Wick}u,u}/T \\ + (\epsilon - 2C_0T- 2c_1T) )\sw{\mu^w u,u})/2T - \mn{\Psi^w u}^2/T
	\end{multline}
	for $u \in  C_0^\infty$. By taking first $ \epsilon $ and then $ T $ small enough we find 
	\begin{equation}\label{finest0}
	\im \sw{P^*u, B_T^{Wick}u }+ \mn{\Psi^w u}^2/T \gtrsim \sw{m^{Wick}u,u}/T + \sw{\mu^w u,u})/T  
	\end{equation}
	for $u \in  C_0^\infty$. 
		
	Since $|\delta_0 + \varrho_T | \le |\delta_0| + m  \le 3\w{\delta_0}/2 $,
	$ h^{1/2}\w{{\delta}_0}^2 \lesssim m $ by ~\eqref{hhhest} and $|\lambda | \lesssim h^{1/4}\mu^{1/2} $, we find 
	$|B_T | \lesssim h^{-1/4}{m}^{1/2} + h^{1/4}\mu^{1/2}$. Thus $h^{1/2}((B_T^{Wick})^2 +1)
	\in \op S(m, g^\sh) + \op S(h^{1/2}\mu , g_0^\sh)$ so Propositions~\ref{wprop} and~\ref{f0prop} give
	\begin{equation}\label{mlow}
	h^{1/2} (\mn{B_T^{Wick}u}^2 + \mn{u}^2 + \mn{D_xu}^2) \lesssim \sw{m^{Wick}u,u}  + \sw{\mu^wu,u} \qquad 
	u \in  C_0^\infty 
	\end{equation}
	Summing up, we obtain that
	\begin{multline}
	h^{1/2}(\mn{B_T^{Wick}u}^2  + \mn u^2 + \mn{D_xu}^2)\le  C_1  \left(\sw{m^{Wick}u,u}  + \sw{\mu^wu,u} \right) \\ \le
	C_2\left(T\im\sw{Pu,B_T^{Wick}u} + \mn{\Psi^w u}^2\right)
	\end{multline}
	if $u \in  C_0^\infty$ has support where $\max ( |t| , |x|) \le T $ which  completes the proof of 
	Proposition~\ref{mainprop}.
\end{proof}

\appendix

\section{Solvability of  quasilinear PDE}\label{app}

In this appendix, we are going to prove a result  that gives Proposition~\ref{approp} in Section~\ref{prep}. 
Let  $ f (x, w)\in C^\infty(\br^{n+m}) $ be real valued and $ (u_0, u_1) \in \br^{1 + n}$, then we shall solve
\begin{equation}\label{pequation}
P(\partial_x u,x, w,\partial_x) u = f, \quad u(x_0, w_0) = u_0 \in \br, \quad  \partial_{x}u(x_0, w_0) = u_1\in \br^n
\end{equation}
where $ u(x,w) \in C^\infty(\br^{n+m}) $ is real valued. Here $ P $ is a quasilinear second order PDO in the $ x $ variables with real $ C^\infty $ coefficients having $ w \in \br^m$ as parameter such that
\begin{equation}\label{psymbol}
P(v,x, w,\partial_x)  = p_2(v,x, w,\partial_x) + p_1(x, w)\partial_x + p_0(x,w) 
\end{equation}
with $ v(x, w) \in C^\infty(\br^{n+m}, \br^n) $, $ p_1(x, w) \in C^\infty(\br^{n+m}, \br^n) $  and $ p_0(x, w)  \in C^\infty(\br^{n+m}, \br) $. 
We assume that the principal symbol vanishes of second order, so that
\begin{equation}\label{pquant}
p_2(v,x,w,\partial_x) = 
\sum_{j,k= 1}^n L_{jk}(v,x,w)\partial_{x_j}\partial_{x_k} 
\end{equation}
where the real quadratic form 
\begin{equation}\label{pcond}
L(u_1,x_0,w_0) = \set{L_{jk}(u_1,x_0,w_0)}_{jk} \quad \text{has maximal rank $ n $ }
\end{equation}
  so that $ P $ is of real principal type, which then holds in a neighborhood of $ (u_1,x_0,w_0) $.

\begin{thm}\label{appthm}
	Let $ P $ be given by~\eqref{psymbol}  so that conditions~\eqref{pquant} and~\eqref{pcond} hold, then for any real valued
	$ f  \in C^\infty(\br^{m+n}) $, $ u_0 \in \br $  and $ u_1 \in \br^n $  there exists a neighborhood $ U $ of   $ (x_0, w_0) $ so that~\eqref{pequation} 
	has a real valued solution $u \in  C^\infty(\br^{m+n}) $ in $ U $. The  neighborhood~$ U $ will only depend on the bounds on $u_0, \ u_1 $,  $ f $ and the coefficients of~$ P $.
\end{thm}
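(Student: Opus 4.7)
The plan is a standard three-step construction: formal power series solution, Borel summation to obtain an approximate $C^\infty$ solution, and nonlinear correction by solving a linearized equation of real principal type.

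First I would normalize by replacing $u$ with $u - u_0 - u_1 \cdot (x - x_0)$ (absorbing the subtracted polynomial into $f$), so we may assume $u_0 = 0$ and $u_1 = 0$. By~\eqref{pcond}, $L(0, x_0, w_0)$ remains nondegenerate, hence $L(\partial_x u, x, w)$ is nondegenerate for $(x, w)$ near $(x_0, w_0)$ and $u$ small in $C^1$. Next, construct a formal Taylor series $\hat u(x, w) = \sum_{\alpha, \beta} a_{\alpha\beta}(x - x_0)^\alpha (w - w_0)^\beta$ by differentiating $Pu - f$ at $(x_0, w_0)$ order by order: at each order $N$, the equation gives a scalar relation expressing one designated $(N+2)$-jet coefficient in terms of lower-order ones, solvable because $L(0, x_0, w_0)$ is nondegenerate (after a linear change of $x$ coordinates diagonalizing $L$, a nonzero diagonal entry lets us solve for the corresponding pure second partial). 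The remaining higher-order coefficients may be set freely (say, to zero), yielding a consistent formal solution $\hat u$.

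By Borel's theorem in the variables $(x, w)$, realize $\hat u$ as an actual $U \in C^\infty(\br^{n+m})$ with the prescribed Taylor series at $(x_0, w_0)$. Then $g := PU - f \in C^\infty$ vanishes to infinite order at $(x_0, w_0)$. Writing $u = U + v$, the equation becomes
\begin{equation*}
Q v + N(v) = -g,
\end{equation*}
where $Q$ is the linearization of $P$ at $U$ and $N(v)$ collects the quadratic and higher nonlinear terms in $v$ and its derivatives. The principal symbol of $Q$ at $(x_0, w_0)$ equals $L(u_1, x_0, w_0)$, which is nondegenerate, so $Q$ is a linear operator of real principal type in a neighborhood of $(x_0, w_0)$.

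To complete the construction, I would invoke H\"ormander's local solvability theorem for operators of real principal type to produce a local right inverse $Q^{-1}$ acting on $C^\infty$, with a fixed finite loss of derivatives, and set up an iteration $v_{k+1} = Q^{-1}(-g - N(v_k))$, $v_0 = 0$, within a shrinking neighborhood of $(x_0, w_0)$. The flatness of $g$ together with smallness of the iterates in the shrinking neighborhood provide the estimates needed for convergence. The main obstacle is reconciling the derivative loss from the linear inversion with the nonlinear iteration: in the $C^\infty$ setting this typically requires a Nash--Moser scheme exploiting the quasilinear structure of $P$ (whose nonlinearity is only quadratic in $\partial_x u$ for the principal application~\eqref{changesys}) together with the flatness of $g$ to obtain tame estimates on shrinking neighborhoods. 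The limit $v$ is smooth, and $u = U + v$ solves~\eqref{pequation}.
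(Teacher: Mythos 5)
Your route is different from the paper's and has gaps at the crucial steps. The paper neither uses a formal power series nor Nash--Moser: it microlocalizes via a conical partition of unity and factors $Pb_j = a_jQ_j + R_j$ (Proposition~\ref{normalform}), with $b_j \in S^{-1}$ lowering the order so that $Q_j = D_t + A_jD_{x'} + A_{0,j}$ is a \emph{scalar first-order evolution operator}. Proposition~\ref{linest} then gives energy estimates $\|\phi u\|_{(k)} \le C_k(\|v\|_{(\ell)})\|f\|_{(k)}$ with \emph{no loss of derivatives} in the index $k$ and a \emph{fixed} $\ell$ controlling the dependence on the coefficient data $v$. The iteration used is the quasilinear one $P(\partial v^j)v^{j+1} = f_0$, in which only the coefficients are frozen at $v^j$; the $\ell$-fixed estimate then closes the loop in a single Sobolev space (see \eqref{indest} and \eqref{inestj}), and Arzel\`a--Ascoli gives convergence. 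No approximate solution $U$, Borel summation, or Nash--Moser is required.

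The main gap in your proposal is that the fully linearized Picard scheme $v_{k+1} = Q^{-1}(-g - N(v_k))$ loses derivatives: $N(v)$ contains terms like $(\partial_x v)^2\,\partial^2_x(U+v)$ (the coefficients $L_{jk}$ depend in a general smooth, not merely quadratic, way on $\partial_x u$), and $Q^{-1}$ only regains one derivative for a second-order real-principal-type operator, so each step drops a net derivative. You flag the need for Nash--Moser but do not carry it out; the essential difficulty is obtaining a \emph{tame} right inverse for the variable-coefficient $Q$, with constants depending on only finitely many seminorms of $\partial_x U$ and with control over the support of the solution. Invoking H\"ormander's real-principal-type solvability theorem does not provide this: the general existence proof is by duality and Hahn--Banach, gives no constructive localization (the paper notes after Theorem~\ref{appthm} that solutions are not unique and no hyperbolicity is assumed, so characteristics may exit any small ball), and is therefore incompatible with an iteration confined to shrinking neighborhoods. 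Finally, the nonlinear correction $v$ does not vanish at $(x_0,w_0)$ merely because $g$ is flat there, so the initial conditions $u(x_0,w_0)=u_0$, $\partial_x u(x_0,w_0)=u_1$ are not preserved by your construction; the paper restores them by inverting the auxiliary linear system \eqref{initrho}, which is where the large parameter $\varrho$ is ultimately fixed.
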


Observe that the solution is not unique,  for uniqueness one needs  hyperbolicity of $ P $ and initial values at a noncharacteristic surface. Since the system~\eqref{changesys} is decoupled into equations on the form~\eqref{psymbol}--\eqref{pcond} we obtain Proposition~\ref{approp} 
from Theorem~\ref{appthm}.

We shall first reduce to the case with vanishing data by changing the dependent variable
\begin{equation}\label{reduc}
u(x,w) = v(x,w) + u_0 + u_1\cdot x 
\end{equation}
in~\eqref{pequation}, then we obtain the following equation for $ v $:
\begin{equation}\label{pmodequation}
P_{0}v = P(\partial v + u_1,x, w,\partial) v   
= f (x,w) -  p_1(x, w) u_1 - p_0(x, w)(u_0 + u_1\cdot x) = f_0(x,w) 
\end{equation}
with $ v(x_0, w_0) = 0 $ and $  \partial_{x}v(x_0, w_0) = 0 $.	
Now the right hand side of~\eqref{pmodequation} depends linearly on both $ f,\  u_0 $ and $u_1 $ and we have that~\eqref{pcond} holds when $ u_1 = 0 $. 

Renaming the operator, for the proof of  Theorem~\ref{appthm} we shall solve the linear equation
\begin{equation}\label{plineq}
P(v(x,w),x, w,\partial_x) u(x, w) = f_0(x, w) \qquad u(x_0, w_0) = 0\quad  \partial_{x}u(x_0, w_0) = 0
\end{equation}
which is a second order real linear PDE  with $ w \in \br^{m}$ and $ v(x, w) \in C^\infty(\br^{n+m}, \br^{n})$ as parameters such that $ v(x_0,w_0) = 0 $.  By using iteration and compactness we shall obtain a solution to~\eqref{pequation}, the proof of Theorem~\ref{appthm} will be at the end of the appendix.

To solve the linear equation~\eqref{plineq} we shall microlocalize using pseudodifferential equations.
In the following we will say that an pseudodifferential operator (or Fourier integral operator)  $ a(v,x,D) $ depends  $ C^\infty $ on a parameter $ v(x) \in   C^\infty  $ if any seminorm of the symbol (and phase function) is bounded by a finite number of  seminorms  of $ v $. 
For operators with symbols in $ S^{-\infty} $ this
means that the $ C^\infty $ kernel is a  $ C^\infty $ function of  $ v $.
Observe that compositions and adjoints of such operators also depend $ C^\infty $ on $ v $, see Lemma~\ref{contlem} and Remark~\ref{fiorem}.

Next, we shall microlocalize  in cones in the $ \xi $ variables. In the following, we will use the classical Kohn-Nirenberg quantization having classical symbol expansions. 

\begin{defn}\label{gammadef}
	For any $ \varepsilon > 0 $  and $ \xi_0 \in \br^n $ such $ | \xi_0 | = 1 $ we let
	\begin{equation}\label{gammaform}
	\Gamma_{\xi_0, \varepsilon} = \set{\xi : \left| \xi /|\xi| - \xi_0  \right| < \varepsilon  }
	\end{equation}
	which is a conical neighborhood of  $ \xi_0 $.
\end{defn}

Recall that a partition of unity is a set $ \set{\phi_j}_j $ such that $ 0 \le \phi_j  \in C^\infty$ and $ \sum_j \phi_j \equiv 1 $.

\begin{rem}\label{microrem}
	For any $ \varepsilon > 0 $ small enough we can find a partition of unity on $ S^*\br^n $ and extend it by homogeneity in $ \xi $ to get a partition of unity $ \set{\varphi_j(\xi)}_j $ on $ T^*\br^n \setminus 0 $ such that  $0 \le \varphi_j \in S^0 $ is homogeneous and supported in $ \Gamma_{\xi_j, \varepsilon} $ for some $ |\xi_j| = 1 $. We can also find  $ \set{\psi_j}_j $ such that $ 0 \le \psi_j \in S^0$ is homogeneous and supported in $ \Gamma_{\xi_j, \varepsilon} $ so that  $ \psi_j = 1 $ on $ \supp \phi_j $. 
	
	We shall also localize when $| \xi | \ge \varrho \ge 1 $ by $ \chi_\varrho(\xi) = \chi(| \xi|/\varrho ) \in C^\infty $, where  $\chi \in C^\infty(\br) $ such that $ 0 \le \chi \le 1 $,  $ \chi(t) = 0 $ when  $t \le 1 $ and  equal to $ 1 $ when $t \ge 2  $.  Let $\varphi_{j, \varrho} = \chi_\varrho \varphi_j $  and $\psi_{j, \varrho} = \chi_\varrho \psi_j $
	then $ \varphi_{j} - \varphi_{j, \varrho} $ and $ \psi_j - \psi_{j, \varrho} $ are in $ S^{-\infty} $, $ \forall\, j $ and $ \forall\, \varrho \ge 1 $.
	We also have that $ \varrho  \chi_{\varrho} \in S^{1}$ uniformly in $ \varrho \ge 1 $, $\forall\,  j $.
\end{rem}

In fact, since  $0 \le  \chi_{\varrho} \le 1$ and $ \varrho \le | \xi | $ in the support of this symbol, we find that  $| \varrho  \chi_{\varrho}( \xi)| \le  |\xi|$. Taking $ \xi $ derivatives  of  $ \varrho \chi_\varrho $  gives a factor $ \varrho^{-1} $ together with a symbol supported where $\varrho \le | \xi | \le 2\varrho $.

Next, we have to prepare the linear operator $ P $ microlocally with respect to this partition of unity. We will then use microlocal pseudodifferential operators which may give complex solutions. But since $ P $ is a real PDO, we may take the real part of the solution to the linear equation. 
In the following we shall use the notation $ \w{D} =  (1 + |D|^2) ^{1/2}$.

\begin{prop}\label{normalform}
	Let $ P $ be given by~\eqref{psymbol}--\eqref{pcond} with $ u_1 = 0 $ and real $ v(x,w) \in C^\infty$  such that $ v(x_0, w_0) = 0 $, and
	let  $ \Gamma=  \Gamma_{\xi_0, \varepsilon}$ be defined by~\eqref{gammaform} for  $ | \xi_0 | = 1 $,  $0 <  \varepsilon \le \varepsilon_0 $. Then for $ \varepsilon_0 $ small enough there exists  real valued $  0  \ne a(v,x, w,\xi) \in S^0 $, $ 0 < c \w{\xi}^{-1} \le b(\xi)  \in S^{-1} $ and orthonormal  variables $ (t,x) \in \br \times \br^{n-1}$ so that 
	\begin{equation}\label{prepform}
	P(v,t,x, w, D)b(D ) =  	a(v,t,x, w, D)  Q(v,t,x, w,D) + R(v,t,x, w,D)
	\end{equation}
	where
	\begin{equation}\label{normform}
	Q(v,t,x, w,D)  = D_{t} + \sum_{j = 1}^{n-1} A_j(v,t,x, w,D_x) D_{x_j} + A_0(v,t,x, w, D_x)   
	\end{equation}
	Here $ a $, $A_j $ and  $ R $ are operators that depend $ C^\infty $ on $ v(x,w)$, $A_j  \in  C^\infty(\br,  S^0) $ is real valued when $ j > 0 $ and $ R  = R_0 + R_1 \in \Psi^1 $ where $ R_0 \in \Psi^{-1}$ and $ \wf R_1 \bigcap \Gamma_0 = \emptyset $,  $ \Gamma_0 = (0, x_0, w_0) \times \Gamma_{\xi_0, \varepsilon} $. The seminorms of $ A_{j} $, $ R $ and the constant~$ \varepsilon_0 $  only depend on the seminorms of  $ v $ and the coefficients of  $ P $.
\end{prop}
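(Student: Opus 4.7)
The plan is to carry out a microlocal factorization of $Pb(D)$ in the cone $\Gamma = \Gamma_{\xi_0,\varepsilon}$, by matching principal and subprincipal symbols. First, by an orthogonal linear change of $x$-coordinates (which induces the same orthogonal action on the dual variables), I arrange that $\xi_0 = (1, 0, \ldots, 0)$, and split the new coordinates as $(t, x) \in \br \times \br^{n-1}$ with dual $(\tau, \xi) \in \br \times \br^{n-1}$. In these coordinates the principal symbol becomes
\[
p_2 = L_{00}(v, t, x, w)\tau^2 + 2\beta(v, t, x, w, \xi)\tau + \gamma(v, t, x, w, \xi),
\]
with $\beta$ linear and $\gamma$ quadratic in $\xi$, and $\{L_{jk}\}$ still a real symmetric nondegenerate matrix at the base point. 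I take $b(\tau, \xi) = (1 + \tau^2 + |\xi|^2)^{-1/2} \in S^{-1}$, positive and scalar, so that $Pb$ is a PDO of order $1$ with Kohn-Nirenberg symbol exactly $\sigma(P)b$ (since $b$ is independent of $x$).

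For $Q$ I make the ansatz $Q = D_t + \sum_{j=1}^{n-1}A_j D_{x_j} + A_0$ with $\sigma_{\mathrm{princ}}(Q) = \tau + \alpha$ for a chosen real homogeneous degree-$1$ symbol $\alpha(v, t, x, w, \xi)$; by Euler's identity $\alpha = \sum_j(\partial_{\xi_j}\alpha)\xi_j$, so the $A_j$ for $j \ge 1$ are the operators with real principal symbols $a_j = \partial_{\xi_j}\alpha$. In the hyperbolic regime (where the discriminant $\beta^2 - L_{00}\gamma$ is positive near $\xi_0$), $p_2$ has two distinct real roots in $\tau$ via the quadratic formula, and I take $\alpha = (-\beta + \sqrt{\beta^2 - L_{00}\gamma})/L_{00}$, so that $p_2 b/(\tau + \alpha) = L_{00}(\tau + \alpha')b$ is a real $C^\infty$ symbol of order $0$ that is nonvanishing near $(x_0, w_0, \xi_0)$ and furnishes the candidate for the principal symbol of $a$. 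In the elliptic regime, I take $\alpha = 0$ (so $A_j = 0$ for $j \ge 1$) and use the ellipticity of $\tau$ in $\Gamma$ to define $a$ via $p_2 b/\tau$ microlocally. In either case I extend the local $a$ to a global real $a \in S^0$ via a homogeneous cutoff $\chi_\Gamma \in S^0$ as in Remark~\ref{microrem}, supported in $\Gamma$ and equal to $1$ on a smaller cone covering $\Gamma_0$. Principal matching then reads $p_2 b - a(\tau + \alpha) = 0$ in $\Gamma_0$, with the difference supported outside $\Gamma_0$ elsewhere. The subprincipal symbol of $Pb - aQ$ is an explicit order-$0$ expression involving $p_1, p_0$, Kohn-Nirenberg composition errors, and $\sigma(A_0)$; since $a$ is elliptic on $\Gamma_0$ by construction, I solve for the (possibly complex) principal symbol of $A_0$ by dividing, leaving a remainder of order $-1$ on $\Gamma_0$. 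This yields the decomposition $R = R_0 + R_1$ with $R_0 \in \Psi^{-1}$ and $\wf R_1 \cap \Gamma_0 = \emptyset$, as required.

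The main obstacle is the interaction between the characteristic set of $p_2$ and the constraint that $A_j$ be real for $j \ge 1$: a real factorization of $p_2$ as a product of two real linear factors in $\tau$ is only available where the discriminant $\beta^2 - L_{00}\gamma$ is positive, i.e., in the hyperbolic regime. Near directions where the discriminant changes sign (parabolic directions), the real roots coincide and the factor $a$ may degenerate, rendering the subprincipal matching equation for $\sigma(A_0)$ singular. This is circumvented by shrinking $\varepsilon_0$ so that $\Gamma$ avoids such degenerate directions; the nondegeneracy of the Hessian $L$ at the base point guarantees a uniform such opening, with $\varepsilon_0$ depending only on the seminorms of the coefficients. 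The $C^\infty$-dependence on $v$ is propagated at every step by the standard pseudodifferential calculus (division by elliptic factors, cutoffs, and compositions being $C^\infty$-continuous in this parameter), and uniformity of the seminorms in $\xi_0 \in S^{n-1}$ is obtained by taking the constants in the cutoffs uniformly in direction.
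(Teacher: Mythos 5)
Your proposal takes a genuinely different route and contains a gap that I do not see how to repair.

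Your key structural choice is to rotate coordinates so that $\xi_0$ itself becomes the ``time'' direction $(1,0,\dots,0)$, to take $b=\langle\zeta\rangle^{-1}$, and then to produce the linear factor $\tau+\alpha$ of the principal symbol via the quadratic formula. The problem is that the hypothesis~\eqref{pcond} only says $L$ is nondegenerate; it may well be \emph{indefinite}, and then $\xi_0$ can lie on the characteristic cone $\{p_2=0\}$. In your rotated coordinates this means $L_{00}=0$ at the base point, and then no factorization $p_2\,b = a\,(\tau+\alpha)$ with $a$ elliptic near $\xi_0$ is possible, because $p_2(\xi_0)=0$ while $\tau+\alpha$ does not vanish at $\xi_0=(1,0)$. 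A concrete instance is $n=2$, $L=\mathrm{diag}(1,-1)$, $\xi_0=(1,1)/\sqrt2$: after your rotation $p_2=-2\tau\xi'$, the quadratic formula degenerates, and the elliptic factor would have to be $\propto\xi'$, which vanishes at $\xi_0$. Your remark that one ``shrinks $\varepsilon_0$ to avoid degenerate directions'' does not help here: $\xi_0$ is the apex of $\Gamma_0$ and cannot be excised, and the proposition must be established for \emph{every} $\xi_0\in S^{n-1}$ because of the subsequent partition of unity. Moreover, even off the characteristic cone, when the discriminant $D(\xi')=\beta^2-L_{00}\gamma$ is positive definite, the symbol $\alpha=(-\beta+\sqrt D)/L_{00}$ behaves like $|\xi'|$ near $\xi'=0$ and has a conical singularity precisely on the axis of $\Gamma_0$; the associated $A_j=\partial_{\xi_j}\alpha$ then fail to be classical symbols without a cutoff near $\xi'=0$, and that cutoff produces an order-$0$ remainder supported in the interior of $\Gamma_0$, which is neither in $\Psi^{-1}$ nor has wave front set outside $\Gamma_0$.

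The paper avoids both obstacles by a different choice of the distinguished variable: it first diagonalizes $L(0,x_0,w_0)$ in orthonormal coordinates, picks an \emph{eigendirection} $j$ with $(\xi_0)_j\ne 0$ (always possible), relabels it $t=x_1$, and takes $b=\xi_1^{-1}$ microlocally near $\Gamma_0$. Then $p_2\,b=-\sum_{jk}L_{jk}\xi_j(\xi_k/\xi_1)$ is automatically linear in $\xi$ with real order-$0$ coefficients, so $Pb(D)=\sum A_jD_j+A_0$ directly, and the coefficient $A_1=-L_{11}=c_1\ne 0$ is the eigenvalue, nonvanishing by nondegeneracy of the Hessian, so the elliptic factor $a$ is produced for free. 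The Malgrange preparation and division theorems then remove the $\xi_1$-dependence of the $A_j$ and $A_0$ with real output, and the $C^\infty$ dependence on $v$ is tracked as you describe. The crucial insight you are missing is that the ``time'' direction must be chosen to keep the leading coefficient elliptic at the base point, and $\xi_0$ is exactly the wrong choice when it is a null direction.
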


Observe that since $ a \ne 0 $ we have by the calculus that $a^{-1} a \cong a a^{-1} \cong \id   $ modulo $ \Psi^{-1} $.

\begin{proof}
	For the proof, it is important that compositions of  operators that depend $ C^\infty $ on $ v $ also depend $ C^\infty $ on $ v $, see Lemma~\ref{contlem}.
	By taking an ON base of eigenvectors of $ L(0,x_0, w_0) $ and choosing ON variables $ x $, we may assume that $ p_2(0,x_0, w_0, \xi ) = \sum_j {c_j}\xi_j^2 $ for $ 0 \ne c_j \in \br$. Choose $ j  $ so that $ \xi_j \ne 0 $ at $ \xi_0 $, thus in a conical neighborhood of $ \Gamma_0 $ if $ \varepsilon  $ is small enough. By an ON change of variables we may take $ j = 1 $, then $ c_1 = - L_{11}(0,x_0, w_0) $.  Letting  $ b (\xi)  =  \xi_1^{-1}$  near  $ \Gamma_0 \cap \set{|\xi | \ge 1}$ we may extend $ b(\xi)  $ to a symbol in $ S^{-1}$ so that $ b(\xi) \gtrsim \w{\xi}^{-1} $. Then $ P b(D) $  has a symbol expansion with $  p_jb  \in S^{j-1}  $ and
	$$  
	p_2(v,x, w, \xi) b({\xi})  = -\sum_{jk} L_{jk}(v,x, w)\xi_j B_k(\xi)  
	$$ 
	where  $ B_k(\xi)  = \xi_k b({\xi})  \in S^0$ . Since $B_j(D)D_{k} =   B_k(D)D_{j}$, we  find from~\eqref{psymbol}--\eqref{pquant} that
	\[ 
	P b(D) = \sum_{j = 1}^{m} A_j(v,x, w,D) D_{j} + A_0(v,x, w,D)
	\]
	where $ A_0 = (p_1\partial + p_0) b(D)$, $ A_1(v,x, w, \xi) = -L_{11}(v,x, w) B_1(\xi)  \ne 0$  near $ \Gamma_0 $ and  $ A_j \in S^0$  is real valued when $ j > 0 $. Observe that $ B_1(\xi) = 1$ near $ \Gamma_0 \bigcap \set{| \xi | \ge 1}$ so it may be extended to be equal to 1 everywhere modulo terms having wave front set outside $ \ol \Gamma_0 $. This gives  that $ A_1 = -L_{11} \ne 0 $ near $(0,x_0, w_0)$ thus we can extend $a = A_1  $  so that $0 \ne  a(v,x, w) \in C^\infty$. 
	Replacing $ A_j $ with $ a^{-1}A_j $  we find that $ Pb(D) = a(v,x, w) Q  \in \Psi^{1}$ where the symbol of $ Q $ is equal to $ \xi_1 + \sum_{ j > 1}A_j(v,x,w,\xi)\xi_j  + A_0(v,x,w,\xi)$ modulo $ \Psi^{-1} $ and terms having wave front set outside $ \ol \Gamma_0 $.
	
	To obtain that $ A_ j$ is independent of $ \xi_1$ for $ j > 0 $, we shall use the Malgrange preparation theorem. If  $ \xi = (\xi_1, \xi') $ we find by homogeneity for small enough $ \varepsilon_0 > 0 $ that
	\begin{equation}
	\xi_1 + \sum_{ j > 1}A_j(v,x,w,\xi)\xi_j = q(v,x,w,\xi)\left(\xi_1 + r(v,x,w,\xi') \right)	
	\end{equation} 
	in a conical neighborhood of $ \Gamma_0 $, where $q > 0$ is  homogeneous  and $ r  $ is real, homogeneous of degree 1 and vanishes when $ \xi' = 0 $. Then we can extend $ q > 0 $ to a homogeneous symbol by a cut-off, observe that the symbols depend $ C^\infty $ on $ v $.
	This replaces $ a$ by $ 0 \ne aq \in S^0$, and by using Taylor's formula we find that $r(v,x,w,\xi') = \sum_{j >1}r_j(v,x,w,\xi') \xi_j $ with $ r_j $ homogeneous in $ \xi' $.  
	This gives that $ Pb(D) =  aq(v,x,w,D) Q$   where $ Q $ is equal to~\eqref{normform} modulo $ \Psi^0 $ and terms having wave front set outside $ \ol \Gamma_0 $.
	The composition $ aq(v,x,w,D) $ with $ Q $ also gives lower order terms in $ \Psi^0 $ which can be included in $ A_0 $.

	Now the term $ A_0 \in  \Psi^0$ of  $ Q $ can be replaced by $  aq(v,x,w,D)R_0 $ modulo $ \Psi^{-1} $ where the symbol $ R_0 = A_0/aq \in S^0$. To  make the term $ R_0 $ independent of $ \xi_1$ we may use Malgrange division theorem and homogeneity for small enough $ \varepsilon_0 > 0$ to obtain that
	\begin{equation}
	R_0(v, x,w,\xi) = q_0(v,x,w,\xi)\left(\xi_1 + r(v, x,w,\xi') \right)	 + r_0(v, x,w,\xi')
	\end{equation}
	in a conical neighborhood of $ \Gamma_0 $, where $ q_0  \in S^{-1}$ and $ r_0 \in S^0 $ by homogeneity. Cutting of $ q_0 $ we may replace $ R_0(v,x,w,D) $ with $ q_0(v,x,w,D) Q + r_0(v,x,w,D_{x'}) $ modulo~$\Psi^{-1}$ near $ \Gamma_0 $. Since $ R_0 \cong (1 + q_0)R_0 $ modulo $\Psi^{-1}$, we obtain~\eqref{prepform}--\eqref{normform} with $ a $ replaced by $aq( 1 + q_0)$.
	Cutting off $ q_0 $ where $ |\xi | \gg 1 $ only changes the operator with terms in $ \Psi^{-\infty} $, but gives that $  1 + q_0  > 0$ making $ aq(1 + q_0 )\ne 0 $.  The composition of $ aq(v,x,w,D) $ with $ q_0(v,x,w,D) Q$ will also give lower order terms in $ \Psi^{-1} $ which can be included in $ R $ together with any cut-off terms. This gives the proposition  after putting $  t= x_1 $ and $ x = x' $.
\end{proof}

Proposition~\ref{normalform} shows that the linerarized equation $P (v,x,w,D)u = f  $ may after ON changes of variables be microlocally be reduced to the system
$Q_j (v,x, w,D)u_j   \cong a_j^{-1}(v,x,w,D) f_j $
where $f_{j} = \varphi_{j}(D) f$ with $  \varphi_{j}$ given by Remark~\ref{microrem}.  Observe that $ u \cong \sum_j b_j(D)u_j $ where $ u_j $ also has to be microlocalized.

Now, the reduction and the calculus will give terms  $ S \in \Psi^{-\infty} $ which have smooth kernels. The errors $ Sf (x) = \int S(x,y) f(y ) \, dy$ can be made small if $ f $ has support in a sufficiently small neighborhood of  $ x_0 $ by cutting off the kernel $ S $.  Let $ \phi_\delta(x) = \phi((x-x_0)/\delta) $ where $ 0 < \delta \le 1 $ and $\phi \in C^\infty_0(\br^n) $ such that $ 0 \le \phi \le 1 $, has support where $ | x| < 2 $ and is equal to 1 when $ | x| \le 1$, so that $\phi(x/\delta) \in C_0^\infty(B_{x_0, 2\delta}) $  if  $ B_{x_0, \delta} = \set{x:  |x - x_0 | \le \delta} $.

\begin{lem}\label{estlem}
	Let $ S(x,y) \in C^\infty $ and 
	$S_\delta(x,y) = \phi_\delta(x) S(x,y) \phi_\delta(y)  \in C^\infty_0(B_{x_0, 2\delta}\times  B_{x_0, 2\delta})$.
	The mapping $S_\delta:   C^\infty \mapsto  C_0^\infty(B_{x_0, 2\delta}) $ is given by $ S_\delta f(x) = \int S_\delta(x,y) f(y ) \, dy$, and for $ f \in  C^\infty_0(B_{x_0, \delta}) $ we have $S_\delta f(x) = Sf(x)$ when $ |x| \le \delta $.
	For $ \delta  $ small enough, $ \id + S_\delta $  has the inverse  $ (\id + S_\delta)^{-1}  = \sum_{j= 0}^\infty (-S_{\delta})^j \cong \id $
	modulo operators with kernels in $C^\infty_0(B_{x_0, 2\delta}\times  B_{x_0, 2\delta}) $. 
\end{lem}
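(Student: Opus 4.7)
The plan is to verify the three claims of the lemma in sequence: the pointwise agreement $S_\delta f = S f$ on the small ball, the convergence of the Neumann series defining $(\id + S_\delta)^{-1}$, and the smoothness together with the compact support of the correction operator.

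For the first claim I would simply use that $\phi_\delta(x) = \phi((x-x_0)/\delta) = 1$ whenever $|x - x_0| \le \delta$, and that for $f \in C_0^\infty(B_{x_0, \delta})$ we have $\phi_\delta(y) = 1$ throughout $\supp f$. Thus for such $x$ and $f$ the integrand $\phi_\delta(x)S(x,y)\phi_\delta(y) f(y)$ coincides with $S(x,y) f(y)$, giving $S_\delta f(x) = Sf(x)$.

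For the second claim I would apply the Schur test. Writing $C_0 = \sup_{B_{x_0,2\delta}^2}|S|$, one has
\begin{equation*}
\int |S_\delta(x,y)| \, dy \le C_0\, \phi_\delta(x) \int \phi_\delta(y)\,dy \lesssim C_0\, \delta^n,
\end{equation*}
and symmetrically in $x$. Hence $\|S_\delta\|_{L^2 \to L^2} \lesssim \delta^n$, which can be made smaller than $1/2$ by choosing $\delta$ small enough. The Neumann series $\sum_{j \ge 0}(-S_\delta)^j$ then converges in the operator norm on $L^2$ (and also on $\Cal S'$, since every $S_\delta^j$ has a smooth compactly supported kernel and therefore maps $\Cal S'$ into $C_0^\infty$), and it inverts $\id + S_\delta$.

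For the third claim I would examine the iterated kernels
\begin{equation*}
K_j(x,z) = \int \cdots \int S_\delta(x,y_1) S_\delta(y_1,y_2) \cdots S_\delta(y_{j-1},z)\, dy_1 \cdots dy_{j-1},
\end{equation*}
each of which is smooth and supported in $B_{x_0,2\delta} \times B_{x_0,2\delta}$. An induction using $\|S_\delta(x,\cdot)\|_{L^1} \lesssim \delta^n$ yields $\|K_j\|_\infty \le C_0 (C_1 \delta^n)^{j-1}$. For derivatives, the key observation is that $\partial_x^\alpha$ acts only on the first factor and $\partial_z^\beta$ only on the last, so for $j \ge 2$
\begin{equation*}
\|\partial_x^\alpha \partial_z^\beta K_j\|_\infty \le C_{\alpha,\beta}(\delta)\,(C_1 \delta^n)^{j-2},
\end{equation*}
where the prefactor $C_{\alpha,\beta}(\delta)$ absorbs the blow-up $\delta^{-|\alpha|-|\beta|}$ of the derivatives of $\phi_\delta$ but is finite for each fixed $\alpha, \beta$. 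Summing geometrically in $j$ shows that $\sum_{j\ge 1}(-1)^j K_j$ converges in every $C^k$-norm, hence in $C^\infty$, to a kernel supported in $B_{x_0,2\delta} \times B_{x_0,2\delta}$. This identifies $(\id + S_\delta)^{-1} - \id$ as an operator whose kernel lies in $C_0^\infty(B_{x_0,2\delta} \times B_{x_0,2\delta})$.

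The main obstacle is the last step: controlling the $C^k$ norms of $K_j$ uniformly in $j$. The naive bound on derivatives of $S_\delta$ degenerates like $\delta^{-|\alpha|}$, but this is harmless because the $\delta$-dependence of the prefactor is independent of $j$, while the geometric decay $(C_1\delta^n)^{j-2}$ supplied by the $j - 2$ interior integrations drives the summability once $\delta$ is small.
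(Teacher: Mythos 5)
Your proof is correct and follows essentially the same strategy as the paper's: establish the trivial agreement $S_\delta f = Sf$ on the small ball, obtain an operator bound of size $O(\delta^n)$ so the Neumann series converges, and then show the iterated kernels converge in $C^\infty$. The only cosmetic difference is that you estimate the operator norm via the Schur test on $L^2$, whereas the paper works directly with the $L^\infty\to L^\infty$ kernel bound $\mn{S_\delta f}_\infty \le c_n 2^n\delta^n \mn{S}_\infty \mn{f}_\infty$; both give the same $\delta^n$ gain and either suffices. Where you actually improve on the paper is the derivative estimate for $K_j$: the paper dismisses this with one sentence (``derivation of the terms in the series will only give factors $\Cal O(\delta^{-1})$''), while you make precise the essential point that $\partial_x^\alpha$ and $\partial_z^\beta$ land only on the first and last factors of the $j$-fold composition, so the $\delta^{-|\alpha|-|\beta|}$ blow-up is independent of $j$ and is killed by the geometric factor $(C_1\delta^n)^{j-2}$ coming from the interior integrations. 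That is exactly the correct justification for the paper's terse remark.
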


\begin{proof}
	We may assume that $x_0 = 0  $, clearly $S_\delta f(x) = Sf(x)$ if $  \phi_\delta f= f$ and $ \phi_\delta(x) = 1 $.
	If $ f \in C^\infty $  then $ L^\infty $ norm is  $ \mn{S_\delta f}_\infty \le c_n 2^n\delta^{n }\mn{S}_\infty \mn{f}_\infty $.
	By induction we get 
	\begin{equation}
	\mn{S_{\delta}^{j}f}_\infty \le c_n2^n\delta^{n }\mn{S_\delta}_\infty \mn{S^{j-1}_{\delta}f}_\infty \\
	\le c_n^{j}2^{jn}\delta^{jn }\mn{S}^j_\infty \mn{f}_\infty \qquad j > 1
	\end{equation}
	where the kernels of $ S_{\delta}^{j} $ are in $ C^\infty_0(B_{0, 2\delta}\times  B_{0, 2\delta}) $. Thus the series $ \sum_{j= 0}^\infty (-S_{\delta})^j  $ converges in $ L^\infty $ if $ c_n 2^n\delta^{n }\mn{S}_\infty < 1 $. 
	Derivation of  the terms in the series will only give factors $ O(\delta^{-1}) $ so the convergence is in $C^\infty_0(B_{0, 2\delta}\times  B_{0, 2\delta}) $. 
	Then the  inverse $(\id + S_\delta)^{-1} =  \id + \sum_{j= 1}^\infty (-S_\delta)^j \cong \id  $ modulo operators with  kernels  in $C^\infty_0(B_{x_0, 2\delta}\times  B_{x_0, 2\delta}) $.
\end{proof}

Next, we shall solve the microlocalized equations $ Q_ju_j = a_j^{-1} f_{j} = a_j^{-1}  \varphi_{j}f  $ with $ u_j = 0$ when $ t = 0 $. Here  $  \varphi_{j}$ is given by Remark~\ref{microrem}, $ Q_j $ is given by~\eqref{normform} near $ \Gamma_{\xi_j, \varepsilon} $ with  $0 \ne  a_j \in S^{0} $ given by Proposition~\ref{normalform}, but we shall treat terms $R \in \Psi^{-1} $ as perturbations.  
In the case when $ A_j \equiv 0 $, we would then find that  $Q_j u_j  \cong D_{t} u_j   = a_j^{-1} f_{j} $, which has the approximate solution $ u_j  \cong  \int_0^{t}a_j^{-1} f_{j} \, dt $. By using Fourier integral operators one can reduce to this case. 

We shall denote by $ I^k $ the classical Fourier integral operators  of order $ k $ with homogenous phase functions and classical symbol expansions depending $ C^\infty $ on~$ v $ and $ w $.
But we shall also use operators $F \in C^\infty(\br, I^{k}) $ which are  FIO $F(t) \in I^k $ in  $  x$ depending $ C^\infty $ on~$ t $, $ v $ and~$ w $ for $ (t,x) \in \br \times \br^{n-1}$.  Observe that $ \Psi$DO of order $ k$ in $ x $ depending $ C^\infty $ on~$ t $, $ v $ and~$ w $ are  also in $C^\infty(\br, I^{k})$ and that $C^\infty(\br, I^{k}) \subset I^k$. By multiplying  $ I^k $ by $ I^m $ we obtain operators in $ I^{k + m} $ by Remark~\ref{fiorem}.

As before, we shall use ON coordinates  $ (t,x) \in \br \times \br^{n-1}$ and suppress the dependence on  $ v $ and $ w $. But the operators will depend $ C^\infty $  on  $ v $ and $ w $ having symbols and phase functions that are uniformly bounded  if $ v \in C^\infty$  and $ w \in \br^m$ are bounded.

\begin{prop}\label{normeq}
	Assume that $ Q = D_{t} + a_1(t,x,D_{x}) + a_0(t,x,D_{x})  $ depend $ C^\infty $ on $ v \in C^\infty$and $ w \in \br^m $,  where $ a_1 \in  C^\infty(\br,  S^1) $ is real and homogeneous of degree 1 and $ a_0 \in  C^\infty(\br,  S^0) $. Then there exist elliptic Fourier integral operators $ F_0(t) $ and  $ F_1(t) \in C^\infty(\br, I^{0}) $ such that $ F_0(t)  F_1(t) \cong \id$ and $ Q F_0(t)  \cong F_0(t)  D_t $  modulo $C^\infty(\br, I^{-1})$.
	If $ f \in C^\infty_0 $ then we have that
	\begin{equation}\label{soleq}
	u(t,x) =i F_0(t) \int_0^t F_1(s) f(s,x)\, ds  = \mathbb{ F}f(t,x) 
	\end{equation}
	solves the initial value problem
	\begin{equation}\label{normequation}
	Q u  = (\id + S)f  \in C^{\infty} \qquad u(0,x)\equiv 0 
	\end{equation}
	where $ S \in C^\infty(\br, I^{-1})$ and $ \mathbb{ F} \in I^0 $. 
	Here $ F_0(t) $, $ F_1(t) $ and $ \mathbb{ F} $  depend $ C^\infty $ on $ v\in C^\infty $ and $ w $, and have wave front sets close to the diagonal when $ |t| \ll 1 $. In fact, the canonical transformations given by $ F_0(t) $ and  $ F_1(t) $ maps bicharacteristics of $D_t + a_1 $ to $t $ lines and vice versa. 
\end{prop}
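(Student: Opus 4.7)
The plan is to conjugate $Q$ to $D_t$ modulo lower order by an elliptic FIO constructed from the Hamilton flow of the real principal symbol $a_1$, and then to use a Duhamel-type formula for the solution. The construction is modeled on the standard reduction of a real principal type operator to $D_t$; the novelty here is only the bookkeeping for the smooth dependence on the parameters $v$ and $w$.

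First, I would construct the phase function $\phi(t,x,y,\eta)$ for $|t| \ll 1$ by solving the eikonal equation
\begin{equation*}
\partial_t \phi(t,x,y,\eta) + a_1(t,x,w,\partial_x \phi(t,x,y,\eta)) = 0, \qquad \phi(0,x,y,\eta) = \langle x-y,\eta\rangle,
\end{equation*}
via the method of characteristics using the Hamilton flow $\chi_t$ of the real time-dependent Hamiltonian $a_1(t,\cdot,w,\cdot)$ on $T^*\br^{n-1}$. For $|t|$ small the flow stays close to the identity, so $\phi$ is a non-degenerate phase function homogeneous of degree one in $\eta$, depending $C^\infty$ on $(v,w)$, with $\partial^2_{x\eta}\phi|_{t=0}=\id$. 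The associated canonical relation is precisely the graph of $\chi_t$, so at $t=0$ it reduces to the diagonal, which will give the wave front statement for small $|t|$.

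Next I would look for $F_0(t)$ with Schwartz kernel
$K_0(t,x,y) = (2\pi)^{-(n-1)} \int e^{i\phi(t,x,y,\eta)} a(t,x,y,\eta)\, d\eta$,
with classical symbol expansion $a \sim \sum_{j\ge 0} a_{-j}$, $a_{-j}$ homogeneous of degree $-j$ in $\eta$. Computing $QF_0 u - F_0 D_t u = \bigl[(D_t F_0) + a_1(t) F_0 + a_0(t) F_0\bigr] u$ and applying stationary phase to the action of $a_1^w$ and $a_0^w$ on $e^{i\phi}a$, the term of order $k$ in the amplitude vanishes by the eikonal equation, the term of order $k-1$ gives a first-order linear transport equation for $a_0$ along the bicharacteristics of $a_1$ (with a zeroth-order coefficient coming from the subprincipal of $a_1$ plus $a_0$), and subsequent terms give recursive transport equations for $a_{-j}$, $j\ge 1$. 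Solving these along the flow with initial data $a_0(0,\cdot)=1$ and $a_{-j}(0,\cdot)=0$ for $j\ge 1$, and Borel summing, one obtains a classical symbol $a\in S^0$ depending $C^\infty$ on $(v,w)$ such that
\begin{equation*}
Q F_0(t) - F_0(t) D_t \in C^\infty(\br, I^{-\infty}) \subset C^\infty(\br, I^{-1})
\end{equation*}
for $|t|\ll 1$; the FIO $F_0(t)\in C^\infty(\br, I^0)$ is elliptic because $a_0|_{t=0}=1$.

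Since $F_0(0)\cong \id$, ellipticity for $|t|\ll 1$ and the standard parametrix construction for elliptic FIOs (with canonical relation the inverse graph $\chi_t^{-1}$) produce $F_1(t) \in C^\infty(\br, I^0)$ with $F_0(t)F_1(t) = \id + S_1(t)$ and $F_1(t)F_0(t) = \id + S_1'(t)$ where $S_1, S_1'\in C^\infty(\br, \Psi^{-\infty})$. Finally, for $f\in C_0^\infty$, setting $u(t,x) = i F_0(t)\int_0^t F_1(s) f(s,\cdot)\, ds$, one has $u(0,\cdot)\equiv 0$ and
\begin{equation*}
Qu = \bigl(F_0 D_t + R_0\bigr)\Bigl(i\int_0^t F_1(s) f(s)\, ds\Bigr) = F_0(t)F_1(t) f + i R_0\!\int_0^t F_1(s) f(s)\, ds,
\end{equation*}
with $R_0\in C^\infty(\br, I^{-1})$, which is of the desired form $(\id + S) f$ with $S\in C^\infty(\br, I^{-1})$. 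Composition of $F_0$, integration in $s$, and $F_1$ gives $\mathbb{F}\in I^0$ with canonical relation close to the diagonal, since the canonical relations of $F_0(t)$ and $F_1(s)$ are graphs of $\chi_t$ and $\chi_s^{-1}$, both close to the identity.

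The only genuinely delicate point is tracking the smooth dependence on the parameter $v$ through all these steps: the Hamilton flow is $C^\infty$ in $v$ because $a_1$ is, the phase function is $C^\infty$ in $v$ by smooth dependence of ODEs on parameters, each transport equation is linear with $C^\infty(v)$ coefficients, and both Borel summation and the parametrix construction can be carried out uniformly; this is essentially routine once set up, and is the only reason the proof is not completely standard.
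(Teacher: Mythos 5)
Your proposal is correct and follows essentially the same route as the paper: solve the eikonal (Hamilton-Jacobi) equation for the phase, use transport equations for the amplitude to get $QF_0(t) \cong F_0(t) D_t$, construct the approximate inverse $F_1(t)$ from the inverse canonical relation and amplitude, and verify the Duhamel formula. The only cosmetic difference is that you carry out the full asymptotic transport hierarchy and Borel-sum to reach $C^\infty(\br, I^{-\infty})$, whereas the paper stops after the first transport equation, which already gives agreement modulo $C^\infty(\br, I^{-1})$ — exactly as much as the statement requires; your extra work is harmless but unnecessary.
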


\begin{cor}\label{locrem}
	If $ c_j \in \Psi^{k} $, $ j = 1,\ 2 $, and $ \supp c_1 \bigcap  \supp c_2  =  \emptyset$, then  \/ $ c_1F_0(t)F_{1}(s)c_2 \in I^{-\infty}$
	having a smooth kernel  for small enough $ s $ and $ t $.
\end{cor}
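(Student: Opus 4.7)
The plan is to reduce this to a canonical-relation computation and use the fact, supplied by Proposition~\ref{normeq}, that at $s=t=0$ the underlying canonical transformations of $F_0$ and $F_1$ are the identity, so continuity in the parameters forces the composition to stay close to the identity relation for $s,t$ small. Concretely, the key inputs are: (i) $F_0(t)$ and $F_1(s)$ are Fourier integral operators whose canonical transformations $\chi^0_t$ and $\chi^1_s$ transport points of $T^*\br^{n-1}$ along the Hamilton flow of the principal symbol of $a_1$ (to and from $t$-lines), so that $\chi^0_0 = \chi^1_0 = \id$; and (ii) the pseudodifferential operators $c_j$ have canonical relations contained in the diagonal of $(T^*\br^{n-1}\setminus 0)^2$, restricted to $\supp c_j$.

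First I would apply the standard composition rule for wave front relations of Fourier integral operators: the kernel of $c_1 F_0(t) F_1(s) c_2$ has wave front set contained in
\[
\{((x,\xi),(y,\eta)) : (x,\xi)\in \supp c_1,\ (y,\eta)\in \supp c_2,\ (x,\xi) = \chi^0_t \circ (\chi^1_s)^{-1}(y,\eta)\},
\]
up to the conventional reflection in the second factor (which I suppress for readability). Here I use that the canonical relation of $F_0(t)F_1(s)$ is the composition $\chi^0_t \circ (\chi^1_s)^{-1}$, which is well defined and depends smoothly on $(s,t)$ in a neighborhood of $(0,0)$ by smooth dependence of the Hamilton flow on its parameters.

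Next I would show this set is empty for $|s|, |t|$ small. The conic sets $\supp c_1$ and $\supp c_2$ are closed and disjoint in $T^*\br^{n-1}\setminus 0$, so their images in the cosphere bundle $S^*\br^{n-1}$ are compact, closed, and disjoint, hence separated by some fixed distance $\delta > 0$ (in a chosen metric on $S^*\br^{n-1}$). Because $\chi^0_0\circ(\chi^1_0)^{-1} = \id$ and the family is continuous in $(s,t)$, there exists $\varepsilon > 0$ such that for $|s|, |t| < \varepsilon$ the composed transformation displaces each point of $S^*\br^{n-1}$ by strictly less than $\delta$. Therefore no point of $\supp c_2$ is mapped into $\supp c_1$, so the canonical relation of $c_1 F_0(t) F_1(s) c_2$ is empty.

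The main conceptual point, which is nevertheless routine in this calculus, is to conclude that having empty canonical relation forces the kernel to lie in $C^\infty$, i.e., the operator is in $I^{-\infty}$. This follows because $c_1 F_0(t) F_1(s) c_2$ is a priori a Fourier integral operator in some $I^m$ class (by the composition results of Lemma~\ref{contlem} and Remark~\ref{fiorem}), and a FIO with empty canonical relation has smooth kernel. Finally, the smooth dependence on $(s,t,v,w)$ is inherited directly from the $C^\infty$ dependence asserted in Proposition~\ref{normeq}, so the resulting smooth kernel depends smoothly on all parameters, as required for subsequent use.
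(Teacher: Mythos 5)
Your argument is essentially the paper's own: the corollary is deduced directly from the final remark of Proposition~\ref{normeq}, namely that the canonical transformations of $F_0(t)$ and $F_1(s)$ are generated by the Hamilton flow of $\tau + a_1$ and hence keep the wave front sets close to the diagonal for $|s|,|t| \ll 1$, so that composing with cut-offs of disjoint support yields an empty wave front relation and thus a smooth kernel, smoothly depending on the parameters. The only caveat is your claim that the images of $\supp c_1$ and $\supp c_2$ in the cosphere bundle are compact (they are only closed conic sets when the $c_j$ are general elements of $\Psi^k$), so the positive separation $\delta>0$ is really an implicit uniformity assumption --- but it is the same one the paper makes, and it holds for the cut-offs actually used there, so your proof matches the intended argument.
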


\begin{proof}
	It is a classical result that there exists elliptic Fourier integral operators $ F_0(t) $ and  $ F_1(t) \in C^\infty(\br, I^0)$  with the properties in the proposition. 
	The construction of the homogeneous phase function of the FIO involves solving the Hamilton-Jacobi equations, which depend on the derivatives of the principal symbol $ \tau + a_1 $ of $ Q $ and thus depend $ C^\infty $ on $ v $ and $ w $. Then the amplitude is given by the transport equations which also depend on the lower order term $ a_0 $ of  $ Q $ modulo terms in $C^\infty(\br,S^{-1}) $ and thus depend $ C^\infty $ on $ v\in C^\infty $ and $ w $. 
	For the approximate inverse, one takes the phase function for the inverse canonical relation and the inverse amplitude, which also depend $ C^\infty $ on $ v $ and $ w $.
	
	If $ f \in C^\infty_0 $ and $ u = i F_0(t) v$ with $ v = \int_0^t F_1(s) f(s,x)\, ds $  as in~\eqref{soleq}, then we find 
	\begin{multline}
	Qu = iQ F_0(t)v = (iF_{0}(t)D_t + S_0 )v =  F_0(t)F_{1}(t) f  +  S_0 v  \\
	= f  + S_1f  + S_0 v = f  + S_1f +  S_0\int_0^t F_1(s) f \, ds = f + S f
	\end{multline}
	where $ S_0 $, $ S_1 $ and $ S \in C^\infty(\br, I^{-1})$.
\end{proof}

The approximate solution  $ u $ in~\eqref{soleq} depends $ C^\infty $ on the data $ f $ and  $ v $, but we shall need stronger estimates. For that we shall use the $ L^2 $ Sobolev norms:
\begin{equation}\label{normdef2}
\mn{\varphi}^2_{(k)} = \mn{\w{D}^k\varphi}^2 \qquad \varphi \in C^\infty_0  \qquad \forall \, k\in \br 
\end{equation}
Then the continuity of  $ a(u,x,D) \in \Psi^m $ depend uniformly on $ u\in C^\infty $ in theses spaces according to the following result,  were we shall suppress the parameter $ w $.

\begin{lem}\label{contlem}
	If  $ a(u,x, D) \in \Psi^{m_1}$ and $ b(u,x, D) \in \Psi^{m_2}$  depend  $   C^\infty  $ on $ u(x) $ then we find that  $a(u,x,D) b(u,x,D) \in \Psi^{m_1 + m_2}$ also depends  $   C^\infty  $ on $ u(x) $. 
	There exists $ \ell \in \br $ so that for  any $ a(u,x, D)  \in \Psi^{0}$ depending  $   C^\infty  $ on $ u(x) \in C^\infty $ and any $ k \in \br$  there exists $ C_{k}(t) \in C^\infty(\br_+) $ so that
	\begin{equation}\label{est1}
	\mn{a(u,x,D)\varphi}_{(k)} \le C_{k}(\mn{u}_{(\ell)})\mn{\varphi}_{(k)} \qquad \forall \, \varphi \in C^\infty_0
	\end{equation}
	There exists $ \ell \in \br $ so that for any  $ a(u,x, D) \in \Psi^{m_1}$ and $ b(u,x, D) \in \Psi^{m_2}$  depending  $   C^\infty  $ on $ u(x) $ and $ k  \in \br$ there exists $ C_{k}(t) \in C^\infty(\br_+) $ so that
	\begin{equation}\label{est2}
	\mn{\, [a(u,x,D), b(u,x,D)]\varphi}_{(k)} \le C_{k}(\mn{u}_{(\ell)})\mn{\varphi}_{(k +m_1+m_2-1)}  \qquad \forall \, \varphi \in C^\infty_0
	\end{equation}
	There exists $ \ell \in \br $ so that for  any $ a (u,x,D)  \in \Psi^{m}$ depending  $   C^\infty  $ on $ u(x) $ having real valued symbol modulo $ S^{m-1} $ and any $ k  \in \br$ there exists $ C_{k}(t) \in C^\infty(\br_+) $ so that
	\begin{equation}\label{est3}
	\mn{\im a(u,x,D)\varphi}_{(k)} \le C_{k}(\mn{u}_{(\ell)})\mn{\varphi}_{(k+m-1)} \qquad \forall \, \varphi \in C^\infty_0
	\end{equation}
	where $ 2i \im  a(u,x,D) =  a(u,x,D) -  a^*(u,x,D)  $ depends $ C^\infty $ on $ u $. Here the functions $ C_k(t) $ only depend on the seminorms of the symbols.
\end{lem}

\begin{proof}
	First we note that by definition any seminorm of order $ k \in \bn$ of $ a(u,x, \xi) \in  S^m $  is bounded  by $ \mn{u}_{C^k} $ for some $k \in \br $. By the Sobolev embedding theorem, the $ C^k $ norm of $ u $ can be bounded by the norm $ \mn{u}_{(k + s)} $ with $ s > n/2 $.
	
	If $ a(u,x,\xi)  \in S^{m_1}$ and $ b(u,x,\xi) \in S^{m_2}$ then $ a(u,x,D) b(u,x,D) = c(u,x,D)$ is given by
	\begin{equation}\label{compform}
	c(u,x, \xi) = e^{i\w{D_{ \xi}, D_{y}}}a(u,x,\xi)b(u,y, \eta)\restr{\substack{y = x \\ \eta =  \xi}}
	\end{equation}
	The mapping $ a, b  \mapsto c $ is weakly continuous on the symbol classes $ S^{m}$ so that any semi\-norm of $ c $ only depends on some seminorms of $ a $ and $ b $, see~\cite[Th.\ 18.4.10$ ' $]{ho:yellow}. 
	(Weak continuity means that the restriction to a bounded set is continuous in the $ C^\infty $ topology, see~\cite[Def.\ 18.4.9]{ho:yellow}.) 
	Thus if $a(u,x,D)  $ and $ b(u,x,D) $ depend  $   C^\infty  $ on $ u \in C^\infty $ then $ c(u,x,D) $ also does.
	Observe that the number of seminorms that is needed does not depend on the symbol classes $ S^{m_j} $, it only depends on the dimension. 
	
	We may reduce the estimate~\eqref{est1} to the case $ k = 0 $ by replacing $ a(u,x,D) $ with  $A(x,D) = \w{D}^{k}a(u,x,D)\w{D}^{-k} \in \Psi^{0} $ and $ \varphi $ with $ \w{D}^{k}\varphi $. Here the symbol expansion of 
	$$ A(x, \xi) \cong a(u(x),x,\xi) + k D_x a(u(x),x,\xi) \xi \w{\xi}^{-2} + \dots  + A_j(x, \xi) + \dots 
	$$  
	where the term $ A_j(x, \xi) \in S^{-j} $ has the factor $ D^j_x a(u(x),x,\xi) $, $ \forall \, j $. The error term for the expansion $ \sum\limits_{j = 0}^{n} A_j(x,D)$ is $ r_n(x,D) \in \Psi^{-n-1} $, which has a $ C^0 $ kernel which is bounded by $ \mn{u}_{C^{n+1}} $ which gives a bound of $r_n(x,D)  $ on $ L^2 $. The $ L^2 $ norm of $ A_j(x,D) $ is bounded by a  fixed seminorm of $ A_j(x,\xi) $, $ \forall \, j $, see~\cite[Th.\ 18.6.3]{ho:yellow}.  This seminorm in turn depends on a fixed seminorm of $ a(u,x,\xi) $ which gives~\eqref{est1}   for some $ C_k(t) \in C^\infty(\br_+)$. 
	
	Any seminorm of the symbol of the commutator $[a(u,x,D), b(u,x,D)]  \in S^{m_1+m_2 -1}$ depends on the same seminorm of the symbols of the compositions $ a(u,x,D)b(u,x,D) $ and $ b(u,x,D)a(u,x,D) $. These seminorms in turn depend on some  seminorms of $  a(u,x,\xi) $ and $  b(u,x,\xi) $. Thus we obtain~\eqref{est2} from~\eqref{est1} for some $ C_k(t) \in C^\infty(\br_+)$.
	
	If $ a   \in S^{m}$ then the adjoint $ a^*(u, x,D)  $ is given by 
	\begin{equation}
	a^*(u, x,\xi)   = e^{i\w{D_\xi, D_x}}\ol a(u,x,\xi)
	\end{equation}
	which is weakly continuous in the symbol class $ S^{m} $ by~\cite[Th.\ 18.1.7]{ho:yellow}. If $ a  $ is real modulo $ S^{m-1} $ then $\im a(u, x,D) \in \Psi^{m-1}  $. Thus any seminorm of the symbol of $ \im a(u, x,D) $ is bounded by some seminorms of $ a(u, x,\xi) $, giving ~\eqref{est3}  for some $ C_k(t) \in C^\infty(\br_+)$. 
\end{proof}

\begin{rem}\label{fiorem}
	The results of Lemma~\ref{contlem} also holds for $a(u,x,D) \in \Psi^m $ depending $ C^\infty $ on~$ u $ composed by $F \in I^k $ depending $ C^\infty $ on $ u $, e.g., the FIO given by Proposition~\ref{normeq}. Operators in $ I^{-\infty} $ have smooth kernels which are  $ C^\infty $ functions of  $ u $.
\end{rem}

In fact, Theorem 9.1 in \cite{ho:weyl} shows that the conjugation of $ \Psi$DO  with FIO gives symbol expansions similar to~\eqref{compform} after change of variables, see for example (9.2)$ ''  $  
in \cite{ho:weyl}. This result is about Weyl operators, but by Theorem 4.5 in \cite{ho:weyl} it can be extended to operators having the Kohn-Nirenberg quantization. This gives a calculus with symbol expansions of classical homogeneous FIO with homogeneous phases and symbols, see pages~441--442 in~\cite{ho:weyl}. For example, if $ a \in \Psi^{m}  $ and $ F \in I^k$  then we have $ \mn{aF u}^2 = \w{F^*a^* a Fu,u} $ where $ F^*a^* a F = b\in \Psi^{2(m+k)} $, and similar result holds for $ \mn{Fa u}^2$.

For $ S \in  I^{-\infty} $ the $ C^\infty $  dependence means that for any $ k \in \br$ we have  $ S \in  I^{-k} $ depending $ C^\infty $   on $ u $. Since the kernel is obtained by taking the Fourier transform in $ \xi $ of the symbol, we find that the kernel of $ S $ is smooth and is a $ C^\infty $  function of  $ u $.

Next, we are going to prove estimates for the microlocalized operators. 
Then we will use ON coordinates  $ (t,x) \in \br \times \br^{n-1}$ and  for $ k \in \br $ and $  T > 0$ define the local norms
\begin{equation}\label{normdef1}
\mn{\varphi}^2_{k,T} = \int_{|t| \le T} \mn{\varphi}_k^2(t) \, dt\qquad \varphi \in C^\infty_0
\end{equation}
and  $ \mn{\varphi}_{k,j,T} = \mn{ \w{D_t}^j \varphi}_{k,T}$, $ \forall \, j \in \br $, with $ \mn{\varphi}_k^2(t) =   \int |\w{D_x}^k \varphi(t,x)|^2\ dx$.

\begin{prop}\label{linest}
	Let $ v,\, f \in C^\infty_0 $ and $ u \in C^\infty $ be a solution to
	\begin{multline}\label{lineq}
	Q(v, t,x,w,D)u  = \partial_t u + \sum_{j=1}^n A_j (v, t,x,w,D_x) \partial_{x_j} u \\ + A_0(v, t,x,w,D_x) u  =  f \qquad u(0, x, w) = 0
	\end{multline}
	where $ A_j \in C^\infty(\br , \Psi^0)$  depends $ C^{\infty} $ on $v $, $ \forall\, j $, and $ A_j $ is real valued modulo $ S^{-1} $ for $  j > 0$. Then there exists $ \ell \in \bn $ so that for any $ k \in \bn$ there exists $ C_k(r) \in C^\infty(\br_+) $ so that
	\begin{equation}
	\mn{\phi u}_{(k)}^2 \le C_{k}(\mn{v}_{\ell })
	\mn{f}_{(k)}^2    
	\end{equation}
	if  $ \phi \in C^\infty_0 $ has support where $ |t| \le 1$.
	The estimate only depends on the seminorms of the symbol of\/ $Q  $ and $ \phi $.
\end{prop}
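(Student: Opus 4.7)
The plan is to establish Proposition~\ref{linest} by a standard hyperbolic energy estimate, exploiting that $Q$ is essentially symmetric hyperbolic modulo lower-order operators since the $A_j$ with $j > 0$ have real symbols modulo $S^{-1}$. Treating $w$ as a parameter throughout, I fix $k \in \bn$ and set $v_k(t,\cdot) = \w{D_x}^k u(t,\cdot,w)$. Applying $\w{D_x}^k$ to the equation and commuting through yields
\[
\partial_t v_k + \sum_{j=1}^n A_j \partial_{x_j} v_k + A_0 v_k = \w{D_x}^k f + r_k,
\]
where $r_k = \sum_j [\w{D_x}^k, A_j]\partial_{x_j} u + [\w{D_x}^k, A_0] u$ has $L^2_x$ norm bounded by $C_k(\mn v_{(\ell)})\,\mn{v_k(t)}$, as follows from Lemma~\ref{contlem} since $[\w{D_x}^k, A_j]\partial_{x_j}$ and $[\w{D_x}^k, A_0]$ are of order $k$ in $x$ with a bounded number of seminorms controlling their $L^2$-operator norms.

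I would then compute $\tfrac{d}{dt}\mn{v_k(t)}^2 = 2\re\langle \partial_t v_k, v_k\rangle$ using the equation, where all inner products are in $L^2_x$. The key estimate is for the first-order part: integration by parts gives
\[
A_j\partial_{x_j} + (A_j\partial_{x_j})^* = (A_j - A_j^*)\partial_{x_j} - [\partial_{x_j}, A_j^*],
\]
and $A_j - A_j^* \in \Psi^{-1}$ by Lemma~\ref{contlem}~\eqref{est3} applied to the real-mod-$S^{-1}$ symbol $A_j$, so both summands lie in $\Psi^0$ and are $L^2_x$-bounded, yielding $|2\re\langle A_j\partial_{x_j} v_k, v_k\rangle| \le C_k(\mn v_{(\ell)})\,\mn{v_k}^2$. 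The zero-order term $A_0 v_k$ and the forcing and commutator contributions are handled by Cauchy--Schwarz together with Lemma~\ref{contlem}, producing altogether
\[
\frac{d}{dt}\mn{v_k(t)}^2 \le C_k(\mn v_{(\ell)})\bigl(\mn{v_k(t)}^2 + \mn{\w{D_x}^k f(t,\cdot)}^2\bigr).
\]

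With $u(0,\cdot,w) = 0$ so $v_k(0) = 0$, Gronwall's lemma on $|t| \le 1$ gives $\mn{v_k(t)}^2 \le C_k(\mn v_{(\ell)})\int_0^t \mn{\w{D_x}^k f(s,\cdot)}^2\,ds$, controlling the pure $x$-Sobolev norm of $u$ on $|t| \le 1$. To pass to the full $(t,x)$-Sobolev norm $\mn{\phi u}_{(k)}$ required in the statement, I would use the equation itself to express $\partial_t^j u$ recursively in terms of $\partial_x$-derivatives of $u$ of order at most $k$ and of $\partial_t^i f$ for $i < j$, trading each $t$-derivative for an $x$-derivative at the cost of additional seminorms of the coefficients $A_j$, all absorbed into $C_k(\mn v_{(\ell)})$ by repeated application of Lemma~\ref{contlem}. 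The main obstacle is the bookkeeping ensuring $\ell$ can be taken independent of $k$: this reduces to the uniform-in-order weak continuity of the Kohn--Nirenberg symbol calculus (composition, adjoint, commutator) on each symbol class, which is precisely the content of Lemma~\ref{contlem}. Once that is accepted, the energy-Gronwall closure is routine.
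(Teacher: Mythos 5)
Your proposal is correct and follows essentially the same route as the paper: both use a $\w{D_x}^k$-based energy estimate, exploit that the first-order part of $Q$ is symmetric modulo bounded operators (your decomposition $A_j\partial_{x_j} + (A_j\partial_{x_j})^* = (A_j - A_j^*)\partial_{x_j} - [\partial_{x_j}, A_j^*]$ is the same computation as the paper's $[\re A,\partial_x]$ plus $\im A\,\partial_x$ split), control the commutators and adjoints via Lemma~\ref{contlem}, and close with Gronwall, which the paper implements by conjugation with $e^{-Ct}$. The one place where you diverge mildly is the passage from pure $x$-Sobolev control to the full $(t,x)$-Sobolev norm $\mn{\phi u}_{(k)}$: you propose to substitute the equation recursively to trade $t$-derivatives for $x$-derivatives of $u$ and $f$, whereas the paper instead applies the already-established pure-$x$ energy estimate to $\partial_t^j u$ inductively, using $Q\partial_t^{j+1}u = \partial_t^{j+1}f + [Q,\partial_t^{j+1}]u$; both variants work, and yours avoids having to worry about the nonzero initial trace $\partial_t u(0,\cdot) = f(0,\cdot)$ that arises when the energy inequality is re-applied to $\partial_t u$.
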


Thus, for any $ k \in \bn$ we get uniform local bounds on $ \mn{\phi u}_{(k)} $ when $\mn{ v}_{(\ell)} $ is uniformly bounded. Now $ Q $ is a differential operator in $ t $ but a $ \Psi $DO in $ x $, so in the proof we shall use Lemma~\ref{contlem} in the $ x $ variables.

\begin{proof}
	Let $ A\partial_x = \sum_{j=1}^n A_j  \partial_{x_j} $  and $ \w{u,u}_{k}(t ) =  \mn{u}_{k}^2(t )$ be the sesquilinear form,
	then 
	\begin{multline}
	\partial_t \mn{u}_{k}^2(t ) = 2 \re  \w{	\partial_t u,u}_{k}(t )	=  2 \re  \w{ f, u}_{k}(t ) \\ - 2 \re  \w{A \partial_t u,u}_{k}(t ) - 2 \re  \w{A_0 u,u}_{k}(t )
	\end{multline}
	Conjugating with $ e^{-Ct} $ gives 
	\begin{multline}
	\partial_t ( e^{-Ct} \mn{u}_{k}^2(t )) 	=  e^{-Ct}  \big(2 \re  \w{f, u}_{k}(t ) \\ - 2 \re  \w{A \partial_x u,u}_{k}(t ) - 2 \re  \w{A_0 u,u}_{k}(t )  - C \mn{u}_{k}^2(t )\big)
	\end{multline}
	where 
	\begin{multline}
	2 \re  \w{A \partial_x u,u}_{k}(t ) = 2 \re \w{[\w{ D_x}^k, A]\partial_x \w{ D_x}^{-k} w, w}_{0}(t ) \\ +  \w{[\re A,\partial_x, ]  w, w}_{0}(t ) + 2 \re \w{i  \im A\partial_x  w, w}_{0}(t )
	= \w{R_k w, w}_{0}(t)
	\end{multline}
	where $ [\re A, \partial_x] $ and $ \im A\partial_x \in C^\infty(\br,  \Psi^{0}) $ and $ w = \w{ D_x}^k u $. The calculus gives that the operator $ [\w{ D_x}^k, A]\partial_x \w{ D_x}^{-k}  \in C^\infty(\br,  \Psi^{0})$, so that  $ R_k\in C^\infty(\br,  \Psi^{0}) $ depends $ C^\infty $ on $ v $. 
	
	Since $ \mn{w}_{0} =  \mn{u}_{k} $  we find by 
	using Lemma~\ref{contlem} that
	\begin{equation}
	| \w{R_k w,w}_{0}(t )| \le  C_k(\mn{v}_{\ell }(t))\mn{w}_{0}^2(t ) = C_k(\mn{v}_{\ell }(t))\mn{u}_{k}^2(t )
	\end{equation}
	for some   $ \ell \in\bn $  and $ C_k(r) \in C^\infty(\br) $, and clearly
	\begin{equation}
	| \w{ f,u}_{k}(t )|   \le  \mn{ f}_{k}^2 (t )+ \mn{u}_{k}^2(t )
	\end{equation}
	We also obtain from Lemma~\ref{contlem} that
	\begin{equation}
	|\w{A_0 u,u}_{k}(t )| \le C_k(\mn{v}_{\ell }(t))\mn{u}_{k}^2(t )
	\end{equation}
	where in the following we will take the maximum of different  $ \ell $ and $ C_k(r) $.
	Summing up, there exists $ C_k(r)  $ so that for any $ C > 0 $ we have
	\begin{equation}
	\partial_t ( e^{-Ct} \mn{u}_{k}^2(t )) 	\le  e^{-Ct}\left((C_k(\mn{v}_{\ell }(t))  - C)\mn{u}_{k}^2(t )
	+ 	C_k(\mn{v}_{\ell }(t))\mn{ f}_{k}^2(t)\right) 
	\end{equation}
	Now we may replace $ C_k(r)  $ by a nondecreasing function
	and put 
	$$ C_k =  \max_{|t | \le 1} C_k(\mn{v}_{\ell }(t)) \le C_k\left(\max_{|t | \le 1} \mn{v}_{\ell }(t)\right) 
	$$ 
	where $ \mn{v}_{\ell }(t) \le  C_0\mn{v}_{( \ell + 1)}$,  $\forall \, t$, by Sobolev's inequality. 
	Since  $  \mn{u}_{k}(0 ) = 0$ we find by integrating that
	\begin{equation}
	e^{-C_kt} \mn{u}_{k}^2(t ) 	\le e^{C_k}C_k(\mn{v}_{(\ell + 1)})\mn{ f}_{k,1}^2   \qquad  t \in [-1, 1]
	\end{equation}
	Integrating over $ [-1, 1] $ we obtain that
	\begin{equation}
	\mn{u}_{k,1}^2	\le 2e^{2C_k}	C_k(\mn{v}_{( \ell + 1)})\mn{ f}_{k,1}^2
	\end{equation}
	where $ \mn{ f}_{k,1} \le \mn{ f}_{(k)} $.
	Renaming  $2 e^{2C_k}C_k(\mn{v}_{( \ell + 1)})  $ as $C_k(\mn{v}_{( \ell + 1)})  $  and changing $ \ell $ 
	we obtain 
	\begin{equation}\label{kest}
	\mn{u}_{k,1}^2	\le C_k(\mn{v}_{( \ell )}) \mn{ f}^2_{(k)}  \qquad \forall\, k \in \bn 
	\end{equation}

	Next, we shall estimate 
	\begin{equation}\label{kjest}
	\mn{u}_{k,j,1}^2	\le C_{k,j}(\mn{v}_{( \ell )}) \mn{ f}^2_{(k+j)}  \qquad \forall\, k \in \bn  
	\end{equation}
	when $j  \in \bn$. For that it suffices to estimate $ 	\mn{D_t^{j} u}_{k,1} $ 
	which we shall do by induction. The case $ j = 0 $ is given by~\eqref{kest}, and we assume that~\eqref{kjest} holds for $ i \le j $ for some $ j \in \bn$. Since $ Qu = \partial_t u + Au = f $ with $ A\in C^\infty(\br, \Psi^1) $, we have
	\begin{equation}
	\mn{D_t^{j+1} u}_{k,1} \le	\mn{ D_t^{j}f}_{k,1} + 	\mn{D_t^{j}A u}_{k,1} 
	\end{equation}
	where $D_t^{j}A = A D_t^{j} + \sum_{0 \le i < j} B_{i}D_t^{i} $
	with $  B_{i}(t,x ,D_x) \in C^\infty(\br, \Psi^1)$ being a $ \Psi $DO in $ x $ depending $ C^\infty $ on $ t $ and $ v $. By using Lemma~\ref{contlem} and integrating  over $ [-1, 1] $ we find that 
	\begin{equation}
	\mn{D_t^{j}A u}_{k,1} \le	\sum_{0 \le i \le j} C_{k,i}(\mn{v}_{( \ell )})	\mn{ u}_{k,i,1} 
	\end{equation}
	so the induction hypothesis gives~\eqref{kjest} for any $ j \in \bn$.
	
	Finally, we shall show that
	\begin{equation}\label{finalest}
	\mn{ \phi u}^2_{ (k)}	\le C_k( \mn{v}_{(\ell)}) \mn{ f}^2_{(k)} \qquad \forall\, k \in \bn
	\end{equation}
	if $\phi \in  C^\infty $ is supported where $ |t| \le 1 $.
	To estimate $ \mn{\phi u}_{ (k)}^2 $ it suffices to estimate $ \mn{D_x^\alpha D_t^j  \phi u} $ for $ | \alpha | + j \le k $.
	We have that 
	$$
	[D_x^\alpha D_t^j , \phi  ] =  \sum_{|\beta | + i \le k}B_{\beta, i}D_x^\beta D_t^i 
	$$ 
	where $  B_{\beta, i}\in  C^{\infty} $ has support where $ |t | \le 1 $. 
	Thus, \eqref{kjest} gives that 
	\begin{equation}\label{finallyest}
	\mn{D_x^\beta D_t^j\phi u}	\le C\sum_{0 \le  i \le j} \mn{u}_{k-i,i, 1}	\le   \sum_{0 \le  i \le j} C_{k-i,i}(\mn{v}_{(\ell)}) \mn{ f}^2_{(k)}
	\end{equation}
	which completes the proof. 
\end{proof}

Next, we shall solve the  IVP for the linearized equation
\begin{equation}\label{plinequation}
P(v(x, w),x, w,\partial) u(x, w) = f(x, w) 
\end{equation}
where $ f $ and $ v \in C^\infty(\br^{m+n}, \br^n) $  with $  v(x_0, w_0) = 0 $ and $ P $ is on the form~\eqref{psymbol} satisfying~\eqref{pquant} and~\eqref{pcond} with $ u_1 = 0 $. 
In the following, we shall suppress the parameters $ v$ and $ w $, the preparation will only depend on the bounds on these parameters.

To solve equation~\eqref{plinequation}, we shall assume that $ x_0 = 0 $ and use the microlocal normal forms given by Proposition~\ref{normalform}. In fact, for any small enough $ \varepsilon > 0 $  we can by Remark~\ref{microrem} find  a partition of unity $\{ \varphi_{j}(\xi) \}_j $ with $ \varphi_{j}\in S^0 $ supported in cones $ \Gamma_{\xi_j, \varepsilon} $  and ON variables $ (x_1,x')$ so that $  P b_j = a_j Q_j + Rj$ satisfies the conditions in Proposition~\ref{normalform} with $ \Gamma_0 = (0,0,w_0) \times \Gamma_{\xi_j, \varepsilon} $ after the change of variables. 
Here  $ 0 \ne  a_j \in S^0$,  $\w{\xi}^{-1} \ls b_j(\xi) \in S^{-1}$  and 
$Q_j = D_{x_1} + A_jD_{x'} + A_{0,j} $
satisfies the conditions in Proposition~\ref{normeq}. The operator $ R_j \in \Psi^1 $ has symbol in $S^{-1}$  in a conical neighborhood of $\Gamma_0 $.
Ignoring the operator $ R_j $, which will be handled as a perturbation,  we obtain from Proposi\-tion~\ref{normeq}
that if  $ f \in C^\infty $ then $ u_j = \mathbb{ F}_ja_j^{-1}\varphi_{j}f $ solves
\begin{equation}\label{microeq0}
Q_j u_j = (\id + s_j) a_j^{-1}\varphi_{j}f = (a_j^{-1} + r_j)\varphi_{j}f  \qquad u_j \restr{t = 0} = 0
\end{equation}
where $ s_j $ and $ r_j \in I^{-1} $.

But $ u_j$ may not be localized near $\Gamma_0  $.
To handle the localization and the error term $ R_j $ we
shall microlocalize $ u_j  $ depending on parameters. We shall use  the cut-off $ \psi_{j,\varrho}(\xi)  = \psi_{j}(\xi)\chi_{\varrho}(\xi)$ given  by Remark~\ref{microrem} with $ \varrho \ge 1 $ such that $0 \le  \chi_{\varrho} \le 1$ has support where $ |\xi | \ge \varrho $,  $ \psi_j \varphi_{j} = \varphi_j $ and $ \supp \psi_j \in \Gamma_{\xi_j, \varepsilon} $. We shall make cut-offs with $\Phi(x) \in C^\infty_0(\br^n) $ such that $ 0 \le \Phi \le 1 $, $  \Phi $  has support where $ | x| \le 1 $ and is equal to 1 when $ | x| \le 1/2 $. 
We find that
\begin{equation}\label{microloc}
u_j =  \psi_{j,\varrho}\Phi u_j  + (1- \psi_{j,\varrho})\Phi  u_j  + (1- \Phi)u_j=  u_{j,\varrho}  + S_{j,\varrho} f
\end{equation}
when $ | x| < 1/2 $, where $u_{j,\varrho} =    \psi_{j,\varrho}\Phi u_j$ and $ S_{j,\varrho} = (1- \psi_{j,\varrho})\Phi  \mathbb{F}_ja_j^{-1} \varphi_{j}   \in I^0$  since $ \mathbb{F}_j a_j^{-1} \varphi_{j} \in I^0$. Thus we find that $ S_{j,\varrho}f(x) $ depends on the values of $ f(y) $ when $y_1 $ is in the interval between 0 and $ x_1 $.
Since  $ \psi_j \varphi_{j} = \varphi_j $ we find that $  (1- \psi_{j,\varrho}) \varphi_{j} = (1- \chi_{\varrho}) \varphi_{j}  $ which is supported where $ | \xi | \ls \varrho $.

By~\eqref{normequation} we also have 
\begin{multline*}
Q_jS_{j,\varrho} =   [\psi_{j,\varrho}, Q_j ]  \Phi\mathbb{F}_ja_j^{-1}\varphi_{j}+ (1- \psi_{j,\varrho})[Q_j, \Phi ] \mathbb{F}_ja_j^{-1}\varphi_{j} \\ +  (1- \psi_{j,\varrho})\Phi(\id + S_j)a_j^{-1} \varphi_{j}     \in I^{-\infty}	
\end{multline*}
and also that $Q_jS_{j,\varrho} \in I^0 $ uniformly with symbol supported where $| \xi | \ls \varrho $ modulo  $ S^{-\infty}$, and $ S_j \in I^{-1} $. 
Since $\Psi^0 \ni a_j \ne 0 $ we have $ a_j a_j^{-1}  = \id  + B_j$ with $ B_j \in \Psi^{-1} $. 
We find from~\eqref{prepform}, \eqref{microeq0} and~\eqref{microloc} that
\begin{equation}\label{linsol0}
Pb_j u_{j,\varrho} = a_jQ_j (u_{j}  - S_{j,\varrho} f)  + R_j u_{j,\varrho} \\
= \left((\id + B_j + a_jr_j)\varphi_j  - a_j Q_jS_{j,\varrho} +  R_{j,\varrho} \right) f  
\end{equation}
when $ | x| < 1/2 $, where $ a_j Q_jS_{j,\varrho} \in I^{-\infty}$, $ a_jr_j \in I^{-1} $ and $ R_{j,\varrho} =  R_j\psi_{j,\varrho} \mathbb{F}_ja_j^{-1} \varphi_{j}\in I^{-1}$ with symbol supported where $| \xi | \gtrsim \varrho $ modulo  $ S^{-\infty}$ when $|x | \ll 1 $ since  the symbol $ R_j \psi_{j,\varrho} \in S^{-1} $ for $| x| \ll 1 $ by Proposition~\ref{normalform}. 
Here $ E \in I^k $ in the open set $ \Omega \subset T^*\br^n $ means that $ E = E_0 + E_1 $ where $ E_1 \in I^k $ and $ \wf E_0 \bigcap \Omega= \emptyset $.
Since $ \varrho \psi_{j,\varrho} \in S^1 $ uniformly by Remark~\ref{microrem}, we find $ \varrho R_{j,\varrho} \in I^0$  uniformly when $| x| \ll 1$.

Now we define
\begin{equation}\label{linsol}
u_{\varrho}(x) = \sum_{j} b_j(D)\psi_{j,\varrho}(D) \Phi  u_j(x)  = \sum_{j} b_j(D)u_{j,\varrho}(x) 
\end{equation}
where $\varrho b_j\psi_{j,\varrho} \in S^0 $ uniformly when $ \varrho \ge 1 $.
We obtain from~\eqref{linsol0} and~\eqref{linsol} that
\begin{equation}\label{sumsys}
P u_{\varrho} =  
f +  \sum_j \left((B_j  + a_jr_j)\varphi_{j} + R_{j,\varrho} - a_jQ_jS_{j,\varrho}\right)f
= (\id+ R_{\varrho}) f
\end{equation}
where $ R_{\varrho} =  \sum_j (B_j + a_j r_j)\varphi_j  + R_ {j,\varrho}- a_jQ_jS_{j,\varrho} \in I^{-1}$ when $ |x|  \ll 1 $. We shall localize the first terms in $ \xi $ by writing $ B_j + a_j r_j =  (B_j + a_j r_j)(1 - \chi_{\varrho}) +  {(B_j + a_j r_j)\chi_\varrho} $ which gives 
\begin{equation}\label{RrhoTdef}
R_{\varrho } = 
R_{\varrho,0}  +  R_{\varrho, 1} 
\end{equation}
with $ R_{\varrho,0} =  \sum_j (B_j + a_jr_j )\varphi_j(1 - \chi_{\varrho})  - a_jQ_jS_{j,\varrho} \in  I^0  $ uniformly with symbol supported where $ |\xi | \ls \varrho $ modulo $ S^{-\infty}$ uniformly when $ \varrho\ge 1 $,  and $  R_{\varrho,1} =  \sum_j (B_j + a_jr_j )\varphi_{j,\varrho} +   R_ {j,\varrho}\in I^{-1}$ uniformly  and $ \varrho R_{\varrho, 1}  \in I^0$ uniformly when $ \varrho \ge 1 $  and $ |x| \ll 1 $ since $ \varrho \chi_{\varrho} \in S^1 $ uniformly.

Since we are only need local solutions, we may cut off near $ x= 0$.
Let  $ \Phi_\delta(x) = \Phi(x/\delta) $ with $ 0 < \delta \le 1 $, 
To solve the equation near $ x=0 $ it is enough that $ \Phi_\delta Pu_{\varrho} =  \Phi_\delta f $ for some $0 < \delta  \le 1$. 
If  $ f$ has support where $|x| \le \delta/2 $  then 
we obtain from~\eqref{linsol0} that $  \Phi_\delta Pu_{\varrho} =  \Phi_\delta(\id  + R_{\varrho})\Phi_{\delta} f = (\id +  R_{\delta, \varrho}) f$ where $R_{\delta, \varrho} =  \Phi_\delta R_{\varrho}\Phi_{\delta} $. 
By~\eqref{RrhoTdef} we have 
\begin{equation}\label{rdrt}
R_{\delta, \varrho} =   R_{\delta, \varrho, 0} + R_{\delta, \varrho, 1}
\end{equation}
with  $  R_{\delta, \varrho, j} = \Phi_\delta R_{\varrho,j}\Phi_{\delta}$. 
For fixed $0 < \delta \ll 1$ we have  $ \varrho R_{\delta, \varrho, 1} \in I^0$ uniformly when $ \varrho\ge 1 $ and  $R_{\delta, \varrho, 0} \in I^{0}$  has $ C^{\infty} $ kernel depending on  $ \varrho $  and  $ \delta $ with symbol supported where $ |\xi | \ls \varrho $ modulo $  S^{-\infty}$ uniformly when $ \varrho\ge 1 $.

It remains to invert the term $ \id +  R_{\delta, \varrho} $ in order solve equation~\eqref{plinequation}. 
This will be done in two steps, first making $R_{\delta, \varrho, 1}  $ small by taking large enough $ \varrho $. This may increase the seminorms of $ R_{\delta, \varrho, 0} $, but this term can then be made small by localizing in a sufficiently small neighborhood of  $ x= 0 $.

Since $ \varrho R_{\delta, \varrho, 1} \in I^0$ uniformly when $ \varrho\ge 1 $, we find by Remark~\ref{fiorem} that there exists $\varrho_{\delta}  \ge 1  $  so that if $ \varrho \ge \varrho_{\delta} $ we have  
$ \mn{R_{\delta, \varrho, 1} f}_{(0)} \le \mn{ f}_{(0)}/2 $  for $ f \in \Cal S $.
Then we find that 
$$ (\id+ R_{\delta, \varrho, 1})^{-1}  = \id + \sum_{ k > 0}(-R_{\delta, \varrho, 1})^k \in I^0 \  \text{ uniformly in $ \varrho $}
$$ 
Observe that $ (-R_{\delta, \varrho, 1})^k $ has kernel supported where $ |x | \le \delta $ and $ |y | \le \delta $.  If we then solve 
\begin{equation}\label{microeq1}
Q_j u_j = a_j^{-1} \varphi_{j}(\id+ R_{\delta, \varrho,1})^{-1}f  \qquad u_j \restr{x_1 = 0} = 0
\end{equation}
for $  f $ supported where $ |x| \le \delta/2 $, then the previous reduction and~\eqref{rdrt} gives
\begin{equation}\label{sumsys1}
\Phi_\delta P u_{\varrho} =  (\id+ R_{\delta, \varrho})(\id+ R_{\delta, \varrho,1})^{-1}f = (\id + R_{\delta,\varrho,2}) f 
\end{equation}
where $ R_{\delta,\varrho,2} = R_{\delta,\varrho, 0}(\id + R_{\delta,\varrho,1})^{-1} \in I^{-\infty}$ with $ C^\infty $ kernel supported where $ |x | \le \delta $ and $ |y | \le \delta $ and with symbol supported where $ |\xi | \ls \varrho $ modulo $ S^{-\infty}$ uniformly when $ \varrho\ge 1 $.  Observe  that we have uniform bounds for  fixed $ \delta $ when  $ \varrho \ge \varrho_{\delta} $ and these bounds depend on the bounds on the symbol of $ P $ and the parameters $ v \in C^\infty $ and~$ w $. We shall later put more restraints on the lower bound of $ \varrho $ because of conditions on the estimates, see~\eqref{indest}, and the values of $ u_{\varrho}(0)$ and $ \partial u_{\varrho}(0)$, see~\eqref{initrho}.

Now we have to shrink the support of $R_{\delta,\varrho,2}  $ to lower the norm of the kernel without changing $R_{\delta,\varrho,1}  $. With fixed $0 <  \delta \le 1$ and $ \varrho_{\delta} \ge 1$,  we assume $ \varrho \ge \varrho_{\delta} $ and multiply the equation~\eqref{sumsys1} with $ \Phi_{\delta_0}  $ with $ 0 < \delta_0 \le \delta/2 < 1/2$ so that $  \Phi_{\delta} = 1 $ on $ \supp  \Phi_{\delta_0} $. If  $ f  $ is  supported where $ |x| \le \delta_0/2 < 1/4$, then we obtain as before that
\begin{equation}\label{sumsys2}
\Phi_{\delta_0}  P u_{\varrho} =  \left(\id + R_{\delta_0,\varrho,2 }\right)f 
\end{equation}
where $R_{\delta_0,\varrho,2} =  \Phi_{\delta_0} R_{\delta,\varrho,2} \Phi_{\delta_0} $.

\begin{lem}\label{estlem2}
   There exists $C_0 > 0  $ so that if  $ \varrho \ge \varrho_{\delta} $ and $ 0 < \delta_0 \le \delta/2 < 1/2$ we have
   \begin{equation}
   	\mn{R_{\delta_0,\varrho,2}\varphi}_{(0) }\le C_0 \varrho^n \delta_0^n  \mn{\varphi}_{(0) } \qquad \forall \,  \varphi \in \Cal S
   \end{equation}
\end{lem}

\begin{proof}
	 Now $ (\id + R_{\delta,\varrho,1})^{-1}\in I^{0} $ uniformly in $ \varrho \ge \varrho_{\delta} $ so $ R_{\delta,\varrho,2} = R_{\delta,\varrho, 0}(\id + R_{\delta,\varrho,1})^{-1}\in I^{-\infty}$  having $ C^\infty $ kernel but also in $ I^0 $ uniformly in $ \varrho \ge 1 $ with bounded symbol supported where $ |\xi | \ls \varrho $ modulo $  S^{-\infty}$ uniformly when $ \varrho\ge 1 $.
	 
	 By integrating the symbol in $ \xi $, we find that $ R_{\delta,\varrho,2} $ has $ C^\infty $ kernel $ R_2 (x,y)$ so that $ \mn{R_2}_\infty \le C\varrho^n $.
	 Thus  $ R_{\delta_0,\varrho, 0}$  has kernel  $R_0 = \Phi_{\delta_0}(x) R_{2}(x,y) \Phi_{\delta_0}(y) \in I^{-\infty} $ and since 
	 $$ 
	 \mn{R_{\delta_0,\varrho, 2}f }^2_{(0)} \le \int \left| \int R_0(x,y) f(y)\, dy \right |^2 \, dx  \le \mn{R_0}^2_{(0)} \mn{f}^2_{(0)} 
	 $$
	 where  $ \mn{R_0}^2_{(0)} \ls \varrho^{2n}(\int \Phi_{\delta_0}(y)\, dy )^2  \ls \varrho^{2n}\delta_0 ^{2n}$, which proves the  lemma.
\end{proof}

By Lemma~\ref{estlem2} we find that if $ c_0 = (2C_0)^{-1/n}$ then for $0 < \delta_0 \le \min(c_0/\varrho, \delta/2) $  we have that $ \mn{R_{\delta_0,\varrho,2}(x,D)\varphi}_{(0) } \le \mn{\varphi}_{(0) }/2$ for $ \varphi \in \Cal S $. We obtain that
$$ 
(\id + R_{\delta_0,\varrho,2}(x,D) )^{-1} = \sum_{j \ge 0}(-R_{\delta_0,\varrho,2}(x,D) )^j \cong \id 
$$ 
modulo  an operator in $ I^{-\infty}$ with $ C^\infty_0 $ kernel supported where $ |x | \le \delta_0 $ and $ |y | \le \delta_0 $. 
By replacing $ f $ in~\eqref{microeq1} by $  (\id + R_{\delta_0,\varrho,2})^{-1} f $ we obtain the first part of the following result.

\begin{prop}\label{initlem}
	Let $ P $ be given by Theorem~\ref{appthm}, $ \varphi_{j} $ be given by Remark~\ref{microrem}, $P b_j  = a_jQ_j  + R_j $ by Proposition~\ref{normalform} with $ x_0 = 0 $ and let $R_{\delta, \varrho,1}  $ and $  R_{\delta_0,\varrho,2}$ be given by~\eqref{rdrt} and~\eqref{sumsys2}  depending $ C^\infty $ on $ v $ and~$ w $. Then there exist  $0 <  \delta \le 1 $, $  \varrho_{\delta} \ge 1$ and $ c_0 > 0 $ so that if  $ \varrho \ge \varrho_{\delta} $ and  $0 <  \delta_0  \le \min( c_0/\varrho, \delta/2) $, then $(\id+ R_{\delta, \varrho, 1})^{-1}(\id + R_{\delta_0,\varrho,2})^{-1} = \id + R_{\varrho,\delta_0}  \in I^0$ is uniformly bounded.
	If	$ f \in C^\infty $ and  $ u_j  \in C^\infty $ solves
	\begin{equation}\label{microeq2}
	Q_j u_j = a_j^{-1} \varphi_j (\id+ R_{\varrho,\delta_0}) \Phi_{\delta_0 }f \qquad u_j \restr{x_1 = 0} = 0 \quad \forall \ j
	\end{equation} 
	then after ON changes of $ x $ variables we find that $ u_{\varrho}(x) = \sum_{j} b_j\varphi_{j,\varrho} \Phi u_j(x) $ solves
	\begin{equation}
	P u_{\varrho} = f
	\end{equation}
	when $ |x| \le \delta_0/2 $ and $ |w - w_0| \ll 1 $. We also have that
	\begin{equation}
	u_{\varrho}(x) = c_{x, \varrho}(f) \quad \text{and} \quad	\partial u_{\varrho}(x) = d_{x, \varrho}(f)
	\end{equation}
	where $ \varrho c_{x, \varrho} $ and $ d_{x, \varrho} \in \Cal E' $  are $ o(1)$ uniformly for $ \varrho\to \infty $ when $ | x | \le \delta_0/2 $ and $ |w - w_0| \ll 1 $, i.e.,  $\varrho c_{x, \varrho}(f)  $ and $d_{x, \varrho}(f) = o(1) $  as $ \varrho\to \infty $. 
\end{prop}

\begin{proof}
	In only remains to prove the statement about the values at $ x $  when $ | x | \le \delta_0/2 $. Since $ u_{\varrho} = \sum_{j} b_ju_{j,\varrho}$ it suffices to consider the terms $ b_ju_{j,\varrho}  = E_{j,\varrho}\Phi u_j$, where  $E_{j,\varrho} = b_j\psi_{j,\varrho} \in I^{-1}$  uniformly and has symbol $ e_{j,\varrho}(x,\xi) \in S^{-1}  $ supported where $  |\xi | \ge \varrho  $. Then $ \varrho E_{j,\varrho} $ and $ \partial_x E_{j,\varrho}   \in I^0$ uniformly in $ \varrho \ge 1 $. By Proposition~\ref{normeq} we have that $ u_j(x) = F_{j,\varrho}f(x) $ depends linearly on $ f $, where $F_{j,\varrho} =  \mathbb{F}_j a_j^{-1} \varphi_{j} (\id+ R_{\varrho,\delta_0})  \Phi_{\delta_0 } \in I^0 $ uniformly when  $ \varrho \ge \varrho_{\delta} $. 
	We only have to prove the continuity, since the factor $ \Phi_{\delta_0 } $ in $ F_{j,\varrho} $ gives compact support. 
	 
    We have that
\[ 
\varrho E_{j,\varrho}   u(x) = (2\pi)^{-n} \int e^{i(\w{x,\xi} + \phi_j(x,\xi) )} \varrho e_{j,\varrho}(x,\xi)\wh u(\xi)\, d\xi \qquad \forall\  u \in  \Cal S  \quad \forall\, x    
\]
where $\phi_j(x,\xi)  $ is real and homogeneous of degree $ 1 $ in $ \xi $ and $\varrho e_{j,\varrho} \in S^0  $ uniformly. Thus, we find that
\begin{equation}\label{FL1est}
|E_{j,\varrho}   u(x)| \ls \varrho^{-1} \int_{| \xi | \ge \varrho}  |\wh{u}(\xi)| \, d\xi \le o( \varrho^{-1})\mn{\wh u}_{L^1}  \qquad \forall\  u \in \Cal S  \quad \forall\, x    
\end{equation}
by dominated convergence as $ \varrho \to \infty $, depending only on the seminorms of   $e_{j,\varrho}$. This estimate also gives $ \partial E_{j,\varrho}   u(x)  = o(1)$  since $ \partial_x E_{j,\varrho}   \in I^0$ uniformly in $ \varrho \ge 1 $ with symbol supported where $  |\xi | \ge \varrho  $.
Here $\mn{\wh u}_{L^1} =  \mn{u}_{FL^1} $ is the Fourier $ L^1 $ norm,  so there exists $ C_n > 0 $ so that $ \mn{u}_{FL^1}  \le C_n \mn{u}_{\left(\frac{n + 1}{2}\right)}$ for any $ u \in \Cal S   $. 
Thus, we find that $	| E_{j, \varrho}\Phi u_j(x) |  \ls o(\varrho^{-1}) \mn{ \Phi u_j}_{\left(\frac{n + 1}{2}\right)} $ and 
$| \partial E_{j, \varrho}\Phi u_j(x) | \le o(1)\mn{\Phi u_j}_{\left(\frac{n + 1}{2}\right)}$, $ \varrho \to \infty $,  
when $  | x | \le \delta_0/2  $. 
Now $ u_j(x) = F_{j,\varrho}f(x) $, where $ F_{j,\varrho} \in I^0  $ uniformly when $ \varrho \ge \varrho_{\delta} $, so 
by Lemma~\ref{contlem} we find that 
$$ 
\mn{\Phi u_j}_{\left(\frac{n + 1}{2}\right)} \le C_0 \mn{ u_j}_{\left(\frac{n + 1}{2}\right)} \le C_1  \mn{f}_{\left(\frac{n + 1}{2}\right)}\qquad \text{when  $ \varrho \ge \varrho_{\delta} $}
$$  
which finishes the proof of Proposition~\ref{initlem}.
    \end{proof}

\begin{proof}[Proof of Theorem~\ref{appthm}] 
	To solve~\eqref{pequation} we may first assume $ x_0 = 0 $ and make the reduction~\eqref{reduc} to the case with vanishing data. Then we find that $ f $ is replaced by $ f_0 = f - (p_1 + p_0\cdot x)u_1  - p_0 u_0 $,  $P_0 =  P $ is given by~\eqref{pmodequation} depending on  $ u_1 $ and~\eqref{pcond} holds with $ u_1 = 0 $. 
	Since we only need local solutions, we shall cut off $\partial v $ with   $ \Phi \in C^\infty_0$ that is supported where $ | x  | \le 1 $,  $0 \le  \Phi \le 1$ and  $ \Phi(x)= 1 $ when $ |x | \le 1/2 $.
	Starting  with  $ v^0 \equiv 0 $  we shall solve the linearized equation
	\begin{equation}\label{plinequation0}
	P_0(\Phi \partial v^j(x, w),x, w,\partial) v^{j+1}(x, w) = f_0(x, w)  \qquad \forall\, j \ge 0
	\end{equation}
	near $ x = 0 $ and $ w = w_0 $ with real valued solution $ v^{j+1} $ such that $ \varrho v^{j+1}(x, w_0) $ and  $\partial_{x}v^{j+1}(x, w_0)$  are  $o(1)$ uniformly when $ | x | \le \delta_0/2 $ and $ \varrho \to \infty $  depending linearly on $ f_0 $. Thus, for large enough $ \varrho $ we find that $ P_0(\Phi \partial v^{j+1}(x,w),x, w,\partial) $ is of real principal type  when $ | x | \le \delta_0/2 $ and $ | w - w_0 | \le c$ for some $ c > 0 $.

   	To  microlocalize, we use Propositions~\ref{normalform} and~\ref{initlem} to find  $  \varrho_{\delta} \ge 1$ and $ c_0 > 0 $ so so that if $ \varrho \ge  \varrho_{\delta} $, $0 < \delta_0 \le \min (c_0/\varrho, \delta/2)  $, we find that~\eqref{plinequation0} reduces  near $ x= 0 $ to the coupled system of equations given by~\eqref{microeq2} for $ j \ge 0 $: 
	\begin{equation}\label{plinsystem}
	Q_k( \Phi  \partial v^{j}(x), x,D) v_k^{j+1}  = a_k^{-1} \varphi_k (\id + R_{\varrho, \delta_0})\Phi_{\delta_0}f_0  \quad  \text{$ \forall \,k $}
	\end{equation}
	with $ v^0 \equiv 0 $. Here $ R_{\varrho, \delta_0} \in I^0 $ uniformly, $ v^{j}(x)= \re \sum_{k= 1}^N b_k\psi_{k, \varrho}(D) \Phi  v^j_k(x) \in  C^\infty$ for $ j > 0 $ with  $\varrho b_k\psi_{k, \varrho} \in S^{0} $ uniformly when $ \varrho \ge 1 $.
	Observe that $ a_k^{-1} $ and $R_{ \varrho,\delta_0}   \in I^0$ uniformly  depending $ C^\infty $ on $ \Phi  \partial  v^{j}( x)$ and $ w $.
	
	Now $ \Phi_{\delta_0 }(x) = 1$  when $ |x| < \delta_0/2 $, so by solving~\eqref{plinsystem} using Proposition~\ref{normeq} we will obtain a solution to~\eqref{plinequation0} when $ |x| < \delta_0/2 $  such that  $ \varrho v^{j+1}_k(x, w_0)  $ and $ \partial_{x} v^{j+1}_k (x, w_0) $ are  distributions  of $ f_0  $ which are $  o(1)$ when $ |x | \le \delta_0/2 $ and $ \varrho \to \infty $.
	We are going to prove that the solutions $ v^{j+1} $ to~\eqref{plinequation0} are uniformly bounded in $  C^\infty $ near $ x = 0 $, then we can use the Arzela-Ascoli theorem to get convergence of  a subsequence to a solution to the nonlinear equation~\eqref{pmodequation}.
	Observe that we may solve~\eqref{plinsystem} in any neighborhood of $ x= 0 $, but this system will only give a solution to~\eqref{plinequation0} when  $ | x | < \delta_0/2$ where $ P_0 $ is of principal type.
	
	First we obtain from Lemmas~\ref{contlem} and Remark~\ref{fiorem} that  there exists $ \ell \in \bn $ so that for any $ m \in \bn $ there exists $ C_m(t) \in C^\infty(\br_+)$ so that 
	\begin{equation}\label{auxeq}
	\mn{a_k^{-1} \varphi_k  (\id+ R_{\varrho,\delta_0})\Phi_{\delta_0}f_0 }^2_{(m)} \le C_m(\mn{ \Phi \partial  v^j}_{(\ell)})\mn{f_0}^2_{(m)}
	\end{equation}
	since the operators are in $ I^0 $ uniformly depending $ C^\infty $ on $  \Phi  \partial v^j (x)$.

	By~\eqref{plinsystem}, \eqref{auxeq} and Proposition~\ref{linest} we also find that
	there exists $ \ell \in \bn $ such that for any $ m \in \bn $ there exists $ C_m(t) \in C^\infty(\br_+)$ so that 
	\begin{equation}\label{recureq}
	\mn{\Phi  v_k^{j+1}}_{(m)}^2 \le C_m(\mn{ \Phi  \partial v^j}_{(\ell)})
	\mn{f_0}_{(m)}^2  \qquad \forall \, k
	\end{equation}
	since $ \Phi   \in C^\infty_0 $ has support when $  | x |  \le 1$.
	Here we may assume that $ C_m(t) $ is a nondecreasing function $\forall \, m $.
	Now  $ \varrho \partial b_k \psi_{k, \varrho} \in \Psi^1 $ uniformly when $ \varrho \ge 1$ and  $ \mn{\re u}_{(m)} \le  \mn{ u}_{(m)} $ for $ u \in \Cal S $, which gives
	\begin{equation}\label{stripest}
	\mn { \Phi \partial v^j}^2_{(m)} \le \varrho^{-2}\wt C_m   \sum_{ 1\le k \le N } \mn{  \Phi  v^j_k}^2_{(m+1)}  \qquad \forall \, m \in \bn \quad \forall \, \varrho \ge 1
	\end{equation}
	By using~\eqref{stripest} for $ m = \ell  $ and changing $ \ell $ we find for any $ k $ that
	\begin{equation}\label{indest}
	\mn{\Phi  v_k^{j+1}}_{(m)}^2\le  C_m\left(\varrho^{-1} \sqrt{\wt C_\ell \sum_{1 \le k \le N} \mn{\Phi  v^j_k}^2_{(\ell)}} \right) \mn{f_0}_{(m)}^2 \qquad \forall \, m\in \bn \quad \forall \, \varrho \ge 1
	\end{equation}

	Thus, for any $ m \in \bn$ we will obtain uniform bounds on $   \mn{\Phi  v_k^{j}}_{(m)}$  if we have uniform bounds when $m =  \ell $. Since $ v^{0} = 0,\  \forall\, k,$ we find by taking $ m = \ell $ in \eqref{recureq} that 
	\begin{equation}\label{est0}
	\mn{\Phi  v_k^{0}}_{(\ell)}^2\le C_\ell(0) \mn{f_0}_{(\ell)}^2 \qquad \forall \, k
	\end{equation}
	where $C_\ell(0) \le C_\ell(1)  $ since $ C_\ell(t) $ is nondecreasing. If we assume for some $ j \ge 0$ that
	\begin{equation}\label{inestj}
	\mn{\Phi  v_k^{j}}_{(\ell)}^2\le C_\ell(1) \mn{f_0}_{(\ell)}^2 \qquad \forall \, k
	\end{equation}
	then by choosing $  \varrho \ge \sqrt{N\wt C_\ell C_\ell(1)} \mn{f_0}_{(\ell)} = \varrho_0$  we find using~\eqref{indest} with $ m = \ell $   that \eqref{inestj} holds with $ j $ replaced by $ j +1 $. Since this is true for $ j = 0 $ we obtain by induction that \eqref{inestj} holds  for any $ j $. By~\eqref{indest} we obtain for any $ m $  uniform bounds on $  \mn{\Phi  v_k^{j}}_{(m)}$  for any $j,\, k $, which by~\eqref{stripest} gives
	that $   \mn{\Phi  v^{j}}_{(m)}$ is uniformly bounded for any $ j $. 
	This also holds for $  | w - w_0| < c  $ for some $ c > 0 $.
	
	By the Arzela-Ascoli theorem there exists a  subsequence $ \{ v^{j_k}\}_{j_k} $ that converges in $ C^\infty $  to a real valued limit $ v $ on $\Phi ^{-1}(1)  $, i.e., when $ |x| \le 1/2 $, as $ j_k \to \infty $.  By taking the limit of the equation~\eqref{plinequation0} we find by continuity for large enough $ \varrho $ that 
	\begin{equation}\label{limequation}
	P_0(\partial v(x, w),x, w,\partial) v(x, w) = f_0(x, w) 
	\end{equation}
	when $ | x| < \delta_0/2 $ and $ | w - w_0| < c $.
	We also obtain  that
	$  v(x, w) = c_{x, \varrho}(f_0)  \in \br $ and  $\partial_x  v(x, w) = d_{x, \varrho}(f_0)  \in \br^n $ where $ \varrho c_{x, \varrho}  $ and  $ d_{x, \varrho}  \in  \Cal E'$ are $ o(1)$ uniformly for  $ \varrho\to \infty $ when $ |x| \le \delta_0/2 $ and  $  | w - w_0| < c  $. Thus, for $ \varrho \ge \varrho_{1} $ large enough, we find that $ P_0(\Phi \partial v(x, w),x, w,\partial) $ is of principal type when $ | x | \le \delta_0/2 $  and $ | w - w_0| < c $.
	
	Thus, if  $ \varrho  \ge \max( \varrho_{0},\varrho_{1},\varrho_{\delta}) $ we have a solution to~\eqref{limequation} when $ | x| < \delta_0/2 $   and $ | w - w_0| < c $ with 
	$ f $ replaced by  $\Phi_{\delta_0}f_0 =  \Phi_{\delta_0}\left(f - (p_1 + p_0 x)\cdot u_1 - p_0 u_0 \right) $ by~\eqref{pmodequation} and 
	$ v $ replaced by $u = v + u_0 + u_1\cdot x $ by~\eqref{reduc} so that $ P_0(\Phi \partial u(x, w),x, w,\partial) $ is of principal type when $ | x | \le \delta_0/2 $  and $ | w - w_0| < c $. 
	By linearity we obtain that
	$$  
	\left\{
	\begin{aligned}
	&u(0, w_0)  = u_0 +  c_{ \varrho}(\Phi_{\delta_0}f) - c_{ \varrho}(\Phi_{\delta_0}p_0)u_0 -   c_{ \varrho}(\Phi_{\delta_0}(p_1 + p_0 x))\cdot u_1\\
	&\partial_x  u(0, w_0)  = u_1 +  d_{ \varrho}(\Phi_{\delta_0}f) - d_{ \varrho}(\Phi_{\delta_0}p_0)u_0 -   d_{ \varrho}(\Phi_{\delta_0}(p_1 + p_0x))\cdot u_1
	\end{aligned}
	\right.
	$$  
	If we replace $ u_j $ with indeterminate $ v_j $ for $ j = 1, \,2 $, then we obtain the linear system
	\begin{equation}\label{initrho}
	\left\{
	\begin{aligned}
	&u(0, w_0) = (1 +  o(\varrho^{-1})a_\varrho) v_0  +  o(\varrho^{-1})b_\varrho \cdot v_1 + o(\varrho^{-1})c_\varrho = u_0\\
	&\partial_x u(0, v_0) = o(1)d_\varrho v_0 +(\id_n  +  o(1)e_\varrho) v_1  + o(1)f_\varrho = u_1
	\end{aligned}
	\right.
	\end{equation}
	where the coefficients $ a_\varrho, \dots, f_\varrho $ are uniformly bounded when $ \varrho \ge 1 $. Observe that $ b_\varrho $ is a $ 1 \times n $, $ d_\varrho $  and $ f_\varrho $ are $ n \times 1 $ and $ e_\varrho $ is an $ n \times n $ matrix.
	This is a linear $ (n+1) \times( n+1) $ system in $ (v_0, v_1) $ which converges to the identity when $ \varrho \to \infty $. 
	Thus, there exists $ \varrho_{2} \ge 1$ so that the determinant of the system is nonvanishing when $ \varrho  \ge \varrho_{2} $ which gives a unique solution  $ (v_0, v_1) $   to~\eqref{initrho} so that $ |   v_j -   u_j | \le o(1)$, $j  = 1$, 2, as $ \varrho \to \infty $. 
	By solving~\eqref{pmodequation} for $ \varrho  \ge  \max(\varrho_{0},\varrho_{1},\varrho_{2},\varrho_{\delta}) $ large enough with $ u_j $ replaced by $ v_j $, $ j=1,\, 2 $,  we obtain  a solution to~\eqref{pequation}  when $ | x | \le \delta_0/2 $  and $ | w - w_0| < c $. In fact, if  $ \varrho $ is large enough then $ P_0(\Phi \partial u(x, w),x, w,\partial) $ is of principal type when $ | x | \le \delta_0/2 $ and $ | w - w_0| < c $. This finishes the proof of Theorem~\ref{appthm}.
\end{proof}

\section*{Acknowledgement} This project was partly funded by the Swedish Research Council grant 2018-04228.


\bibliographystyle{plain}

\begin{thebibliography}{10}
	
	
	\bibitem{bc:sob}
	J.-M. Bony and J.-Y.\ Chemin, \emph{Espaces fonctionnels associ\'es au
		calcul de {Weyl}-{H\"{o}r\-mander}}, Bull. Soc. Math. France \textbf{122}
	(1994), 77--118.
	
	
	\bibitem{de:suff}
	N. Dencker,
	\emph{A sufficient condition for solvability}, International
	Mathematics Research Notices \textbf{1999} (1999), no. 12, 627--659.
	
	
	\bibitem{de:nt}
	\bysame, 
	\newblock \emph{The resolution of the {N}irenberg-{T}reves conjecture},
	\newblock {Ann. of Math. (2)}, \textbf{163} (2006), no. 2, 405--444.
	
	\bibitem{de:X}
	\bysame, 
	\emph{On the solvability of pseudodifferential operators},
	In {S\'{e}minaire: \'{E}quations aux {D}\'{e}riv\'{e}es {P}artielles. 2005--2006}, {Exp. No. I, 0--29}, {\'{E}cole Polytech., Palaiseau},
	{2006}.
	
	\bibitem{de:lim}
	\bysame,
	\newblock {\em Solvability and limit bicharacteristics},
	\newblock { J.\ Pseudo-Differ.\ Oper.\ Appl.}, \textbf{7} (2016), 295--320.
	
	\bibitem{de:sub} 
	\bysame, 
	{\em Operators of subprincipal type}, {Anal.\ PDE} \textbf{10} (2017), 323--350.
	
	\bibitem{de:isaac} 
	\bysame, 
	{\em Solvability of subprincipal type operators}, In {Mathematical analysis and applications---plenary lectures}, {1--49}, {Springer Proc. Math. Stat. 262}, {Springer, Cham}, {2018}.
	
	\bibitem{ho:weyl}
	L. H{\"o}rmander.
	{\em The {Weyl} calculus of pseudo-differential operators},  {Comm. Partial Differential Equations} \textbf{32}  (1979), 359--443.
	
	
	\bibitem{ho:nec}
	\bysame, 
	\newblock {\em Pseudodifferential operators of principal type},
	\newblock In {Singularities in boundary value problems ({P}roc. {NATO}
		{A}dv. {S}tudy {I}nst., {M}aratea, 1980)}, 69--96, volume~65 of {\em NATO Adv. Study
		Inst. Ser. C: Math. Phys. Sci.}, Reidel, Dordrecht, 1981.
	
	\bibitem{ho:yellow}
	\bysame, 	
	\newblock {\em The Analysis of Linear Partial Differential Operators}, volume
	I--IV.
	\newblock Springer Verlag, Berlin, Heidelberg, New York, Tokyo, 1983--1985.
	
	
	\bibitem{ho:conv}
	\bysame, \emph{Notions of convexity}, {Birkh\"{a}user}, Boston, 1994.
	
	
	\bibitem{ho:NT}
	\bysame, 
	\newblock{\em The proof of the Nirenberg--Treves conjecture according
		to N. Dencker och N. Lerner}, {In Unpublished manuscripts---from 1951 to 2007}, 216--256,
	{Springer, Cham}, {2018}.
	
	
	\bibitem{ln:coh}
	N. Lerner, \emph{Energy methods via coherent states and advanced
		pseudo-differential calculus}, In Multidimensional complex analysis and partial
	differential equations, 177--201 (P.~D. Cordaro, H.~Jacobowitz, and S.~Gidikin, eds.),
	Amer. Math. Soc., Providence, R.I., USA, 1997.
	
	
	\bibitem{ln:wick}
	\bysame, \emph{The Wick calculus of pseudo-differential operators and some of
		its applications}.  
	Cubo Mat. Educ.   \textbf{5}  (2003),  213--236. 
	
	
	\bibitem{ln:cutloss}
	\bysame, \emph{Cutting the loss of derivatives for solvability under
		condition (${\Psi}$)}, 
	{Bull. Soc. Math. France} \textbf{134} ({2006}),  no. 4, {559--631}.
	
	
	\bibitem{MU1}
	G.~A. Mendoza and G.~A. Uhlmann,
	\newblock {\em A necessary condition for local solvability for a class of operators
		with double characteristics},
	\newblock { J. Funct. Anal.}, \textbf{52} (1983), no. 2, 252--256.
	
	\bibitem{MU2}
	\bysame, 
	\newblock {\em A sufficient condition for local solvability for a class of operators
		with double characteristics},
	\newblock {\em Amer. J. Math.}, \textbf{106}  (1984),  no. 1, 187--217.
	
	\bibitem{Men}
	G. A. Mendoza,
	\newblock {\em A necessary condition for solvability for a class of operators with
		involutive double characteristics},
	\newblock In {Microlocal analysis ({B}oulder, {C}olo., 1983)}, 193--197, volume~27 of
	{\em Contemp. Math.}, Amer. Math. Soc., Providence, RI, 1984.

    \bibitem{Parm}
    A. Parmeggiani,
    \newblock {\em On the solvability of a class of second order degenerate operators},
    {In Advances in microlocal and time-frequency analysis} 207--226,
    {(P. Boggiatto, M. Cappiello, E. Cordero, S.
    	Coriasco, G. Garello, A. Oliaro
    	and J. Seiler, eds.)}
    {Applied and Numerical Harmonic Analysis}, {Birkh\"{a}user/Springer, Cham}, 2020.
	
	\bibitem{Serparm}
	{A. Parmeggiani and F. Serena},
	{\em On the local solvability of a class of degenerate second order
		operators with complex coefficients},
	{Comm. PDE}, {\bf 43} (2018), p 1485--1501.
	
	\bibitem{T2}
	F. Tr{\`e}ves,
	\newblock {\em Introduction to pseudodifferential and {F}ourier integral
		operators. {V}ol. 2}.
	\newblock Plenum Press, New York-London, 1980.
	\newblock Fourier integral operators, The University Series in Mathematics.
	{Plenum Press, New York-London},
	{1980}.
	
\end{thebibliography}

\end{document}